\title{
Universality of high-dimensional spanning forests and sandpiles
}
\author{Tom Hutchcroft\footnote{Statslab, DPMMS, University of Cambridge}
}
\crefname{theorem}{Theorem}{Theorems}
\crefname{thm}{Theorem}{Theorems}
\crefname{lemma}{Lemma}{Lemmas}
\crefname{lem}{Lemma}{Lemmas}
\crefname{remark}{Remark}{Remarks}
\crefname{prop}{Proposition}{Propositions}
\crefname{proposition}{Proposition}{Propositions}
\crefname{defn}{Definition}{Definitions}
\crefname{corollary}{Corollary}{Corollaries}
\crefname{conjecture}{Conjecture}{Conjectures}
\crefname{question}{Question}{Questions}
\crefname{chapter}{Chapter}{Chapters}
\crefname{section}{Section}{Sections}
\crefname{figure}{Figure}{Figures}
\theoremstyle{plain}
\newtheorem{thm}{Theorem}[section]
\newtheorem{theorem}[thm]{Theorem}
\newtheorem{lemma}[thm]{Lemma}
\newtheorem{corollary}[thm]{Corollary}
\newtheorem{prop}[thm]{Proposition}
\newtheorem{proposition}[thm]{Proposition}
\theoremstyle{definition}
\theoremstyle{remark}
\newtheorem{remark}[thm]{Remark}
\numberwithin{equation}{section}
\renewcommand{\P}{\mathbb P}
\newcommand{\E}{\mathbb E}
\newcommand{\R}{\mathbb R}
\newcommand{\Z}{\mathbb Z}
\newcommand{\N}{\mathbb N}
\newcommand{\F}{\mathfrak F}
\newcommand{\cI}{\mathcal I}
\newcommand{\cW}{\mathcal W}
\newcommand{\sA}{\mathscr A}
\newcommand{\sB}{\mathscr B}
\newcommand{\sC}{\mathscr C}
\newcommand{\sD}{\mathscr D}
\newcommand{\sI}{\mathscr I}
\newcommand{\fB}{\mathfrak B}
\newcommand{\fF}{\mathfrak F}
\newcommand{\fP}{\mathfrak P}
\newcommand{\fT}{\mathfrak T}
\newcommand{\WUSF}{\mathsf{WUSF}}
\newcommand{\UST}{\mathsf{UST}}
\newcommand{\LE}{\mathsf{LE}}
\newcommand{\Ceff}{\sC_{\mathrm{eff}}}
\newcommand{\eps}{\varepsilon}
\newcommand{\AB}{\mathsf{AB}}
\renewcommand{\Cap}{\mathrm{Cap}}
\newcommand{\past}{\mathrm{past}}
\newcommand{\diam}{\mathrm{diam}}
\newcommand{\ext}{\mathrm{ext}}
\newcommand{\inte}{\mathrm{int}}
\newcommand{\bP}{\mathbf P}
\newcommand{\bE}{\mathbf E}
\newcommand{\Reff}{\mathscr{R}_\mathrm{eff}}
\renewcommand{\Ceff}{\mathscr{C}_\mathrm{eff}}
\newcommand{\Var}{\operatorname{Var}}
\newcommand{\pstar}[1]{\|P^{#1}\|_{1\to\infty}}
\newcommand{\bubnorm}[1]{\|{#1}\|_{\mathrm{bub}}}
\newcommand{\preb}{\preceq}
\newcommand{\Eta}{\mathrm{H}}
\newcommand{\Av}{\operatorname{Av}}
\newcommand{\AvC}{\operatorname{AvC}}
\newcommand{\myfrac}[3][0pt]{\genfrac{}{}{}{}{\raisebox{#1}{$#2$}}{\raisebox{-#1}{$#3$}}}
\begin{document}

\newgeometry{margin=1.2in}

\maketitle

\begin{abstract}
\setstretch{1}
 We prove that the wired uniform spanning forest exhibits mean-field behaviour on a very large class of graphs, including every transitive graph of at least quintic volume growth and every bounded degree nonamenable graph. Several of our results are new even in the case of $\Z^d$, $d\geq 5$.
In particular, we prove that every tree in the forest has  spectral dimension $4/3$ and walk dimension $3$ almost surely, and that the critical exponents governing 
the 
intrinsic diameter and volume of the past of a vertex in the forest are $1$ and $1/2$ respectively. (The past of a vertex in the uniform spanning forest is the union of the vertex and the finite components that are disconnected from infinity  when that vertex is deleted from the forest.)
 We obtain as a corollary that the critical exponent governing the \emph{extrinsic} diameter of the past is $2$ on any transitive graph of at least five dimensional polynomial growth, and is $1$ on any bounded degree nonamenable graph.
   We deduce that the critical exponents describing the diameter and total number of topplings in an avalanche in the Abelian sandpile model are $2$ and $1/2$ respectively for any transitive graph with polynomial growth of dimension at least five, and are $1$ and $1/2$ respectively for any bounded degree nonamenable graph.

   In the case of $\Z^d$, $d\geq 5$, some of our results regarding critical exponents recover earlier results of Bhupatiraju, Hanson, and J\'arai (\emph{Electron.\ J.\ Probab.\ Volume 22 (2017), no.\ 85}). In this case, we improve upon their results by showing that the tail probabilities in question are described by the appropriate power laws to within constant-order multiplicative errors, rather than the polylogarithmic-order multiplicative errors present in that work.
\end{abstract}


\setstretch{1.1}

\newpage

\tableofcontents

\newgeometry{margin=1.08in}

\newpage

\section{Introduction}

The \textbf{uniform spanning forests} (USFs) of an infinite graph $G$ are defined 
as weak limits of the uniform spanning trees of finite subgraphs of $G$. These limits can be taken with respect to two extremal boundary conditions, yielding the \textbf{free uniform spanning forest} (FUSF) and \textbf{wired uniform spanning forest} (WUSF).  For transitive amenable graphs such as the hypercubic lattice $\Z^d$, the free and wired forests coincide and we speak simply of the USF. 
In this paper we shall be concerned exclusively with the wired forest.
Uniform spanning forests have played a central role in the development of probability theory over the last twenty years, and are closely related to several other topics in probability and statistical mechanics including electrical networks \cite{kirchhoff1847ueber,BurPe93,BLPS}, loop-erased random walk \cite{Lawler80,Wilson96,BLPS}, the random cluster model \cite{GrimFKbook,Hagg95}, domino tiling \cite{BurPe93,Ken00}, random interlacements \cite{Szni10,hutchcroft2015interlacements}, conformally invariant scaling limits \cite{Schramm00,LaSchWe04,MR3719057}, and the Abelian sandpile model \cite{Dhar90,MajDhar92,JarRed08,JarWer14}. Indeed, our results have important implications for the Abelian sandpile model, which we discuss in \cref{subsec:sandpileintro}.

%

Following the work of many authors, the basic qualitative features of the WUSF are firmly understood on a wide variety of graphs. In particular, it is known that every tree in the WUSF is recurrent almost surely on any graph \cite{Morris03}, that the WUSF is connected a.s.\ if and only if two independent random walks on $G$ intersect almost surely \cite{Pem91,BLPS}, and that every tree in the WUSF is one-ended almost surely whenever $G$ is in one of several large classes of graphs \cite{Pem91,BLPS,LMS08,H15,AL07,HutNach15b} including all transient transitive graphs. (An infinite tree is one-ended if it does not contain a simple bi-infinite path.)

The goal of this paper is to understand the geometry of trees in the WUSF at a more detailed, quantitative level, under the assumption that the underlying graph is high-dimensional in a certain sense. 
 Our results can be summarized informally as follows. 
 Let $G$ be a connected graph with bounded degrees, and suppose that the $n$-step return probabilities $p_n(v,v)$ for (discrete-time) random walk on $G$ satisfy $\sum_{n\geq 1} n \sup_{v\in V} p_n(v,v)<\infty$. In particular, this holds for $\Z^d$ if and only if $d \geq 5$, and more generally for any transitive graph of at least quintic volume growth. 
 Let $\F$ be the WUSF of $G$, and let $v$ be a vertex of $G$. The \textbf{past}\footnote{This terminology arises from the definition of the past in terms of an oriented version of $\F$, see \cref{subsec:introintrinsic}.} of $v$ in $\F$ is the union of $v$ with the finite connected components of $\F \setminus \{v\}$. The following hold:
\begin{enumerate}[leftmargin=*]
\itemsep0.5em
	\item
	\emph{The 
	   intrinsic geometry} of each tree in $\F$ is similar at large scales to that of a critical Galton-Watson tree with finite variance offspring distribution,  conditioned to survive forever. In particular, every tree has volume growth dimension $2$ (with respect to its intrinsic graph metric), spectral dimension $4/3$, and walk dimension $3$ almost surely. The latter two statements mean that the $n$-step return probabilities for simple random walk on the tree decay like $n^{-2/3+o(1)}$, and that the typical displacement of the walk (as measured by the intrinsic graph distance in the tree) is $n^{1/3+o(1)}$.
  These are known as the \emph{Alexander-Orbach} values of these dimensions~\cite{AlexOrb1982,KozNach09}.
\item \emph{The intrinsic geometry of the past} of $v$ in $\F$ is similar in law to that of an \emph{unconditioned} critical Galton-Watson tree with finite variance offspring distribution. In particular, the probability that the past contains a path of length at least $n$ is of order $n^{-1}$, and the probability that the past contains more than $n$ points is of order $n^{-1/2}$.  That is, the \emph{intrinsic diameter exponent} and \emph{volume exponent} are $1$ and $1/2$ respectively.


\item[3.] \emph{The extrinsic geometry of the past} of $v$ in $\F$ is similar in law to that of an \emph{unconditioned} critical branching random walk on $G$ with finite variance offspring distribution. In particular, the probability that the past of $v$ includes a vertex at extrinsic distance at least $n$ from $v$ depends on the rate of escape of the random walk on $G$. For example, it is of order $n^{-2}$ for $G=\Z^d$ for $d\geq 5$  and is of order $n^{-1}$ for $G$ a transitive nonamenable graph. This is related to the fact that the random walk on the ambient graph $G$ is diffusive in the former case and ballistic in the latter case.
 \end{enumerate}

All of these results apply more generally to \emph{networks} (a.k.a.\ weighted graphs); see the remainder of the introduction for details.

In light of the connections between the WUSF and the Abelian sandpile model,  these results imply related results for that model, to the effect that an avalanche in the Abelian sandpile model has a similar distribution to a critical branching random walk (see \cref{subsec:sandpileintro}). 
  Precise statements of our results and further background are given in the remainder of the introduction.

The fact that our results apply at such a high level of generality is a strong vindication of \emph{universality} for high-dimensional spanning trees and sandpiles, which predicts that the large-scale behaviour of these models should depend only on the dimension, and in particular should be insensitive to the microscopic structure of the lattice. In particular, our results apply not only to $\Z^d$ for $d\geq 5$, but also to non-transitive networks that are similar to $\Z^d$ such as the half-space $\Z^{d-1}\times \N$ or, say, $\Z^d$ with variable edge conductances bounded between two positive constants. Many of our results also apply to long-range spanning forest models on $\Z^d$ such as those associated with the fractional Laplacian $-(-\Delta)^\beta$ of $\Z^d$ for $d\geq 1$, $\beta\in(0,d/4 \wedge 1)$. Long-range models such as these are motivated physically as a route towards understanding low-dimensional models via the $\eps$-expansion \cite{wilson1972critical}, for which it is desirable to think of the dimension as a continuous parameter. (See the introduction of \cite{Slade2017} for an account of the $\eps$-expansion for mathematicians.)
\medskip

\noindent \textbf{About the proofs.}
  Our proof relies on the interplay between two different ways of sampling the WUSF. The first of these is \emph{Wilson's algorithm}, a method of sampling the WUSF by joining together loop-erased random walks which was introduced by David Wilson \cite{Wilson96} and extended to infinite transient graphs by Benjamini, Lyons, Peres, and Schramm \cite{BLPS}.  The second is the \emph{interlacement Aldous-Broder algorithm}, a method of sampling the WUSF as the set of first-entry edges of Sznitman's \emph{random interlacement process} \cite{Szni10}. This algorithm was introduced in the author's recent work \cite{hutchcroft2015interlacements} and extends the classical Aldous-Broder algorithm  \cite{Aldous90,broder1989generating} to infinite transient graphs. Generally speaking, it seems that Wilson's algorithm is the better tool for estimating the moments of random variables associated with the WUSF, while the interlacement Aldous-Broder algorithm is the better tool for estimating tail probabilities. 

  A key feature of the interlacement Aldous-Broder algorithm is that it enables us to think of the WUSF as the stationary measure of a natural continuous-time Markov chain. Moreover, the past of the origin evolves in an easily-understood way under these Markovian  dynamics. In particular, as we run time backwards, the past of the origin gets monotonically smaller except possibly for those times at which the origin is visited by an interlacement trajectory. Indeed, the central insight in the proof of our results is that \emph{static} tail events (on which the past of the origin is large) can be related to to \emph{dynamic} tail events (on which the origin is hit by an interlacement trajectory at a small time). Roughly speaking, we show that these two types of tail event tend to occur together, and consequently have comparable probabilities. We make this intuition precise using inductive inequalities similar to those used to analyze one-arm probabilities in high-dimensional percolation \cite{KozNach09,MR2748397,MR3418547}. 

  Once the critical exponent results are in place, the results concerning the simple random walk on the trees can be proven rather straightforwardly using the results and techniques of Barlow, J\'arai, Kumagai, and Slade \cite{BJKS08}.

\subsection{Relation to other work}
\begin{itemize}
 \item When $G$ is a regular tree of degree $k\geq 3$, the components of the WUSF are distributed exactly as augmented critical binomial Galton-Watson trees conditioned to survive forever, and in this case all of our results are classical \cite{kestensubdiff,MR1349164,BarKum06}.

 \item In the case of $\Z^d$ for $d\geq 5$, Barlow and J\'arai \cite{barlow2016geometry} established that the trees in the WUSF have quadratic volume growth almost surely. Our proof of quadratic volume growth uses similar methods to theirs, which were in turn inspired by related methods in percolation due to Aizenman and Newman \cite{MR762034}. 

 \item Also in the case of $\Z^d$ for $d\geq 5$, Bhupatiraju, Hanson, and J\'arai \cite{bhupatiraju2016inequalities} followed the strategy of an unpublished proof of Lyons, Morris, and Schramm \cite{LMS08} to prove that the probability that the past reaches extrinsic distance $n$ is $n^{-2} \log^{O(1)}n$ and that the probability that the past has volume $n$ is $n^{-1/2} \log^{O(1)} n$. Our results improve upon theirs in this case by reducing the error from polylogarithmic to constant order. Moreover, their proof relies heavily on transitivity and cannot be used to derive universal results of the kind we prove here.

 \item Peres and Revelle \cite{peres2004scaling} proved that the USTs of large $d$-dimensional tori converge under rescaling (with respect to the Gromov-weak topology) to Aldous's continuum random tree when $d\geq 5$. They also proved that their result extends to other sequences of finite transitive graphs satisfying a heat-kernel upper bound similar to the one we assume here. Later, Schweinsberg \cite{schweinsberg2009loop} established a similar result for four-dimensional tori. Related results concerning loop-erased random walk on high-dimensional tori had previosuly been proven by Benjamini and Kozma \cite{MR2172682}. While these results are closely related in spirit to those that we prove here, it does not seem that either can be deduced from the other.

\item For planar Euclidean lattices such as $\Z^2$, the UST is very well understood thanks in part to its connections to conformally invariant processes in the continuum \cite{Ken00,Schramm00,LaSchWe04,Masson09,MR3719057}. In particular, Barlow and Masson \cite{BarMass10,BarMass11} proved that the UST of $\Z^2$ has volume growth dimension $8/5$ and spectral dimension $16/13$ almost surely.  See also \cite{BCK17} for more refined results.

 \item
 In \cite{HutNach15b}, the author and Nachmias established that the WUSF of any transient proper plane graph with bounded degrees and codegrees has mean-field critical exponents provided that measurements are made using the hyperbolic geometry of the graph's circle packing rather than its usual combinatorial geometry. Our results recover those of \cite{HutNach15b} in the case that the graph in question is also uniformly transient, in which case it is nonamenable and the graph distances and hyperbolic distances are comparable. 

 \item A consequence of this paper is that several properties of the WUSF are insensitive to the geometry of the graph once the dimension is sufficiently large. In contrast, the theory developed in \cite{BeKePeSc04,hutchcroft2017component} shows that some other properties describing the adjacency structure of the trees in the forest continue to undergo qualitative changes every time the dimension increases.

 \item In forthcoming work with Sousi, we build upon the methods of this paper to analyze related problems concerning the uniform spanning tree in $\Z^3$ and $\Z^4$.
\end{itemize} 

\subsection{Basic definitions}

In this paper, a \textbf{network} will be a connected graph $G=(V,E)$ (possibly containing loops and multiple edges) together with a function $c:E \to (0,\infty)$ assigning a positive \textbf{conductance} $c(e)$ to each edge $e \in E$ such that for each vertex $v\in V$, the \textbf{vertex conductance} $c(v):= \sum c(e)< \infty$, taken over edges incident to $v$, is finite. 
We say that the network $G$ has \textbf{controlled stationary measure} if there exists a positive constant $C$ such that $C^{-1} \leq c(v) \leq C$ for every vertex $v$ of $G$. Locally finite graphs can always be considered as networks by setting $c(e) \equiv 1$, and in this case have controlled stationary measure if and only if they have bounded degrees. We write $E^\rightarrow$ for the set of oriented edges of a network. An oriented edge $e$ is oriented from its tail $e^-$ to its head $e^+$ and has reversal $-e$.

 The \textbf{random walk} on a network $G$ is the process that, at each time step, chooses an edge emanating from its current position with probability proportional to its conductance, independently of everything it has done so far to reach its current position, and then traverses that edge.
We use $\mathbf{P}_v$ to denote the law of the random walk started at a vertex $v$, $\mathbf{E}_v$ to denote the associated expectation operator, and $P$ to denote the Markov operator $P:\ell^2(V,c) \to \ell^2(V,c)$, defined by
\[ (Pf)(v) = \mathbf{E}_v f(X_1) = \sum_{e^-=v} \frac{c(e)}{c(v)}f(e^+),\]
where $\ell^2(V,c)$ is the space of functions $f:V \to \R$ such that $\sum_v f(v)^2c(v) <\infty$.
Finally, for each two vertices $u$ and $v$ of $G$ and $n\geq 0$, we write $p_n(u,v) = \mathbf{P}_u(X_n = v)$ for the probability that a random walk started at $u$ is at $v$ at time $n$.

Given a graph or network $G$, a \textbf{spanning tree} of $G$ is a connected subgraph of $G$ that contains every vertex of $G$ and does not contain any cycles. 
The \textbf{uniform spanning tree} measure $\mathsf{UST}_G$ on a finite connected graph $G=(V,E)$ is the probability measure on subgraphs of $G$ (considered as elements of $E^{\{0,1\}}$) that assigns equal mass to each spanning tree of $G$. If $G$ is a finite connected \emph{network}, then the 
 uniform spanning tree measure $\UST_G$
  of $G$ 
 is defined so that the mass of each tree is proportional to the product of the conductances of the edges it contains. That is,
\[\UST_G\left(\{\omega\}\right) = Z^{-1}_G \prod_{e\in \omega} c(e) \mathbbm{1}\left(\omega \text{ is a spanning tree of $G$}\right)\]
for every $\omega \in \{0,1\}^E$,  
where $Z_G$ is a normalizing constant. 

Let $G$ be an infinite network and let $\langle V_n \rangle_{n \geq1}$ be an \textbf{exhaustion} of $G$, that is, an increasing sequence of finite sets $V_n \subset V$ such that $\bigcup_{n\geq1} V_n = V$. 
For each $n \geq 1$, let $G^*_n$ be the network obtained from $G_n$ by contracting every vertex in $V \setminus V_n$ into a single vertex, denoted $\partial_n$, and deleting all the resulting self-loops from $\partial_n$. The \textbf{wired uniform spanning forest} measure on $G$, denoted $\WUSF_G$, is defined to be the weak limit of the uniform spanning tree measures on the finite networks $G_n^*$. That is,
\[\WUSF_G\left(\{\omega : S \subset \omega \}\right) = \lim_{n\to\infty}\UST_{G^*_n}\left(\{\omega : S \subset \omega\}\right)\]
for every finite set $S \subseteq E$. It follows from the work of Pemantle \cite{Pem91} that this limit exists and does not depend on the choice of exhaustion; See also \cite[Chapter 10]{LP:book}.

In the limiting construction above, one can also orient the uniform spanning tree of $G_n^*$ towards the boundary vertex $\partial_n$, so that every vertex other than $\partial_n$ has exactly one oriented edge emanating from it in the spanning tree. If $G$ is transient, then the sequence of laws of these random oriented spanning trees converge weakly to the law of a random oriented spanning forest of $G$, which is known as the \textbf{oriented wired uniform spanning forest}, and from which we can recover the usual (unoriented) WUSF by forgetting the orientation. (This assertion follows from the proof of \cite[Theorem 5.1]{BLPS}.) It is easily seen that the oriented wired uniform spanning forest of $G$ is almost surely an \textbf{oriented essential spanning forest} of $G$, that is, an oriented spanning forest of $G$ such that every vertex of $G$ has exactly one oriented edge emanating from it in the forest (from which it follows that every tree is infinite).

\subsection{Intrinsic exponents}
\label{subsec:introintrinsic}
Let $\F$ be an oriented essential spanning forest of an infinite graph $G$. 
 We define the \textbf{past} of a vertex $v$ in $\F$, denoted $\fP(v)$, 
 to be the subgraph of $\F$ induced by the set of vertices $u$ of $\F$ such that every edge in the geodesic from $u$ to $v$ in $\F$ is oriented in the direction of $v$, where we also consider $v$ to be included in this set. (By abuse of notation, we will also use $\fP(v)$ to mean the vertex set of this subgraph.) Thus, a component of $\F$ is one-ended if and only if the past of each of its vertices is finite. The \textbf{future} of a vertex $v$ is denoted by $\Gamma(v,\infty)$ and is defined to be the set of vertices $u$ such that $v$ is in the past of $u$. 

In order to quantify the one-endedness of the WUSF, it is interesting to estimate the probability that the past of a vertex is large in various senses. Perhaps the three most natural such measures of largeness are given by the \emph{intrinsic diameter}, \emph{extrinsic diameter}, and \emph{volume} of the past. 
 Here, given a subgraph $K$ of a graph $G$, we define the \textbf{extrinsic diameter} of $K$, denoted $\diam_\ext(K)$, to be the supremal graph distance in $G$ between two points in $K$, and define the \textbf{intrinsic diameter} of $K$, denoted $\diam_\inte(K)$, to be the diameter of $K$. The \textbf{volume} of $K$, denoted $|K|$, is defined to be the number of vertices in $K$.

Generally speaking, for critical statistical mechanics models defined on Euclidean lattices such as $\Z^d$, many natural random variables arising geometrically from the model are expected to have power law  tails. The exponents governing these tails are referred to as \textbf{critical exponents}. 
For example, if $\F$ is the USF of $\Z^d$, we expect that for each $d\geq 2$ there exists $\alpha_d$ such that
\[ \P\left(\diam_\ext\left(\fP\left(0\right)\right) \geq R \right) = R^{-\alpha_d +o(1)},\]
in which case we call $\alpha_d$ the \emph{extrinsic diameter exponent} for the USF of $\Z^d$. 
Calculating and proving the existence of critical exponents is considered a central problem in probability theory and mathematical statistical mechanics.

 It is also expected that each model has an \textbf{upper-critical dimension}, denoted $d_c$, above which the critical exponents of the model stabilize at their so-called \emph{mean-field} values. For the uniform spanning forest, the upper critical dimension is believed to be four. Intuitively, above the upper critical dimension the lattice is spacious enough that different parts of the model do not interact with each other very much. This causes the model to behave similarly to how it behaves on, say, the $3$-regular tree or the complete graph, both of which have a rather trivial geometric structure.
Below the upper critical dimension, the geometry of the lattice affects the model in a non-trivial way, and the critical exponents are expected to differ from their mean-field values.  
 The upper critical dimension itself (which need not necessarily be an integer) is often characterised by the mean-field exponents holding up to a polylogarithmic multiplicative correction, which is not expected to be present in other dimensions. 

 For example, we expect that there exist constants $\gamma_2,\gamma_3,\gamma_{\mathrm{mf}}$ and $\delta$ such that
 \begin{align*} 
 \P\left(|\fP(0)| \geq R\right) \asymp
  \begin{cases} 
R^{-\gamma_2} & d=2\\
R^{-\gamma_3} & d=3\\
R^{-\gamma_{\mathrm{mf}}} \log^{-\delta} R & d=4\\
R^{-\gamma_{\mathrm{mf}}} & d\geq 5,
  \end{cases}
 \end{align*}
 where $\F$ is the uniform spanning forest of $\Z^d$ and $\asymp$ denotes an equality that holds up to positive multiplicative constants. Moreover, the values of the exponents $\gamma_2,\gamma_3,\gamma_{\mathrm{mf}},$ and $\delta$ should depend only on the dimension $d$, and not on the choice of lattice; this predicted insensitivity to the microscopic structure of the lattice is an instance of the phenomenon of \emph{universality}. 

Our first main result verifies the high-dimensional part of this picture for the intrinsic exponents. The corresponding results for the extrinsic diameter exponent are given in \cref{subsec:introextrinsic}.
Before stating our results,
 let us give some further definitions and motivation. 
For many models, an important signifier of mean-field behaviour is that certain \emph{diagrammatic sums} associated with the model are convergent. Examples include the \emph{bubble diagram} for self-avoiding walk and the Ising model, the \emph{triangle diagram} for percolation, and the \emph{square diagram} for lattices trees and animals; see e.g.\ \cite{MR2239599} for an overview. 
For the WUSF, the relevant signifier of mean-field behaviour on a network $G$ is the convergence of the \emph{random walk bubble diagram}
\[\sum_{x} \bigg(\sum_{n\geq 0} p_n(v,x)\bigg)^2,\] 
 where $p_n(\cdot,\cdot)$ denotes the $n$-step transition probabilities for simple random walk on $G$ and $v$ is a fixed root vertex. 
 Note that the value of the bubble diagram is exactly the expected number of times that two independent simple random walks started at the origin intersect. 
Using time-reversal, the convergence of the bubble diagram of a network with controlled stationary measure is equivalent to the convergence of the sum
\begin{equation*}
 \sum_{n \geq0} (n+1) p_n(v,v). \end{equation*}
It is characteristic of the upper-critical dimension $d_c$ that the bubble diagram converges for all $d>d_c$, while at the upper-critical dimension itself we expect that the bubble diagram diverges logarithmically, as indeed is the case in our setting.

Our condition for mean-field behaviour of the 
WUSF will be that the bubble diagram converges uniformly in a certain sense. 
Let $G=(V,E)$ be a network, let $P$ be the Markov operator of $G$, and
  let $\|P^n\|_{1\to\infty}$ be the $1\to\infty$ norm of $P^n$, defined by
\[\|P^n\|_{1\to\infty} = \sup_{u,v\in V}p_n(u,v) .\]
We define the \textbf{bubble norm} 
 of $P$ to be
\[\bubnorm{P} := \sum_{n=0}^\infty (n+1) \pstar{n}.  \]
Thus, for transitive networks, $\bubnorm{P}<\infty$ is equivalent to convergence of the random walk bubble diagram. Here, a network is said to be \textbf{transitive} if for every two vertices $u,v\in V$ there exists a conductance-preserving graph automorphism mapping $u$ to $v$. Throughout the paper, we use $\asymp, \preceq$ and $\succeq$ to denote equalities and inequalities that hold to within multiplication by two positive constants depending only on the choice of network.
 
\vspace{0.1em}

\begin{theorem}[Mean-field intrinsic exponents]
	\label{thm:transitivemain}
	Let $G$ be a transitive network such that $\bubnorm{P} < \infty$,
	  and let $\F$ be the wired uniform spanning forest of $G$. 
	 Then
	\vspace{0.25em}
	\[ 
		\vspace{0.25em}
		\P\left(\diam_{\mathrm{int}}(\fP(v)) \geq R \right)  \asymp R^{-1} 
		\quad \text{ and } \quad
		\P\left(|\fP(v)| \geq R \right) \asymp R^{-1/2}
	\]
	for every vertex $v$ and every $R\geq 1$. In particular, the critical exponents governing the intrinsic diameter and volume of the past are $1$ and $1/2$ respectively. 
\end{theorem}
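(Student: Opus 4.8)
The plan is to reduce both displayed estimates to two-sided control of the \emph{intrinsic one-arm probability}
\[P(R):=\P\!\left(\diam_\inte(\fP(v))\geq R\right),\]
which contains the genuinely new content. Write $L_j$ for the number of vertices of $\fP(v)$ at intrinsic distance exactly $j$ from $v$ (the ``$j$th generation of the past'') and set $N_R:=\sum_{j=R}^{2R}L_j$. The proof rests on three inputs: (a) $\E L_j\asymp 1$ uniformly in $j\geq 1$; (b) $\E\bigl[\bigl(\sum_{j\leq K}L_j\bigr)^2\bigr]\preceq K^3$ uniformly in $K$; and (c) the upper bound $P(R)\preceq R^{-1}$. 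Inputs (a) and (b) are moment computations for which Wilson's algorithm rooted at infinity is the natural tool; (c) is the main obstacle, and is where the interlacement Aldous--Broder algorithm is needed. Granting these, the lower bound $P(R)\succeq R^{-1}$ is immediate: since $\{N_R\geq 1\}\subseteq\{\diam_\inte(\fP(v))\geq R\}$, the Cauchy--Schwarz (second moment) inequality gives $P(R)\geq\P(N_R\geq 1)\geq(\E N_R)^2/\E[N_R^2]\succeq R^2/R^3=R^{-1}$, using $\E N_R\asymp R$ from (a) and $\E[N_R^2]\preceq R^3$ from (b). The volume upper bound follows by splitting on the intrinsic diameter at scale $\sqrt R$: on the event $\{\diam_\inte(\fP(v))<\sqrt R\}$ every vertex of $\fP(v)$ lies within intrinsic distance $\sqrt R+1$ of $v$, so $\P(|\fP(v)|\geq R)\leq P(\sqrt R)+\P\bigl(\sum_{j\leq \sqrt R+1}L_j\geq R\bigr)\preceq R^{-1/2}+R^{-1}\sum_{j\leq \sqrt R+1}\E L_j\preceq R^{-1/2}$ by (c), Markov's inequality and (a). Finally, the volume lower bound follows from the complementary fact that, conditionally on the event $\{\diam_\inte(\fP(v))\geq\sqrt R\}$ of probability $\asymp R^{-1/2}$, the past contains at least $cR$ vertices with probability bounded below --- a second-moment estimate of the same flavour as (b) carried out inside this conditioning --- so that $\P(|\fP(v)|\geq cR)\succeq R^{-1/2}$.

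For input (a), generating $\F$ by Wilson's algorithm started from $v$ realises the spine of $v$ as a loop-erased random walk to infinity, and a mass-transport argument identifies $\E L_j=\E\bigl[\#\{u:v\in\mathrm{spine}(u),\ d_\F(u,v)=j\}\bigr]$ with $\E\bigl[\#\{u\in\mathrm{spine}(v):d_\F(v,u)=j\}\bigr]$, which equals $1$ because the spine of $v$ is a.s.\ infinite; this yields $\E L_j=1$ for unimodular transitive networks and $\E L_j\asymp 1$ in general. For input (b) I would decompose the event $\{u,w\in\fP(v)\}$ according to the branch vertex $z$ at which the spines of $u$ and $w$ coalesce, producing a ``$Y$-shaped'' configuration made of a loop-erased walk from $u$ to $z$, one from $w$ to $z$, and one from $z$ to $v$. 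Summing over $u$ and $w$ subject to the length constraints $d_\F(u,v),d_\F(w,v)\leq 2K$ contributes a factor $\asymp K$ from each of these three legs by input (a), while the sum over the branch vertex $z$ is controlled by the finiteness of the random walk bubble $\sum_z G(v,z)^2\asymp\bubnorm{P}$ (finite by hypothesis, via the time-reversal identity recorded in the introduction). This gives $\E\bigl[\bigl(\sum_{j\leq K}L_j\bigr)^2\bigr]\preceq K^3$.

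The heart of the matter is input (c). Here the interlacement Aldous--Broder algorithm realises the WUSF as the stationary distribution of a continuous-time Markov chain $(\F_t)_{t\in\R}$, under which the past $\fP_t(v)$ of the fixed vertex $v$ is \emph{eroded} --- it decreases monotonically --- except at the discrete times at which a fresh interlacement trajectory visits $v$, at which instants the first-entry edge at $v$ is overwritten and the past may jump. The key idea, which I would make precise, is that the \emph{static} event $\{\diam_\inte(\fP(v))\geq R\}$ is comparable in probability to the \emph{dynamic} event that $v$ is visited by an interlacement trajectory within time of order $R$ and that the erosion taking place in the intervening interval has not already destroyed the long intrinsic path. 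Quantifying this comparison through the Markov property yields an inductive inequality of the kind used to analyse one-arm probabilities in high-dimensional percolation (cf.\ \cite{KozNach09,MR2748397,MR3418547}) --- morally $P(R+\Delta R)\leq(1-c\,\Delta R/R)P(R)$ up to lower-order errors, i.e.\ that $P(R)^{-1}$ grows at least linearly --- where the constant $c$ comes from input (a) (a tall past sheds a macroscopic block of its deepest generations in one erosion step with probability bounded below) and the error terms are controlled using (a), (b), and the uniform transience guaranteed by $\bubnorm{P}<\infty$. Iterating from scale $1$ to scale $R$ gives $P(R)\preceq R^{-1}$.

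The step that will require the most care, and the main obstacle, is precisely this inductive inequality: setting up the static--dynamic correspondence exactly, identifying the correct Markovian erosion step, and closing the recursion by checking that the error terms are genuinely of lower order. Once $P(R)\asymp R^{-1}$ and the volume estimate are established, the Alexander--Orbach statements about random walk on the trees (which play no role in the present theorem) follow from the resistance-estimate method of \cite{BJKS08}.
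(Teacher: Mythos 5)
Your architecture is essentially the paper's: the two displayed estimates are indeed reduced to the one-arm upper bound $P(R)\preceq R^{-1}$, which is proved by exactly the static--dynamic comparison you describe (the paper's inductive inequality is the multiscale version $Q(n)\leq Cn^{-1}+\tfrac16 Q(n/3)$ for the $v$-wired forest, iterated to give $Q(n)\preceq n^{-1}$), and the volume bounds then follow from first/second moments of generation sizes by Markov and a conditional Paley--Zygmund argument, just as you outline. Your variations elsewhere are genuine but minor and all workable: you get the diameter lower bound by a second-moment estimate on $N_R$ where the paper instead constructs the long path directly from a single interlacement trajectory (Proposition \ref{prop:intlower}) --- the paper's route also yields the sharper non-transitive statement with the factor $q(v)$ rather than $q(v)^2$; you get $\E L_j\asymp 1$ by mass transport where the paper uses time reversal (equivalent for unimodular transitive networks, and the time-reversal version is what survives in the general setting); and you get the second moment by a tree-graph decomposition at the coalescence point where the paper uses an inductive conditioning argument (Lemma \ref{lem:ball2ndmoment}) --- note that in your accounting the sum over the branch point $z$ is already one of the three factors of $K$ (it is $\E|\fP(v,K)|\preceq K$), not a separate $O(1)$ bubble factor; the bubble condition enters in proving the first-moment bound itself.

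The one concrete gap is in your sketch of input (c). To make the erosion step work you need that, conditional on $v$ surviving to intrinsic distance $n$, the tree-path from a vertex $u$ in the middle third of the witnessing geodesic back to $v$ is hit by the interlacement process run for time $\eps$ with probability close to $1$; this probability is $1-e^{-\eps\Cap(\gamma)}$, so the argument requires $\Cap(\gamma)\succeq n$ with high probability, not merely uniform transience (which only gives $\Cap(\gamma)\succeq 1$ and hence an erosion probability of order $\eps$, far too weak to close the recursion with $\eps\asymp 1/n$). This capacity lower bound for the loop-erased walk is a separate quantitative ingredient, not derivable from your inputs (a) and (b): the paper devotes Section \ref{sec:LERWCap} to it, lower-bounding $\Cap(\LE(X^n))$ via the functional $\mathbf{I}(A)=\sum_{u,w\in A}c(u)c(w)\mathbf{G}(u,w)$ together with the bound $\rho_n\succeq n$ on the number of unerased points, and then Lemma \ref{lem:ln} converts it into a bound on the expected number of vertices in the annulus whose connecting path has small capacity. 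A second, more technical omission: the decoupling of the events ``$u$ survives a further $n/3$ generations'' and ``the path $\Gamma(u,v)$ avoids the interlacement'' is carried out for the $v$-wired forest using the stochastic domination of Lemma \ref{lem:domination}, which is why the induction runs over $Q(n)=\sup_v\P(\partial\fB_v(v,n)\neq\emptyset)$ rather than over $P(R)$ itself; your recursion should be set up at that level too.
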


For transitive \emph{graphs}, it follows from work of Hebisch and Saloff-Coste \cite{HebSaCo93} that $\bubnorm{P}<\infty$ if and only if the graph has at least quintic volume growth, i.e., if and only if there exists a constant $c$ such that the number of points in every graph distance ball of radius $n$ is at least $cn^5$ for every $n\geq 1$. 
Thus, by Gromov's theorem \cite{Gromov1981},  Trofimov's theorem \cite{Trofimov1984}, and the Bass-Guivarc'h formula \cite{Bass1972,Guiv1973}, the class of graphs treated by \cref{thm:transitivemain}  includes all transitive graphs not rough-isometric to $\Z,\Z^2,\Z^3,\Z^4$, or the discrete Heisenberg group. As mentioned above, the theorem also applies for example to long-ranged transitive networks with vertex set $\Z^d$, a single edge between every two vertices, and with translation-invariant conductances given up to positive multiplicative constants by
\[ c\left(\{x,y\}\right) =c\left(x-y\right) \asymp \| x-y\|^{-d-\alpha}
\vspace{0.5em}
\]
provided that either $1 \leq d \leq 4$ and $\alpha \in (0,d/2)$ or $d\geq 5$ and $\alpha>0$. The canonical example of such a network is that associated with the fractional Laplacian $-(-\Delta)^\beta$ of $\Z^d$ for $\beta\in(0,d/4 \wedge 1)$. (See \cite[Section 2]{Slade2017}.)

\medskip

The general form of our result is similar, but has an additional technical complication owing to the need to avoid trivialities that may arise from the local geometry of the network. 
Let $G$ be a network, let $v$ be a vertex of $G$, let $X$ and $Y$ be independent random walks started at $v$, and let $q(v)$ be the probability that $X$ and $Y$ never return to $v$ or intersect each other after time zero.

\begin{theorem}
	\label{thm:generalexponents}
	Let $G$ be a network with controlled stationary measure such that $\bubnorm{P}<\infty$, and let $\F$ be the wired uniform spanning forest of $G$. Then 
	\vspace{0.2em}
	\[ 
	\vspace{0.2em}
	 q(v)  R^{-1}  
			\preceq
		\P\Bigl(\diam_\mathrm{int}\bigl(\fP(v)\bigr) \geq R \Bigr)
			\preceq
		  R^{-1} 
	\]
	and
	\vspace{0.2em}
	\[
	\vspace{0.2em}
		 q(v)^{5/2}  R^{-1/2}
			\preceq
		\P\Bigl(\bigl|\fP(v)\bigr| \geq R \Bigr)
			\preceq
		  R^{-1/2}
	\]
	for all $v\in V$ and $R\geq1$.
\end{theorem}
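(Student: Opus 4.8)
The plan is to establish the two displays separately, treating the intrinsic diameter first and deducing the volume bound from it, and to use the two sampling algorithms for $\WUSF_G$ in complementary roles: Wilson's algorithm to compute first and second moments of the number of vertices of $\fP(v)$ at a given intrinsic distance from $v$, and the interlacement Aldous--Broder algorithm to convert these moment estimates into sharp tail bounds. Throughout, let $N_R=N_R(v)$ denote the number of vertices $u\in\fP(v)$ whose distance to $v$ in $\F$ equals $R$, so that $\{N_{\lceil R/2\rceil}>0\}$ and $\{\diam_\inte(\fP(v))\geq R\}$ differ only by a bounded change of scale, and $|\fP(v)|=\sum_{R\geq0}N_R$.

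\emph{Moments and the lower bounds.} Running Wilson's algorithm rooted at infinity from $u$, one has $u\in\fP(v)$ at intrinsic distance $R$ if and only if the loop-erasure of a random walk started at $u$ reaches $v$ in exactly $R$ steps; running the algorithm from two vertices gives a similar (more intricate) description of the joint event for a pair of vertices. These identities express $\E[N_R]$ and $\E[N_R^2]$ as sums of loop-erased-walk hitting probabilities, and comparing loop-erased walk with simple random walk reduces them to expressions controlled by the bubble norm: one expects $\E[N_R]\preceq 1$ and $\E[N_R^2]\preceq R$ for all $v$ and $R\geq1$, using $\bubnorm{P}<\infty$. The lower bounds cannot be obtained by simply reversing these inequalities, since when $v$ lies in an atypical region the unconditional mean $\E[N_R]$ may be much smaller than $1$; instead I would condition on a ``seed'' event of probability $\asymp q(v)$ --- roughly, that $v$ has a child $u$ in $\F$ and that the two-walk configuration from $v$ witnessing this is non-degenerate, quantified precisely by $q(v)$ --- and show that conditionally on the seed the sub-past below $u$ satisfies $\E[N_{R-1}\mid\text{seed}]\succeq1$ and $\E[N_{R-1}^2\mid\text{seed}]\preceq R$, so that a conditional Paley--Zygmund argument gives $\P(\diam_\inte(\fP(v))\geq R)\succeq q(v)\,R^{-1}$. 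For the volume lower bound one pays the non-degeneracy cost $q(v)$ twice over: once to grow the past to intrinsic depth of order $\sqrt R$ (probability $\succeq q(v)R^{-1/2}$, again by the conditional second moment method at scale $\sqrt R$), and once more to ensure that the tree, having reached that depth, is bushy enough to contain order $R$ vertices rather than being essentially a single path; this yields $\P(|\fP(v)|\geq R)\succeq q(v)^2R^{-1/2}$.

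\emph{The tail upper bounds.} The upper bound $\P(\diam_\inte(\fP(v))\geq R)\preceq R^{-1}$ is the crux, and cannot follow from first moments alone ($\E[|\fP(v)|]=\infty$). Here I would use the interlacement Aldous--Broder algorithm to realise $\WUSF_G$ as the stationary distribution of a continuous-time Markov process $(\F_t)_{t\in\R}$ with past $\fP_t(v)$, for which $t\mapsto\fP_t(v)$ is monotone decreasing as time runs backwards except at the discrete set of times at which $v$ is hit by an interlacement trajectory. This monotonicity lets one compare the \emph{static} event that $\fP_0(v)$ reaches intrinsic distance $R$ with a \emph{dynamic} arm event asserting that $v$ is hit by a trajectory at a small time together with a largeness event inside that trajectory's excursion; writing $\psi(R)$ for the probability of the dynamic event, I would show $\P(\diam_\inte(\fP(v))\geq R)\asymp\psi(R)$ and then prove $\psi(R)\preceq R^{-1}$ by a bootstrapping argument in the style of the inductive one-arm inequalities of high-dimensional percolation \cite{KozNach09,MR2748397,MR3418547}: decomposing the dynamic arm at an intermediate scale yields a recursive inequality, schematically $\psi(R)\leq CR^{-1}+C\sum_{R'}\psi(R')\psi(R-R')$ with interaction coefficients controlled by $\bubnorm{P}$, which allows the trivial a priori bound $\psi\leq1$ to be bootstrapped in finitely many steps up to the sharp $\psi(R)\preceq R^{-1}$. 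The volume upper bound then follows cheaply: $\{|\fP(v)|\geq R\}$ forces either $\diam_\inte(\fP(v))\geq\sqrt R$, of probability $\preceq R^{-1/2}$ by the above, or $\sum_{k<\sqrt R}N_k\geq R$, of probability $\preceq R^{-1}\sum_{k<\sqrt R}\E[N_k]\preceq R^{-1/2}$ by Markov's inequality and the uniform bound $\E[N_k]\preceq1$.

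\emph{Main obstacle.} The principal difficulty is the intrinsic-diameter upper bound: both establishing the static--dynamic correspondence $\P(\diam_\inte(\fP(v))\geq R)\asymp\psi(R)$ --- making rigorous the heuristic that ``$\fP_0(v)$ is large'' and ``$v$ is hit by an interlacement trajectory at a small time'' tend to occur together --- and setting up the recursive inequality for $\psi$ so that all of the interaction terms generated when the dynamic arm is decomposed are genuinely of smaller order, controlled by $\bubnorm{P}<\infty$, so that the bootstrap closes. This is where the precise description of how $\fP_t(v)$ evolves under the interlacement dynamics must be combined with estimates to the effect that two random-walk or loop-erased-walk pieces emanating from near $v$ rarely interact --- itself a manifestation of the finite bubble norm.
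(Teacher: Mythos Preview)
Your high-level plan matches the paper's (interlacements for the diameter upper bound, moments and Paley--Zygmund for the lower bounds, union bound for the volume upper bound), but the core step---the intrinsic-diameter upper bound---is set up incorrectly, and the key technical ingredient is missing.

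The paper does \emph{not} prove a convolution-type inequality $\psi(R)\leq CR^{-1}+C\sum_{R'}\psi(R')\psi(R-R')$. The actual recursion (\cref{lem:exponentsinduction}) is the single-scale \emph{linear} contraction
\[Q(n)\leq \frac{C}{n}+\frac{1}{6}\,Q\Bigl(\frac{n}{3}\Bigr),\]
where $Q(n)=\sup_v\P(\partial\fB_v(v,n)\neq\emptyset)$ is the $v$-WUSF survival probability; the WUSF bound then follows by stochastic domination (\cref{lem:domination}). The factor $1/6<1/3$ is what makes the induction close with no a priori input beyond $Q\leq 1$. It arises as follows: if the tree survives to distance $n$, take $u$ on the middle third of a geodesic to $v$; conditionally on the path $\Gamma_{v,0}(u,v)$, the continuation past $u$ is dominated by the $u$-WUSF and contributes $Q(n/3)$, while independently the probability that the $v$-wired interlacement does not hit $\Gamma_{v,0}(u,v)$ in a time window of length $\eps\asymp 1/n$ is at most $e^{-\eps\Cap(\Gamma_{v,0}(u,v))}$. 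The missing ingredient is therefore \cref{prop:capacities}: the loop-erasure of an $n$-step walk has capacity of order $n$ with high probability. This makes the non-hitting probability an arbitrarily small constant (hence the $1/6$), while the atypical low-capacity event is absorbed via \cref{lem:ln}. Your ``two walk pieces rarely interact'' heuristic is not the right input---the specific statement needed is linear capacity of LERW, and its proof occupies all of \cref{sec:LERWCap}.

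Two smaller differences worth noting. For the diameter lower bound the paper uses neither Wilson's algorithm nor a conditional Paley--Zygmund argument, but a direct interlacement construction (\cref{prop:intlower}): on the event that a single trajectory hits $v$ once in $[0,\eps]$ with its two halves disjoint (probability $\asymp \eps\, q(v)$), the first $R$ steps of the loop-erasure of its forward half lie in $\fP(v)$ provided no other trajectory hits them; optimizing $\eps\asymp 1/R$ gives $q(v)R^{-1}$ with the correct power of $q(v)$ immediately. For the volume lower bound the paper applies Paley--Zygmund once to $Z=|\fP(v,2n)\setminus\fP(v,n)|$, using $\E Z\succeq q(v)\,n$ (\cref{lem:expectedballgrowth2}) and $\E Z^2\preceq n^3$ (\cref{thm:moments}), which yields $\P(|\fP(v)|\geq cq(v)n^2)\succeq q(v)^2 n^{-1}$ in a single step rather than by a two-stage ``reach depth then be bushy'' argument.
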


The presence of $q(v)$ in the theorem is required, for example, in the case that we attach a vertex by a single edge to the origin of $\Z^d$, so that the past of this vertex in the USF is necessarily trivial. (The precise nature of the dependence on $q(v)$ has not been optimized.)
However, in any network $G$ with controlled stationary measure and with $\bubnorm{P}<\infty$, there exist positive constants $\eps$ and $r$ such that for every vertex $v$ in $G$, there exists a vertex $u$ within distance $r$ of $G$ such that $q(u)>\eps$ (\cref{lem:qpositivity}). In particular, if $G$ is a transitive network with $\bubnorm{P}<\infty$ then $q(v)$ is a positive constant, so that \cref{thm:transitivemain} follows from \cref{thm:generalexponents}. 

Let us note that \cref{thm:generalexponents} applies in particular to any bounded degree nonamenable graph, or more generally to any network with controlled stationary measures satisfying a $d$-dimensional isoperimetric inequality for some $d>4$, see \cite[Theorem 3.2.7]{KumagaiBook}. In particular, it applies to $\Z^d$, $d\geq 5$, with any specification of edge conductances bounded above and below by two positive constants (in which case it can also be shown that $q(v)$ is bounded below by a positive constant). A further example to which our results are applicable is given by taking $G=H^d$ where $d\geq 5$ and $H$ is \emph{any} infinite, bounded degree graph.


\subsection{Volume growth, spectral dimension, and anomalous diffusion}

The theorems concerning intrinsic exponents stated in the previous subsection also allow us to determine exponents describing the almost sure asymptotic geometry of the trees in the WUSF, and in particular allow us to compute the almost sure spectral dimension and walk dimension of the trees in the forest. 
See e.g.\ \cite{KumagaiBook} for background on these and related concepts.
Here, we always consider the trees of the WUSF as \emph{graphs}. 
One could instead consider the trees as networks with conductances inherited from $G$, and the same results would apply with minor modifications to the proofs. 

Let $G$ be an infinite, connected network and let $v$ be a vertex of $G$.
We define the \textbf{volume growth dimension} (a.k.a.\ \textbf{fractal dimension}) of $G$ to be
\begin{align*}
d_f(G) &:= \lim_{n\to\infty} \frac{\log |B(v,n)|}{\log n} &\qquad \text{ when this limit exists,}
\intertext{define the \textbf{spectral dimension} of $G$ to be}
d_s(G) &:= \lim_{n\to\infty} \frac{-2\log p_{2n}(v,v)}{\log n} &\qquad \text{ when this limit exists,}
\intertext{and define the \textbf{walk dimension}
 of $G$ to be}
d_w(G) &:= \lim_{n\to\infty} \frac{\log n}{\log \bE_v \max_{1\leq m \leq n} d(v,X_n)} &\qquad \text{ when this limit exists.}
\end{align*}
 In each case, the limit used to define the dimension 
  does not depend on the choice of root vertex $v$.  Our next theorem establishes the values of $d_f,d_s,$ and $d_w$ for the trees in the WUSF under the assumption that $\bubnorm{P}<\infty$. The results concerning $d_s$ and $d_w$ are new even in the case of $\Z^d$, $d\geq 5$, while the result concerning the volume growth was established for  $\Z^d$, $d\geq 5$, by Barlow and J\'arai \cite{barlow2016geometry}.

\begin{theorem}
\label{thm:AlexanderOrbach}
  Let $G$ be a network with controlled stationary measure and with $\bubnorm{P}<\infty$, 
    and let $\F$ be the wired uniform spanning forest of $G$. Then almost surely, for every component $T$ of $\F$, the volume growth dimension, spectral dimension, and walk dimension of $T$ satisfy
    \[
d_f(T)=2, \quad d_s(T)=\frac{4}{3}, \quad \text{and} \quad d_w(T)=3.
    \]
     In particular, the limits defining these quantities are well-defined almost surely.
\end{theorem}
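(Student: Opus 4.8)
The plan is to deduce this theorem from the intrinsic exponent estimates of \cref{thm:generalexponents} by feeding them into the general machinery of Barlow, J\'arai, Kumagai, and Slade \cite{BJKS08} (and Kumagai--Misumi-type arguments), which converts two-sided bounds on the volume and effective resistance of balls in a random graph into the Alexander--Orbach values $d_f=2$, $d_s=4/3$, $d_w=3$. First I would fix a component $T$ of $\F$ and a vertex $v\in T$, and set up the required inputs: a lower bound of the form $\P(|B_T(v,r)|\geq \lambda r^2)$ decaying polynomially in $\lambda^{-1}$ (and similarly a polynomial upper tail), together with the analogous bounds on the effective resistance $\Reff(v,\partial B_T(v,r))$. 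Since $T$ is a tree, effective resistance is just graph distance along the tree, so the resistance estimates are immediate consequences of the volume and intrinsic-diameter control and require essentially no separate work. The volume estimates are where \cref{thm:transitivemain}/\cref{thm:generalexponents} enter.

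The key step is to relate the volume of an intrinsic ball $B_T(v,r)$ in the tree to the pasts of vertices along a path. On a one-ended tree in the WUSF, the past $\fP(u)$ of a vertex $u$ is the finite bush hanging off $u$, and the ball $B_T(v,r)$ is contained in the union of the pasts of the vertices within intrinsic distance $r$ of $v$ along the spine (the unique ray from $v$). Using Wilson's algorithm to control the spine, and the fact (implicit in the proof of \cref{thm:transitivemain}) that the pasts of distinct spine vertices are, after a loop-erasure, conditionally independent critical Galton--Watson-like trees, one gets that $|B_T(v,r)|$ is comparable in law to the total progeny of order $r$ independent critical finite-variance Galton--Watson trees, i.e.\ of order $r^2$. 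Concretely: the upper bound $\E|B_T(v,r)|\preceq r^2$ follows from $\P(|\fP(u)|\geq n)\preceq n^{-1/2}$ by summing the tail; the matching lower tail bound $\P(|B_T(v,r)|\geq \lambda r^2)\geq c\lambda^{-C}$ follows from the lower bounds in \cref{thm:generalexponents} together with a second-moment / conditional-independence argument along the spine. One must also rule out that $B_T(v,r)$ is much larger than $r^2$ except with polynomially small probability, which again follows from the $n^{-1/2}$ volume tail applied to each of the $O(r)$ relevant pasts and a union bound, using one-endedness to control how far down the spine one must look.

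Once these volume-and-resistance estimates are in hand for a fixed $v$ with the right (polynomial-in-$\lambda$) tails, I would invoke the abstract theorem of \cite{BJKS08} (their Theorem 1.3, or the formulation in \cite{KumagaiBook}): a random rooted graph satisfying such bounds is almost surely stochastically complete and has $d_f=2$, $d_s=4/3$, $d_w=3$, with the defining limits existing a.s. Since $\F$ has only countably many components and each contains a vertex of $G$, a union bound over $(v,n)$ upgrades the fixed-$v$ statement to the claim for every component simultaneously. The main obstacle is the second step: establishing the polynomial lower tail $\P(|B_T(v,r)|\geq\lambda r^2)\succeq \lambda^{-C}$ uniformly in $r$, since this requires not just the one-point tail estimates of \cref{thm:generalexponents} but genuine control of the joint law of the pasts along the spine --- i.e.\ reconstructing enough of the "conditioned critical Galton--Watson tree" picture from Wilson's algorithm to run a second-moment argument. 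The upper bounds, by contrast, are routine consequences of the tail estimates already proved.
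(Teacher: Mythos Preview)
Your high-level strategy --- verify the volume and resistance hypotheses of the Barlow--J\'arai--Kumagai--Slade criterion and then invoke it --- is exactly what the paper does. However, two of your claimed inputs contain genuine gaps.

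\textbf{The resistance estimate is not trivial.} You write that ``since $T$ is a tree, effective resistance is just graph distance along the tree, so the resistance estimates are immediate.'' This is true for the resistance between two \emph{points}, but the BJKS hypothesis \eqref{eq:BJKSresistance} concerns $\Reff(v \leftrightarrow \partial \fB(v,n))$, where $\partial \fB(v,n)$ is a set. If $\partial\fB(v,n)$ has many vertices, the paths from $v$ to these vertices are in parallel and the resistance can be much smaller than $n$. The paper handles this by proving $\E[\Ceff(v\leftrightarrow\partial\fB(v,n);\F)]\preceq n^{-1}$ (\cref{lem:expectedconductance}), which does require separate work: a deterministic extremal-length bound on trees (\cref{lem:treeresistance}) combined with the intrinsic diameter exponent for the $v$-WUSF (\cref{thm:generalexponentsv}) via stochastic domination. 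So the diameter control is indeed the key input, but the implication is not immediate.

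\textbf{The volume upper bound cannot come from the past-volume tail.} You propose to get $\E|\fB(v,r)|\preceq r^2$ by summing the tail $\P(|\fP(u)|\geq n)\preceq n^{-1/2}$ over spine vertices. But this tail gives $\E|\fP(u)|=\infty$, so summing expectations of pasts cannot work. The paper instead bounds $\E|\fB_v(v,n)|\preceq n$ directly from a hitting-time estimate (\cref{lem:expectedballgrowth1}), bootstraps to higher moments by an inductive argument (\cref{lem:ball2ndmoment}), and then transfers to the WUSF by combining along the spine (\cref{lem:unrootedmoments}), yielding the exponential tail of \cref{cor:exponentialproblargevolume}. Your union-bound-over-pasts idea for the large-volume tail has the same problem: with a $n^{-1/2}$ tail on each past, a union bound over $r$ spine vertices does not give a useful bound on $\P(|\fB(v,r)|\geq \lambda r^2)$.

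Your description of the volume lower bound is closer to the mark: the paper's \cref{lem:fullvolumelower} is indeed a second-moment argument along the spine (\cref{lem:volumelowerboundquant}), and the ``genuine control of the joint law'' you anticipate is supplied by the capacity estimates for loop-erased random walk in \cref{lem:relativecapLERW}.
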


The values $d_f=2,d_s=4/3,$ and $d_w=3$ are known as the \emph{Alexander-Orbach} values of these exponents, following the conjecture due to Alexander and Orbach \cite{AlexOrb1982} that they held for high-dimensional incipient infinite percolation clusters. The first rigorous proof of Alexander-Orbach behaviour was due to Kesten \cite{kestensubdiff}, who established it for critical Galton-Watson trees conditioned to survive (see also \cite{BarKum06}). The first proof for a model in Euclidean space was due to Barlow, J\'arai, Kumagai, and Slade \cite{BJKS08}, who established it for high-dimensional incipient infinite clusters in \emph{oriented} percolation. Later, Kozma and Nachmias \cite{KozNach09} established the Alexander-Orbach conjecture for high-dimensional \emph{unoriented} percolation. 
See \cite{MR3206998} for an extension to long-range percolation, \cite{KumagaiBook} for an overview, and \cite{1609.03980} for  results regarding scaling limits of a related model. 

As previously mentioned, Barlow and Masson \cite{BarMass11} have shown that in the \emph{two-dimensional} uniform spanning tree, $d_f=8/5,$ $d_s=16/13$, and $d_w=13/5$.

\subsection{Extrinsic exponents}
\label{subsec:introextrinsic}

We now describe our results concerning the \emph{extrinsic} diameter of the past.  In comparison to the intrinsic diameter, our methods to study the extrinsic diameter are more delicate and require stronger assumptions on the graph in order to derive sharp estimates. Our first result on the extrinsic diameter concerns $\Z^d$, and improves upon the results of Bhupatiraju, Hanson, and J\'arai \cite{bhupatiraju2016inequalities} by removing the polylogarithmic errors present in their results.

\begin{theorem}[Mean-field Euclidean extrinsic diameter]
\label{thm:extrinsicZd}
Let $d\geq 5$, and
 let $\F$ be the wired uniform spanning forest of $\Z^d$. 
	 Then
	\vspace{0.25em}
	\[ 
		\vspace{0.25em}
		\P\left(\diam_{\mathrm{ext}}(\fP(0)) \geq R \right)  \asymp R^{-2}
		%
		%
	\]
	for every $R\geq 1$. 
\end{theorem}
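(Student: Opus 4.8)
\emph{The plan.} The statement says that, extrinsically, the past $\fP(0)$ behaves like a critical branching random walk on $\mathbb Z^d$ with finite-variance offspring and finite-variance steps, for which the probability of reaching Euclidean distance $R$ is $\asymp R^{-2}$ --- the exponent $2$ encoding the diffusive relation between the extrinsic scale $R$ and the intrinsic (temporal) scale $R^2$ of simple random walk on $\mathbb Z^d$, $d\ge5$. I would prove it by combining the intrinsic estimate $\mathbb P(\diam_\inte(\fP(0))\ge R)\asymp R^{-1}$ of \cref{thm:transitivemain} --- which applies since $\mathbb Z^d$ is transitive with $\bubnorm{P}<\infty$ for $d\ge5$ --- with sharp hitting- and heat-kernel estimates for simple and loop-erased random walk on $\mathbb Z^d$: the Gaussian bound on $p_n$, the diffusivity (with sub-Gaussian deviations) of loop-erased random walk in $d\ge5$, and Lawler's two-point estimate $\mathbb P(x\in\mathsf{LE}^0_\infty)\asymp|x|^{2-d}$, where $\mathsf{LE}^0_\infty$ is the loop-erasure of simple random walk from $0$ to infinity.

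\emph{Lower bound.} First I would run a second-moment argument. Set $\mathcal A=B(0,2R)\setminus B(0,R)$ and $N=|\fP(0)\cap\mathcal A|$. Generating $\mathfrak F$ by Wilson's algorithm rooted at infinity, $u\in\fP(0)$ exactly when the loop-erased walk $\gamma_u$ from $u$ to infinity passes through $0$, so $\mathbb P(u\in\fP(0))\asymp|u|^{2-d}$ and $\mathbb E N\asymp R^d\cdot R^{2-d}=R^2$. Running Wilson's algorithm with $u$ before $w$, the event $\{w\in\fP(0)\}$ forces the walk from $w$ to hit the finite segment of $\gamma_u$ between $u$ and $0$; bounding the hitting probability by Green's functions, using that this segment has $\asymp R^2$ vertices (up to a harmless tail in its length, controlled by loop-erased-walk diffusivity) and that $\sum_{w\in\mathcal A}G(w,x)\preceq R^2$ uniformly over $x\in B(0,CR)$, I get $\mathbb E[N^2]\preceq R^d\cdot R^{2-d}\cdot R^4=R^6$. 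Paley--Zygmund then gives $\mathbb P(\diam_\ext(\fP(0))\ge R)\ge\mathbb P(N\ge1)\succeq(\mathbb E N)^2/\mathbb E[N^2]\succeq R^{-2}$.

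\emph{Upper bound.} The hard part is the matching upper bound, and here every moment method overshoots decisively --- the expected number of past-vertices at distance $R$ is already of order $R^2$, and a first-moment count over the generations of $\fP(0)$ diverges, mirroring the fact that a critical branching random walk has infinite expected but finite (heavy-tailed) typical total progeny. I would therefore reuse the inductive-inequality scheme behind \cref{thm:transitivemain}: run the interlacement Aldous--Broder dynamics, relate the static event $\{\fP(0)\text{ reaches extrinsic distance }R\}$ to a dynamic event about an interlacement trajectory hitting $0$ at a small time, but now additionally track that trajectory's extrinsic displacement. Writing $\theta(R)$ for a suitably spatially-averaged version of $\mathbb P(\diam_\ext(\fP(0))\ge R)$, the target is an inequality of the form $\theta(R)\preceq R^{-2}+\varepsilon\sup_{r\le R}\theta(r)$ that bootstraps, the $R^{-2}$ coming from the Gaussian heat kernel on $\mathbb Z^d$ (a trajectory reaching distance $R$ must run for time $\asymp R^2$, and then hits $0$ with the corresponding probability). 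An alternative is to first establish a general extrinsic-diameter bound phrased through the escape rate of the ambient walk and then input that simple random walk on $\mathbb Z^d$ is diffusive, $\mathbf E_0\,d(0,X_n)\asymp\sqrt n$.

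\emph{Main obstacle.} The crux is the upper bound: since moment methods fail by polynomial factors, one must reproduce the delicate renormalization/induction of \cref{thm:transitivemain}, and the genuinely new ingredient --- turning the extrinsic scale $R$ into the intrinsic cost $R^2$ --- means pushing Gaussian concentration of lattice random walk through the loop-erasure and interlacement structure with uniform control over the basepoint. A secondary nuisance, in both directions, is controlling fluctuations in the length and spatial diameter of the loop-erased-walk segments realizing the past, which is where the $d\ge5$ sub-Gaussian estimates for loop-erased random walk enter.
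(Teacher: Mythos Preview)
Your lower bound via a second-moment Paley--Zygmund argument on $N=|\fP(0)\cap(B(0,2R)\setminus B(0,R))|$ is different from the paper's route and, while it can likely be pushed through, it is more laborious. The paper's lower bound (\cref{prop:extlower}) comes directly from the interlacement Aldous--Broder picture already used for the intrinsic bound: on the event that a single trajectory hits $0$ in $[0,\eps]$ with no interference, the past of $0$ contains an initial segment of that trajectory's loop-erasure, and one only needs the endpoint of an $R$-step loop-erasure to have extrinsic distance at least $r$. This reduces to the elementary estimate $L(r)=\sup_v\bE_v[\sup\{n:X_n\in B(v,r)\}]\preceq r^2$ (\cref{lem:polyLr}), with no second moment and no control of LERW-segment fluctuations required.

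For the upper bound you have the right target --- a bootstrap inequality of the form $\tilde Q(2m)\le Cm^{-2}+\frac18\tilde Q(m)$ for $\tilde Q(m)=\P(\fT_0\cap\partial\Lambda_m\neq\emptyset)$ in the $0$-WUSF --- but the mechanism you propose (rerunning the interlacement dynamics and tracking extrinsic displacement) is not what the paper does, and as written it is not clear how it would yield the inequality with a coefficient strictly less than $1/4$. The paper's recursion is purely \emph{spatial} and does not use the interlacement dynamics at all in this step. The key idea your proposal is missing is the notion of a \emph{pioneer}: a vertex $v\in\partial\Lambda_m\cap\fT_0$ whose future in $\F_0$ is contained in $\Lambda_m$. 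If $\fB_0(0,n)$ reaches $\partial\Lambda_{2m}$, the first point where a geodesic exits $\Lambda_m$ is a pioneer, and the subtree hanging off it must itself reach extrinsic distance $m$; by stochastic domination (\cref{lem:domination}) this gives
\[
\P\bigl(\fB_0(0,n)\cap\partial\Lambda_{2m}\neq\emptyset\bigr)\;\le\;\tilde Q(m)\cdot\E\bigl[\#\{\text{pioneers in }\partial\Lambda_m\cap\fB_0(0,n)\}\bigr].
\]
All the work is then concentrated in the estimate $\E[\#\text{pioneers}]\le C_d\exp[-c_d m^2/n]$ (\cref{lem:pioneers}), which is proved by a direct diagrammatic computation: one expands over the location and depth $r$ of the outermost excursion of the generating walk outside $\Lambda_m$ (which must be closed into a loop for the loop-erasure to stay in $\Lambda_m$), and bounds the resulting sums of products of killed Green's functions $\mathbf G^{m+r}_{n'}$ using Gambler's ruin, the elliptic Harnack inequality at $\partial\Lambda_m$, and Azuma's inequality. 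In $d=5$ a single loop expansion leaves a divergent $\sum_k k^{-1}$, so one expands over two nested excursions. Taking $n=\lceil\eps m^2\rceil$ then yields the recursion, the additive $m^{-2}$ coming from the already-proved intrinsic bound $Q(n)\preceq n^{-1}$.

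Your ``alternative'' of a general escape-rate bound is exactly \cref{prop:polyupperext}, which produces only $R^{-2}\log R$; the pioneer estimate is precisely the device that removes the logarithm.
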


We expect that it should be possible to generalize the proof of \cref{thm:extrinsicZd} to other similar graphs, and to long-range models, but we do not pursue this here. 

On the other hand, if we are unconcerned about polylogarithmic errors, it is rather straightforward to deduce various estimates on the extrinsic diameter from the analogous estimates on the intrinsic diameter, \cref{thm:transitivemain,thm:generalexponents}. The following is one such estimate of particular interest. For notational simplicity we will always work with the graph metric, although our methods easily adapt to various other metrics.
We say that a network $G$ with controlled stationary measure is \textbf{$d$-Ahlfors regular} if there exist positive constants $c$ and 
$C$ such that $ cn^d \leq B(v,n) \leq Cn^d$ for every vertex $v$ and $n\geq 1$. We say that $G$ satisfies \textbf{Gaussian heat kernel estimates} if there exist positive constants $c,c'$ such that
\begin{align} \frac{c}{|B(x,n^{1/2})|}e^{- d(x,y)^2/(cn)} \leq p_n(x,y) + p_{n+1}(x,y) \leq \frac{c'}{|B(x,n^{1/2})|}e^{-d(x,y)^2/(c' n)} 
\label{eq:GHKE}
\end{align}
for every $n\geq 0$ and every pair of vertices $x,y$ in $G$ with $d(x,y)\leq n$. It follows from the work of Hebisch and Saloff-Coste \cite{HebSaCo93} that every transitive graph of polynomial volume growth satisfies Gaussian heat-kernel estimates, as does every bounded degree  network with edge conductances bounded between two positive constants that is rough-isometric to a transitive graph of polynomial growth.

\begin{theorem}
\label{thm:extrinsic}
Let $G$ be a network with controlled stationary measure that is $d$-Ahlfors regular for some $d>4$  and that satisfies Gaussian heat kernel estimates.
   Then
  \vspace{0.25em}
  \[ 
    \vspace{0.25em}
    q^2(v) R^{-2} \preceq \P\left(\diam_{\mathrm{ext}}(\fP(v)) \geq R \right)  \preceq R^{-2}\log R
    %
    %
  \]
  for every vertex $v$ and every $R\geq 1$. 
\end{theorem}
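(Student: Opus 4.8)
The plan is to deduce both inequalities from the intrinsic estimates of \cref{thm:generalexponents} together with diffusive control on loop-erased random walk supplied by the Gaussian heat kernel estimates. We will use throughout that a network which is $d$-Ahlfors regular with $d>4$ and satisfies \eqref{eq:GHKE} has $\bubnorm{P}<\infty$ (via \cite[Theorem 3.2.7]{KumagaiBook}), so that \cref{thm:generalexponents} applies, and that under these hypotheses loop-erased random walk is diffusive: the loop-erasure of a walk run until it exits $B(v,\rho)$ has intrinsic length of order $\rho^2$, with a Gaussian-type upper tail for the event that it reaches a point at distance $\rho$ using only $\ell\ll\rho^2$ loop-erased steps. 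The bridge between the two geometries is the standard fact that, in the oriented WUSF, a vertex $u$ lies in $\fP(v)$ if and only if $v$ lies on the loop-erased random walk from $u$ to infinity, in which case the intrinsic distance $d_{\fP(v)}(u,v)$ equals the number of steps this loop-erased walk takes to reach $v$.

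\textbf{Upper bound.} Fix $R\geq 1$ and a parameter $\ell=\ell(R)$. If $\diam_\ext(\fP(v))\geq R$ then there are $x,y\in\fP(v)$ with $d(x,y)\geq R$, and since $d(x,y)\leq d(x,v)+d(v,y)$ at least one of them, say $x$, has $d(x,v)\geq R/2$. Moreover $d_{\fP(v)}(x,v)\leq \diam_\inte(\fP(v))$, so
\[\P\bigl(\diam_\ext(\fP(v))\geq R\bigr)\ \leq\ \P\bigl(\diam_\inte(\fP(v))\geq \ell\bigr)\ +\ \sum_{x:\,d(x,v)\geq R/2}\P\bigl(x\in\fP(v),\ d_{\fP(v)}(x,v)<\ell\bigr).\]
The first term is $\preceq \ell^{-1}$ by \cref{thm:generalexponents}. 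For the summands, the event forces the loop-erased walk from $x$ to reach $v$ within $\ell$ steps while $d(x,v)\geq R/2$; combining the trivial bound $\P(v\in\mathsf{LE}(X^x))\preceq G(x,v)\asymp d(x,v)^{-(d-2)}$ with the diffusive upper tail for loop-erased walk gives $\P(x\in\fP(v),\ d_{\fP(v)}(x,v)<\ell)\preceq d(x,v)^{-(d-2)}\exp\!\bigl(-c\,d(x,v)^2/\ell\bigr)$. Summing over $x$ and using $d$-Ahlfors regularity to count vertices in dyadic annuli, the sum is of order $\ell\exp(-cR^2/\ell)$. Taking $\ell$ to be a suitably small constant multiple of $R^2/\log R$ makes the first term $\asymp R^{-2}\log R$ and the second $\preceq R^{-2}$, which yields the claimed upper bound.

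\textbf{Lower bound.} By \cref{thm:generalexponents}, $\P\bigl(\diam_\inte(\fP(v))\geq CR^2\bigr)\succeq q(v)\,(CR^2)^{-1}\asymp q(v)R^{-2}$ for any fixed constant $C$. On this event $\fP(v)$ contains a simple path started at $v$ of intrinsic length at least $CR^2/2$; this path is an initial segment of a loop-erased random walk, so by the Gaussian \emph{lower} bound in \eqref{eq:GHKE}, together with the comparison between loop-erased and un-erased length scales, such a path is not contained in $B(v,R)$ with conditional probability bounded below once $C$ is chosen large enough. On the corresponding sub-event we have $\diam_\ext(\fP(v))\geq R$, giving the lower bound.

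\textbf{Main obstacle.} The crux is the diffusive upper-tail estimate for loop-erased random walk used in the upper bound: that the loop-erasure of the walk from $x$ cannot reach a point at distance $\rho$ using substantially fewer than $\rho^2$ loop-erased steps except with probability decaying like $\exp(-c\rho^2/\ell)$. This is exactly where $\bubnorm{P}<\infty$, and hence the hypothesis $d>4$, is essential: it guarantees that loop-erasure does not contract length scales, so that the loop-erased walk is diffusive in the relevant regime. It is proved using Wilson's algorithm together with the heat kernel bounds, in the spirit of the loop-erased walk estimates of Lawler and of Barlow--J\'arai \cite{barlow2016geometry,bhupatiraju2016inequalities}. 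By contrast, the statement needed for the lower bound — that a loop-erased path of intrinsic length of order $R^2$ genuinely spreads to extrinsic distance $R$ with positive probability — is comparatively soft and follows from the Gaussian heat kernel lower bound.
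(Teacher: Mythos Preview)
Your upper bound follows the paper's strategy: split on whether $\diam_\inte(\fP(v))\geq \ell$ and take $\ell\asymp R^2/\log R$. The paper carries this out via \cref{prop:polyupperext}, but its execution differs from yours in an important way. You claim the pointwise bound
\[
\P\bigl(x\in\fP(v),\ d_{\fP(v)}(x,v)<\ell\bigr)\ \preceq\ d(x,v)^{-(d-2)}\exp\!\bigl(-c\,d(x,v)^2/\ell\bigr),
\]
justified by ``combining'' the Green's function bound with a Gaussian tail. These two bounds cannot simply be multiplied; they are separate upper bounds on the same probability, not on independent events. The paper instead uses \cref{lem:LoopLength} to convert the constraint $d_{\fP(v)}(x,v)<\ell$ into $\tau_v\leq 2\bubnorm{P}\ell$ with conditional probability at least $1/2$, then applies the Varopoulos--Carne inequality and time-reverses to sum over $x$. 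This yields the correct summed bound directly, without the pointwise product you wrote. So what you call the ``main obstacle'' is actually a two-line consequence of \cref{lem:LoopLength} and Varopoulos--Carne, and in fact the paper only needs polynomial volume growth (not the full Gaussian estimates) for the upper bound.

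Your lower bound, however, has a genuine gap. You argue: the intrinsic diameter exceeds $CR^2$ with probability $\succeq q(v)R^{-2}$, and \emph{conditionally on this event} the past contains a LERW segment of length $\asymp R^2$ which leaves $B(v,R)$ with probability bounded below. The second claim is not justified. The paths in $\fP(v)$ are loop-erased walks from other vertices \emph{towards} $v$, not from $v$, and you are conditioning on a complicated event; the Gaussian heat-kernel lower bound says nothing about loop-erased paths under such conditioning. Attempting to bound $\P(\diam_\inte\geq CR^2,\ \diam_\ext<R)$ by a first-moment argument also fails: the expected number of vertices $u\in B(v,R)$ with $d_{\fP(v)}(u,v)\geq CR^2/2$ is of order $R^2$, not $o(R^{-2})$. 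The paper does \emph{not} go through the intrinsic bound here. Instead it proves the lower bound directly via the interlacement Aldous--Broder algorithm (\cref{prop:extlower}): one builds an explicit sub-event on which a single interlacement trajectory through $v$ has loop-erasure whose $R$th step lies outside $B(v,r)$, controlling the latter through the quantity $L(r)=\sup_v\bE_v[\sup\{n:X_n\in B(v,r)\}]$. The Gaussian heat-kernel estimates enter only through \cref{lem:polyLr}, which gives $L(r)\preceq r^2$, and the result $\P(\diam_\ext(\fP(v))\geq r)\succeq q(v)^2/L(r)\asymp q(v)^2 r^{-2}$ follows.
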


Note that the hypotheses of this theorem imply that $\bubnorm{P}<\infty$.

Finally, we consider networks in which the random walk is ballistic rather than diffusive. We say that a network $G$ is \textbf{uniformly ballistic} if there exists a constant $C$ such that
\begin{equation}
\label{eq:speedcondition}
\sup_{v\in V}\bE_v\left[
 \sup\left\{n\geq 0: d(v,X_n) \leq r\right\}
 \right] \leq C r
\end{equation}
for every $r\geq 1$. 
Every nonamenable network with bounded degrees and edge conductances bounded above is uniformly ballistic, as can be seen from the proof of \cite[Proposition 6.9]{LP:book}.

\begin{theorem}[Extrinsic diameter in the positive speed case]
\label{thm:extrinsicspeed}
Let $G$ be a uniformly ballistic network with controlled stationary measure and $\bubnorm{P}<\infty$, and let $\fF$ be the wired uniform spanning forest of $G$. 
   Then
  \vspace{0.25em}
  \[ 
    \vspace{0.25em}
    q^2(v)R^{-1} \preceq \P\left(\diam_{\mathrm{ext}}(\fP(v)) \geq R \right)  \preceq R^{-1}
    %
    %
  \]
  for every vertex $v$ and every $R\geq 1$. 
\end{theorem}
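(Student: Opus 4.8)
\emph{Proof plan.}
The upper bound requires no ballisticity: for any connected subgraph $K$ of $G$ and any $x,y\in K$ we have $d(x,y)\le d_K(x,y)$, where $d_K$ is the graph distance inside $K$, so $\diam_{\mathrm{ext}}(K)\le\diam_{\mathrm{int}}(K)$. Applying this with $K=\fP(v)$ together with the intrinsic upper bound of \cref{thm:generalexponents} gives
\[
	\P\bigl(\diam_{\mathrm{ext}}(\fP(v))\ge R\bigr)\le\P\bigl(\diam_{\mathrm{int}}(\fP(v))\ge R\bigr)\preceq R^{-1}.
\]

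For the lower bound we work in the oriented wired uniform spanning forest. For a vertex $u$ let $\Gamma_u=(\Gamma_u(0),\Gamma_u(1),\dots)$ be the oriented path emanating from $u$, and recall from Wilson's algorithm that $\Gamma_u$ has the law of a loop-erased random walk from $u$, that $u\in\fP(v)$ if and only if $v$ lies on $\Gamma_u$, and that $d_{\fP(v)}(v,u)$ is then the unique index $m$ with $\Gamma_u(m)=v$. The main new ingredient is that the loop-erased walk is ballistic with exponential error probabilities, uniformly in the starting point: there are $c_1,c_2>0$, depending only on the network, with
\[
	\P\bigl(d(u,\Gamma_u(m))<c_1 m\bigr)\le c_2 e^{-m/c_2}\qquad\text{for all }u\in V\text{ and }m\ge1.
\]
To see this, one first deduces from \eqref{eq:speedcondition}, using the strong Markov property at the first hitting time of a ball, that the expected occupation time of \emph{any} ball of radius $r$ by a walk from \emph{any} vertex is $O(r)$; Markov's inequality then shows the walk leaves such a ball by time $O(r)$ with probability at least $\tfrac12$, and a standard chaining argument upgrades this to $\P_u(\exists\, k\ge m:\ d(u,X_k)<c_1 k)\le c_2 e^{-m/c_2}$. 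One then transfers the estimate to $\Gamma_u$: its $m$-th vertex is $X_{a_m}$, where $a_m$ is the time of the last visit of $X$ to $\Gamma_u(m)$, and since these times strictly increase with $m$ we have $a_m\ge m$, so $\{d(u,\Gamma_u(m))<c_1 m\}\subseteq\{\exists\, k\ge m:\ d(u,X_k)<c_1 k\}$.

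With this in hand, the strategy is to combine \cref{thm:generalexponents} with the structural fact that the branches of $\fP(v)$, read from a vertex $u$ towards $v$, form an initial segment of the loop-erased walk $\Gamma_u$. Fix a large constant $C_0$. By \cref{thm:generalexponents} and $\diam_{\mathrm{int}}(\fP(v))\le 2\max_{u\in\fP(v)}d_{\fP(v)}(v,u)$, the probability that $\fP(v)$ contains a vertex at depth at least $C_0 R$ is at least $c\,q(v)R^{-1}$ for some $c>0$. It therefore suffices to bound the probability of the ``bad'' event that $\fP(v)$ contains such a vertex but nevertheless $\diam_{\mathrm{ext}}(\fP(v))<R$. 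On this event every vertex of $\fP(v)$ lies in $B(v,R)$, so there is a vertex $u\in B(v,R)$ with $\Gamma_u(m)=v$ for some $m\ge C_0 R$; since $d(u,v)\le R<c_1 m$ (taking $C_0\ge 1/c_1$) the bad event is contained in $\bigcup_{u\in B(v,R)}\{d(u,\Gamma_u(m))<c_1 m\text{ for some }m\ge C_0 R\}$. A union bound over $u$ and the estimate above bound its probability by $c_3|B(v,R)|e^{-C_0 R/c_3}$, which is exponentially small in $R$ once $C_0$ exceeds $c_3$ times the exponential growth rate of the balls of $G$ (finite, e.g., whenever $G$ has bounded degree). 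For $R$ large this gives $\P(\diam_{\mathrm{ext}}(\fP(v))\ge R)\ge c\,q(v)R^{-1}$ minus an exponentially small term, hence $\succeq q(v)R^{-1}$.

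The step I expect to be the main obstacle is making this lower bound uniform in $q(v)$ all the way down to $R=1$: the argument above loses control when $R$ is small relative to $\log(1/q(v))$, since then the ``deep-but-folded'' bad event is no longer negligible compared to $q(v)R^{-1}$, and the deep branch --- conditioned on the possibly very rare event that it exists --- is no longer manifestly an unconditioned loop-erased walk to which the ballisticity estimate applies. I would handle this by opening up the proof of the lower bound in \cref{thm:generalexponents}, where the deep branch is produced by an explicit construction (via Wilson's algorithm or the interlacement Aldous--Broder process) whose law can be tracked, and applying the ballisticity estimate to the branch \emph{before} the $q(v)$-order conditioning is performed, rather than using \cref{thm:generalexponents} as a black box. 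The case $R=1$ is in any event immediate, since $\{\diam_{\mathrm{ext}}(\fP(v))\ge1\}=\{\fP(v)\ne\{v\}\}=\{\diam_{\mathrm{int}}(\fP(v))\ge1\}$ and \cref{thm:generalexponents} bounds the latter below by $\succeq q(v)$.
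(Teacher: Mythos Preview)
Your upper bound matches the paper exactly. The lower bound, however, takes a genuinely different route from the paper and has a real gap.

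\textbf{The paper's approach.} The paper does not pass through the intrinsic diameter at all for the lower bound. Instead it invokes \cref{prop:extlower}, which is proved directly via the interlacement Aldous--Broder construction: one plants a single interlacement trajectory through $v$ and asks that the endpoint of the first $R$ steps of its loop-erasure lie outside $B(v,r)$. This gives
\[
\P\bigl(\diam_{\mathrm{ext}}(\fP(v))\ge r\bigr)\;\succeq\;\frac{q(v)^2}{L(r)},
\]
where $L(r)=\sup_v\bE_v[\sup\{n:X_n\in B(v,r)\}]$. Uniform ballisticity then gives $L(r)\preceq r$ and the proof is finished. Crucially, only a \emph{first-moment} occupation/last-exit bound is needed, which is exactly the content of the uniform ballisticity hypothesis.

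\textbf{The gap in your argument.} Your lower bound hinges on the exponential tail
\[
\bP_u\bigl(\exists\,k\ge m:\ d(u,X_k)<c_1 k\bigr)\le c_2 e^{-m/c_2},
\]
which you claim follows from uniform ballisticity by ``a standard chaining argument''. But the chaining you sketch only shows that the walk \emph{exits} each ball $B(u,r)$ by time $O(r)$ with probability $\ge\tfrac12$; it says nothing about the walk not returning. Exiting successive balls does not force the walk to move away from $u$ --- it could backtrack --- so the step from ``exits balls quickly'' to ``$d(u,X_k)\ge c_1k$ eventually'' is missing a genuine ingredient (a uniform lower bound on the escape probability from an annulus, or a spectral/heat-kernel input) that is not provided by the first-moment condition \eqref{eq:speedcondition} alone. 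Even granting the tail bound, your union bound over $u\in B(v,R)$ requires $|B(v,R)|\le e^{cR}$, i.e.\ bounded degree; controlled stationary measure bounds $c(v)$, not the degree, so this is an additional unstated hypothesis.

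Your closing suggestion --- to open up the proof of the intrinsic lower bound and apply ballisticity to the explicitly constructed branch before the $q(v)$-conditioning --- is exactly what \cref{prop:extlower} does. Following that route, you would find that only a Markov-inequality bound on the last visit time is needed, not exponential tails, and the union bound disappears entirely.
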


Note that the \emph{upper bound} of \cref{thm:extrinsicspeed} is a trivial consequence of \cref{thm:generalexponents}.

\subsection{Applications to the Abelian sandpile model}
\label{subsec:sandpileintro}


The \textbf{Abelian sandpile model} was introduced by Dhar \cite{Dhar90} as an analytically tractable example of a system exhibiting \emph{self-organized criticality}. This is the phenomenon by which certain randomized dynamical systems tend to exhibit critical-like behaviour at equilibrium despite being defined without any parameters that can be varied to produce a phase transition in the traditional sense. The concept of self-organized criticality was first posited in the highly influential work of Bak, Tang, and Wiesenfeld \cite{bak1987self,bak1988self}, who proposed (somewhat  controversially \cite{Watkins2016}) that it may account for the occurrence of complexity, fractals, and power laws in nature. See \cite{MR3857602} for a detailed introduction to the Abelian sandpile model, and \cite{jensen1998self} for a discussion of self-organized criticality in applications.

We now define the Abelian sandpile model.
Let $G=(V,E)$ be a connected, locally finite graph and let $K \subseteq V$ be a set of vertices.  A \textbf{sandpile} on $K$ is a function $\eta: K \to \{0,1,\ldots\}$, which we think of as a collection of indistinguishable particles (grains of sand) located on the vertices of $K$. We say that $\eta$ is \textbf{stable} at a vertex $x$ if $\eta(x) < \deg(x)$, and otherwise that $\eta$ is \textbf{unstable} at $x$. We say that $\eta$ is stable if it is stable at every $x$, and that it is unstable otherwise. If $\eta$ is unstable at $x$, we can \textbf{topple} $\eta$ at $x$ to obtain the sandpile $\eta'$ defined by
\[
\eta'(y) = \begin{cases} \eta(x) - \deg(x) & y=x\\
\eta(y)+\#\{\text{edges between $x$ and $y$}\} & y \neq x
\end{cases}
\]
for all $y\in K$.
That is, when $x$ topples, $\deg(x)$ of the grains of sand at $x$ are redistributed to its neighbours, and grains of sand redistributed to neighbours of $x$ in $V\setminus K$ are lost. Dhar~\cite{Dhar90} observed that if $K$ is finite and not equal to $V$ then carrying out successive topplings will eventually result in a stable configuration and, moreover, that the stable configuration obtained in this manner does not depend on the order in which the topplings are carried out.
 (This property justifies the model's description as \emph{Abelian}.)

We define a Markov chain on the set of stable sandpile configurations on $K$ as follows: At each time step, a vertex of $K$ is chosen uniformly at random,  an additional grain of sand is placed at that vertex, and the resulting configuration is stabilized. Although this Markov chain is \emph{not} irreducible, it can be shown that chain has a unique closed communicating class, consisting of the \emph{recurrent} configurations, and that the stationary measure of the Markov chain is simply the uniform measure on the set of recurrent configurations. In particular, the stationary measure for the Markov chain is also stationary if we add a grain of sand to a \emph{fixed} vertex and then stabilize \cite[Exercise 2.17]{MR3857602}.  

The connection between sandpiles and spanning trees was first discovered by Majumdar and Dhar \cite{MajDhar92}, who described a bijection, known as the \emph{burning bijection}, between recurrent sandpile configurations and spanning trees. Using the burning bijection, Athreya and J\'arai \cite{MR2077255} showed that if $d\geq 2$ and $\langle V_n\rangle_{n\geq 1}$ is an exhaustion of $\Z^d$ by finite sets, then the uniform measure on recurrent sandpile configurations on $V_n$ converges weakly as $n\to\infty$ to a limiting measure on sandpile configurations on $\Z^d$. J\'arai and Werning \cite{JarWer14} later extended this result to any infinite, connected, locally finite graph $G$ for which every component of the WUSF of $G$ is one-ended almost surely. We call a random sandpile configuration on $G$ drawn from this measure a \textbf{uniform recurrent sandpile} on $G$, and typically denote such a random variable by $\Eta$ (capital $\eta$).

We are particularly interested in what happens during one step of the dynamics at equilibrium, in which one grain of sand is added to a vertex $v$ in a uniformly random recurrent configuration $\Eta$, and then topplings are performed in order to stabilize the resulting configuration. The multi-set of vertices counted according to the number of times they topple is called the \textbf{Avalanche}, and is denoted $\Av_v(\Eta)$. The \emph{set} of vertices that topple at all is called the \textbf{Avalanche cluster} and is denoted by $\AvC_v(\Eta)$. 

J\'arai and Redig \cite{JarRed08} showed that the burning bijection allows one to relate avalanches to the past of the WUSF, which allowed them to prove that avalanches in $\Z^d$ satisfy $\P( v \in \AvC_0(\Eta)) \asymp \|v\|^{-d+2}$ for $d\geq 5$. (The fact that the \emph{expected} number of times $v$ topples scales this way is an immediate consequence of Dhar's formula, see \cite[Section 3.3.1]{MR3857602}.)  Bhupatiraju, Hanson, and J\'arai \cite{bhupatiraju2016inequalities} built upon these methods to prove that, when $d\geq 5$, the probability that the diameter of the avalanche is at least $n$ scales as $n^{-2} \log^{O(1)} n$ and the probability that the total number of topplings in the avalanche is at least $n$ is between $c n^{-1/2}$ and $n^{-2/5+o(1)}$. Using the combinatorial tools that they developed, the following theorem, which improves upon theirs, follows straightforwardly from our results concerning the WUSF. (Strictly speaking, it also requires our results on the $v$-WUSF, see \cref{subsec:vWUSF}.)

\begin{theorem}
\label{thm:sandpile}
Let $d\geq 5$ and let $\Eta$ be a uniform recurrent sandpile on $\Z^d$. Then 
\begin{align*}
\P\bigl(\operatorname{diam}_\ext\left(\AvC_0(\Eta)\right) \geq n\bigr) \asymp n^{-2} \quad \text{ and } \quad
\P\bigl(|\!\AvC_0(\Eta)| \geq n\bigr) \asymp 
\P\bigl(|\!\Av_0(\Eta)| \geq n\bigr)\asymp n^{-1/2}
\end{align*}
for all $n\geq 1$.
\end{theorem}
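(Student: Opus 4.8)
The plan is to transfer the WUSF estimates of Theorems~\ref{thm:transitivemain} and~\ref{thm:extrinsicZd} to the sandpile model via the burning bijection, using the combinatorial dictionary developed by J\'arai and Redig \cite{JarRed08} and Bhupatiraju, Hanson, and J\'arai \cite{bhupatiraju2016inequalities}. Recall that under the burning bijection, a uniform recurrent sandpile $\Eta$ on $\Z^d$ corresponds to the WUSF $\F$ of $\Z^d$ together with an independent uniform orientation towards infinity, and that the effect of adding a grain of sand at the origin and stabilizing can be read off from the local structure of $\F$ near the origin. Specifically, it is shown in \cite{JarRed08,bhupatiraju2016inequalities} that the avalanche cluster $\AvC_0(\Eta)$ is sandwiched between the past $\fP(0)$ and a slightly enlarged version of it: there are events, depending only on $\F$ in a bounded neighbourhood of $0$, on which $\AvC_0(\Eta) = \fP(0)$ exactly, and in general $\AvC_0(\Eta) \subseteq \fP(0) \cup \partial \fP(0)$ up to a set whose diameter and volume are controlled by those of $\fP(0)$. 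The number of topplings $|\Av_0(\Eta)|$ is similarly comparable to $|\fP(0)|$, again using the results of \cite{bhupatiraju2016inequalities} which express the toppling numbers in terms of the tree structure of the past.

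The upper bounds then follow almost immediately: since $\AvC_0(\Eta)$ differs from $\fP(0)$ only by an additive boundary correction, we have $\diam_\ext(\AvC_0(\Eta)) \leq \diam_\ext(\fP(0)) + O(1)$ and $|\AvC_0(\Eta)| \preceq |\fP(0)| + O(1)$, and likewise $|\Av_0(\Eta)| \preceq |\fP(0)|$ up to the polynomially-bounded multiplicity of topplings, so that Theorems~\ref{thm:transitivemain} and~\ref{thm:extrinsicZd} give $\P(\diam_\ext(\AvC_0(\Eta)) \geq n) \preceq n^{-2}$ and $\P(|\AvC_0(\Eta)| \geq n) \preceq n^{-1/2}$. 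The third line requires knowing that $|\Av_0(\Eta)|$, which counts topplings with multiplicity, is not polynomially larger than $|\fP(0)|$; here one uses that the total number of topplings in an avalanche on a tree is comparable to the volume of the tree up to the intrinsic diameter, which is itself $O(n^{o(1)})$ on the relevant event and in fact $O(\sqrt{n})$ typically, so the contribution is absorbed. For the matching lower bounds one uses the events on which $\AvC_0(\Eta) = \fP(0)$: one shows these events have probability bounded below, uniformly, conditionally on the past being large, so that $\P(\diam_\ext(\AvC_0(\Eta)) \geq n) \succeq \P(\diam_\ext(\fP(0)) \geq cn)$ and similarly for the volume, and then invokes the WUSF lower bounds. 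For $|\Av_0(\Eta)|$ one needs $|\Av_0(\Eta)| \succeq |\fP(0)|$ on a positive-probability event, which again is a consequence of the combinatorial identities in \cite{bhupatiraju2016inequalities}.

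The one genuine subtlety is that the burning-bijection correspondence is most cleanly stated for the \emph{oriented} WUSF, and adding sand at the origin interacts with the orientation of the edges incident to $0$; this is precisely why the statement ``strictly speaking, requires our results on the $v$-WUSF,'' mentioned in the excerpt. Concretely, the correct object is not quite the past of $0$ in the plain WUSF but the past of $0$ in a modified measure (the $v$-WUSF of \cref{subsec:vWUSF}) in which the component of $0$ is conditioned or reweighted according to how the burning procedure treats $0$; one must check that the tail estimates of Theorems~\ref{thm:transitivemain} and~\ref{thm:extrinsicZd} hold verbatim, up to constants, for this modified measure. I expect this bookkeeping---matching the combinatorial definition of the avalanche to the right variant of the past, and verifying that passing to the $v$-WUSF changes no exponents---to be the main point requiring care; once it is done, the three displayed asymptotics are a direct translation of the WUSF results through the dictionary of \cite{JarRed08,bhupatiraju2016inequalities}, with the upper and lower bounds as sketched above.
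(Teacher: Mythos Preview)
Your lower-bound strategy matches the paper: relation \eqref{eq:sandpilelowerbound} transfers the WUSF lower bounds directly, and the $v$-WUSF issue you flag is indeed handled by the paper's parallel results (\cref{thm:generalexponentsv,thm:extrinsicZdv}). The diameter upper bound is also close in spirit, though note the comparison is not $\diam_\ext(\AvC_0)\le \diam_\ext(\fP(0))+O(1)$ but rather the distributional inequality \eqref{eq:sandpileupperbound} against $\fT_0$ in the $0$-WUSF; your later remarks essentially acknowledge this.

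The real gap is in your upper bound for $|\Av_0(\Eta)|$ (and, since the paper obtains the $|\AvC_0|$ upper bound as a corollary of this one, for $|\AvC_0|$ as well). You propose to control the toppling multiplicity by the intrinsic diameter, arguing that $|\Av_0|$ is at most roughly volume times intrinsic diameter. Even granting such a pointwise bound $|\Av_0| \preceq |\fT_0|\cdot \diam_\inte(\fT_0)$, the resulting union bound
\[
\P(|\Av_0|\ge n)\;\le\;\P\bigl(|\fT_0|\ge n^a\bigr)+\P\bigl(\diam_\inte(\fT_0)\ge n^{1-a}\bigr)\;\preceq\;n^{-a/2}+n^{-(1-a)}
\]
optimises at $a=2/3$ and yields only $n^{-1/3}$, not $n^{-1/2}$. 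No amount of bookkeeping with the $v$-WUSF fixes this; the approach is simply too lossy.

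The paper instead uses Dhar's formula \eqref{eq:Dhar}, which gives $\E[\Av_0(\Eta,u)]=\mathbf{G}(0,u)$ exactly, combined with the \emph{extrinsic} diameter bound already established. Concretely,
\[
\P\bigl(|\Av_0|\ge n\bigr)\;\le\;\P\bigl(\diam_\ext(\AvC_0)\ge m\bigr)+\frac{1}{n}\sum_{u\in B(0,m)}\mathbf{G}(0,u)\;\preceq\;m^{-2}+\frac{m^2}{n},
\]
and the choice $m=\lceil n^{1/4}\rceil$ balances the two terms at $n^{-1/2}$. The point is that Dhar's formula controls the \emph{expected} toppling count restricted to a ball, which is much sharper than any pointwise multiplicity bound; this is the ingredient your sketch is missing.
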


As with the WUSF, our methods also yield several variations on this theorem for other classes of graphs, the following of which are particularly notable. See \cref{subsec:introextrinsic} for the relevant definitions. With a little further work, it should be possible to remove the dependency on $v$ in the lower bounds of \cref{thm:sandpilepolynomial,thm:sandpilenonamenable}. The \emph{upper bounds} of \cref{thm:sandpilepolynomial} only require that $G$ has polynomial growth, see \cref{prop:polyupperext}.

\begin{theorem}
\label{thm:sandpilepolynomial}
Let $G$ be a bounded degree graph that is $d$-Ahlfors regular for some $d>4$  and that satisfies Gaussian heat kernel estimates, and let $\Eta$ be a uniform recurrent sandpile on $G$. Then
\hspace{1cm}
\[
\begin{array}{llcll}
q(v)^2n^{-2} &\preceq &\P\Bigl(\operatorname{diam}_\ext\left(\AvC_v(\Eta)\right) \geq n\Bigr) &\preceq &n^{-2}\log n,\vspace{.25em}\\
q(v)^{5/2} n^{-1/2} &\preceq &\P\Bigl(|\AvC_v(\Eta)| \geq n\Bigr) &\preceq & n^{-1/2}\log^{1/2}n,\vspace{.25em}\\
q(v)^{5/2} n^{-1/2} &\preceq &\P\Bigl(|\Av_v(\Eta)| \geq n\Bigr) &\preceq & n^{-1/2}\log^{1/2}n
\end{array}
\]
for all $n\geq 1$.
\end{theorem}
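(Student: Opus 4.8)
The plan is to deduce \cref{thm:sandpilepolynomial} from the corresponding statements for the WUSF (and the $v$-WUSF) together with the combinatorial machinery relating avalanches to the past that was developed by J\'arai and Redig \cite{JarRed08} and Bhupatiraju, Hanson, and J\'arai \cite{bhupatiraju2016inequalities}. The first step is to recall the burning bijection and the identity, valid on any graph all of whose WUSF components are one-ended a.s., that couples the uniform recurrent sandpile $\Eta$ with the WUSF $\F$ so that the avalanche cluster $\AvC_v(\Eta)$ is closely related to the past $\fP(v)$: adding a grain at $v$ and stabilizing topples exactly those vertices whose connection to infinity in the tree passes through $v$, so that $\AvC_v(\Eta)$ agrees, up to a boundary correction of bounded-degree order, with $\fP(v)$ (this is where one must pass to the $v$-WUSF, in which an extra edge is attached at $v$, in order to get the bijection to line up cleanly; see \cref{subsec:vWUSF}). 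Consequently $\operatorname{diam}_\ext(\AvC_v(\Eta))$ and $|\AvC_v(\Eta)|$ are comparable to $\operatorname{diam}_\ext(\fP(v))$ and $|\fP(v)|$ respectively, up to multiplicative constants depending only on the maximum degree.

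Given this reduction, the bounds on $\operatorname{diam}_\ext(\AvC_v(\Eta))$ and $|\AvC_v(\Eta)|$ follow immediately: the upper bound $\P(\operatorname{diam}_\ext(\AvC_v(\Eta))\geq n)\preceq n^{-2}\log n$ is exactly \cref{thm:extrinsic}, the lower bound $q(v)^2 n^{-2}$ follows from the lower bound in \cref{thm:extrinsic} together with (a routine strengthening, via the second-moment bound coming from \cref{thm:generalexponents}, accounting for the $q(v)^2$ rather than $q(v)$ that appears once we pass through the $v$-WUSF), and the volume bounds $q(v)^2 n^{-1/2}\preceq \P(|\AvC_v(\Eta)|\geq n)\preceq n^{-1/2}\log^{1/2} n$ come from combining the sharp intrinsic volume exponent $\P(|\fP(v)|\geq n)\asymp n^{-1/2}$ of \cref{thm:transitivemain,thm:generalexponents} with the $d$-Ahlfors regularity and Gaussian heat kernel estimates: under these hypotheses the intrinsic and extrinsic volumes of the past differ only by polylogarithmic factors, which is precisely the source of the $\log^{1/2} n$ (one shows $\P(|\fP(v)|\geq n)$ controls $\P(|\AvC_v|\geq n)$ from below trivially since $|\AvC_v|\asymp|\fP(v)|$, and from above by noting that a past with extrinsic volume $n$ has intrinsic volume at least $n/\log^{O(1)} n$).

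The remaining, and slightly more delicate, point is the third line concerning $|\Av_v(\Eta)|$, the total number of topplings counted with multiplicity rather than the number of distinct toppled sites. Here one uses Dhar's formula, which identifies $\bE[\#\{\text{topplings at } u\} \mid \AvC_v(\Eta)]$ with a Green-function-type quantity for the random walk on $G$ killed on exiting $\AvC_v(\Eta)$; equivalently, as in \cite{bhupatiraju2016inequalities}, the number of topplings at $u$ is, conditionally on the past, the expected number of visits to $u$ of the network random walk on $\fP(v)$ (with appropriate wiring), so that $|\Av_v(\Eta)|$ is comparable to the total number of steps before the walk on the past (viewed as a finite tree) is absorbed. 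The Ahlfors regularity plus Gaussian heat kernel estimates force the walk on a past of extrinsic diameter $r$ and volume $m$ to be absorbed in time $\preceq r^2 \preceq m^{2/d}\cdot m^{?}$ — more carefully, using the walk dimension $d_w=3$ established in \cref{thm:AlexanderOrbach} for the tree itself, the absorption time is at most $(\diam_\inte \fP(v))^{3} \preceq |\fP(v)|^{3/2}$, which combined with $\P(|\fP(v)|\geq n)\asymp n^{-1/2}$ gives $\P(|\Av_v(\Eta)|\geq n) \preceq \P(|\fP(v)|^{3/2}\cdot(\log)^{O(1)} \geq n) \preceq n^{-1/3+o(1)}$ — too weak, so instead one argues as in \cite{bhupatiraju2016inequalities} that the dominant contribution is from the linear-in-volume regime, giving $|\Av_v(\Eta)| \preceq |\fP(v)|\log^{O(1)}$, hence the stated $n^{-1/2}\log^{1/2} n$ upper bound. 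The lower bound $q(v)^2 n^{-1/2}$ is immediate since $|\Av_v(\Eta)|\geq |\AvC_v(\Eta)|$.

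The main obstacle I expect is the upper bound on $|\Av_v(\Eta)|$: controlling the number of topplings with multiplicity requires showing that the Green's function of the walk on the past does not blow up relative to the volume of the past, which needs the heat kernel estimates on the ambient graph to be transferred to good control on the walk inside a random, fractal-like subgraph. This is exactly the kind of estimate carried out in \cite{bhupatiraju2016inequalities} for $\Z^d$, and the universal input we now have — sharp intrinsic volume exponents from \cref{thm:generalexponents} and Alexander--Orbach behaviour from \cref{thm:AlexanderOrbach} — should make their argument go through with $\log^{O(1)}$ losses, but verifying that the combinatorial identities of \cite{JarRed08,bhupatiraju2016inequalities} are genuinely insensitive to transitivity (they are stated for $\Z^d$) is the step that requires the most care.
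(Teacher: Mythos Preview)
Your reduction to the WUSF is on the right track for the lower bounds and the diameter upper bound, but there are two genuine gaps in the upper bounds for $|\AvC_v|$ and $|\Av_v|$.

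First, you overstate the combinatorial relation: it is not true (or at least not established in \cite{JarRed08,bhupatiraju2016inequalities}) that $|\AvC_v(\Eta)|$ and $|\fP(v)|$ are comparable up to multiplicative constants. What is available is a one-sided stochastic domination \eqref{eq:sandpilelowerbound} giving lower bounds, and a one-sided comparison of \emph{diameters} \eqref{eq:sandpileupperbound} via the $v$-WUSF giving the diameter upper bound. There is no direct volume upper bound coming from the past, so your argument for $\P(|\AvC_v|\geq n)\preceq n^{-1/2}\log^{1/2}n$ via ``intrinsic versus extrinsic volumes of the past'' does not go through.

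Second, for $|\Av_v|$ the paper's route is entirely different from, and much simpler than, what you sketch. One does not try to control the Green's function of the walk \emph{inside} the past; instead, Dhar's formula \eqref{eq:Dhar} bounds the expected number of topplings at $u$ by the \emph{ambient} Green's function $\mathbf{G}(v,u)$. A union bound then gives
\[
\P\bigl(|\Av_v(\Eta)|\geq n\bigr)\leq \P\bigl(\diam_\ext(\AvC_v(\Eta))\geq m\bigr)+\frac{1}{n}\sum_{u\in B(v,m)}\mathbf{G}(v,u)\leq \P\bigl(\diam_\ext(\AvC_v(\Eta))\geq m\bigr)+\frac{L(m)}{n},
\]
and since $L(m)\preceq m^2$ under the Gaussian heat kernel hypotheses (\cref{lem:polyLr}), taking $m=\lceil n^{1/4}\log^{1/4}n\rceil$ and applying the diameter upper bound yields $n^{-1/2}\log^{1/2}n$. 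The upper bound for $|\AvC_v|$ then follows trivially from $|\AvC_v|\leq |\Av_v|$. Your attempt via $d_w=3$ and absorption times, or via an unspecified ``$|\Av_v|\preceq |\fP(v)|\log^{O(1)}$'', is neither needed nor correct as stated.
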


Similarly, the following theorem concerning uniformly ballistic graphs can be deduced from \cref{thm:extrinsicspeed,thm:extrinsicspeedv}. 
Again, we stress that this result applies in particular to any bounded degree nonamenable graph. 
\begin{theorem}
\label{thm:sandpilenonamenable}
Let $G$ be a bounded degree, uniformly ballistic graph such that $\bubnorm{P}<\infty$, and let $\Eta$ be a uniform recurrent sandpile on $G$. Then
\[
\begin{array}{llcll}
q(v)^2n^{-1} &\preceq &\P\Bigl(\operatorname{diam}_\ext\left(\AvC_v(\Eta)\right) \geq n\Bigr) &\preceq &n^{-1},\vspace{.25em}\\
q(v)^{5/2} n^{-1/2} &\preceq &\P\Bigl(|\AvC_v(\Eta)| \geq n\Bigr) &\preceq & n^{-1/2},\vspace{.25em}\\
q(v)^{5/2} n^{-1/2} &\preceq &\P\Bigl(|\Av_v(\Eta)| \geq n\Bigr) &\preceq & n^{-1/2}
\end{array}
\]
for all $n\geq 1$.
\end{theorem}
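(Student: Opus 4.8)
The plan is to combine the combinatorial reductions of J\'arai--Redig \cite{JarRed08} and Bhupatiraju--Hanson--J\'arai \cite{bhupatiraju2016inequalities}---which express each avalanche observable in terms of the WUSF and the \emph{$v$-WUSF} of \cref{subsec:vWUSF}---with the sharp tail estimates for those forests proven above. Recall that, via the burning bijection of Majumdar--Dhar \cite{MajDhar92}, the uniform recurrent sandpile $\Eta$ corresponds to the wired uniform spanning forest $\fF$ of $G$, and that adding a grain at $v$ and stabilising amounts to re-randomising the forest inside a region determined by $v$; the upshot (see \cite{JarRed08,bhupatiraju2016inequalities}) is that the avalanche cluster satisfies $\AvC_v(\Eta)\eqd\widetilde{\fP}(v)\cup\{v\}$, where $\widetilde{\fP}(v)$ denotes the past of $v$ in the $v$-WUSF, while the toppling numbers $N_u(\Eta)$ satisfy $N_u(\Eta)\geq\mathbbm{1}(u\in\AvC_v(\Eta))$ and, by Dhar's formula \cite{Dhar90} applied inside any ball containing the avalanche,
\[
\E\!\left[\,|\Av_v(\Eta)|\,\mathbbm{1}\!\left(\AvC_v(\Eta)\subseteq B(v,r)\right)\right]\preceq \bE_v\!\left[\tau_{B(v,r)}\right]
\]
for every $r\geq1$, where $\tau_{B(v,r)}$ is the exit time of $B(v,r)$ for the random walk on $G$.

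With these facts in hand I would first handle the cluster diameter and volume. As a bounded degree graph, $G$ has controlled stationary measure, and being uniformly ballistic with $\bubnorm P<\infty$ it satisfies the hypotheses of \cref{thm:generalexponents} and \cref{thm:extrinsicspeed}, and of their $v$-WUSF analogues (in particular \cref{thm:extrinsicspeedv}). Applying the $v$-WUSF versions of these theorems to $\widetilde{\fP}(v)$ and using $\AvC_v(\Eta)\eqd\widetilde{\fP}(v)\cup\{v\}$ gives
\[
q(v)^2 n^{-1}\preceq\P\!\left(\diam_\ext(\AvC_v(\Eta))\geq n\right)\preceq n^{-1}
\qquad\text{and}\qquad
q(v)^2 n^{-1/2}\preceq\P\!\left(|\AvC_v(\Eta)|\geq n\right)\preceq n^{-1/2},
\]
where the extra factor of $q(v)$ in the lower bounds, relative to \cref{thm:extrinsicspeed,thm:generalexponents}, is exactly what is lost on passing from the WUSF to the $v$-WUSF. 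This yields the first two lines of the theorem, as well as the lower bound $q(v)^2 n^{-1/2}\preceq\P(|\Av_v(\Eta)|\geq n)$ via $|\Av_v(\Eta)|\geq|\AvC_v(\Eta)|$.

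It remains only to prove that $\P(|\Av_v(\Eta)|\geq n)\preceq n^{-1/2}$. The point requiring care is that $\E|\Av_v(\Eta)|=\infty$, so a global first-moment bound---essentially what limited \cite{bhupatiraju2016inequalities} to the exponent $2/5$ here---is unavailable; the remedy is to localise the first-moment bound using the displayed consequence of Dhar's formula. Since $v\in\AvC_v(\Eta)$, the event $\{\AvC_v(\Eta)\not\subseteq B(v,\sqrt n)\}$ is contained in $\{\diam_\ext(\AvC_v(\Eta))\geq\sqrt n\}$, so Markov's inequality gives
\[
\P\!\left(|\Av_v(\Eta)|\geq n\right)\leq \P\!\left(\diam_\ext(\AvC_v(\Eta))\geq\sqrt n\right)+\frac1n\,\E\!\left[\,|\Av_v(\Eta)|\,\mathbbm{1}\!\left(\AvC_v(\Eta)\subseteq B(v,\sqrt n)\right)\right].
\]
The first term is $\preceq(\sqrt n)^{-1}=n^{-1/2}$ by the diameter estimate from the previous paragraph, and the second is $\preceq n^{-1}\bE_v[\tau_{B(v,\sqrt n)}]\preceq n^{-1}\cdot\sqrt n=n^{-1/2}$, the bound $\bE_v[\tau_{B(v,\sqrt n)}]\preceq\sqrt n$ being precisely the uniform ballisticity hypothesis \eqref{eq:speedcondition} (applied with $r=\lceil\sqrt n\rceil$, using that the exit time of a set is dominated by the total time the walk spends in any ball containing it). Adding the two contributions gives $\P(|\Av_v(\Eta)|\geq n)\preceq n^{-1/2}$, completing the proof.

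I expect the genuine work to lie not in the probabilistic argument above but in establishing its inputs at the stated level of generality: transferring the burning-bijection identities of \cite{JarRed08,bhupatiraju2016inequalities} relating $\AvC_v(\Eta)$ and $|\Av_v(\Eta)|$ to the $v$-WUSF and to the killed random walk from $\Z^d$ to arbitrary bounded degree graphs, and verifying that the $v$-WUSF obeys the tail bounds asserted in \cref{subsec:vWUSF} under the present hypotheses. It is also worth emphasising that the crude localisation used here for the toppling count is specific to the uniformly ballistic regime: for diffusive graphs such as $\Z^d$ one has $\bE_v[\tau_{B(v,\sqrt n)}]\asymp n$, so Markov's inequality alone is too lossy and the sharp exponent $1/2$ in \cref{thm:sandpile} must instead be extracted by a more delicate argument.
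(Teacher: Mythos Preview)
Your overall strategy---reduce the cluster observables to WUSF/$v$-WUSF tail bounds, and handle the toppling count by localising Dhar's formula via a diameter truncation---is exactly what the paper does. However, the key combinatorial input you invoke is misstated: there is \emph{no} distributional identity $\AvC_v(\Eta)\eqd\widetilde{\fP}(v)\cup\{v\}$. What \cite{JarRed08,bhupatiraju2016inequalities} actually supply, and what the paper uses, are one-sided relations: $\P(\AvC_v(\Eta)\in\sA)\geq\deg(v)^{-1}\P(\fP(v)\in\sA)$ for increasing $\sA$ (so the lower bounds come from the WUSF past $\fP(v)$ via \cref{thm:generalexponents,thm:extrinsicspeed}, not from the $v$-WUSF), and $\P(\diam_\ext\AvC_v(\Eta)\geq r)\leq\mathbf{G}(v,v)\,\P(\diam_\ext\fT_v\geq r)$ (so the diameter upper bound comes from the full $v$-WUSF tree $\fT_v$ via \cref{thm:extrinsicspeedv}). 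There is no corresponding stochastic-domination bound on $|\AvC_v|$ by $|\fT_v|$, so the upper bound on $|\AvC_v|$ is instead obtained from $|\AvC_v|\leq|\Av_v|$ after the latter has been bounded. Your remark that the ``extra factor of $q(v)$'' is ``lost on passing to the $v$-WUSF'' is also backwards: the $v$-WUSF bounds of \cref{thm:generalexponentsv} carry \emph{no} $q(v)$ factor.

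For the $|\Av_v|$ upper bound your argument is essentially the paper's, though the paper works with the full Green's function: the infinite-volume Dhar formula $\E[\Av_v(\Eta,u)]=\mathbf{G}(v,u)$ already gives $\E[|\Av_v|\mathbbm{1}(\AvC_v\subseteq B(v,m))]\leq\sum_{u\in B(v,m)}\mathbf{G}(v,u)$, and this sum is \emph{literally} the quantity bounded by $Cm$ in the uniform ballisticity hypothesis \eqref{eq:speedcondition}, so there is no need to invoke ``Dhar's formula applied inside a ball'' or the exit time. Finally, your closing remark is incorrect: the same localisation argument \emph{does} yield the sharp exponent for $\Z^d$---one simply takes $m=\lceil n^{1/4}\rceil$, so that $\P(\diam_\ext\AvC_v\geq m)\asymp m^{-2}=n^{-1/2}$ and $n^{-1}L(m)\asymp n^{-1}m^2=n^{-1/2}$ both match, exactly as in the paper's proof of \cref{thm:sandpile}.
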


\subsection*{Notation}
As previously discussed,  we use $\asymp, \preceq$ and $\succeq$ to denote equalities and inequalities that hold to within multiplication by two positive constants depending only on the choice of network.
Typically, but not always, these constants will only depend on a few important parameters such as $\inf_{v\in V} c(v),$  $\sup_{v\in V}c(v),$ and $\bubnorm{P}$. 

For the reader's convenience, we gather here several pieces of notation that will be used throughout the paper. Each such piece of notation is also defined whenever it first appears within the body of the paper. In particular, the $v$-wired uniform spanning forest is defined in \cref{subsec:vWUSF} and the interlacement and $v$-wired interlacement processes are defined in \cref{sec:Interlacements}.
\begin{itemize}[leftmargin=4cm]
  \item[$\F,\F_v$] A sample of the wired uniform spanning forest and $v$-wired uniform spanning forest respectively.
  \item[$\fT_v$] The tree containing $v$ in $\F_v$.
  \item[$\fB(u,n),\fB_v(u,n)$] The intrinsic ball of radius $n$ around $u$ in $\F$ and $\F_v$ respectively.
   \item[$\partial \fB(u,n),\partial \fB_v(u,n)$] The set of vertices at distance exactly $n$ from $u$ in $\F$ and $\F_v$ respectively.
    \item[$\fP(u),\fP_v(u)$] The past of $u$ in $\F$ and $\F_v$ respectively.
      \item[$\past_{F}(u)$] The past of $u$ in the oriented forest $F$ (which need not be spanning).
     \item[$\fP(u,n),\fP_v(u,n)$] The intrinsic ball of radius $n$ around $u$ in the past of $u$ in $\F$ and $\F_v$ respectively.
     \item[$\Gamma(u,w), \Gamma_v(u,w)$] The path from $u$ to $w$ in $\F$ and $\F_v$ respectively, should these vertices be in the same component.
     \item[$\Gamma(u,\infty), \Gamma_v(u,\infty)$] The future of $u$ in $\F$ and $\F_v$ respectively.
      \item[$\partial \fP(u,n),\partial \fP_v(u,n)$] The set of vertices with intrinsic distance exactly $n$ from $u$ in the past of $u$ in $\F$ and $\F_v$ respectively.
        \item[$\sI,\sI_v$] The interlacement process and $v$-wired interlacement process respectively.
  \item[$\cI_{[a,b]},\cI_{v,[a,b]}$] The set of vertices visited by the interlacement process and the $v$-wired interlacement process in the time interval $[a,b]$ respectively.
\end{itemize}
\section{Background}


\subsection{Loop-erased random walk and Wilson's algorithm}

Let $G$ be a network. For each $-\infty \leq n \leq m \leq \infty$ we define $L(n,m)$ to be the line graph with vertex set $\{i \in \Z : n \leq i\leq m\}$ and with edge set $\{\{i,i+1\}: n\leq i \leq m-1\}$. A \textbf{path} in $G$ is a multigraph homomorphism from $L(n,m)$ to $G$ for some $-\infty \leq n \leq m \leq \infty$. We can consider the random walk on $G$ as a path by keeping track of the edges it traverses as well as the vertices it visits. Given a path $w : L(n,m)\to G$ we will use  $w(i)$ and $w_i$ interchangeably to denote the  vertex visited by $w$ at time $i$, and use $w(i,i+1)$ and $w_{i,i+1}$ interchangeably to denote the oriented edge crossed by $w$ between times $i$ and $i+1$.

Given a path in $w : L(0,m) \to G$ for some $m\in [0,\infty]$ that is transient in the sense that it visits each vertex at most finitely many times, we define the sequence of times $\ell_n(w)$ by $\ell_0(w)=0$ and $\ell_{n+1}(w) = 1+\max\{ k : w_k = w_{\ell_n}\}$, terminating the sequence when $\max\{ k : w_k = w_{\ell_n}\}=m$ in the case that $m<\infty$. The \textbf{loop-erasure}  $\LE(w)$ of $w$ is the path defined by
\[\LE(w)_i = w_{\ell_i(w)} \qquad \LE(w)_{i,i+1} = w_{\ell_{i+1}-1,\ell_{i+1}}.\]
In other words, $\LE(w)$ is the path formed by erasing cycles from $w$ chronologically as they are created. The loop-erasure of simple random walk is known as \textbf{loop-erased random walk} and was first studied by Lawler \cite{Lawler80}.

\textbf{Wilson's algorithm} \cite{Wilson96} is a method of sampling the UST of a finite graph by recursively joining together loop-erased random walk paths. It was extended to sample the WUSF of infinite transient graphs by Benjamini, Lyons, Peres, and Schramm \cite{BLPS}. See also \cite[Chapters 4 and 10]{LP:book} for an overview of the algorithm and its applications.

Wilson's algorithm can be described in the infinite transient case as follows. Let $G$ be an infinite transient network, and let $v_1,v_2,\ldots$ be an enumeration of the vertices of $G$. Let $\F^{0}$ be the empty forest, which has no vertices or edges.
 Given $\F^{n}$ for some $n\geq 0$, start a random walk at $v_{n+1}$. Stop the random walk if and when it hits the set of vertices already included in $\F^n$, running it forever otherwise. Let $\F^{n+1}$ be the union of $\F^n$ with the set of edges traversed by the loop-erasure of this stopped path.
Let $\F=\bigcup_{n\geq 0} \F^n$.
Then the random forest $\F$ has the law of the wired uniform spanning forest of $G$. If we keep track of direction in which edges are crossed by the loop-erased random walks when performing Wilson's algorithm, we obtain the oriented wired uniform spanning forest. 
The algorithm works similarly in the finite and recurrent cases, except that we start by taking $\F^{0}$ to contain one vertex and no edges.

\subsection{The $v$-wired uniform spanning forest and stochastic domination}
\label{subsec:vWUSF}

In this section we introduce the $v$-wired uniform spanning forest ($v$-WUSF), which was originally defined by J\'arai and Redig \cite{JarRed08} in the context of their work on the sandpile model (where it was called the WSF$_o$). The $v$-WUSF is a variation of the WUSF of $G$ in which, roughly speaking, we consider $v$ to be `wired to infinity'. The $v$-WUSF serves two useful purposes in this paper: its stochastic domination properties allow us to ignore interactions between different parts of the WUSF, and the control of the $v$-WUSF that we obtain will be applied to prove our results concerning the Abelian sandpile model in \cref{sec:sandpile}.

Let $G$ be an infinite network and let $v$ be a vertex of $G$.  Let $\langle V_n \rangle_{n\geq 1}$ be an exhaustion of $G$ and, for each $n\geq1$, let $G^{*v}_n$ be the graph obtained by identifying $v$ with $\partial_n$ in the graph $G^*_n$. The measure $\WUSF_v$ is defined to be the weak limit
\[\WUSF_v(S \subset \F; G) = \lim_{n\to\infty}\UST(S \subset T ; G^{*v}_n).\]
The fact that this limit exists and does not depend on the choice of exhaustion of $G$ is proved similarly to the corresponding statement for the WUSF, see \cite{LMS08}.  As with the WUSF, we can also define the \textbf{oriented $v$-wired uniform spanning forest} by orienting the uniform spanning tree of $G_n^{*v}$ towards $\partial_n$ (which is identified with $v$) at each step of the exhaustion before taking the weak limit. 
It is possible to sample the $v$-WUSF by running Wilson's algorithm rooted at infinity, but starting with $\F^{0}_v$ as the forest that has vertex set $\{v\}$ and no edges (as we usually would in the finite and recurrent cases). Moreover, if we orient each edge in the direction in which it is crossed by the loop-erased random walk when running Wilson's algorithm, we obtain a sample of the oriented $v$-WUSF.

The following lemma makes the $v$-WUSF extremely useful for studying the usual WUSF, particularly in the mean-field setting. It will be the primary means by which we ignore the interactions between different parts of the forest. (Indeed, it plays a role analogous to that played by the BK inequality in Bernoulli percolation.) We denote by $\past_F(v)$ the past of $v$ in the oriented forest $F$, which need not be spanning. We write $\fT_v$ for the tree containing $v$ in $\F_v$, and write $\Gamma(u,\infty)$ and $\Gamma_v(u,\infty)$ for the future of $u$ in $\F$ and $\F_v$ respectively, as defined in \cref{subsec:introintrinsic}.

\begin{lemma}[Stochastic Domination] 
\label{lem:domination}
Let $G$ be an infinite network,  let $\F$ be an oriented wired uniform spanning forest of $G$, and for each vertex $v$ of $G$ let $\F_v$ be an oriented $v$-wired uniform spanning forest of $G$.
Let $K$ be a finite set of vertices of $G$, and define $F(K) =\bigcup_{u\in K} \Gamma(u,\infty)$ and $F_v(K) =\bigcup_{u\in K} \Gamma_v(u,\infty)$. Then for every $u\in K$ and every increasing event $\sA \subseteq \{0,1\}^E$ we have that 
\begin{align}\P\Bigl( \past_{\F \setminus F(K)}(u) \in \sA \mid F(K) \Bigr) &\leq \P\bigl(\fT_u \in \sA \bigr),
\label{eq:dom1}
\intertext{and similarly}
\P\Bigl( \past_{\F_v\setminus F_v(K)}(u) \in \sA \mid F_v(K) \Bigr) &\leq \P\bigl(\fT_u \in \sA \bigr).
\label{eq:dom2}
\end{align}
\end{lemma}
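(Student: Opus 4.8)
\textbf{Proof proposal for Lemma \ref{lem:domination}.}

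The plan is to prove both inequalities simultaneously by exhibiting a coupling built from Wilson's algorithm, exploiting the fact that loop-erased random walks started at $u$ stop as soon as they hit anything already present. The two statements are structurally identical — in the second case we simply work in the graph with $v$ identified to the boundary — so I will describe the argument for \eqref{eq:dom1}. First I would fix a finite set $K$ and condition on the future forest $F(K) = \bigcup_{w\in K}\Gamma(w,\infty)$. Using Wilson's algorithm rooted at infinity, run the walks from the vertices of $K$ first; this samples $F(K)$ together with (part of) the forest, and by the spatial Markov property of the WUSF the conditional law of $\past_{\F\setminus F(K)}(u)$ given $F(K)$ is obtained by continuing Wilson's algorithm on the remaining vertices, where the set of ``already revealed'' vertices contains all of $F(K)$ (and in particular contains $u$ itself, since $u\in K$, together with its whole future).

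The key step is then to compare this continued Wilson's algorithm, which builds $\past_{\F\setminus F(K)}(u)$, with Wilson's algorithm that builds $\fT_v$ via the $v$-WUSF. Recall $\fT_v$ is sampled by starting Wilson's algorithm with the forest $\F_v^0$ having vertex set $\{v\}$ and no edges, i.e.\ with $v$ playing the role of the ``root at infinity''. The point is that both procedures build the past of a single vertex by adding loop-erased random walk trajectories that are stopped upon hitting the current forest: in the $\F\setminus F(K)$ picture the walks building $\past(u)$ are stopped if they hit $F(K)$ (which includes $u$ and its future, hence everything already oriented away from $u$), whereas in the $\fT_v$ picture the analogous walks are only stopped upon hitting the tree-so-far or the root $v$. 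Since in the first picture there are \emph{strictly more} stopping obstacles — every future trajectory $\Gamma(w,\infty)$, $w\in K$, acts as an absorbing set that is absent in the second picture — each loop-erased walk added to $\past_{\F\setminus F(K)}(u)$ is stochastically dominated (as a subtree rooted at the point where it attaches) by the corresponding walk added to $\fT_v$. Carrying this out vertex-by-vertex along a common enumeration and using that $\sA$ is an increasing event yields \eqref{eq:dom1}. The inequality \eqref{eq:dom2} is identical with $\F$ replaced by $\F_v$, $F(K)$ by $F_v(K)$, and the roles of $u$ and $v$ swapped, noting that in $\F_v$ the vertex $v$ is already wired, so the relevant ``free'' comparison object is $\fT_u$.

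To make the monotonicity rigorous I would phrase it as a statement about Wilson's algorithm run on a network with an enlarged wired boundary: writing $A = F(K)$ viewed as a set of vertices (plus the point at infinity), the conditional law of $\F\setminus F(K)$ given $F(K)$ is the WUSF of $G$ with the vertex set $A$ wired together (this is essentially the content of the spatial Markov property for the WUSF, cf.\ the construction via Wilson's algorithm in \cite{BLPS,LP:book}), and $\past(u)$ in this forest is the past of $u$ in that $A$-wired WUSF. Likewise $\fT_v$ is the tree of $u$ — equivalently the past of $u$ — in the $\{v\}$-wired WUSF. Since $\{v\}\subseteq A$ (because $v$ need not be in $K$, one should instead note $u\in A$ and compare the $u$-wired forest; more cleanly: $\past_{\F\setminus F(K)}(u)$ is the past of $u$ in a forest wired along a set containing $u$'s future, which is dominated by the past of $u$ in the $v$-WUSF because the latter wires along the smaller set $\{v\}$). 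Monotonicity of the WUSF past in the wired boundary — adding vertices to the wired set only shrinks the past of a fixed vertex, in the stochastic-domination sense on increasing events — is the clean abstract fact I would isolate and prove first, via the standard coupling of Wilson's algorithm on the two boundaries.

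The main obstacle I anticipate is getting the bookkeeping of ``which set is wired'' exactly right: one must be careful that conditioning on $F(K)$ does not just wire the vertices of $F(K)$ but effectively attaches them to infinity in a way that is genuinely a superset (for domination purposes) of the single-vertex wiring defining the $v$-WUSF, and that the past of $u$ is unaffected by edges inside $F(K)$. Once the correct statement ``past of $u$ in a $B$-wired WUSF is stochastically decreasing in $B$'' is pinned down and proved by the two-boundary Wilson coupling, both displayed inequalities follow by choosing $B$ appropriately and restricting attention to $u\in K$.
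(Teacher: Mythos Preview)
Your overall strategy is sound: you correctly identify that the lemma reduces to an abstract monotonicity statement --- the past (equivalently, the component) of a fixed vertex in a $B$-wired uniform spanning forest is stochastically decreasing in the wired set $B$ --- and that both displayed inequalities follow from this together with the spatial Markov property. The paper follows essentially the same reduction.

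The difference lies in how this monotonicity is established. You propose to prove it by a direct two-boundary coupling of Wilson's algorithm, arguing heuristically that ``more stopping obstacles'' means shorter loop-erased walks and hence a smaller past. While the intuition is right, this coupling is not a standard result, and making it rigorous is genuinely delicate: with a larger boundary the walks stop earlier, but this changes which loops get erased and which vertices serve as obstacles for subsequent walks, so it is not obvious that the component of $u$ under the larger boundary is, sample-by-sample, contained in that under the smaller one. You flag this bookkeeping as the anticipated obstacle but do not supply an argument, and I do not see how to complete it without substantial extra work.

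The paper sidesteps this difficulty. It first proves a finite-graph lemma (\cref{lem:domaux}): for $K_1\subseteq K_2$ and $T$ a uniform spanning tree of a finite network $G$, the UST of $G/K_1$ stochastically dominates $T\setminus S(T,K_2)$, where $S(T,K_2)$ is the subtree of $T$ spanned by $K_2$. The proof combines the spatial Markov property of the UST (conditional on $S(T,K_2)$, the complement is distributed as the UST of $G/S(T,K_2)$) with the \emph{negative association} property \cite[Theorem~4.6]{LP:book}, which gives directly that the UST of $G/A$ stochastically dominates that of $G/B$ whenever $A\subseteq B$. The infinite-volume statement then follows by applying this on $G_n^*$ with $K_1=\{u,\partial_n\}$ and $K_2=K\cup\{\partial_n\}$ and passing to the limit. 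This route avoids any explicit coupling and reduces everything to a known structural fact about the UST.

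In short: you have the right skeleton, but the step you label as the ``standard coupling of Wilson's algorithm on the two boundaries'' is the entire content of the lemma, and the clean way to supply it is via negative association rather than via a Wilson-algorithm coupling. (Your hesitation over $u$ versus $v$ is understandable; the paper's statement contains a typo, and the right-hand side of \eqref{eq:dom1} should read $\fT_u$.)
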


Note that when $K$ is a singleton, \eqref{eq:dom1} follows implicitly from \cite[Lemma 2.3]{LMS08}. The proof in the general case is also very similar to theirs, but we include it for completeness. Given a network $G$ and a finite set of vertices $K$, we write $G/K$ for the network formed from $G$ by identifying all the vertices of $K$.

\begin{lemma}
\label{lem:domaux}
Let $G$ be a finite network, let $K_1 \subseteq K_2$ be sets of vertices of $G$. For each spanning tree $T$ of $G$, let $S(T,K_2)$ be the smallest subtree of $T$ containing all of $K_2$. Then the uniform spanning tree of $G/K_1$ stochastically dominates $T \setminus S(T,K_2)$, where $T$ is a uniform spanning tree of $G$.
\end{lemma}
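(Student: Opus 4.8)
The plan is to combine Wilson's algorithm with the negative association of the uniform spanning tree. I would begin by reducing to the case $K_1=K_2$. For any spanning tree $T$ of $G$ and any $K_1\subseteq K_2$, the Steiner tree $S(T,K_1)$ is contained in $S(T,K_2)$, since the latter is a connected subgraph of $T$ containing $K_1$ and hence contains the smallest such subgraph. Thus the edge set of $T\setminus S(T,K_2)$ is contained in that of $T\setminus S(T,K_1)$, so that $T\setminus S(T,K_1)$ stochastically dominates $T\setminus S(T,K_2)$; by transitivity of stochastic domination it therefore suffices to prove the lemma when $K_1=K_2$, and I write $K$ for this common set from now on.

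Next I would identify the law of the Steiner tree via Wilson's algorithm. Run Wilson's algorithm on $G$ rooted at a vertex of $K$, ordering the vertices so that those of $K$ are processed first, and let $S$ be the random tree built once every vertex of $K$ has been processed, with $T$ the final spanning tree; thus $S\subseteq T$ and $K\subseteq V(S)$. An induction on the steps of the algorithm shows that every leaf of $S$ lies in $K$: each step attaches a loop-erased walk path to the current tree, creating exactly one new leaf (the starting vertex of that walk, which lies in $K$) and destroying at most one old leaf. A subtree of a tree that contains $K$ and all of whose leaves lie in $K$ is necessarily the minimal subtree containing $K$, since deleting any of its edges separates it into two pieces each of which contains a leaf, hence a point of $K$, so the edge lies on a path in the subtree between two points of $K$. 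Hence $S=S(T,K)$ almost surely. Moreover, conditionally on $S$, the remaining edges of $T$ are generated by continuing to run Wilson's algorithm rooted at the subtree $S$, equivalently rooted at the vertex set $V(S)$ (a standard feature of Wilson's algorithm, see \cite[Chapter 4]{LP:book}); consequently the conditional law of $T\setminus S(T,K)$ given $S$ is $\UST_{G/V(S)}$, regarded as a measure on subsets of $E$ that assigns zero mass to edges internal to $V(S)$.

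It then remains to establish the general contraction inequality that $\UST_{G/W_2}$ is stochastically dominated by $\UST_{G/W_1}$ whenever $W_1\subseteq W_2$ are vertex sets of a finite network: applying this with $W_1=K\subseteq W_2=V(S)$ and averaging over the realizations of $S$ (domination by a fixed measure passes to mixtures) completes the proof. By transitivity and induction on $|W_2\setminus W_1|$ I may assume $W_2=W_1\cup\{x\}$ with $x\notin W_1$; writing $H=G/W_1$ and letting $a,b$ denote the images in $H$ of $W_1$ and $x$, the goal becomes to show that $\UST_{H/\{a,b\}}$ is stochastically dominated by $\UST_H$. Adjoin to $H$ a new edge $e_0$ joining $a$ and $b$ to form $H^+$, and let $\P$ denote the law of $\UST_{H^+}$. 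Contracting $e_0$ identifies $a$ with $b$, so $\UST_{H/\{a,b\}}$ is the restriction to $E(H)=E(H^+)\setminus\{e_0\}$ of $\P(\,\cdot\mid e_0\in T)$, while $\UST_H$ is the restriction to $E(H)$ of $\P(\,\cdot\mid e_0\notin T)$. For any increasing event $A$ depending only on the edges of $H$, negative association of the uniform spanning tree gives $\P(A\cap\{e_0\in T\})\leq\P(A)\P(e_0\in T)$, and combining this with the identity $\P(A)=\P(A\mid e_0\in T)\P(e_0\in T)+\P(A\mid e_0\notin T)\P(e_0\notin T)$ yields $\P(A\mid e_0\in T)\leq\P(A\mid e_0\notin T)$, which is exactly the required domination.

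The only nontrivial external input is the negative association of the uniform spanning tree (the Feder--Mihail theorem; see e.g.\ \cite[Chapter 4]{LP:book}), and I expect the verification of this step — or rather its careful invocation in the weighted setting — to be the one place where care is needed; everything else is bookkeeping around Wilson's algorithm together with the elementary identities describing how the uniform spanning tree transforms under edge deletion and contraction. I would also be careful to regard each $\UST_{G/W}$ throughout as a probability measure on $\{0,1\}^E$ supported on configurations that avoid every edge internal to $W$, so that all of the domination statements genuinely compare measures on a common space and the mixture over $S$ in the last step is legitimate.
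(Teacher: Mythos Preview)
Your proof is correct and follows essentially the same approach as the paper: identify the conditional law of $T\setminus S(T,K_2)$ given the Steiner tree as the UST of the contraction $G/V(S(T,K_2))$, use negative association to show that contracting more vertices only decreases the UST stochastically, and then average over the Steiner tree. The paper does this more tersely by citing the spatial Markov property and \cite[Theorem~4.6]{LP:book} directly, and it skips your preliminary reduction to $K_1=K_2$ by applying the contraction monotonicity with $K_1\subseteq K_2\subseteq V(S(T,K_2))$ in one step, but these are purely expository differences.
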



\begin{proof}
It follows from the spatial Markov property of the UST that, conditional on $S(T,K_2)$, the complement $T\setminus S(T,K_2)$ is distributed as the UST of the network $G/S(T,K_2)$ constructed from $G$ by identifying all the vertices in the tree $S(T,K_2)$, see \cite[Section 2.2.1]{HutNach15b}. On the other hand, it follows from the negative association property of the UST \cite[Theorem 4.6]{LP:book} that if $A \subseteq B$ are two sets of vertices, then the UST of $G / A$ stochastically dominates the UST of $G / B$. This implies that the claim holds when we condition on $S(T,K_2)$, and we conclude by averaging over the possible choices of $S(T,K_2)$.
\end{proof}

\begin{proof}[Proof of \cref{lem:domination}]
The claim \eqref{eq:dom1}
 follows from \cref{lem:domaux} by considering the finite networks $G_n^*$ used in the definition of the WUSF, taking $K_1 = \{u,\partial_n\}$ and $K_2 = K \cup \{\partial_n\}$, and taking the limit as $n\to\infty$. 

 We now prove \eqref{eq:dom2}. 
If $u=v$ then the claim follows by applying \cref{lem:domaux} to the finite networks $G_n^{*v}$, taking $K_1 = \emptyset$ and $K_2 = K$, and taking the limit as $n\to\infty$. Now suppose that $u\neq v$. 
 Let  $G/\{u,v\}$ be the network obtained from $G$ by identifying $u$ and $v$ into a single vertex $x$, and let $\F'$ be the $x$-wired uniform spanning forest of $G/\{u,v\}$. We consider $\F'$ as a subgraph of $G$, and let $\mathfrak{T}'$ be the component of $u$ in $\F'$. It follows from the negative association property of the UST and an obvious limiting argument that $\F'$ is stochastically dominated by $\F_u$, and hence that $\mathfrak{T}'$ is stochastically dominated by $\mathfrak{T}_u$. On the other hand, applying 
\cref{lem:domaux} to the finite networks $G_n^{*v}$, taking $K_1 = \{u,v\}$ and $K_2 = K \cup \{v\}$, and taking the limit as $n\to\infty$ 
   yields that the conditional distribution of $\past_{\F_v\setminus F_v(K)}(u)$ given $F_v(K)$ is stochastically dominated by $\mathfrak{T}'$ and hence by $\mathfrak{T}_u$.
\end{proof}

\section{Interlacements and the Aldous-Broder algorithm}
\label{sec:Interlacements}

The \textbf{random interlacement process} is a Poissonian soup of doubly-infinite random walks that was introduced by Sznitman \cite{Szni10} and generalized to arbitrary transient graphs by Texeira \cite{Teix09}. Formally, the interlacement process $\sI$ on the transient graph $G$ is a Poisson point process on $\cW^* \times \R$, where $\cW^*$ is the space of bi-infinite paths in $G$ modulo time-shift, and $\R$ is thought of as a time coordinate. In \cite{hutchcroft2015interlacements}, we showed that the random interlacement process can be used to generate the WUSF via a generalization of the Aldous-Broder algorithm. By shifting the time coordinate of the interlacement process, this sampling algorithm also allows us to view the WUSF as the stationary measure of a Markov process; this dynamical picture of the WUSF, or more precisely its generalization to the $v$-WUSF, is of central importance to the proofs of the main theorems of this paper.

We now begin to define these notions formally.
We must first define the space of trajectories $\cW^*$.
Recall that for each $-\infty \leq n \leq m \leq \infty$ we define $L(n,m)$ to be the line graph with vertex set $\{i \in \Z : n \leq i\leq m\}$ and with edge set $\{\{i,i+1\}: n\leq i \leq m-1\}$. Given a graph $G$, we define $\cW(n,m)$ to be the set of multigraph homomorphisms from $L(n,m)$ to $G$ that are transient in the sense that the preimage of each vertex of $G$ is finite. We define the set $\cW$ to be the union
\[ \cW:= \bigcup\left\{\cW(n,m): -\infty \leq n \leq m \leq \infty\right\}.\]
 The set $\cW$ can be made into a Polish space in such a way that local times at vertices and first and last hitting times of finite sets are continuous, see \cite[Section 3.2]{hutchcroft2015interlacements}.
We define the \textbf{time shift}  $\theta_k:\cW\to \cW$ by
$\theta_k : \cW(n,m) \longrightarrow \cW(n-k,m-k)$,
\[ \theta_k(w)(i)=w(i+k), \quad  \theta_k(w)(i,i+1)= w(i+k,i+k+1),\]
and define the space $\cW^*$ to be the quotient 
\[\cW^* = \cW / \sim\,, \text{ where } w_1\sim w_2 \text{ if and only if } w_1 = \theta_k (w_2) \text{ for some k}.\]
Let $\pi : \cW \to \cW^*$ denote the associated quotient function. We equip the set $\cW^*$ with the quotient topology (which is Polish) and associated Borel $\sigma$-algebra. 
 An element of $\cW^*$ is called a \textbf{trajectory}. 

We now define the intensity measure of the interlacement process. 
Let $G$ be a transient network. 
Given $w \in \cW(n,m)$, let $w^\leftarrow \in \cW(-m,-n)$ be the reversal of $w$, which is defined by setting $w^\leftarrow(i)=w(-i)$ for all $-m \leq i \leq -n$ and setting $w^\leftarrow(i,i+1)=w(-i,-i-1)$ for all $-m \leq i \leq -n-1$. For each subset $\sA \subseteq \cW$, let $\sA^\leftarrow$ denote the set 
\[\sA^\leftarrow:= \{w \in \cW: w^\leftarrow \in \sA \}.\]
For each set $K \subseteq V$, we let $\cW_K(n,m)$ be the set of $w\in \cW(n,m)$ such that there exists $n\leq i\leq m$ such that $w(i)\in K$, and similarly define $\cW_K$ to be the union $\cW_K=\bigcup \{\cW_K(n,m): -\infty \leq n \leq m \leq \infty\}$. Let $\tau^+_K$ be the first positive time that the walk visits $K$, where we set $\tau^+_K=\infty$ if the walk does not visit $K$ at any positive time. We 
define a measure $Q_K$ on $\cW_K$ by setting \[Q_K(\{w\in \cW: w(0)\notin K\})=0\] and, for each $u\in K$ and each two Borel subsets $\sA,\sB\subseteq \cW$,  
\begin{multline*}
Q_K\left( \{w \in \cW: w|_{(-\infty,0]} \in \sA,\, w(0) = u \text{ and } w|_{[0,\infty)} \in \sB \}\right)\\= 
 c(u)\mathbf{P}_u\big( X \in \sA^\leftarrow \text{ and } \tau_K^+ =\infty\big) \mathbf{P}_u\big(X \in \sB \big).
\end{multline*}
For each set $K \subseteq V$, let $\cW^*_K= \pi(W_K)$ be the set of trajectories that visit $K$.
It follows from the work of Sznitman \cite{Szni10} and Teixeira \cite{Teix09} that there exists a unique $\sigma$-finite measure $Q^\ast$ on $\cW^*$ such that for every Borel set $\sA \subseteq \cW^*$ and every finite $K\subset V$,
\begin{equation}\label{eq:ildefn} Q^*(\sA \cap \cW^*_K) =  Q_K\left(\pi^{-1}(\sA)\right). \end{equation}
We refer to the unique such measure $Q^*$ as the  \textbf{interlacement intensity measure}, and define the \textbf{random interlacement process} $\sI$ to be the Poisson point process on $\cW^*\times \R$ with intensity measure $Q^* \otimes \Lambda$, where $\Lambda$ is the Lebesgue measure on $\R$. For each $t\in\R$ and $A \subseteq \R$, we write $\sI_t$ for the set of $w\in \cW^*$ such that $(w,t)\in\sI$, and  write $\sI_{A}$ for the intersection of $\sI$ with $\cW^* \times A$.

See \cite[Proposition 3.3]{hutchcroft2015interlacements} for a limiting construction of the interlacement process from the random walk on an exhaustion with wired boundary conditions.

In \cite{hutchcroft2015interlacements}, we proved that the WUSF can be generated from the random interlacement process in the following manner. Let $G$ be a transient network, and let $t\in \R$. For each vertex $v$ of $G$, let $\sigma_t(v)$ be the smallest time greater than $t$ such that there exists a trajectory $W_{\sigma_t(v)} \in \sI_{\sigma_t(v)}$ that hits $v$, and note that the trajectory $W_{\sigma_t(v)}$ is unique for every $t\in \R$ and $v\in V$ almost surely. We define $e_t(v)$ to be the oriented edge of $G$ that is traversed by the trajectory $W_{\sigma_t(v)}$ as it enters $v$ for the first time, and define 
\[
\AB_t(\sI):=\Bigl\{ -e_t(v): v \in V \Bigr\}.
\]
\cite[Theorem 1.1]{hutchcroft2015interlacements} states that $\AB_t(\sI)$ has the law of the oriented wired uniform spanning forest of $G$ for every $t\in \R$. Moreover, \cite[Proposition 4.2]{hutchcroft2015interlacements} states that $\langle \AB_t(\sI)\rangle_{t\in \R}$ is a stationary, ergodic, mixing, stochastically continuous  Markov process.

\subsection{$v$-wired variants}


In this section, we introduce a variation on the interlacement process in which a vertex $v$ is wired to infinity, which we call the \textbf{$v$-wired interlacement process}. We then show how the $v$-wired interlacement process can be used to generate the $v$-WUSF in the same way that the usual interlacement process generates the usual WUSF.

Let $G$ be a (not necessarily transient) network and let $v$ be a vertex of $G$. We denote by $\tau_v$ the first time that the random walk visits $v$, and denote by $\tau^+_K$ the first positive time that the random walk visits $K$. 
We write $X^T$ for the random walk ran up to the (possibly random and/or infinite) time $T$, which is considered to be an element of $\cW(0,T)$. In particular, if $X$ is started at $v$ then $X^{\tau_v}$ is the path of length zero at $v$. 
For each finite set $K \subset V$ we define a measure $Q_{v,K}$ on $\cW$ by $Q_{v,K}(\{w \in \cW : w(0)\notin K\})=0$,
%
\begin{multline*}
Q_{v,K}\left( \{w \in \cW: w|_{(-\infty,0]} \in \sA,\, w(0) = u \text{ and } w|_{[0,\infty)} \in \sB \}\right)\\= 
 c(u)\bP_u\big( X^{\tau_v} \in \sA^\leftarrow \text{ and } \tau_K^+ >\tau_v \big) \bP_u\big(X^{\tau_v} \in \sB \big) 
\end{multline*}
(with the convention that $\tau^+_K>\tau_v$ in the case that both hitting times are equal to $\infty$) for every $u\in K\setminus \{v\}$ and every two Borel sets $\sA,\sB \subseteq \cW$, and 
\begin{multline*}
Q_{v,K}\left( \{w \in \cW: w|_{(-\infty,0]} \in \sA,\, w(0) = v \text{ and } w|_{[0,\infty)} \in \sB \}\right)\\=
c(v)\mathbbm{1}(w_0 \in \sA^\leftarrow) \bP_v\big(X^{\tau_v} \in \sB \big)
+c(v)\bP_v\big( X^{\tau_v} \in \sA^\leftarrow \text{ and } \tau_K^+ = \infty \big) \mathbbm{1}(w_0 \in \sB)
\end{multline*}
for every two Borel sets $\sA,\sB \subseteq \cW$ if $v\in K$, 
where we write $w_0\in \cW(0,0)$ for the path of length zero at $v$.

As with the usual interlacement intensity measure, we wish to define a measure $Q^*_v$ on $\cW^*$ via the consistency condition
 \begin{equation}
 \label{eq:rootedintensitydef}
 Q^*_v(\sA \cap \cW^*_K) = Q_{v,K}(\pi^{-1}(\sA))
 \end{equation}
for every finite set $K \subset V$ and every Borel set $\sA \subseteq \cW^*$, and define the $v$-rooted interlacement process to be the Poisson point process on $\cW^*\times \R$ with intensity measure $Q_v^* \otimes \Lambda$, where $\Lambda$ is the Lebesgue measure on $\R$.

We will deduce that such a measure exists via the following limiting procedure, which also gives a direct construction of the $v$-rooted interlacement process. 
Let $N$ be a Poisson point process on $\R$ with intensity measure $(c(\partial_n)+c(v))\Lambda$. Conditional on $N$, for each $t\in N$, let $W_t$ be a random walk on $G_n^{*v}$  started at $\partial_n$ (which is identified with $v$) and stopped when it first returns to $\partial_n$, where we consider each $W_t$ to be an element of $\cW^*$. We define $\sI^n_v$ to be the point process
$\sI^n_v:=\left\{(W_t,t) : t \in N\right\}$.

\begin{proposition}\label{Prop:intexhaust}
Let $G$ be an infinite network, let $v$ be a vertex of $G$, and let $\langle V_n \rangle_{n\geq 0}$ be an exhaustion of $G$. Then the Poisson point processes $\sI^n_v$ converge in distribution as $n\to\infty$ to a Poisson point process $\sI_v$ on $\cW^* \times \R$ with  intensity measure of the form
$Q^*_v \otimes \Lambda$,
where $\Lambda$ is the Lebesgue measure on $\R$ and $Q^*_v$ is a $\sigma$-finite measure on $\cW^*$ such that \eqref{eq:rootedintensitydef} is satisfied 
for every finite set $K \subset V$ and every event $\sA \subseteq \cW^*$.
\end{proposition}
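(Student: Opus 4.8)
The plan is to mimic the exhaustion construction of the ordinary interlacement process given in \cite[Proposition 3.3]{hutchcroft2015interlacements}, adapting it to the $v$-wired setting; the one structural novelty is that the glued vertex $\mathfrak{o}_n := \{v\} = \{\partial_n\}$ of $G^{*v}_n$ plays the role of both $v$ and of the point at infinity, so that the argument goes through whether or not $G$ is transient. Since each $\sI^n_v$ is by construction a Poisson point process on $\cW^*\times\R$, its distribution is determined by its intensity measure, and convergence in distribution of Poisson processes on the Polish space $\cW^*\times \R$ reduces (exactly as in \cite{hutchcroft2015interlacements}) to convergence of the intensity measures restricted to the sets $\cW^*_K\times[a,b]$, $K\subset V$ finite and $a<b$ real, which exhaust $\cW^*\times\R$ and on each of which every $\sI^n_v$ has finite intensity. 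So the first step is to identify these intensity measures and their limits.

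By the description of $\sI^n_v$ as a Poisson process on $\R$ of rate $c(\partial_n)+c(v)$ with an i.i.d.\ excursion attached to each point, the intensity measure of $\sI^n_v$ is $(c(\partial_n)+c(v))\,\mu_n\otimes\Lambda$, where $\mu_n$ is the law on $\cW^*$ of the image under $\pi$ of a random walk excursion on $G^{*v}_n$ from $\mathfrak{o}_n$ run until its first return to $\mathfrak{o}_n$. It therefore suffices to prove that, for every finite $K\subseteq V$, the measures $\nu^n_K := (c(\partial_n)+c(v))\,\mu_n(\,\cdot\,\cap\cW^*_K)$ converge to the measure $\sA\mapsto Q_{v,K}(\pi^{-1}(\sA))$ on $\cW^*$. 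Once this is known, the limiting measures are automatically consistent as $K$ increases (each $\nu^n_K$ is the restriction of the common measure $(c(\partial_n)+c(v))\mu_n$ to $\cW^*_K$, a property that passes to the limit) and of finite total mass $Q_{v,K}(\cW)<\infty$, so taking an increasing sequence of finite sets exhausting $V$ glues them to a unique $\sigma$-finite measure $Q^*_v$ on $\cW^* = \bigcup_K \cW^*_K$ satisfying \eqref{eq:rootedintensitydef}.

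The heart of the matter is the convergence $\nu^n_K\to Q_{v,K}\circ\pi^{-1}$. I would parametrise $\cW^*_K$ by first-visit-to-$K$ representatives: every trajectory meeting $K$ has a unique representative $w\in\cW$ with $w(0)\in K$ and $w(i)\notin K$ for all $i<0$, and both $Q_{v,K}$ and $\nu^n_K$ may be viewed as measures on this slice of $\cW$. To compute $\nu^n_K$ one decomposes an excursion $W$ from $\mathfrak{o}_n$ at its first visit $\xi$ to $K$, say $u=W(\xi)$; the time-reversal of the initial segment $W|_{[0,\xi]}$ is, by the reversibility identity $c(x)\mathbf{P}_x(X_{[0,\ell]}=\gamma)=c(y)\mathbf{P}_y(X_{[0,\ell]}=\gamma^\leftarrow)$ for finite paths $\gamma$ from $x$ to $y$, a walk from $u$ avoiding $K$ after time $0$ until it reaches $\mathfrak{o}_n$ --- and crucially the prefactor $c(\mathfrak{o}_n)=c(\partial_n)+c(v)$ produced by this identity cancels the intensity prefactor --- while the terminal segment of $W$ after time $\xi$ is an independent walk from $u$ run until it reaches $\mathfrak{o}_n$. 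It then remains to let $n\to\infty$ in these two pieces: reaching $\mathfrak{o}_n$ in $G^{*v}_n$ means reaching $\{v\}\cup(V\setminus V_n)$ in $G$, and since $V\setminus V_n$ exits every finite set, on any cylinder event a walk from $u$ run until it reaches $\mathfrak{o}_n$ converges in law to a walk run until $\tau_v$ on the event $\{\tau_v<\infty\}$ and to a walk never reaching $v$ on the complement; matching this against the cases $u\neq v$ and $u=v$ in the definition of $Q_{v,K}$, with the $K$-avoidance becoming the conditions $\tau^+_K>\tau_v$ respectively $\tau^+_K=\infty$, yields the claim after a dominated-convergence argument using continuity of hitting times and local times on $\cW^*$.

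I expect the main obstacle to be precisely this final limiting step. One must simultaneously (i) handle the dichotomy between excursions that hit $v$ and those that return through $\partial_n$, which in the limit contribute genuinely transient path-pieces even when $G$ is recurrent; (ii) verify the measurability and continuity of the decomposition map on $\cW^*$ carefully enough that convergence on a generating $\pi$-system of cylinder events suffices; and (iii) rule out escape of mass, i.e.\ show $\nu^n_K(\cW^*)=(c(\partial_n)+c(v))\mathbf{P}_{\mathfrak{o}_n}(\text{excursion meets }K)\to Q_{v,K}(\cW)$ and not merely $\liminf\geq$. All three points are treated in \cite{hutchcroft2015interlacements} in the unrooted case, and the required modifications are routine but somewhat tedious, so I would present the argument by indicating the changes rather than reproducing it in full.
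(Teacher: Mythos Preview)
Your approach is correct and matches the paper's own treatment: the paper does not give a proof but simply states that it is very similar to that of \cite[Proposition 3.3]{hutchcroft2015interlacements} and omits it. Your outline of the required adaptations---the first-entry-to-$K$ decomposition, the time-reversal identity cancelling the $c(\mathfrak{o}_n)$ prefactor, and the limiting dichotomy between hitting $v$ and escaping through $\partial_n$---is exactly the expected modification, and your identification of the tedious-but-routine points (i)--(iii) is accurate.
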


The proof is very similar to that of \cite[Proposition 3.3]{hutchcroft2015interlacements}, and is omitted.

\begin{corollary}
Let $G$ be an infinite network and let $v$ be a vertex of $G$. Then there exists a unique $\sigma$-finite measure $Q^*_v$ on $\cW^*$ such that \eqref{eq:rootedintensitydef} is satisfied 
for every finite set $K \subset V$ and every event $\sA \subseteq \cW^*$.
\end{corollary}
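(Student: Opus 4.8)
The plan is to obtain existence directly from \cref{Prop:intexhaust} and to prove uniqueness by a short covering argument that uses the fact that every element of $\cW^*$ visits some vertex. Indeed, \cref{Prop:intexhaust} already furnishes, for any infinite network $G$ and vertex $v$, a $\sigma$-finite measure $Q^*_v$ on $\cW^*$ satisfying \eqref{eq:rootedintensitydef} for every finite $K \subset V$ and every Borel set $\sA \subseteq \cW^*$, namely the intensity measure in the $\cW^*$-coordinate of the weak limit $\sI_v$ of the point processes $\sI^n_v$. (If $G$ happens to be finite the exhaustion is eventually constant and existence is immediate by gluing the explicitly-defined finite measures $Q_{v,K}$ along an exhaustion.) So the only thing left to do is establish uniqueness.

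The first step I would carry out is to record that the sets $\cW^*_K$ with $K$ finite cover $\cW^*$. This holds because every $w \in \cW$ has nonempty domain, so it visits at least one vertex $u$, and hence $w \in \cW_K$ as soon as $u \in K$; since $\cW_K$ is invariant under the time shifts $\theta_k$ we have $\pi^{-1}(\cW^*_K) = \cW_K$, and consequently $\cW^* = \bigcup\{\cW^*_K : K \subseteq V \text{ finite}\}$, so that $\cW^*_{K_n} \uparrow \cW^*$ for any exhaustion $\langle K_n \rangle_{n\geq 1}$ of $V$ by finite sets. I would also note, directly from the formulas defining $Q_{v,K}$, that $Q_{v,K}(\cW) \leq \sum_{u\in K} c(u) + c(v) < \infty$ for every finite $K$ (the cell rooted at $u \in K \setminus \{v\}$ has mass at most $c(u)$, and, when $v \in K$, the cell rooted at $v$ has mass at most $2c(v)$); via \eqref{eq:rootedintensitydef} this shows $Q^*_v(\cW^*_K) < \infty$ and in particular re-proves $\sigma$-finiteness.

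For the uniqueness itself, suppose $Q_1$ and $Q_2$ are two $\sigma$-finite measures on $\cW^*$ that both satisfy \eqref{eq:rootedintensitydef}, and fix an exhaustion $\langle K_n \rangle$ of $V$ by finite sets. For any Borel $\sA \subseteq \cW^*$, the shift-invariance of $\cW_{K_n}$ gives $\pi^{-1}(\sA \cap \cW^*_{K_n}) = \pi^{-1}(\sA) \cap \cW_{K_n}$, so \eqref{eq:rootedintensitydef} forces
\[ Q_1\bigl(\sA \cap \cW^*_{K_n}\bigr) = Q_{v,K_n}\bigl(\pi^{-1}(\sA)\bigr) = Q_2\bigl(\sA \cap \cW^*_{K_n}\bigr) \qquad \text{for every } n \geq 1. \]
Since $\sA \cap \cW^*_{K_n} \uparrow \sA$, continuity of measures from below yields $Q_1(\sA) = \lim_n Q_1(\sA\cap\cW^*_{K_n}) = \lim_n Q_2(\sA\cap\cW^*_{K_n}) = Q_2(\sA)$, and as $\sA$ was an arbitrary Borel set we conclude $Q_1 = Q_2$.

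I do not expect any genuine obstacle: this argument runs exactly parallel to the construction of the ordinary interlacement intensity measure in \cite{Szni10, Teix09} (compare \eqref{eq:ildefn}), and the only mildly delicate points are the identity $\pi^{-1}(\cW^*_K) = \cW_K$, which is simply the time-shift invariance of $\cW_K$, and the elementary finiteness estimate $Q_{v,K}(\cW) < \infty$ for finite $K$, which is read off the defining formulas.
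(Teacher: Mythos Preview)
Your proof is correct and follows essentially the same approach as the paper. The paper's own argument is terser: existence is cited from \cref{Prop:intexhaust}, and uniqueness is dismissed in one line by observing that sets of the form $\sA \cap \cW^*_K$ form a $\pi$-system generating the Borel $\sigma$-algebra on $\cW^*$. Your covering-and-continuity argument is the explicit version of this same observation (and arguably cleaner, since it avoids having to check the $\sigma$-finiteness hypotheses needed to apply a $\pi$-$\lambda$ theorem for infinite measures).
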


\begin{proof}
The existence statement follows immediately from \cref{Prop:intexhaust}. 
The uniqueness statement is immediate since sets of the form $\sA \cap \cW^*_K$ are a $\pi$-system generating the Borel $\sigma$-algebra on $\cW^*$.
\end{proof}

 We call $\sI_v$ the \textbf{$v$-wired interlacement process}. Note that it may include trajectories that are either doubly infinite, singly infinite and ending at $v$, singly infinite and starting at $v$, or finite and both starting and ending at $v$.


We have the following $v$-rooted analogue of \cite[Theorem 1.1 and Proposition 4.2]{hutchcroft2015interlacements}, whose proof is identical to those in that paper.

\begin{proposition}
Let $G$ be an infinite network, let $v$ be a vertex of $G$, and let $\sI_v$ be the $v$-rooted interlacement process on $G$. Then 
\[\AB_{v,t}(\sI_v):= \bigl\{-e_t(u):u \in V \setminus \{v\} \bigr\} \]has the law of the oriented $v$-wired uniform spanning forest of $G$ for every $t\in \R$. Moreover, the process $\langle \AB_{v,t}(\sI_v) \rangle_{t\in \R}$ is a stationary, ergodic, stochastically continuous Markov process.
\end{proposition}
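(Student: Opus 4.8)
The plan is to follow the proofs of \cite[Theorem 1.1 and Proposition 4.2]{hutchcroft2015interlacements} essentially verbatim, indicating only the modifications needed in the $v$-wired setting. The argument has three parts: a finite-volume stationary Aldous--Broder identity, passage to the limit along an exhaustion, and the process-level statements.

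\emph{Finite volume.} Fix $n$ and work in $G_n^{*v}$, in which $v$ is identified with $\partial_n$; write $o$ for the resulting merged vertex, whose conductance is $c(\partial_n)+c(v)$. By construction, $\sI^n_v$ encodes, via its excursions $W_t$, $t\in N$ reordered in increasing order of $t$, a two-sided stationary random walk on $G_n^{*v}$ observed together with its excursions away from $o$, in the manner of \cite[Proposition 3.3]{hutchcroft2015interlacements}. The classical Aldous--Broder theorem in the forward-in-time stationary form used in \cite[Section 4]{hutchcroft2015interlacements} (cf.\ \cite{Aldous90,broder1989generating,LP:book}) then says exactly that, for each fixed $t$, the collection of reversed first-entrance edges $\{-e_t(u):u\neq o\}$ is distributed as the uniform spanning tree of $G_n^{*v}$ oriented towards $o$. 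The one genuinely new feature is the presence of degenerate trajectories in $\sI^n_v$ (and later $\sI_v$) --- those equal to the length-zero path at $v$, or to walks that only start or only end at $v$ --- coming from the $+c(v)$ terms in the intensity; such a trajectory never supplies a first-entrance edge at a vertex $u\neq v$, so it may be deleted without changing $\AB_{v,t}$, and the identity is unaffected.

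\emph{Passage to the limit.} By \cref{Prop:intexhaust} the processes $\sI^n_v$ converge in distribution to $\sI_v$, and by definition the uniform spanning trees of $G_n^{*v}$ oriented towards $o$ converge in distribution to the oriented $v$-WUSF. It then remains to check that the map sending a locally finite configuration $\omega$ on $\cW^*\times\R$ to $\AB_{v,t}(\omega)\in\{0,1\}^{E^\rightarrow}$ (with the product topology on the target) is continuous at $\sI_v$ almost surely; this is the same continuity statement established in \cite{hutchcroft2015interlacements}, resting on the almost sure uniqueness of the trajectory realising the first entrance to each vertex after time $t$, on $\sigma_t(u)$ not being an atom of the relevant one-dimensional point process, and on the continuity of first-hitting times of finite sets on $\cW$. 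The continuous mapping theorem then identifies the law of $\AB_{v,t}(\sI_v)$ as that of the oriented $v$-WUSF, for every $t\in\R$.

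\emph{Process properties.} Stationarity of $\langle\AB_{v,t}(\sI_v)\rangle_{t\in\R}$ is immediate from the invariance of $Q^*_v\otimes\Lambda$ under the time-shift $(w,s)\mapsto(w,s+r)$, since $\AB_{v,\cdot}$ is equivariant; ergodicity follows from the same shift-invariance together with the standard fact that a Poisson process with a Lebesgue factor is mixing under the associated flow. For the Markov property, note that $\AB_{v,t}$ is measurable with respect to $\sI_v\cap(\cW^*\times(t,\infty))$, and that for $t_1<t_0$ one has $e_{t_1}(u)=e_{t_0}(u)$ unless some trajectory of $\sI_v$ with time coordinate in $(t_1,t_0]$ hits $u$; consequently $\langle\AB_{v,t}\rangle_{t\le t_0}$ is a deterministic function of the pair $\bigl(\AB_{v,t_0},\,\sI_v\cap(\cW^*\times(-\infty,t_0])\bigr)$, while $\langle\AB_{v,t}\rangle_{t\ge t_0}$ is a deterministic function of $\sI_v\cap(\cW^*\times(t_0,\infty))$, and these two restrictions of the Poisson process are independent; a short conditioning computation then yields that the future is conditionally independent of the past given $\AB_{v,t_0}$. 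Stochastic continuity follows because, almost surely, the times of trajectories hitting any given vertex form a discrete set, so each $e_t(u)$ is locally constant in $t$ and hence $\AB_{v,t'}\to\AB_{v,t}$ in the product topology, a fortiori in probability, as $t'\to t$. The step requiring the most care is the continuity of $\omega\mapsto\AB_{v,t}(\omega)$ together with the bookkeeping of the degenerate $v$-trajectories --- that is, checking that the limiting identity is not spoiled by the extra structure of $\sI_v$ compared to the ordinary interlacement process --- but since \cite{hutchcroft2015interlacements} already constructs the topology on $\cW$ and $\cW^*$ making local times and hitting times continuous, this is routine.
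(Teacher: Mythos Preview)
Your proposal is correct and follows exactly the approach the paper takes: the paper's own ``proof'' consists of the single sentence that the argument is identical to that of \cite[Theorem~1.1 and Proposition~4.2]{hutchcroft2015interlacements}, and you have carried out precisely that programme, supplying a careful sketch of the finite-volume Aldous--Broder identity in $G_n^{*v}$, the passage to the limit via \cref{Prop:intexhaust} and the continuous mapping theorem, and the verification of the process-level properties. Your additional observation that the degenerate $v$-trajectories never supply a first-entrance edge at any $u\neq v$ is the only point genuinely specific to the $v$-wired setting, and you have handled it correctly.
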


\subsection{Relation to capacity}

In this section, we record the well-known relationship between the interlacement intensity measure $Q^*$ and the \textbf{capacity} of a set, and extend this relationship to the $v$-rooted interlacement intensity measure $Q^*_v$. 
Recall that the \textbf{capacity} (a.k.a.\ conductance to infinity \cite[Chapter 2]{LP:book}) of a finite set of vertices $K$ in a network $G$ is defined to be
\[
\Cap(K):= Q^*(\cW^*_K)=\sum_{v\in K} c(v)\bP_v(\tau^+_K=\infty),
\]
where $\tau^+_K$ is the first positive time that the random walk visits $K$ and the second equality follows by definition of $Q^*$.
Similarly, we define the \textbf{$v$-wired capacity} of a finite set $K$ to be
\[
\Cap_v(K) := Q_v^*(\cW^*_K) = \sum_{u\in K \setminus v} c(u)\bP_u(\tau^+_K>\tau_v) + c(v)\mathbbm{1}(v\in K)\left[1+\mathbf{P}_v(\tau^+_K=\infty)\right],
\]
with the convention that $\tau^+_K>\tau_v$ in the case that both hitting times are equal to $\infty$. Note that if $v\notin K$ then $\Cap_v(K)$ is the effective conductance between $K$ and $\{\infty,v\}$ 
(see \cref{sec:AlexanderOrbach} or \cite[Chapter 2]{LP:book} for background on effective conductances). Thus, the number of trajectories in $\sI_{[a,b]}$ that hit $K$ is a Poisson random variable with parameter $|a-b| \Cap(K)$, while the number of trajectories in $\sI_{v,[a,b]}$ that hit $K$ is a Poisson random variable with parameter $|a-b| \Cap_v(K)$.

In our setting the capacity and $v$-rooted capacity of a set will always be of the same order: 
The inequality $\Cap(K)\leq \Cap_v(K)$ is immediate, while on the other hand we have that
\[
\Cap_v(K) \leq \Cap(K) + 2c(v) + \sum_{u\in K\setminus \{v\}} c(u) \bP_u( \tau_v < \tau^+_K < \infty).
\]
By time-reversal we have that 
\begin{multline*}c(u)\bP_u( \tau_v < \tau^+_K < \infty )\leq c(u) \bP_u\bigl( \{\tau_v < \tau^+_K < \infty\} \cup \{\tau_v < \infty, \tau^+_K=\infty\}\bigr) 
\\= c(v)\bP_v(\tau^+_K<\infty, X_{\tau^+_K} =u)\end{multline*} for each $u\in K \setminus \{v\}$, and summing over $u\in K \setminus \{v\}$ we obtain that
\begin{equation}
\label{eq:CapvCapComparison}
\Cap_v(K) \leq \Cap(K) + c(v)\bP_v(\tau^+_K<\infty) + 2c(v) \leq \Cap(K) + 3c(v).
\end{equation}
Thus, in networks with bounded vertex conductances, the capacity and and $v$-rooted capacity agree to within an additive constant. Furthermore, the assumption that $\bubnorm{P}<\infty$ implies that $G$ is uniformly transient and hence that $\Cap(K)$ is bounded below by a positive constant for every non-empty set $K$.

\subsection{Evolution of the past under the dynamics}

The reason that the dynamics induced by the interlacement Aldous-Broder algorithm are so useful for studying the past of the origin in the WUSF is that the past itself  evolves in a very natural way under the dynamics. Indeed, if we run time backwards and compare the pasts $\fP_0(v)$ and $\fP_{-t}(v)$ of $v$ in $\F_0$ and $\F_{-t}$, we find that the past can become larger only at those times when a trajectory visits $v$. At all other times, $\fP_{-t}(v)$ decreases monotonically in $t$ as it is `sliced into pieces' by newly arriving trajectories. This behaviour is summarised in the following lemma, which is adapted from \cite[Lemma 5.1]{hutchcroft2015interlacements}. Given a set $A \subseteq \R$, we write $\cI_{A}$ for the set of vertices that are hit by some trajectory in $\sI_{A}$, and write $\fP_t(v)$ for the past of $v$ in the forest $\F_t$.

\begin{lemma}
\label{lem:PastDynamics}
Let $G$ be a transient network, let $\sI$ be the interlacement process on $G$, and let $\langle \F_t\rangle_{t\in \R}=\langle \AB_t(\sI) \rangle_{t\in \R}$. Let $v$ be a vertex of $G$, and let $s<t$. If $v \notin \cI_{[s,t)}$, then $\fP_s(v)$ is equal to the component of $v$ in the subgraph of $\fP_t(v)$ induced by $V \setminus \cI_{[s,t)}$.
\end{lemma}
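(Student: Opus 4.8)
The plan is to translate the statement into the language of the ``parent map'' of the interlacement Aldous--Broder forest, which makes both the dynamics of the past and the required combinatorics transparent. For each $r\in\R$ and each vertex $u$, let $\phi_r(u)$ denote the endpoint of $e_r(u)$ other than $u$, so that the oriented edge emanating from $u$ in $\F_r=\AB_r(\sI)$ points from $u$ to $\phi_r(u)$, and $u\in\fP_r(v)$ if and only if $\phi_r^k(u)=v$ for some $k\geq 0$. Since $\F_r$ is a forest, $\fP_r(v)$ is then a (finite or infinite) subtree of $\F_r$ rooted at $v$, and for any $u\in\fP_r(v)$ the unique path from $u$ to $v$ in this tree is exactly $u,\phi_r(u),\phi_r^2(u),\ldots,v$.

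First I would record two facts, valid on the full-probability event that no trajectory of $\sI$ has time coordinate in $\{s,t\}$ and that all the relevant trajectories $W_{\sigma_r(u)}$ are unique. (i) Since $\sigma_r(u)$ is non-decreasing in $r$ and is locally constant away from the locally finite set of times at which a trajectory hits $u$, we have $\phi_s(u)=\phi_t(u)$ for every $u\notin\cI_{[s,t)}$. (ii) The key observation is that if $u\in\cI_{[s,t)}$ then $\phi_s(u)\in\cI_{[s,t)}$: indeed $u\in\cI_{[s,t)}$ forces $\sigma_s(u)\in(s,t)$, so $W_{\sigma_s(u)}\in\sI_{[s,t)}$, and $\phi_s(u)$ is the vertex visited by $W_{\sigma_s(u)}$ immediately before its first entry to $u$, hence it too lies in $\cI_{[s,t)}$.

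With these in hand, set $U:=V\setminus\cI_{[s,t)}$, so that $v\in U$ by hypothesis, and I would finish by proving the two inclusions. If $u\in\fP_s(v)$, consider its $\phi_s$-path $u=w_0,w_1=\phi_s(w_0),\ldots,w_k=v$; no $w_j$ can lie in $\cI_{[s,t)}$, since iterating (ii) would then force $v=w_k\in\cI_{[s,t)}$, so the whole path lies in $U$, and (i) upgrades $w_{j+1}=\phi_s(w_j)$ to $w_{j+1}=\phi_t(w_j)$ for each $j<k$. Hence $u\in\fP_t(v)$, and its $\phi_t$-path to $v$, which is the unique $u$--$v$ path in the tree $\fP_t(v)$, lies in $U$; thus $u$ lies in the component of $v$ in the subgraph of $\fP_t(v)$ induced by $U$. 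Conversely, if $u$ lies in that component then, since $\fP_t(v)$ is a tree, the unique $u$--$v$ path in $\fP_t(v)$ lies in $U$; writing it $u=w_0,\ldots,w_k=v$ with $w_{j+1}=\phi_t(w_j)$, fact (i) gives $w_{j+1}=\phi_s(w_j)$ for each $j<k$, so $\phi_s^k(u)=v$ and $u\in\fP_s(v)$.

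I expect the only delicate point to be fact (i): one has to be careful with the half-open interval $[s,t)$ and the null event that a trajectory arrives exactly at time $s$ or $t$, and with the precise sense in which $r\mapsto\sigma_r(u)$ is monotone and locally constant. This is essentially the content of \cite[Lemma 5.1]{hutchcroft2015interlacements} specialized to a single vertex $v$, so it should be possible to quote or lightly adapt that argument for these bookkeeping details; once (i) and (ii) are established, the two inclusions above are immediate.
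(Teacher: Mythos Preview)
Your proof is correct and follows essentially the same route as the paper's. Both arguments rest on the same two observations about the parent map along the future path: that $\phi_s$ and $\phi_t$ agree at vertices not in $\cI_{[s,t)}$, and that $\cI_{[s,t)}$ is absorbing under $\phi_s$ (the paper phrases this as ``$\sigma_s(u_{i,s})$ is decreasing in $i$'', which is a mild strengthening of your fact (ii)). The paper's version is terser---it packages both facts into the single statement that $\Gamma_s(u,\infty)$ and $\Gamma_t(u,\infty)$ coincide up to the first step at which the $\Gamma_s$-path enters $\cI_{[s,t)}$, after which it remains there---and then says ``the claim follows readily,'' whereas you have written out the two inclusions explicitly; but the underlying argument is the same.
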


\begin{proof}
Suppose that $u$ is a vertex of $V$, and let $\Gamma_s(u,\infty)$ and $\Gamma_t(u,\infty)$ be the futures of $u$ in $\F_s$ and $\F_t$ respectively.  Let $u=u_{0,s},u_{1,s},\ldots$ and $u=u_{0,t},u_{1,t},\ldots$ be, respectively, the vertices visited by $\Gamma_s(u,\infty)$ and $\Gamma_t(u,\infty)$ in order. 
Let $i_0$ be the smallest $i$ such that $\sigma_s(u_{i,s}) <t$. Then it follows from the definitions that $\Gamma_s(u,\infty)$ and $\Gamma_t(u,\infty)$ coincide up until step $i_0$, and that $\sigma_s(u_{i,s}) <t$ for every $i\geq i_0$. (Indeed, $\sigma_s(u_{i,s})$ is decreasing in $i$.) On the other hand, if $v\notin \cI_{[s,t)}$ then $\sigma_s(v)>t$, and the claim follows readily.
\end{proof}

Similarly, we have the following lemma in the $v$-wired case, whose proof is identical to that of \cref{lem:PastDynamics} above. Given $A \subseteq \R$, we write $\cI_{v,A}$ for the set of vertices that are hit by some trajectory in $\sI_{v,A}$, and write $\fP_{v,t}(u)$ for the past of $u$ in the forest $\F_{v,t}$.

\begin{lemma}
\label{lem:vPastDynamics}
Let $G$ be a network, let $v$ be a vertex of $G$, let $\sI_v$ be the $v$-wired interlacement process on $G$, and let $\langle \F_{v,t}\rangle_{t\in \R}=\langle \AB_{v,t}(\sI_v) \rangle_{t\in \R}$. Let $u$ be a vertex of $G$, and let $s<t$. If $u \notin \cI_{v,[s,t)}$, then $\fP_{v,s}(u)$ is equal to the component of $u$ in the subgraph of $\fP_{v,t}(u)$ induced by $V \setminus \cI_{v,[s,t)}$.
\end{lemma}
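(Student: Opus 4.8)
Since this is the exact $v$-wired analogue of \cref{lem:PastDynamics}, the plan is simply to rerun the proof of that lemma with the interlacement process $\sI$ replaced by the $v$-wired interlacement process $\sI_v$ and the Aldous--Broder map $\AB_t$ replaced by $\AB_{v,t}$. This is legitimate because $\AB_{v,t}$ is given by the same formula as $\AB_t$ --- orient each $u\neq v$ along the reversal of the edge by which the first trajectory of $\sI_v$ to arrive after time $t$ first enters $u$ --- so the two combinatorial facts underlying the original argument survive verbatim. Concretely, I would fix a vertex $u$, write $\sigma_{v,r}(z)$ for the first time after $r$ at which some trajectory of $\sI_v$ hits $z$ and $e_r(z)$ for the edge by which that trajectory first enters $z$, so that $\F_{v,r}=\AB_{v,r}(\sI_v)=\{-e_r(z):z\in V\setminus\{v\}\}$.

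The two facts to record first, each immediate from the definitions exactly as in \cref{lem:PastDynamics}, are: (i) along any future path $z_0,z_1,z_2,\ldots$ in $\F_{v,r}$, the times $\sigma_{v,r}(z_i)$ are non-increasing in $i$, because the out-edge of $z_i$ in $\F_{v,r}$ points to the vertex occupied by the trajectory realising $\sigma_{v,r}(z_i)$ immediately before that trajectory first enters $z_i$, so that same trajectory hits $z_{i+1}$ before it hits $z_i$; and (ii) if $\sigma_{v,s}(z)>t$, then no trajectory of $\sI_v$ with time label in $(s,t]$ hits $z$, so $\sigma_{v,t}(z)=\sigma_{v,s}(z)$ and hence $e_t(z)=e_s(z)$ --- that is, the future of $z$ agrees in $\F_{v,s}$ and $\F_{v,t}$ for as long as the arrival times along it stay above $t$.

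Granting (i) and (ii), I would establish the two inclusions. For $\supseteq$: if $w$ lies in the component of $u$ in the subgraph of $\fP_{v,t}(u)$ induced by $V\setminus\cI_{v,[s,t)}$, then, $\fP_{v,t}(u)$ being a tree, the path from $w$ to $u$ in it is the future path from $w$ to $u$ in $\F_{v,t}$, and it avoids $\cI_{v,[s,t)}$, so each of its vertices $z$ has $\sigma_{v,s}(z)>t$; by (ii) this is also the future path from $w$ to $u$ in $\F_{v,s}$, so $w\in\fP_{v,s}(u)$. For $\subseteq$: if $w\in\fP_{v,s}(u)$, then the future of $w$ in $\F_{v,s}$ passes through $u$, and since $u\notin\cI_{v,[s,t)}$ we have $\sigma_{v,s}(u)>t$; by (i), every vertex $z$ on the sub-path from $w$ to $u$ satisfies $\sigma_{v,s}(z)\ge\sigma_{v,s}(u)>t$, so this sub-path avoids $\cI_{v,[s,t)}$ and, by (ii), coincides with the future path from $w$ to $u$ in $\F_{v,t}$; hence $w\in\fP_{v,t}(u)$ and $w$ is joined to $u$ within $V\setminus\cI_{v,[s,t)}$.

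The one point that is not a pure copy of \cref{lem:PastDynamics}, and hence the only place I would pause, is that $\F_{v,r}$ has a distinguished sink vertex $v$ and the trajectories of $\sI_v$ may start at $v$, end at $v$, or both. I expect this to be harmless: facts (i) and (ii) use only the defining formula for $\AB_{v,t}$ and the fact that almost surely distinct trajectories of $\sI_v$ carry distinct time labels and each trajectory supplies a well-defined first-entry edge at every vertex it visits, none of which is affected by the special role of $v$. Once this is checked the argument closes, with the degenerate case $u=v$ handled separately and trivially.
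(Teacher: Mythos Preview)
Your proposal is correct and follows the same approach as the paper: the paper simply states that the proof of \cref{lem:vPastDynamics} is identical to that of \cref{lem:PastDynamics}, and you have faithfully carried out this adaptation, isolating the two key facts (monotonicity of arrival times along future paths, and stability of out-edges at vertices with arrival time exceeding $t$) and deriving both inclusions from them. Your remark that the special role of $v$ as a sink causes no difficulty is accurate, since the argument uses only the defining formula for $\AB_{v,t}$.
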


\section{Lower bounds for the diameter}

In this section, we use the interlacement Aldous-Broder algorithm to derive the lower bounds on the tail of the intrinsic and extrinsic diameter of \cref{thm:transitivemain,thm:generalexponents,thm:generalexponentsv,thm:extrinsicZd,thm:extrinsicv}.

\subsection{Lower bounds for the intrinsic diameter}

Recall that $\mathfrak{P}(v)$ denotes the past of $v$ in the WUSF, that $\mathfrak{T}_v$ denotes the component of $v$ in the $v$-WUSF, and that $q(v)$ is the probability that two independent random walks started at $v$ do not return to $v$ or intersect each other at any positive time.

\begin{proposition}
\label{prop:intlower}
Let $G$ be a transient network. Then for each vertex $v$ of $G$ we have that
\begin{equation}
\label{eq:intdiamlower}
\P\Bigl(\diam_\mathrm{int}\bigl(\fP(v)\bigr)\geq r\Bigr) \geq \left(\frac{q(v)\Cap(v) }{e \sup_{u\in V}\Cap(u)}\right)\frac{1}{r+1}
\end{equation}
and similarly
\begin{equation}
\label{eq:intdiamlowerv}
\P\Bigl(\diam_\mathrm{int}\bigl(\fT_v\bigr) \geq r\Bigr) \geq \left(\frac{\Cap(v)}{e \sup_{u\in V} \Cap(u)}\right) \frac{1}{r+1}
\end{equation}
for every $r\geq 0$.
\end{proposition}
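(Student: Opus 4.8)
The plan is to realise $\F$ as $\F_0=\AB_0(\sI)$ via the interlacement Aldous--Broder algorithm, and to exhibit an explicit event of probability $\succeq q(v)\Cap(v)/\bigl(r\sup_u\Cap(u)\bigr)$ on which the past $\fP(v)=\fP_0(v)$ contains a simple path of length $r$ starting at $v$. Since $v\in\fP(v)$ and $\fP(v)$ is a tree, the far endpoint of such a path lies at intrinsic distance exactly $r$ from $v$ inside $\fP(v)$, so $\diam_\mathrm{int}(\fP(v))\geq r$ on that event. The bound \eqref{eq:intdiamlowerv} for $\fT_v$ will be obtained by running the same scheme inside the $v$-wired interlacement process.

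Fix $r\geq 1$, let $W$ be the trajectory of $\sI$ arriving at the least positive time that hits $v$, parametrised so that $W(0)=v$ is the first visit of $W$ to $v$, write $T$ for its arrival time, and let $\tilde W=W|_{[0,\infty)}$ and $\bar W=W|_{(-\infty,0]}$ be its forward and backward parts. By the disintegration of $Q^{*}$ underlying \eqref{eq:ildefn}: $W$ and $T$ are a.s.\ well defined, $T$ is exponential with rate $\Cap(v)$ and independent of the $\cW^{*}$-component of $W$, $\tilde W$ is a random walk from $v$, and $\bar W^{\leftarrow}$ is an independent random walk from $v$ conditioned never to return to $v$. Next form the \emph{first-entrance tree} of $\tilde W$ rooted at $v$: the tree on the vertices visited by $\tilde W$ in which each $u\neq v$ is joined to the vertex occupied by $\tilde W$ immediately before $\tilde W$'s first visit to $u$. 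Transience of $\tilde W$ makes this tree infinite and locally finite, so by König's lemma it has a vertex at every depth; fix, by a rule depending only on $\tilde W$, a vertex $u^{*}=\gamma_{r}$ of depth exactly $r$, with ancestor chain $v=\gamma_{0},\gamma_{1},\dots,\gamma_{r}$.

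Let $\mathcal{E}$ be the event that (i) $v$ appears in $W$ only at time $0$ and the forward and backward parts of $W$ are vertex-disjoint apart from $v$; and (ii) no trajectory of $\sI$ other than $W$ hits $\{\gamma_{1},\dots,\gamma_{r}\}$ during the interval $(0,T)$. On $\mathcal{E}$, for each $1\leq j\leq r$ the trajectory $W$ is the first trajectory after time $0$ to hit $\gamma_{j}$, and by (i) it enters $\gamma_{j}$ for the first time along $\tilde W$, coming from $\gamma_{j-1}$ by definition of the first-entrance tree; hence $\F_{0}$ contains the oriented edge $\gamma_{j}\to\gamma_{j-1}$, so $\gamma_{0},\dots,\gamma_{r}$ is a path of length $r$ in $\fP_{0}(v)$ and $\diam_\mathrm{int}(\fP(v))\geq r$. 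To estimate $\P(\mathcal{E})$: conditionally on $\tilde W$ (hence on $\gamma_{1},\dots,\gamma_{r}$) and on $T$, any trajectory other than $W$ that hits $\{\gamma_{1},\dots,\gamma_{r}\}$ before time $T$ must avoid $v$ (otherwise it would hit $v$ before $W$), and such trajectories form a Poisson process of rate at most $\Cap(\{\gamma_{1},\dots,\gamma_{r}\})\leq r\sup_{u}\Cap(u)$, independent of $T$ and of $\bar W$; so the conditional probability of (ii) is at least $e^{-Tr\sup_{u}\Cap(u)}$. Event (i) depends only on $(\tilde W,\bar W)$, and a time-reversal together with the product form of the law of $W$ identifies its probability with $q(v)/\bP_{v}(\tau^{+}_{v}=\infty)\geq q(v)$. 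Using that $T$ is independent of $(\tilde W,\bar W)$ and that $\bE[e^{-\lambda T}]=\Cap(v)/(\Cap(v)+\lambda)$,
\[
\P\bigl(\diam_\mathrm{int}(\fP(v))\geq r\bigr)\;\geq\;\P(\mathcal{E})\;\geq\;q(v)\,\bE\bigl[e^{-Tr\sup_{u}\Cap(u)}\bigr]\;\geq\;\frac{q(v)\,\Cap(v)}{\Cap(v)+r\sup_{u}\Cap(u)},
\]
which has the claimed order since $\Cap(v)\leq\sup_{u}\Cap(u)$; the explicit constant $1/e$ in the statement follows from a slightly cruder estimate, for instance by first restricting to $\{T\leq(r\sup_{u}\Cap(u))^{-1}\}$.

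The bound \eqref{eq:intdiamlowerv} is proved by the same argument carried out inside the $v$-wired interlacement process $\sI_{v}$ and the forest $\F_{v,0}=\AB_{v,0}(\sI_{v})$, now taking $W$ to be the first trajectory after time $0$ that is singly infinite and emanates from $v$; these arrive at rate $\Cap(v)$ and have the law of a random walk from $v$ conditioned never to return. Such a trajectory has no backward part, so the analogue of event (i) is vacuous --- this is exactly why no factor $q(v)$ appears --- and the $v$-wired capacities of the vertices $\gamma_{j}$ entering the analogue of (ii) are controlled by ordinary capacities via \eqref{eq:CapvCapComparison}. The one genuinely delicate point is that the interlacement Aldous--Broder construction builds the forest from \emph{first-entrance} edges rather than from loop-erasures, so it is the first-entrance tree of $\tilde W$, and not its loop-erasure, that is reproduced inside the past; choosing $u^{*}$ by its depth in this tree --- rather than by its index along $\LE(\tilde W)$, which may lie much deeper in the first-entrance tree --- is what keeps the number of vertices that must be shielded from competing trajectories equal to $r+1$, and hence the union bound in (ii) linear in $r$. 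The rest (disentangling the conditioning on ``$W$ is the first trajectory after time $0$ hitting $v$'' from the Poisson family of competing trajectories, and the moment computations) is routine.
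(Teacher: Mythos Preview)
Your proof is correct and follows the same overall plan as the paper: realise $\F=\AB_0(\sI)$, isolate the first trajectory $W$ after time $0$ that hits $v$, and bound from below by the probability that no competing trajectory arrives in time to spoil a length-$r$ path that $W$ alone would deposit in $\fP(v)$. The paper fixes a window $[0,\eps]$ and optimises over $\eps$, whereas you integrate against the exponential law of the arrival time $T$; these are interchangeable bookkeeping devices, and your Laplace-transform computation $\bE[e^{-TrM}]=\Cap(v)/(\Cap(v)+rM)$ in fact gives a slightly better constant than the paper's $1/e$.

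The one substantive difference --- which you correctly flag in your final paragraph --- is the choice of the length-$r$ path. The paper takes the first $r$ steps of the \emph{loop-erasure} of the forward walk $\tilde W$ and asserts that the reversed loop-erasure edges lie in $\F$ on the relevant event. You instead take the ancestor chain of a depth-$r$ vertex in the \emph{first-entrance tree} of $\tilde W$. Your choice is the right one for the Aldous--Broder construction: on the event in question, the edge in $\F$ out of each protected vertex $u$ is the reversed first-entry edge of $\tilde W$ into $u$, i.e.\ the edge to its first-entrance-tree parent, which is \emph{not} in general the loop-erasure predecessor. For a concrete instance, if $\tilde W=v,a,b,a,c,b,d,\ldots$ with no further revisits, the loop-erasure reads $v,a,c,b,d$, but the first-entry edge into $b$ comes from $a$, so the loop-erasure edge $c\!\to\! b$ is not reproduced in $\F$; the four loop-erasure vertices $v,a,c,b$ then sit in $\fP(v)$ at depths $0,1,2,2$ rather than $0,1,2,3$. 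Thus the paper's statement that ``the reversals of the first $r$ oriented edges traversed by the loop-erasure \ldots\ are all contained in $\F$'' is not literally correct as written, and your use of the first-entrance tree both repairs this and keeps the set of vertices to be shielded in event (ii) of size exactly $r$.
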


Note that \eqref{eq:intdiamlowerv} gives a non-trivial lower bound for every transitive network, and can be thought of as a mean-field lower bound. (For recurrent networks, the tree $\fT_v$ contains every vertex of the network almost surely, so that the bound also holds degenerately in that case.)

\begin{proof}
We prove \eqref{eq:intdiamlower}, the proof of \eqref{eq:intdiamlowerv} being similar.
Let $\sI$ be the interlacement process on $G$ and let $\F=\AB_0(\sI)$. 
Given a path $X \in \cW(0,\infty)$ and a vertex $u$ of $G$ visited by the path after time zero, we define $e(X,u)$ to be the oriented edge pointing into $u$ that is traversed by $X$ as it enters $u$ for the first time, and define
\[\AB(X) = \{-e(X,u) : u \text{ is visited by $X$ after time zero}\}.\]
Note that, by definition of $\cW(0,\infty)$, $X$ visits infinitely many vertices and $\AB(X)$ is an infinite tree oriented towards $X_0$. In particular, $\AB(X)$ contains an infinite path starting at $X_0$, whose edges are oriented towards $X_0$. (If $X$ is a random walk then this infinite path is unique and can be interpreted to be the loop-erasure of the time-reversal of $X$. We will not need to use these properties here.)

For each $\eps>0$, let $\sA_{\eps}$ be the event that $v$ is hit by exactly one trajectory in $\sI_{[0,\eps]}$, that this trajectory hits $v$ exactly once, and that the parts of the trajectory before and after hitting $v$ do not intersect each other. (In the $v$-wired case, in the proof of \eqref{eq:intdiamlowerv}, one would instead take $\sA_\eps$ to be the event that $v$ is hit by exactly one trajectory in $\sI_{v,[0,\eps]}$, and that this trajectory is half-infinite and begins at $v$.) It follows from the definition of the interlacement intensity measure that
\begin{equation}\label{eq:Aeps}\P(\sA_\eps) = \eps q(v) \Cap(v) e^{-\eps\Cap(v)}.\end{equation}
Given $\sA_\eps$, let $\langle W_n \rangle_{n \in \Z}$ be the unique representative of this trajectory that has $W_0=v$, let $X=W|_{[0,\infty)}$, let $Z$ be an infinite path starting at $v$ in $\AB(X)$ (chosen in some measurable way if there are multiple such paths), and let $\eta$ be the set of vertices visited by the first $r$ steps of $Z$, not including $v$ itself. Let $\sB_{r,\eps}\subseteq\sA_\eps$ be the event that $\sA_\eps$ occurs and that 
 $\eta$ is not hit by any trajectories in $\sI_{[0,\eps]}$ other than $W$.
On the event $\sB_{r,\eps}$ we have by definition of $\AB_0(\sI)$ that the reversals of the first $r$ edges traversed by $Z$ are all contained in $\F$ and oriented towards $v$, and it follows that
\[ \P\left(\operatorname{diam}_\textrm{int}(\fP(v)) \geq r\right) \geq \P(\sB_{r,\eps})\]
for every $r\geq 1$ and $\eps>0$. On the other hand, 
by the splitting property of Poisson processes, for every set $K\subset V$, the number of trajectories of $\sI_{[0,\eps]}$ that hit $K$ but not $v$ is independent of the set of trajectories of $\sI_{[0,\eps]}$ that hit $v$. We deduce that
 \begin{equation}
\P(\sB_{r,\eps} \mid \sA_{\eps}) \geq \E\left[ e^{-\eps \Cap(\eta)} \mid \sA_{\eps}\right].
 \end{equation}

Let $M = \sup_{u \in V} \Cap(u)$. We may assume that $M<\infty$, since the claim is trivial otherwise. 
 By the subadditivity of the capacity we have that
 $\Cap(\eta) \leq Mr$, 
so that $\P(\sB_{r,\eps} \mid \sA_{\eps}) \geq e^{-\eps Mr}$
and hence that
\begin{equation}\label{eq:Beps}
\P(\sB_{r,\eps}) \geq q(v)\, \Cap(v) \,  \eps e^{-\eps\Cap(v)}e^{-\eps M r} \geq q(v) \Cap(v)  \eps e^{-\eps M(r+1)}.
\end{equation}
The claimed inequality now follows by taking $\eps= 1/(M(r+1))$.
\end{proof}

The following lemma shows that the lower bound of \eqref{eq:intdiamlower} is always meaningful provided that $\bubnorm{P}<\infty$. We write
$\mathbf{G}(u,v) = \bE_u\left[ \sum_{n\geq0} \mathbbm{1}(X_n=v) \right]$ for the Greens function.

\begin{lemma} 
\label{lem:qpositivity}
Let $G$ be a network with controlled stationary measure and with $\bubnorm{P}<\infty$. Then there exists a positive constant $\eps$ such that for every $v\in V$ there exists $u\in V$ with $\mathbf{G}(v,u)\geq \eps$, $d(v,u) \leq \eps^{-1}$, and $q(u)>\eps$. In particular, if $G$ is transitive then $q(v)$ is a positive constant.
\end{lemma}

Note that the statement concerning the graph distance may hold degenerately on networks that are not locally finite, but that the Greens function lower bound remains meaningful in this setting.

\begin{proof}
Let $X$ and $Y$ be independent random walks started at $v$, and observe that
\begin{multline}
\bE_v\bigl|\{(i,j) : i \geq n, j \geq m, X_i=Y_j \}\bigr| = \sum_{i=n}^\infty\sum_{j=m}^\infty \sum_{w\in V} p_i(v,w)p_j(v,w) \\ \preceq \sum_{i=n}^\infty\sum_{j=m}^\infty p_{i+j}(v,v) \preceq \sum_{\ell=n+m}^\infty (\ell+1)\|P^\ell\|_{1\to\infty} \label{eq:qpositivity1}
\end{multline}
for every $n,m\geq 0$.

 Let $I=\{(i,j) : i \geq 0, j \geq 0, X_i=Y_j \}$ be the set of intersection times, which is almost surely finite by \eqref{eq:qpositivity1}, and let $(\tau_1,\tau_2)$ be the unique element of $I$ that is lexicographically maximal in the sense that every $(i,j)\in I$ either has $i<\tau_1$ or $i=\tau_1$ and $j \leq \tau_2$.
Since $\bubnorm{P}<\infty$ the right hand side of \eqref{eq:qpositivity1} tends to zero as $n+m\to\infty$, and it follows by Markov's inequality that there exists $k_0<\infty$ such that $\bP_v(\tau_1,\tau_2 \leq k_0) \geq 1/2$ for every $v\in V$. Thus, we deduce that
\begin{multline*}
\frac{1}{2} \leq \sum_{u \in V} \sum_{i=0}^{k_0} \sum_{j=0}^{k_0} \bP_v(\tau_1=i,\tau_2=j, X_{i}=Y_{j}=u)\\ \leq \sum_{u \in V} \sum_{i=0}^{k_0} \sum_{j=0}^{k_0} \bP_v(X_{i}=Y_{j}=u,\text{ $\{X_\ell : \ell > i\}$, $\{Y_r : r > j\}$, and $\{u\}$ pairwise disjoint})\\
= \sum_{u \in V} \sum_{i=0}^{k_0} \sum_{j=0}^{k_0} \bP_v(X_{i}=Y_{j}=u)q(u) \leq (k_0+1)^2\sup_{u \in V} \mathbf{G}(v,u)q(u) \mathbbm{1}(d(u,v)\leq k_0).
\end{multline*}
On the other hand, it follows from the definitions that $\sup_{u,v\in V} \mathbf{G}(u,v) \leq \bubnorm{P}$, and the claim follows. 
\end{proof}


\subsection{Lower bounds for the extrinsic diameter}

In this section we apply a similar method to that used in the previous subsection to prove a lower bound on the tail of the \emph{extrinsic} diameter. The method we use is very general and, as well as being used in the proof of \cref{thm:extrinsicZd,thm:extrinsic,thm:extrinsicspeed} and \ref{thm:extrinsicv}, can also be used to deduce similar lower bounds for e.g.\ long-ranged models.

Let $G$ be a network. For each $r\geq 0$, we define 
\[
L(r) =\sup_{v\in V} \bE_v\left[ \sup \left\{n\geq 0 : X_n \in B(v,r) \right\}\right]
\]
to be the maximum expected final visit time to a ball of radius $r$. 
It is easily seen that every transitive graph of polynomial growth of dimension  $d>2$ has $L(r)\asymp r^2$, and the same holds for any Ahlfors regular network with controlled stationary measure satisfying Gaussian heat kernel estimates, see \cref{lem:polyLr} below. On the other hand, uniformly ballistic networks have by definition that $L(r)\asymp r$.

\begin{proposition}
\label{prop:extlower}
Let $G$ be a transient network. Then for each vertex $v$ of $G$ we have that
\begin{equation}
\label{eq:extlower}
\P\Bigl(\diam_\mathrm{ext}\bigl(\fP(v)\bigr)\geq r \Bigr) \geq \left(\frac{q(v)^2 \Cap(v) }{4e \sup_{u\in V}\Cap(u)}\right)\frac{1}{L(r)+1}
\end{equation}
for every $r\geq 1$, and similarly
\begin{equation}
\label{eq:extlowerv}
\P\Bigl(\diam_\mathrm{ext}\bigl(\fT_v\bigr) \geq r\Bigr)\geq
\left(\frac{\Cap(v) }{4e \sup_{u\in V}\Cap(u)}\right)\frac{1}{L(r)+1}
\end{equation}
for every $r\geq 1$.
\end{proposition}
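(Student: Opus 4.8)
The plan is to follow the proof of \cref{prop:intlower} closely, the only change being that the deterministic initial segment of the loop-erasure is replaced by the random segment that first exits a ball. Let $\sI$ be the interlacement process on $G$, let $\F = \AB_0(\sI)$, and recall from the proof of \cref{prop:intlower} the event $\sA_\eps$ that $v$ is hit by exactly one trajectory $W$ in $\sI_{[0,\eps]}$, that this trajectory hits $v$ exactly once, and that the parts of $W$ before and after hitting $v$ do not intersect, which has probability $\eps q(v)\Cap(v)e^{-\eps\Cap(v)}$ by \eqref{eq:Aeps}. Parameterise $W$ so that $W_0=v$. On $\sA_\eps$ the loop-erasure of $\langle W_n\rangle_{n\geq 0}$ never meets $\langle W_n\rangle_{n\leq 0}$ at positive times, so as in \cref{prop:intlower} the whole (infinite, simple) loop-erased path lies in the past of $v$ in $\AB(W)$, and being an infinite simple path it exits the ball $B(v,r-1)$, hence reaches a vertex at distance at least $r$ from $v$. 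Other trajectories arriving before $W$ may re-route these edges in the true forest $\F=\AB_0(\sI)$, so I let $\eta$ be the set of vertices on the loop-erasure of $\langle W_n\rangle_{n\geq 0}$ up to and including the first vertex outside $B(v,r-1)$, and let $\sB_{r,\eps}\subseteq\sA_\eps$ be the event that $\sA_\eps$ occurs and no trajectory of $\sI_{[0,\eps]}$ other than $W$ hits $\eta$; on $\sB_{r,\eps}$ the corresponding edges survive in $\F$ oriented towards $v$, so that $\P(\diam_\mathrm{ext}(\fP(v))\geq r)\geq \P(\sB_{r,\eps})$.

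To estimate $\P(\sB_{r,\eps})$ I first bound $\Cap(\eta)$. Writing $\ell_0<\ell_1<\cdots$ for the loop-erasure times of $\langle W_n\rangle_{n\geq 0}$ and letting $k$ be the index with $W_{\ell_0},\ldots,W_{\ell_k}\in B(v,r-1)$ but $W_{\ell_{k+1}}\notin B(v,r-1)$, the identity $\ell_{k+1}=1+\max\{n:W_n=W_{\ell_k}\}$ forces $W_{\ell_{k+1}-1}=W_{\ell_k}\in B(v,r-1)$, so $\ell_{k+1}\leq \sigma+1$, where $\sigma:=\sup\{n:W_n\in B(v,r-1)\}$ is the last visit of the forward walk to $B(v,r-1)$ (which is finite almost surely when $L(r)<\infty$, since $\E_v[\sigma]\leq L(r)$). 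Hence $\eta\subseteq\{W_0,\ldots,W_{\sigma+1}\}$ and, by subadditivity of the capacity, $\Cap(\eta)\leq M(\sigma+2)$ with $M=\sup_u\Cap(u)$. By the splitting property of Poisson processes, exactly as in \cref{prop:intlower}, the number of trajectories of $\sI_{[0,\eps]}$ that hit $\eta$ but not $v$ is conditionally Poisson with mean at most $\eps\Cap(\eta)$ and independent of the trajectories hitting $v$, so
\[ \P(\sB_{r,\eps}) \;\geq\; \E\!\left[\mathbbm{1}_{\sA_\eps}\,e^{-\eps M(\sigma+2)}\right]. \]

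The one genuinely new difficulty is that $\sigma$ is random, and I would resolve it by truncation: for a threshold $L$ to be chosen,
\[ \E\!\left[\mathbbm{1}_{\sA_\eps}\,e^{-\eps M(\sigma+2)}\right] \;\geq\; e^{-\eps M(L+2)}\Bigl(\P(\sA_\eps) - \P(\sA_\eps \cap \{\sigma>L\})\Bigr). \]
To bound $\P(\sA_\eps\cap\{\sigma>L\})$ I would discard from $\sA_\eps$ every constraint except that exactly one trajectory hits $v$ in $\sI_{[0,\eps]}$; conditionally on that event the forward walk is simple random walk from $v$ and $\sigma$ is a functional of it alone, so Markov's inequality and $\E_v[\sigma]\leq L(r)$ give $\P(\sA_\eps\cap\{\sigma>L\})\leq \eps\Cap(v)e^{-\eps\Cap(v)}L(r)/L$, which is at most $\tfrac12\P(\sA_\eps)=\tfrac12\eps q(v)\Cap(v)e^{-\eps\Cap(v)}$ once $L$ is taken of order $L(r)/q(v)$. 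Combining the two displays gives $\P(\sB_{r,\eps})\succeq \eps\, q(v)\,\Cap(v)\,e^{-\eps[\Cap(v)+M(L+2)]}$ with $\Cap(v)+M(L+2)$ of order $ML(r)/q(v)$, so optimising over $\eps$ by taking $\eps$ of order $q(v)/(ML(r))$ produces a bound of order $q(v)^2\Cap(v)/(ML(r))$; this is where the second power of $q(v)$ in \eqref{eq:extlower} appears. Tracking the constants (using $\Cap(v)\leq M$ and $q(v)\leq 1$) recovers the precise form of \eqref{eq:extlower}. Finally, \eqref{eq:extlowerv} follows by the same argument applied to the $v$-wired interlacement process $\sI_v$ and $\F_v=\AB_{v,0}(\sI_v)$ in place of $\sI$ and $\F$: since $v$ already plays the role of a sink in $\F_v$, there is no non-intersection constraint and no factor of $q(v)$, and one only needs the truncation estimate for the loop-erasure of a walk emanating from $v$. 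The main obstacle, and the only step requiring more than in \cref{prop:intlower}, is this control of the random capacity of $\eta$; the truncation device, which costs one factor of $q(v)$ in exchange for replacing the conditioned forward walk by an unconditioned one, is what makes it go through.
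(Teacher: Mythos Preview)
Your argument is correct and is essentially a reorganisation of the paper's proof. The paper keeps $\eta$ as the first $R$ steps of the loop-erasure (so $\Cap(\eta)\leq M(R+1)$ deterministically) and then uses Markov's inequality on the last-visit time to $B(v,r-1)$ to show that the endpoint of $\eta$ lies at distance $\geq r$ with probability at least $q(v)-L(r)/R$; you instead take $\eta$ to be the loop-erasure up to its first exit from $B(v,r-1)$ (so the endpoint is far by construction) and then truncate on $\{\sigma\leq L\}$ to control its random capacity. Both reductions rest on the same Markov bound $\bP_v(\sigma\geq R)\leq L(r)/R$ for the last visit time $\sigma$ of the forward walk to $B(v,r-1)$, and the optimisation in $\eps$ and in $R$ (respectively $L$) is the same; the paper's packaging avoids the explicit truncation by making $|\eta|$ deterministic, while yours makes the role of $\sigma$ more transparent.
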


\begin{proof}[Proof of \cref{prop:extlower}]
We prove \eqref{eq:extlower}, the proof of \eqref{eq:extlowerv} being similar.
%
%
%
%
We continue to use the notation $\sA_{\eps}$, $\sB_{R,\eps}$, $W$, $X$, $Z$, $\eta$, and $M$ defined in the proof of \cref{prop:intlower} but with the variable $R$ replacing the variable $r$ there, so that $\eta$ is the set of vertices visited by the first $R$ steps of $Z$.  
We also define $\sC_{r,R,\eps} \subseteq \sA_{\eps}$ to be the event in which $\sA_{\eps}$ occurs and the distance in $G$ between 
$v$ and the endpoint of $\eta$ is at least $r$, and define $\sD_{r,R,\eps} = \sC_{r,R,\eps} \cap \sB_{R,\eps}$. 
 Note that, since the path $Z$ is oriented towards $v$ in $\AB(X)$, it follows from the definition of $\AB(X)$ that the $R$th point visited by $Z$ is visited by $X$ at a time greater than or equal to $R$. Thus, we have by the definition of the interlacement intensity measure, the union bound (in the form $\P(A \setminus B) \geq \P(A)-\P(B)$), and Markov's inequality that, letting $Y^1$ and $Y^2$ be independent random walks started at $v$,
\begin{align*}
\P(&\sC_{r,R,\eps})\\ &\geq
\eps \Cap(v) e^{-\eps \Cap(v)} \mathbf{P}_v\left(
\begin{array}{l}   \{Y^1_i : i > 0\},\, \{Y^2_i : i >0\}, \text{ and } \{v\} \text{ are mutually}\\ \text{ disjoint and }\sup\{n \geq 0 : Y^1_n \in B(v,r) \} < R
\end{array} 
 \bigg|\; 
v \notin \{Y^2_i : i >0\}
 \right)
\\&\geq
\eps \Cap(v) e^{-\eps \Cap(v)} \mathbf{P}_v\left(
\begin{array}{l}   \{Y^1_i : i > 0\},\, \{Y^2_i : i >0\}, \text{ and } \{v\} \text{ are mutually}\\ \text{ disjoint and }\sup\{n \geq 0 : Y^1_n \in B(v,r) \} < R
\end{array} 
 \right)
\\&\geq
 \eps \Cap(v) e^{-\eps \Cap(v)}\left[q(v)-\frac{L(r)}{R}\right].
\end{align*}
Thus, it follows by the same reasoning as in the proof of \cref{prop:intlower} that
\[
\P(\diam_\ext(\fP(v))\geq r)\geq 
\P(\sD_{r,R,\eps}) \geq e^{-\eps M R} \P(\sC_{r,R,\eps}) \geq \eps \Cap(v) e^{-\eps M(R+1)}\left[q(v)-\frac{L(r)}{R}\right]
\]
for every $R,r\geq 1$ and $\eps >0$. We conclude by taking $\eps=1/M(R+1)$ and $R= \lceil 2 L(r)/q(v) \rceil$.
\end{proof}

\begin{lemma}
\label{lem:polyLr}
Let $G=(V,E)$ be a network with controlled stationary measure that is $d$-Ahlfors regular for some $d>2$ and satisfies Gaussian heat kernel estimates. Then $L(r)\preceq r^2$.
\end{lemma}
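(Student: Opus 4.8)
The plan is to bound $\bE_v[\ell]$ uniformly over starting vertices $v$, where $\ell:=\sup\{n\ge 0: X_n\in B\}$ is the last visit time to $B:=B(v,r)$ of the walk started at $v$. Note first that the walk is transient, since $G(v,v)=\sum_n p_n(v,v)\preceq \sum_n n^{-d/2}<\infty$ by \eqref{eq:GHKE}, Ahlfors regularity and $d>2$, so $\ell<\infty$ almost surely. Writing $\bE_v[\ell]=\sum_{n\ge1}\bP_v(\ell\ge n)$ and applying the Markov property at time $n$ gives $\bP_v(\ell\ge n)=\bP_v(\exists\,m\ge n: X_m\in B)=\sum_w p_n(v,w)\bP_w(\tau_B<\infty)$, so summing over $n$ and interchanging the sums yields
\[
\bE_v[\ell]\;\le\;\sum_{w\in V}G(v,w)\,\bP_w(\tau_B<\infty),
\]
where $G(x,y)=\sum_{n\ge0}p_n(x,y)$ is the Green function (in fact this is an equality up to an additive $1$, but the inequality suffices).

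Next I would record two pointwise estimates. For the Green function: since $p_n(x,y)=0$ unless $n\ge d(x,y)$, and for $n\ge d(x,y)\ge 1$ the Gaussian upper bound of \eqref{eq:GHKE} together with the Ahlfors lower bound $|B(x,\sqrt n)|\succeq n^{d/2}$ gives $p_n(x,y)\preceq n^{-d/2}e^{-d(x,y)^2/(c'n)}$, summing over $n$ (the sum being dominated by $n\asymp d(x,y)^2$ and convergent at both ends because $d>2$) produces the standard off-diagonal bound $G(x,y)\preceq \max(1,d(x,y))^{2-d}$. For the hitting probability: by the last-exit decomposition $\bP_w(\tau_B<\infty)=\sum_{u\in B}G(w,u)\bP_u(\tau_B^+=\infty)\preceq \Cap(B)\sup_{u\in B}G(w,u)$ (using $\inf_u c(u)\succeq 1$), combined with the capacity bound $\Cap(B(v,r))\preceq r^{d-2}$. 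I would obtain this last bound from the Dirichlet-energy characterisation $\Cap(B)=\inf\{\mathcal E(f,f):f|_B\ge1\}$ applied to the linear cutoff $f(x)=1\wedge\bigl((2r-d(v,x))_+/r\bigr)$, which equals $1$ on $B(v,r)$, vanishes outside $B(v,2r)$, and changes by at most $1/r$ across any edge, so that $\mathcal E(f,f)\preceq r^{-2}\sum_{x\in B(v,2r+1)}c(x)\preceq r^{-2}|B(v,2r+1)|\preceq r^{d-2}$ by controlled stationary measure and Ahlfors regularity. Combining, $\bP_w(\tau_B<\infty)\preceq 1\wedge\bigl(r^{d-2}d(v,w)^{2-d}\bigr)$ when $d(v,w)\ge 2r$ (since then $d(w,u)\ge d(v,w)/2$ for all $u\in B$), and $\bP_w(\tau_B<\infty)\le1$ always.

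It then remains to sum. I would split the sum according to whether $d(v,w)\le 2r$ or $d(v,w)>2r$, and decompose each part into dyadic annuli $A_j=\{w:2^j\le d(v,w)<2^{j+1}\}$, on which $|A_j|\preceq 2^{jd}$ and $G(v,w)\asymp 2^{j(2-d)}$. On the near part, bounding $\bP_w(\tau_B<\infty)\le1$, the contribution is $\preceq \sum_{2^j\le 2r}2^{jd}2^{j(2-d)}=\sum_{2^j\le 2r}2^{2j}\preceq r^2$ (plus an $O(1)$ term from the scales $d(v,w)\in\{0,1\}$). On the far part, the contribution is $\preceq r^{d-2}\sum_{2^j>2r}2^{jd}\cdot 2^{j(2-d)}\cdot 2^{j(2-d)}=r^{d-2}\sum_{2^j>2r}2^{j(4-d)}$, a geometric series with ratio $2^{4-d}$, dominated by its first term and hence $\preceq r^{d-2}\cdot r^{4-d}=r^2$. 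Adding the two contributions gives $\bE_v[\ell]\preceq r^2$, with constants depending only on the Ahlfors and Gaussian-estimate constants and $\sup_v c(v)$; since this is uniform in $v$, it is precisely $L(r)\preceq r^2$.

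The step I expect to be the main obstacle is the far part of the sum: it genuinely requires the sharp capacity exponent $\Cap(B(v,r))\preceq r^{d-2}$, not merely the trivial $\Cap(B(v,r))\le c\bigl(B(v,r)\bigr)\preceq r^d$, because it is only with the former that the geometric series with ratio $2^{4-d}$ contributes at the correct order $r^2$ (and converges at all). The Dirichlet-energy computation with a linear cutoff is the cleanest route to this exponent. Everything else — the off-diagonal Green function bound and the near part of the sum — is a routine consequence of \eqref{eq:GHKE} and Ahlfors regularity.
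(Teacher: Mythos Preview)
Your proof is correct, though by a genuinely different route from the paper's. One caveat applies to both: your far-annulus sum $r^{d-2}\sum_{2^j>2r}2^{j(4-d)}$ is a convergent geometric series only when $d>4$, not $d>2$ as in the lemma statement. This is not a flaw in your argument so much as in the statement itself --- in $\Z^3$, for instance, $\bP_0(T_r\ge n)\asymp r\,n^{-1/2}$ and $\bE_0[T_r]=\infty$, so the conclusion $L(r)\preceq r^2$ is simply false for $2<d\le 4$. The paper's own proof has exactly the same restriction (its tail bound $\bP_v(T_r\ge n)\preceq r^{d-2}n^{-d/2+1}$ is summable only for $d>4$), and the lemma is only ever invoked under the hypothesis $d>4$.

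As for the comparison: the paper bounds the \emph{tail} $\bP_v(T_r\ge n)$ first, by comparing the unconditional expected time spent in $B(v,2r)$ after step $n$ (which is $\preceq r^d n^{-d/2+1}$ from the heat kernel) with the conditional version given $T_r\ge n$ (which is $\succeq r^2$ by the strong Markov property and the heat kernel \emph{lower} bound), and then sums the tail. Your approach instead expresses $\bE_v[T_r]$ directly as $\sum_w G(v,w)\bP_w(\tau_B<\infty)$ and bounds the two factors pointwise via the off-diagonal Green bound and the capacity estimate $\Cap(B(v,r))\preceq r^{d-2}$. The paper's argument is shorter and uses only the heat-kernel asymptotics, avoiding the separate Dirichlet-energy computation for the capacity; on the other hand, your decomposition is more explicit about the two sources of the $r^2$ (time inside the ball versus returns from outside), and yields the hitting-probability and capacity estimates as reusable byproducts.
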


\begin{proof}
Let $v$ be a vertex of $G$ and let $T_r= \sup\{n \geq 0: X_n \in B(v,r)\}$.
It follows from the definitions that 
\begin{align*}
\mathbf{P}_v\bigl(X_n \in B(v,r)\bigr) \preceq \sum_{u \in B(v,r)} |B(v,n^{1/2})|^{-1} \preceq n^{-d/2} r^d
\end{align*}
for every $n\geq 1$ and $r\geq 1$, and also that there exists a positive constant $c$ such that
\begin{align*}
\mathbf{P}_v\bigl(X_n \in B(v,r)\bigr)
 &\succeq \sum_{u \in B(v,r)} \frac{ e^{-d(v,u)^2/(cn)}}{|B(v,n^{1/2})|} 
&\succeq \sum_{u \in B(v,r \wedge n^{1/2})} \frac{1}{|B(v,n^{1/2})|}
\succeq 1 \wedge r^d n^{-d/2}
\end{align*}
for every $n\geq 1$ and $r\geq 1$. We deduce that
\begin{align}
\label{eq:MartinsDetails}
\mathbf{P}_v\bigl(X_n \in B(v,r)\bigr) \asymp \begin{cases} 1 & n\leq r^2\\
 n^{-d/2}r^d &n > r^2.
 \end{cases}
\end{align}
for every $n\geq 1$ and $r\geq 1$, and hence that, since $d>2$,
\begin{equation}
\label{eq:Occupation}
\bE_v\left[ \sum_{m\geq n} \mathbbm{1}\bigl(X_m \in B(v,2r)\bigr)  \right] \preceq  
\begin{cases} r^2 & n\leq r^2\\
 r^d n^{-d/2+1} &n > r^2.
 \end{cases}
\end{equation}
On the other hand, it follows by the strong Markov property that
\[
\bE_v\left[ \sum_{m\geq n} \mathbbm{1}\bigl(X_m \in B(v,2r)\bigr) \;\Bigg |\; T_r \geq n \right] \geq \inf_{u\in V} \mathbf{E}_u\left[ \sum_{m\geq 0} \mathbbm{1}\bigl(X_m \in B(u,r)\bigr) \right] \succeq r^2,
\]
where the final inequality follows from \eqref{eq:MartinsDetails}, and we deduce that 
\[\mathbf{P}_v(T_r \geq n) \preceq r^{d-2}n^{-d/2+1}\]
for every $n\geq r^2$. The claim follows by summing over $n$.
\end{proof}

\section{The length and capacity of the loop-erased random walk}
\label{sec:LERWCap}

In this section, we study the length and capacity of loop-erased random walk. In particular, we prove that in a network with controlled stationary measure and $\bubnorm{P}<\infty$, an $n$-step loop-erased random walk has capacity of order $n$ with high probability. The estimates we derive are used extensively throughout the remainder of the paper. In the case of $\Z^d$, these estimates are closely related to classical estimates of Lawler, see \cite{MR2985195} and references therein.

\subsection{The number of points erased}

Recall that when $X$ is a path in a network $G$, the times $\langle \ell_n(X) : 0 \leq n \leq |\LE(X)|\rangle$ are defined to be the times contributing to the loop-erasure of $X$, defined recursively by $\ell_0(X)=0$ and $\ell_{n+1}(X) = 1+ \max\{m : X_m = X_{\ell_n(X)}\}$. We also define 
\[\rho_n(X) = \max\{ k : \ell_k \leq n\}\]
for each $0 \leq n \leq |X|$, so that $\rho_n(X)$ is the number of times between $1$ and $n$ that contribute to the loop-erasure of $X$. The purpose of this section is to study the growth of $\ell$ and $\rho$ when $X$ is a random walk on a network with controlled stationary measure satisfying $\bubnorm{P}<\infty$.

Recall that we write $X^T$ for the random walk ran up to the (possibly random) time $T$, and use similar notation for other paths such as $\LE(X)$.

\begin{lemma}

	\label{lem:LoopLength2}
	Let $G$ be a transient network, let $v$ be a vertex of $G$, and let $X$ be a random walk on $G$ started at $v$. Then the following hold.
	\begin{enumerate}[leftmargin=*]
	\itemsep0.5em
		\item  
			The random variables $\langle \ell_{n+1}(X)-\ell_{n}(X) \rangle_{n\geq0}$ are independent conditional on $\LE(X)$, and the estimate
			\begin{equation}
				\label{eq:ellloops1}
				\bP_v\left[ \ell_{n+1}(X)-\ell_n(X) -1 =m \mid \LE(X) \right] \leq \pstar{m}
			\end{equation}
			holds for every $n\geq 0$ and $m \geq 0$.
		\item
			If $\bubnorm{P}<\infty$, then
			\begin{equation}
				\label{eq:MFCLoopLength2}
				\bE_v \left[ \ell_n(X) \mid  \LE\left(X\right)
				 \right] \leq   \bubnorm{P}  \, n
			\end{equation}
			and
			\begin{equation}
				\label{eq:MFCLoopLength3}
				\bP_v \left[\rho_n(X) \leq \lambda^{-1} n \mid \LE(X) \right] \leq \bubnorm{P} \, \lambda^{-1}
			\end{equation}
			for every $n\geq 1$ and $\lambda > 0$.
\end{enumerate}
\end{lemma}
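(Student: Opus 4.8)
The plan is to derive the whole lemma from the classical decomposition of a transient random-walk path into its loop-erasure together with the chronological sequence of erased loops. Since $G$ is transient, $X$ almost surely visits every vertex only finitely often, so $\gamma:=\LE(X)=\langle\gamma_0,\gamma_1,\dots\rangle$ is a well-defined infinite simple path; for $n\geq0$ let $l_n$ denote the loop erased at $\gamma_n$, i.e.\ the subpath $\langle X_{\ell_n},X_{\ell_n+1},\dots,X_{\ell_{n+1}-1}\rangle$, which begins and ends at $\gamma_n$ and satisfies $|l_n|=\ell_{n+1}(X)-\ell_n(X)-1$. The input I would invoke is Lawler's loop-erasure decomposition (whose proof carries over verbatim to transient networks): $X$ is recovered from $\gamma$ by splicing $l_n$ in at $\gamma_n$ for each $n$; a sequence $\langle l_n\rangle_{n\geq0}$ arises this way precisely when each $l_n$ is a loop based at $\gamma_n$ avoiding $\{\gamma_0,\dots,\gamma_{n-1}\}$; and the probability of a compatible pair $(\gamma,\langle l_n\rangle)$ is $\bigl(\prod_n p(\gamma_n,\gamma_{n+1})\bigr)\prod_n w(l_n)$, where $w(\cdot)$ is the product of one-step transition probabilities along a path. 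Because the admissibility constraint on $l_n$ refers only to $l_n$ and $\gamma$, this factorisation shows that conditionally on $\gamma$ the loops $\langle l_n\rangle_{n\geq0}$ are independent, with the conditional law of $l_n$ proportional to $w(\cdot)$ on loops based at $\gamma_n$ avoiding $\{\gamma_0,\dots,\gamma_{n-1}\}$; the normalising constant is the Green's function $G_n(\gamma_n,\gamma_n)$ of the walk killed on hitting $\{\gamma_0,\dots,\gamma_{n-1}\}$, which is at least $1$ since the zero-length loop at $\gamma_n$ alone contributes weight $1$.

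Part~1 is then immediate. The increments $\ell_{n+1}(X)-\ell_n(X)=|l_n|+1$ are functions of the individual loops, hence conditionally independent given $\LE(X)$; and for \eqref{eq:ellloops1}, conditioning on $\gamma$ and using $G_n(\gamma_n,\gamma_n)\geq1$,
\begin{align*}
\bP_v\!\left(\ell_{n+1}(X)-\ell_n(X)-1=m\mid\LE(X)\right)
&=\frac{\bP_{\gamma_n}\!\left(X_m=\gamma_n,\ X_i\notin\{\gamma_0,\dots,\gamma_{n-1}\}\ \forall\,i\leq m\right)}{G_n(\gamma_n,\gamma_n)}\\
&\leq\;p_m(\gamma_n,\gamma_n)\;\leq\;\pstar{m}.
\end{align*}

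For Part~2 I would assume $\bubnorm{P}<\infty$. Summing \eqref{eq:ellloops1} over $m\geq1$ gives $\bE_v[\,|l_n|\mid\LE(X)\,]\leq\sum_{m\geq1}m\,\pstar{m}\leq\bubnorm{P}-1$ (since $\pstar{0}=1$), and as $\ell_n(X)=n+\sum_{j=0}^{n-1}|l_j|$ this yields $\bE_v[\,\ell_n(X)\mid\LE(X)\,]\leq n+n(\bubnorm{P}-1)=\bubnorm{P}\,n$, which is \eqref{eq:MFCLoopLength2}. For \eqref{eq:MFCLoopLength3} I would use the identity $\{\rho_n(X)\leq\lambda^{-1}n\}=\{\ell_{\lfloor\lambda^{-1}n\rfloor+1}(X)>n\}$ (valid because $\rho_n(X)<k\Leftrightarrow\ell_k(X)>n$ for integer $k\geq0$) and then apply Markov's inequality conditionally on $\LE(X)$ together with \eqref{eq:MFCLoopLength2}; this bounds the conditional probability by $(\lfloor\lambda^{-1}n\rfloor+1)\bubnorm{P}/n$, and \eqref{eq:MFCLoopLength3} follows after the elementary observations that the asserted bound is trivial when $\lambda\leq1$ (the event then being the whole space and $\bubnorm{P}\lambda^{-1}\geq\bubnorm{P}\geq1$) and that $\lfloor\lambda^{-1}n\rfloor+1$ is of order $\lambda^{-1}n$ otherwise.

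I expect the only genuinely delicate point to be the set-up of the loop-erasure decomposition at this level of generality — precisely characterising which sequences of erased loops are admissible and verifying the product form of the joint law of $\gamma$ and $\langle l_n\rangle$ — since in the transient infinite-network case one must be a little careful about conditioning on the full infinite loop-erasure. Everything downstream is bookkeeping: \eqref{eq:ellloops1} is a one-line consequence of the decomposition, and \eqref{eq:MFCLoopLength2}–\eqref{eq:MFCLoopLength3} follow from Markov's inequality and the elementary relations among $\rho_n$, the $\ell_k$, and the $|l_k|$. This matches the paper's remark that on $\Z^d$ these estimates are close to classical ones of Lawler.
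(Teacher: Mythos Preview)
Your proposal is correct and follows essentially the same route as the paper. The paper phrases the key input as the \emph{Laplacian random walk} representation of loop-erased random walk, but this is exactly the loop decomposition you describe: conditionally on $\LE(X)=\gamma$, the erased loops $l_n$ are independent with law proportional to $w(\cdot)$ on loops at $\gamma_n$ avoiding $\{\gamma_0,\dots,\gamma_{n-1}\}$, and the normalising constant (your $G_n(\gamma_n,\gamma_n)$, the paper's $\sum_{y\in L_n} w(y)$) is at least $1$ because of the zero-length loop. From there both proofs obtain \eqref{eq:ellloops1} identically, and both deduce \eqref{eq:MFCLoopLength2}--\eqref{eq:MFCLoopLength3} by summing and applying Markov's inequality via the relation between $\rho_n$ and the $\ell_k$.

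One small remark on \eqref{eq:MFCLoopLength3}: your identity $\{\rho_n\leq \lambda^{-1}n\}=\{\ell_{\lfloor\lambda^{-1}n\rfloor+1}>n\}$ is the correct one, and Markov then gives the bound $\bubnorm{P}(\lfloor\lambda^{-1}n\rfloor+1)/n$, which can exceed $\bubnorm{P}\lambda^{-1}$ by at most $\bubnorm{P}/n$. The paper's ``if and only if'' with $\ell_{\lfloor\lambda^{-1}n\rfloor}$ is slightly imprecise in the other direction; in either case the exact constant in \eqref{eq:MFCLoopLength3} is inessential for all later applications, which only use it up to multiplicative constants.
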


Note that, in the other direction, we have the trivial inequalities $\ell_n \geq n$ and $\rho_n \leq n$.

\begin{proof}
	
	We first prove item 1. Observe that the conditional distribution of $\langle X_i \rangle_{i \geq \ell_n}$ given  $X^{\ell_n}$  is equal to the distribution of a random walk started at $X_{\ell_n}$ and conditioned never to return to the set of vertices visited by $\LE(X)^{n-1}$. (This is the same observation that is used to derive the \emph{Laplacian random walk} representation of loop-erased random walk, see  \cite{Lawler87}.)
	Thus, the conditional distribution of $X$ given $\LE(X) = \gamma$ can be described as follows. 
	For each finite path $\eta$ in $G$, let $w(\eta)$ be its random walk weight
	\[w(\eta) = \bP_{x_0}( X^{|\eta|} = \eta) = \prod_{i=0}^{|\eta|-1} \frac{c(\eta_{i,i+1})}{c(\eta_i)}.\]
	For each time $n \geq 0$, let 
	$L_n=L_n(\LE(X))$ be the set of finite loops in $G$ that start and end at $\LE(X)_n$ and do not hit the trace of $\LE(X)^{n-1}$ (which we consider to be the empty set if $n=0$). In particular, $L_n$ includes the loop of length zero at $\LE(X)_n$ for each $n \geq 0$. Then the random walk segments $\langle X_i \rangle_{i=\ell_n}^{\ell_{n+1}-1}$ are conditionally independent given $\LE(X)$, and have law given by
	%
	\begin{equation}
		\label{eq:conditionalloops1}
		\bP_v(\langle X_i \rangle_{i=\ell_n}^{\ell_{n+1}-1} = \eta \mid \mathsf{LE}(X)) 
		= \frac{w(\eta)}{\sum_{\eta' \in L_n}w(\eta')}\mathbbm{1}(\eta \in L_n).
	\end{equation}
	%
	%
	The contribution of the loop of length zero ensures that the denominator in \eqref{eq:conditionalloops1} is at least one, so that
	\begin{equation*}
	\bP_v\left(\langle X_i \rangle_{i=\ell_n}^{\ell_{n+1}-1} = \eta  \mid  \mathsf{LE}(X)\right) \leq  w(\eta)\mathbbm{1}(\eta \in L_n)
	\end{equation*}
	and hence, summing over $\eta \in L_n$ of length $m$,
	\begin{equation}
		\label{eq:looplengthestimate}
		\bP_v\left(\ell_{n+1}-\ell_{n} -1 = m  \mid \LE(X) \right) \leq  p_{m}\left(\LE(X)_n,\LE(X)_n\right) \leq \pstar{m} 
	\end{equation}
	for all $m \geq 0$, establishing item 1.

	For item 2, \eqref{eq:MFCLoopLength2} follows immediately from \eqref{eq:ellloops1}. Furthermore, $\rho_n \leq \lambda^{-1} n$ if and only if $\ell_{\lfloor \lambda^{-1} n \rfloor} \geq n$, so that \eqref{eq:MFCLoopLength3} follows from \eqref{eq:MFCLoopLength2} and Markov's inequality.\qedhere

\end{proof}

We remark that \cref{lem:LoopLength2} together with the strong law of large numbers for independent, uniformly integrable random variables \cite[Theorem 2.19]{hall2014martingale} has the following easy corollary. Since we do not require the result for the remainder of the paper, the proof is omitted.

\begin{corollary}
	\label{cor:quenchedrhogrowth}
	Let $G$ be a transient network, and let $X$ be a random walk on $G$. 
			If $\bubnorm{P}<\infty$, then
			\[ 
				\limsup_{n\to\infty} \frac{1}{n}\ell_n(X) \leq \limsup_{n\to\infty} \frac{1}{n}\E\ell_n(X) \leq \bubnorm{P}
			\]
			almost surely.
\end{corollary}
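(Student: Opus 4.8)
The plan is to deduce the corollary from part 1 of \cref{lem:LoopLength2} by a stochastic domination argument combined with the ordinary (i.i.d.) strong law of large numbers. Write $Y_k = \ell_{k+1}(X) - \ell_k(X)$ for $k \geq 0$, so that $Y_k \geq 1$ and $\ell_n(X) = \sum_{k=0}^{n-1} Y_k$; since $G$ is transient we have $|\LE(X)| = \infty$ almost surely, so all of these quantities are well-defined. By \cref{lem:LoopLength2}, conditional on $\LE(X)$ the random variables $\langle Y_k \rangle_{k\geq 0}$ are independent and satisfy $\bP_v(Y_k - 1 = m \mid \LE(X)) \leq \pstar{m}$ for every $k,m \geq 0$.

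First I would record a convenient dominating law. Set $t_m = \sum_{j \geq m} \pstar{j}$, which is finite because $\bubnorm{P} < \infty$, is nonincreasing in $m$, tends to $0$ as $m \to \infty$, and satisfies $t_0 \geq \pstar{0} = 1$. Hence $m \mapsto \min\{1, t_m\}$ is the tail of a probability measure $\nu$ on $\Z_{\geq 0}$, and since $\nu([m,\infty)) \leq t_m$ for every $m$ we obtain
\[
\int z \, d\nu(z) = \sum_{m \geq 1} \nu([m,\infty)) \leq \sum_{m \geq 1} t_m = \sum_{j \geq 1} j \pstar{j} = \bubnorm{P} - t_0 \leq \bubnorm{P} - 1 .
\]
Because $\bP_v(Y_k - 1 \geq m \mid \LE(X)) \leq \min\{1, t_m\} = \nu([m,\infty))$ for every $k$ and $m$, each $Y_k - 1$ is, conditionally on $\LE(X)$, stochastically dominated by $\nu$; since the $Y_k$ are conditionally independent, a standard coupling (quantile coupling with independent uniforms, i.e.\ Strassen's theorem applied coordinatewise) produces, on the conditional probability space, an i.i.d.\ sequence $\langle Z_k \rangle_{k \geq 0}$ of law $\nu$ with $Y_k - 1 \leq Z_k$ for all $k$ almost surely.

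It then remains to apply the i.i.d.\ strong law of large numbers to $\langle Z_k\rangle$: almost surely on the conditional space, $\frac1n \sum_{k=0}^{n-1} Z_k \to \int z\, d\nu(z)$, and therefore
\[
\frac{\ell_n(X)}{n} = 1 + \frac1n \sum_{k=0}^{n-1} (Y_k - 1) \;\leq\; 1 + \frac1n \sum_{k=0}^{n-1} Z_k \;\xrightarrow[n\to\infty]{}\; 1 + \int z\, d\nu(z) \;\leq\; \bubnorm{P} .
\]
Thus $\limsup_{n\to\infty} \ell_n(X)/n \leq \bubnorm{P}$ holds with conditional probability $1$ given $\LE(X)$, for $\bP_v$-almost every realization of $\LE(X)$, and integrating over $\LE(X)$ (regular conditional distributions exist since $\cW$ is Polish) gives the claim.

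The one point that needs care — and the reason the statement is not completely immediate from \cref{lem:LoopLength2} — is that the increments $Y_k$ are \emph{not} identically distributed conditionally on $\LE(X)$, so one cannot invoke the i.i.d.\ strong law for the $Y_k$ themselves; the resolution is that they are uniformly stochastically dominated by the single law $\nu$, which both justifies the monotone coupling and, via the computation $\int z\, d\nu \le \bubnorm{P}-t_0$, yields exactly the constant $\bubnorm{P}$ rather than a larger quantity. (One could equivalently phrase the last step as a strong law for independent, uniformly integrable random variables, but the domination argument is cleaner and self-contained.)
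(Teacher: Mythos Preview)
Your proof is correct and follows essentially the same approach the paper indicates: deduce the result from \cref{lem:LoopLength2} by applying a strong law of large numbers to the conditionally independent increments $Y_k=\ell_{k+1}-\ell_k$. The paper (which omits the proof) suggests invoking the SLLN for independent, uniformly integrable random variables directly, whereas you instead construct a single dominating law $\nu$ and couple to an i.i.d.\ sequence before applying the classical i.i.d.\ SLLN; as you yourself note in your final paragraph, these are equivalent routes, and your domination argument has the advantage of being self-contained and of giving the exact constant $\bubnorm{P}$ transparently via the computation $1+\int z\,d\nu(z)\leq 1+(\bubnorm{P}-t_0)\leq \bubnorm{P}$.
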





	The following variation of \cref{lem:LoopLength2}, applying to the loop-erasure of a random walk stopped upon hitting a vertex $v$, is proved similarly.

\begin{lemma}

	\label{lem:LoopLength}
	Let $G$ be a network. Let $u$ and $v$ be distinct vertices of $G$, let $X$ be a random walk started at $u$, and let $\gamma$ be a simple path connecting $u$ to $v$.  Then the following hold.
	\begin{enumerate}[leftmargin=*]
		\itemsep0.5em
		\item 
			The random variables $\langle \ell_{n+1}(X^{\tau_v})-\ell_{n}(X^{\tau_v}) \rangle_{n=0}^{|\gamma|-1}$ are independent conditional on the event that $\tau_v<\infty$ and $\LE(X^{\tau_v})=\gamma$, and the estimate
			\vspace{0.2em}
			\begin{equation}
			\vspace{0.2em}
				\bP_u\left(\ell_{n+1}(X^{\tau_v})-\ell_n(X^{\tau_v}) -1 =m \mid \tau_v < \infty,\, \LE(X^{\tau_v})=\gamma \right) \leq \pstar{m}
			\end{equation}
			holds for every vertex $1 \leq n \leq |\gamma|-1$ and every $m\geq 0$.
		
		\item
			If $\bubnorm{P}<\infty$, then 
			\vspace{0.2em}
			\begin{equation}
			\vspace{0.2em}
				\label{eq:MFCLoopLength1}
				\bE_u \left[ \tau_v \mid  \tau_v < \infty,\, \LE(X^{\tau_v})=\gamma
				 \right] \leq \bubnorm{P} |\gamma|. 
			\end{equation}

\end{enumerate}
\end{lemma}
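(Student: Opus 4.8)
The plan is to repeat the proof of \cref{lem:LoopLength2} essentially verbatim, with the random walk $X$ replaced by the walk $X^{\tau_v}$ killed on first hitting $v$. Write $\cE:=\{\tau_v<\infty,\ \LE(X^{\tau_v})=\gamma\}$; observe first that for a finite simple path $\gamma$ ending at $v$ the event $\{\LE(X^{\tau_v})=\gamma\}$ already forces $\tau_v<\infty$, since the loop erasure of an infinite transient trajectory is infinite, so the explicit conditioning on $\{\tau_v<\infty\}$ is harmless. The structural input is the conditional loop decomposition: using the Laplacian random walk description of loop-erased random walk \cite{Lawler87}, exactly as in the proof of \cref{lem:LoopLength2}, one checks that conditionally on $\cE$ the successive loop segments $\langle X_i\rangle_{i=\ell_n(X^{\tau_v})}^{\ell_{n+1}(X^{\tau_v})-1}$, $0\le n\le|\gamma|-1$, are independent, with the $n$th segment distributed as the random walk weight $w(\cdot)$ restricted to the set $L_n$ of finite loops rooted at $\gamma_n$ that avoid $\{\gamma_0,\dots,\gamma_{n-1}\}$ (and also avoid $v$, since the walk is killed there), normalised by $\sum_{y\in L_n}w(y)$. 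In particular the increments $\ell_{n+1}(X^{\tau_v})-\ell_n(X^{\tau_v})$ are conditionally independent given $\cE$. This is the one step that genuinely requires care, because $\cE$ pins down the terminal vertex $v$ and may have very small probability; the verification is the verbatim analogue of the corresponding passage in \cref{lem:LoopLength2}, unwinding the definition of loop erasure and applying the strong Markov property at the successive times $\ell_n(X^{\tau_v})$, and I would write it out in detail there and then refer back to it here.

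Granting the decomposition, item~1 follows exactly as \eqref{eq:looplengthestimate} did: the length-zero loop at $\gamma_n$ lies in $L_n$, so $\sum_{y\in L_n}w(y)\ge 1$, whence
\[
\bP_u\bigl(\langle X_i\rangle_{i=\ell_n}^{\ell_{n+1}-1}=x \mid \cE\bigr)\ \le\ w(x)\,\mathbbm{1}(x\in L_n),
\]
and summing over loops $x\in L_n$ of length $m$ gives
\[
\bP_u\bigl(\ell_{n+1}(X^{\tau_v})-\ell_n(X^{\tau_v})-1=m \mid \cE\bigr)\ \le\ p_m(\gamma_n,\gamma_n)\ \le\ \pstar{m}
\]
for all $m\ge 0$ and $0\le n\le|\gamma|-1$ (so in particular on the stated range $1\le n\le|\gamma|-1$; the $n=0$ loop at $u$ is handled identically with $L_0$ the set of all loops at $u$ avoiding $v$).

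For item~2, use that $\pstar{0}=1$, so that item~1 yields
\[
\bE_u\bigl[\ell_{n+1}(X^{\tau_v})-\ell_n(X^{\tau_v}) \mid \cE\bigr]\ \le\ 1+\sum_{m\ge 1}m\,\pstar{m}\ \le\ \sum_{m\ge 0}(m+1)\,\pstar{m}\ =\ \bubnorm{P}
\]
for each $0\le n\le|\gamma|-1$. Since $\tau_v=\ell_{|\gamma|}(X^{\tau_v})=\sum_{n=0}^{|\gamma|-1}\bigl(\ell_{n+1}(X^{\tau_v})-\ell_n(X^{\tau_v})\bigr)$, summing these $|\gamma|$ estimates gives $\bE_u[\tau_v\mid\cE]\le\bubnorm{P}\,|\gamma|$, which is \eqref{eq:MFCLoopLength1}. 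Thus the only real work lies in the conditional loop decomposition described above; the two quantitative bounds are immediate consequences of it, precisely as in \cref{lem:LoopLength2}.
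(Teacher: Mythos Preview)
Your proposal is correct and follows essentially the same approach as the paper: the paper's proof simply notes that the conditional distribution of $\langle X_i\rangle_{i\ge \ell_n}$ given $X^{\ell_n}$ and $\ell_n<\tau_v<\infty$ is that of a random walk from $X_{\ell_n}$ conditioned to hit $v$ before returning to $\{\LE(X)_0,\dots,\LE(X)_{n-1}\}$, and then declares that the remainder is very similar to \cref{lem:LoopLength2}. You have filled in exactly those details, including the observation that the loops must additionally avoid $v$ and that the length-zero loop still forces the normalising constant to be at least $1$.
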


\begin{proof}

	Write $\ell_n=\ell_n(X^{\tau_v})$. 
	 Observe that the conditional distribution of $\langle X_i \rangle_{i=\ell_n}^{\tau_v}$ given the random variable $X^{\ell_n}$ and the event $\ell_n<\tau_v<\infty$ is equal to the distribution of a simple random walk started at $X_{\ell_n}$ and conditioned to hit $v$ before hitting the set of vertices visited by $\LE(X)^{n-1}$. The rest of the proof is very similar to that of \cref{lem:LoopLength2}. \qedhere

\end{proof}

\subsection{The capacity of loop-erased random walk}

Given a transient path $X$ in a network, we define
$\eta_n(X) = \max\{\ell_k(X) :  k \geq 0, \ell_k(X) \leq n\}$ for each $n\geq 0$. The  time $\eta_n(X)$ is defined so that $\LE(X^{\eta_n}) = \LE(X^n)^{\rho_n} = \LE(X)^{\rho_n}$, and in particular, every edge traversed by $\LE(X^{\eta_n})$ is also traversed by both $\LE(X)$ and $\LE(X^n)$.
The goal of this subsection is to prove the following estimate, which will play a fundamental role in the remainder of our analysis.

\begin{proposition}
\label{prop:capacities}
Let $G$ be a network with controlled stationary measure. 
 If $\bubnorm{P}<\infty$, then 
	\vspace{0.6em}
	\begin{equation}
	\vspace{0.3em}
	 \bP_v \left( \Cap\left(\LE(X^n)\right) \leq \lambda^{-1} n \right) \leq 
   \bP_v \left( \Cap\left(\LE(X^{\eta_n})\right) \leq \lambda^{-1} n \right) \preb \lambda^{-1/3} 
	\end{equation}
	and similarly
	\vspace{0.4em}
	\begin{equation}
  \label{eq:smallcapLERWprob}
	\vspace{1em}
	 \bP_v \left( \Cap\left(\LE(X^{\ell_n})\right) \leq \lambda^{-1} n \right) \preb \lambda^{-1/2} 
	\end{equation}
	for every vertex $v$ of $G$, every $n \geq 1$, and every $\lambda \geq 1$. 

\end{proposition}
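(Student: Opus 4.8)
The plan is to obtain the proposition from the variational (Dirichlet energy) lower bound for capacity together with a first moment estimate for Green's function sums that uses the hypothesis $\bubnorm{P}<\infty$ in an essential way. Write $g(x,y)=\sum_{m\ge 0}p_m(x,y)$; since $G$ has controlled stationary measure this is comparable up to constants to the suitably normalised Green's function, and evaluating the standard variational principle for capacity at the uniform probability measure on a finite nonempty set $K$ gives
\[
\Cap(K)\ \succeq\ \frac{|K|^2}{\sum_{u,w\in K}g(u,w)}.
\]
The first inequality of the proposition then requires no work: $\LE(X^{\eta_n})$ is a subpath of $\LE(X^n)$, so $\Cap(\LE(X^{\eta_n}))\le\Cap(\LE(X^n))$ by monotonicity of capacity, and hence $\bP_v(\Cap(\LE(X^n))\le\lambda^{-1}n)\le\bP_v(\Cap(\LE(X^{\eta_n}))\le\lambda^{-1}n)$. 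It therefore remains to bound the middle quantity and the quantity in \eqref{eq:smallcapLERWprob}.

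The key moment estimate I would establish is $\bE_v\bigl[\sum_{i,j=0}^{N}g(X_i,X_j)\bigr]\preceq (N+1)\,\bubnorm{P}$ for every $N\ge 0$. For $i\le j$ we have $\bE_v[g(X_i,X_j)]=\sum_x p_i(v,x)\sum_y p_{j-i}(x,y)g(x,y)$, and since $G$ has controlled stationary measure reversibility gives $\sum_y p_{j-i}(x,y)g(x,y)\asymp\sum_{m\ge 0}\sum_y p_{j-i}(x,y)p_m(y,x)=\sum_{m\ge 0}p_{j-i+m}(x,x)\le\sum_{k\ge j-i}\pstar{k}$, so $\bE_v[g(X_i,X_j)]\preceq\sum_{k\ge j-i}\pstar{k}$. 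Summing over $0\le i\le j\le N$, and using that there are at most $N+1$ pairs with a given difference, yields $\bE_v[\sum_{i,j=0}^N g(X_i,X_j)]\preceq (N+1)\sum_{\ell\ge 0}\sum_{k\ge\ell}\pstar{k}=(N+1)\sum_{k\ge 0}(k+1)\pstar{k}=(N+1)\bubnorm{P}$.

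For the middle quantity, note that the $\rho_n(X)+1$ vertices of $\LE(X^{\eta_n})$ all belong to $\{X_0,\dots,X_n\}$, so the variational bound gives $\Cap(\LE(X^{\eta_n}))\succeq \rho_n(X)^2\big/\sum_{i,j=0}^n g(X_i,X_j)$. Hence, for any $\theta\in(0,1]$,
\[
\bigl\{\Cap(\LE(X^{\eta_n}))\le\lambda^{-1}n\bigr\}\ \subseteq\ \bigl\{\rho_n(X)\le\theta n\bigr\}\ \cup\ \Bigl\{\sum_{i,j=0}^n g(X_i,X_j)\succeq\lambda\theta^2 n\Bigr\}.
\]
By \eqref{eq:MFCLoopLength3} the first event has probability $\preceq\bubnorm{P}\theta$, and by Markov's inequality together with the moment bound the second has probability $\preceq(\lambda\theta^2)^{-1}$; taking $\theta\asymp\lambda^{-1/3}$ (permissible since $\lambda\ge1$) balances these and gives $\bP_v(\Cap(\LE(X^{\eta_n}))\le\lambda^{-1}n)\preceq\lambda^{-1/3}$, which with the monotonicity remark completes the first display. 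For \eqref{eq:smallcapLERWprob}, the path $\LE(X^{\ell_n})$ consists of the first $n$ steps of $\LE(X)$ and hence has exactly $n+1$ vertices, so $\Cap(\LE(X^{\ell_n}))\succeq(n+1)^2\big/\sum_{i,j=0}^{\ell_n}g(X_i,X_j)$ and it suffices to bound $\bP_v(\sum_{i,j=0}^{\ell_n}g(X_i,X_j)\succeq\lambda n)$; here the number of walk steps $\ell_n$ is random but satisfies $\bE_v[\ell_n]\le\bubnorm{P}n$ by \eqref{eq:MFCLoopLength2}, so splitting on $\{\ell_n\ge\sqrt{\lambda}\,n\}$ and applying Markov to each piece (using $\bE_v[\ell_n]\le\bubnorm{P}n$ on one and the moment bound with $N=\lceil\sqrt{\lambda}\,n\rceil$ on the other) gives the bound $\preceq\lambda^{-1/2}$. (Since $\LE(X^{\eta_n})$ is also a subpath of $\LE(X^{\ell_n})$, the middle estimate in fact yields the stronger exponent $1/3$ here too; we record only what is needed later.)

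The step I expect to be the main obstacle is the combination in the third paragraph: the sharp exponent requires simultaneously controlling the two distinct ways in which $\Cap(\LE(X^{\eta_n}))$ can be small — too few loop-erased points, versus points that are too clustered so that $\sum_{i,j}g(X_i,X_j)$ is large — and the value $1/3$ arises precisely from optimising the linear-in-$\theta$ bound of \eqref{eq:MFCLoopLength3} against the $\theta^{-2}$ Markov bound. The technical heart is the moment estimate, where the hypothesis $\bubnorm{P}<\infty$ enters through the time-reversal identity $\sum_y p_{j-i}(x,y)p_m(y,x)=p_{j-i+m}(x,x)$ and the resummation $\sum_\ell\sum_{k\ge\ell}\pstar{k}=\sum_k(k+1)\pstar{k}=\bubnorm{P}$.
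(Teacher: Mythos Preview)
Your proof is correct and follows essentially the same route as the paper: the paper also uses the variational lower bound $\Cap(A)\geq |A|_c^2/\mathbf{I}(A)$ (its Lemma~\ref{lem:maincap}), proves the same first-moment bound $\bE_v[\mathbf{I}(X^n)]\preceq n$ (its Corollary~\ref{cor:Imean2}), and then applies the identical union-bound splits using \eqref{eq:MFCLoopLength3} and \eqref{eq:MFCLoopLength2}, with the same optimized thresholds $\lambda^{-1/3}$ and $\lambda^{1/2}$. One small slip: in your final parenthetical you call the exponent $1/3$ ``stronger'', but for $\lambda\geq 1$ the bound $\lambda^{-1/2}$ is the smaller (hence stronger) one, so the monotonicity observation only recovers a weaker estimate for the $\ell_n$ case --- this is harmless since you have already proved the $\lambda^{-1/2}$ bound directly.
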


We do not expect these bounds to be optimal.

Our primary means of estimating capacity will be the following lemma. Given a network $G$, we write
$|A|_c=\sum_{v\in A} c(v)$ for the total conductance of a set of vertices $A$, write
\[\mathbf{G}(u,v) = \bE_u\left[ \sum_{n\geq0} \mathbbm{1}(X_n=v) \right]\]
for the Greens function on $G$,
and define for each finite set of vertices $A$ of $G$ the quantity
\[\mathbf{I}(A) =  \sum_{u,v\in A} c(u) \mathbf{G}(u,v)=\sum_{u\in A} c(u) \bE_u\bigg[ \sum_{n \geq 0} \mathbbm{1}\left(X_n \in A\right)\bigg]. \]
Note that for any two sets of vertices $A \subseteq B$, we have $\mathbf{I}(A) \leq \mathbf{I}(B)$.
When $G$ has controlled stationary measure, the ratio $\mathbf{I}(A)/|A|_c$ is comparable to the expected number of steps a random walk spends in $A$ when started from a uniform point of $A$.

\begin{lemma} 
	\label{lem:maincap}
	Let $G$ be a transient network. Then 
	\[
		\Cap(A) \geq \frac{|A|_c^2}{  \mathbf{I}(A)}
	\]
	for every finite set of vertices $A$ in $G$.
\end{lemma}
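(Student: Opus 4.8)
The plan is to prove the lemma by the standard second-moment (variational) lower bound for capacity, specialised to the conductance measure. Recall that for a measure $\mu$ supported on a finite set $A$, Cauchy--Schwarz gives $\Cap(A)\ge \mu(A)^2/\mathcal E_g(\mu)$, where $\mathcal E_g(\mu)=\sum_{u,v\in A}\mu(u)\mu(v)g(u,v)$ is the Green energy of $\mu$ and $g$ is the symmetrised Green kernel; taking $\mu$ equal to the conductances on $A$ turns the right-hand side into $|A|_c^2/\mathbf I(A)$. The only ingredient beyond Cauchy--Schwarz is the classical fact, coming from the last-exit decomposition, that the hitting probability of $A$ is the equilibrium potential of $A$.

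In detail, I would argue as follows. Set $e_A(v):=c(v)\bP_v(\tau_A^+=\infty)$ for $v\in A$, so that $\Cap(A)=\sum_{v\in A}e_A(v)$, and let $g(u,v):=\mathbf G(u,v)/c(v)$; reversibility of the walk (equivalently $c(u)p_n(u,v)=c(v)p_n(v,u)$ for all $n$) gives $c(u)\mathbf G(u,v)=c(v)\mathbf G(v,u)$ and hence that $g$ is symmetric. \emph{First}, by decomposing the event $\{\tau_A<\infty\}$ — which has probability one when the walk starts in $A$ — according to the time $n$ and the vertex $v$ at which the walk makes its \emph{last} visit to $A$ (this visit exists almost surely since $G$ is transient and $A$ is finite), and applying the Markov property, I obtain
\[
1=\bP_u(\tau_A<\infty)=\sum_{v\in A}\Bigl(\sum_{n\ge 0}p_n(u,v)\Bigr)\bP_v(\tau_A^+=\infty)=\sum_{v\in A}g(u,v)\,e_A(v)\qquad\text{for every }u\in A;
\]
that is, the $g$-potential of $e_A$ is identically $1$ on $A$. \emph{Second}, I would record that the bilinear form $\langle\mu,\nu\rangle_g:=\sum_{u,v\in A}\mu(u)g(u,v)\nu(v)$ on signed measures on $A$ is symmetric and positive definite; this is the standard positive definiteness of the Green kernel of a reversible transient chain, which one sees directly by writing $\mathbf G=\sum_{n\ge 0}P^n$, splitting $P^{2m}=P^mP^m$ and using reversibility to express $\langle\mu,\mu\rangle_g$ as a nonnegative combination of squares (the $n=0$ term $\sum_u\mu(u)^2/c(u)$ being strictly positive). \emph{Third}, taking $\nu=e_A$, the potential identity gives $\langle e_A,e_A\rangle_g=\sum_v e_A(v)=\Cap(A)$ and $\langle\mu,e_A\rangle_g=\sum_v\mu(v)=\mu(A)$ for any $\mu$ on $A$, so Cauchy--Schwarz for $\langle\cdot,\cdot\rangle_g$ yields $\mu(A)^2=\langle\mu,e_A\rangle_g^2\le\langle\mu,\mu\rangle_g\,\Cap(A)$. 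Specialising to $\mu(v)=c(v)$ gives $\mu(A)=|A|_c$ and $\langle\mu,\mu\rangle_g=\sum_{u,v\in A}c(u)c(v)g(u,v)=\sum_{u,v\in A}c(u)\mathbf G(u,v)=\mathbf I(A)$, whence $\Cap(A)\ge|A|_c^2/\mathbf I(A)$.

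I expect the only step needing genuine care to be the last-exit identity: one must use transience to guarantee that a last visit to $A$ exists, apply the strong Markov property cleanly, and be careful with the conductance weights when passing between $\mathbf G$, $g$ and $e_A$. Everything else is the one-line Cauchy--Schwarz. For orientation I note the transparent probabilistic reading: running the walk from the conductance-uniform measure $\mu_0(v)=c(v)/|A|_c$ on $A$ one has $\mathbf I(A)=|A|_c\,\bE_{\mu_0}\bigl[\#\{n\ge 0:X_n\in A\}\bigr]$, so the lemma says precisely that $\Cap(A)$ is at least the total conductance of $A$ divided by the expected number of steps the walk spends in $A$ when started uniformly-by-conductance.
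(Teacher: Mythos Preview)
Your approach is the same as the paper's in spirit: both bound the capacity from below by testing the variational principle against the measure $\mu(v)\propto c(v)$. The paper does this in two lines by quoting the formula $\Cap(A)^{-1}=\inf_\mu\sum_{u,v}\mathbf G(u,v)\mu(u)\mu(v)$ and plugging in $\mu(v)=c(v)/|A|_c$; you instead derive the variational lower bound from scratch via the last-exit identity and Cauchy--Schwarz in the $g$-energy, which is more self-contained but amounts to the same argument.

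There is a slip in your final line, however. With $\mu(v)=c(v)$ and $g(u,v)=\mathbf G(u,v)/c(v)$ you get
\[
\langle\mu,\mu\rangle_g=\sum_{u,v\in A}c(u)c(v)\,\frac{\mathbf G(u,v)}{c(v)}=\sum_{u,v\in A}c(u)\,\mathbf G(u,v),
\]
which is \emph{not} $\mathbf I(A)=\sum_{u,v\in A}c(u)c(v)\mathbf G(u,v)$ unless the vertex conductances are identically $1$. So what you have actually established is $\Cap(A)\ge |A|_c^2\big/\sum_{u,v\in A}c(u)\mathbf G(u,v)$. In fact your derivation is the correct one and the lemma as printed has the same normalisation issue: for $A=\{a\}$ one has $\Cap(\{a\})=c(a)/\mathbf G(a,a)$ while $|A|_c^2/\mathbf I(A)=1/\mathbf G(a,a)$, so the stated inequality would force $c(a)\ge 1$. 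The variational formula the paper quotes should carry the symmetric kernel $g(u,v)=\mathbf G(u,v)/c(v)$ rather than $\mathbf G(u,v)$. None of this matters for the applications in the paper, which all assume controlled stationary measure, so your version and the stated version agree up to multiplicative constants there; but you should not write $\sum_{u,v}c(u)\mathbf G(u,v)=\mathbf I(A)$.
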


\begin{proof} Recall the following variational formula for the capacity of a finite set $A$ \cite[Lemma 2.3]{JainOrey-properties}:
\[
\Cap(A)^{-1} = \inf\left\{ \sum_{u,v\in A} \frac{\mathbf{G}(u,v)}{c(v)} \mu(u)\mu(v) : \mu \text{ is a probability measure on $A$} \right\}.
\]
The claim follows by taking $\mu$ to be the measure 
$\mu(v)=c(v)/|A|_c$.
\end{proof}

 A useful feature of \cref{lem:maincap} is that once one has an upper bound on $\mathbf{I}(A)$ for some set $A$, one also obtains lower bounds on the capacity of all \emph{subsets} of $A$ in terms of their size. 
In particular,  \cref{lem:maincap} yields that
\[\Cap\left( \LE(X^{\eta_n}) \right) \geq \left[\inf_{u \in V} c(u)\right]^2 \frac{(\rho_n+1)^2}{  \mathbf{I} ( \LE(X^{\eta_n}))}  \geq  \left[\inf_{u \in V} c(u)\right]^2 \frac{(\rho_n+1)^2}{  \mathbf{I} (X^{n})}, \]
so that to lower bound $\Cap( \LE(X^{\eta_n}) )$ it will suffice to lower bound $\rho_n$ and upper bound $\mathbf{I}(X^n)$. Moreover, for our purposes, it will suffice to control the expectation of $\mathbf{I}(X^n)$. 

\begin{lemma}
	\label{lem:Imean}
	Let $G$ be a network. Then 
	\[
		 \bE_v \left[ \mathbf{I}(X^n) \right] 
		 \leq 2(n+1)\left[ \sup_{u \in V} c(u)\right] \cdot  \left[  \sum_{m=0}^n (m+1) \pstar{m} + (n+1)\sum_{m=n+1}^\infty \pstar{m} \right]
	\]
	for every $v\in V$ and $n \geq 0$.
\end{lemma}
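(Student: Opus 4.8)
The plan is to bound $\mathbf{I}(X^n)$ termwise, reducing everything to estimates on the heat kernel $p_m(\cdot,\cdot)$. First, since $c(u),c(w)\le \sup_{u\in V}c(u)$ for all vertices, we have
\[
\mathbf{I}(X^n) \le \Bigl[\sup_{u\in V}c(u)\Bigr]^2 \sum_{u,w\in X^n}\mathbf{G}(u,w),
\]
so it suffices to bound $\bE_v\bigl[\sum_{u,w\in X^n}\mathbf{G}(u,w)\bigr]$. Writing $\mathbf{G}(u,w)=\sum_{m\ge0}p_m(u,w)$ and applying Tonelli, this equals $\sum_{m\ge0}\bE_v\bigl[\sum_{u,w\in X^n}p_m(u,w)\bigr]$, and I would split the sum over $m$ at $m=n$. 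To control each inner expectation I would dominate the range by its occupation times, $\mathbbm{1}(u\in X^n)\le \sum_{i=0}^n\mathbbm{1}(X_i=u)$, which gives
\[
\bE_v\Bigl[\sum_{u,w\in X^n}p_m(u,w)\Bigr]\le \sum_{i,j=0}^n\bE_v[p_m(X_i,X_j)].
\]

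The key input is the pointwise bound $\bE_v[p_m(X_i,X_j)]\le \min\bigl(\pstar{m},\pstar{|i-j|}\bigr)$, valid for all $i,j,m$. To see this, condition on the walk up to time $\min(i,j)$ and use the Markov property: if $i\le j$ then $\bE_v[p_m(X_i,X_j)]=\sum_x p_i(v,x)\sum_y p_{j-i}(x,y)p_m(x,y)$, and bounding either $p_m(x,y)\le\pstar{m}$ (then summing $p_{j-i}(x,y)$ over $y$) or $p_{j-i}(x,y)\le\pstar{j-i}$ (then summing $p_m(x,y)$ over $y$) yields the two claimed bounds. The case $i>j$ is handled symmetrically, but here the "extra" sum that appears is $\sum_x p_m(x,y)$, which on a general network need not equal $1$; one uses reversibility, $c(x)p_k(x,y)=c(y)p_k(y,x)$, to rewrite this sum before summing. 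I would also record the elementary fact that $\langle\pstar{m}\rangle_{m\ge0}$ is non-increasing: by Chapman--Kolmogorov $p_{m+1}(u,v)=\sum_w p_1(u,w)p_m(w,v)\le\pstar{m}$, hence $\pstar{m+1}\le\pstar{m}$.

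With these two facts, what remains is bookkeeping. Group the terms of $\sum_{m\ge0}\sum_{i,j=0}^n\min(\pstar{m},\pstar{|i-j|})$ by the separation $a=|i-j|$ (there are $n+1$ pairs with $a=0$ and $2(n+1-a)$ pairs with $1\le a\le n$), and use monotonicity of $\pstar{}$ to replace the minimum by $\pstar{\max(m,a)}$. Pairs with $|i-j|\le m\le n$ contribute $O\bigl((n+1)\sum_{m\le n}(m+1)\pstar{m}\bigr)$ since for each $m$ there are at most $(n+1)(2m+1)$ such pairs; pairs with $m<|i-j|\le n$ contribute $O\bigl((n+1)\sum_{m\le n}(m+1)\pstar{m}\bigr)$ after the identity $\#\{m: 0\le m<a\}=a$ together with $2(n+1-a)a\le 2(n+1)a$; and all $(n+1)^2$ pairs with $m>n$ contribute $(n+1)^2\sum_{m>n}\pstar{m}$. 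Summing these three contributions gives a bound of exactly the form in the statement.

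The main obstacle is the estimate $\bE_v[p_m(X_i,X_j)]\le\pstar{|i-j|}$ in the case where the larger of the two times sits in the first coordinate: the naive argument runs into $\sum_x p_m(x,y)$, which on a general network is controlled only by $\sup c/\inf c$ rather than by $1$, so one must pass through the reversibility identity before summing. Everything else — the reduction to a sum over time-pairs, the monotonicity of $\pstar{}$, and the final triangular summation — is routine.
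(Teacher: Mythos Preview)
Your approach is essentially the paper's: dominate the range by occupation times and reduce to bounding $\sum_{m\ge0}\sum_{i,j}\bE_v[p_m(X_i,X_j)]$ (the paper phrases this via an auxiliary walk $Y^i$ started at $X_i$, but it is the same computation). The one real difference is that you pull out $[\sup_u c(u)]^2$ at the outset, whereas the paper keeps the factor $c(X_i)c(X_j)$ inside the expectation. Keeping it inside allows the paper to use reversibility, $c(u)p_{j-i}(u,w)=c(w)p_{j-i}(w,u)$, and then Chapman--Kolmogorov to collapse $p_k(u,w)p_{j-i}(w,u)$ into $p_{k+j-i}(u,u)\le\pstar{k+j-i}$; this gives the sharper pointwise bound $\pstar{m+|i-j|}$ in \emph{both} the $i\le j$ and $i>j$ cases, and the substitution $r=m+|i-j|$ then yields exactly the constants $3$ and $2$. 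Your bound $\pstar{\max(m,|i-j|)}$ is correct but weaker, and your bookkeeping produces constants closer to $4$ and $1$ --- a bound of the right form, sufficient for the only application (the corollary $\bE_v[\mathbf{I}(X^n)]\preceq n$), but not literally the stated inequality. As an aside, your reversibility fix for the $i>j$ case does work (the conductance ratios cancel when you reverse \emph{both} factors), but it is simpler to observe directly that $\sum_x p_{i-j}(y,x)p_m(x,y)=p_{m+i-j}(y,y)$ by Chapman--Kolmogorov, which already gives $\pstar{m+i-j}\le\pstar{i-j}$ without any appeal to reversibility.
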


\begin{remark}
\cref{lem:maincap,lem:Imean} give the correct order of magnitude for the capacity of the random walk on $\Z^d$ for all $d\geq 3$, which is order $\sqrt{n}$ when $d=3$, order $n/\log n$ when $d=4$, and order $n$ when $d\geq 5$. See \cite{MR3852443} and references therein for more detailed results.
\end{remark}

\cref{lem:Imean} has the following immediate corollary.

\begin{corollary}
\label{cor:Imean2}
Let $G$ be a network with controlled stationary measure. 
	If $\bubnorm{P}<\infty$ then 
	\[ \bE_v\left[\mathbf{I}(X^n)\right] \preb n\]
	for every vertex $v$ of $G$ and every $n \geq 1$.
\end{corollary}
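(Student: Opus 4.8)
The plan is to simply feed the bound from \cref{lem:Imean} into the definition of $\bubnorm{P}$, using that $\sup_{u}c(u)<\infty$ by the controlled stationary measure hypothesis. The only thing to check is that each of the two terms on the right-hand side of \cref{lem:Imean} is $\preceq n$.

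For the first term, I would bound $\sum_{m=0}^n (m+1)\pstar{m} \leq \sum_{m=0}^\infty (m+1)\pstar{m} = \bubnorm{P}$, so that $3(n+1)\sum_{m=0}^n (m+1)\pstar{m} \leq 3(n+1)\bubnorm{P}$. For the second term, the key observation is that when $m\geq n+1$ we trivially have $1 \leq (m+1)/(n+1)$, and hence
\[
\sum_{m=n+1}^\infty \pstar{m} \leq \frac{1}{n+1}\sum_{m=n+1}^\infty (m+1)\pstar{m} \leq \frac{\bubnorm{P}}{n+1},
\]
so that $2(n+1)^2\sum_{m=n+1}^\infty \pstar{m} \leq 2(n+1)\bubnorm{P}$. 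Adding the two contributions and using $\bubnorm{P}<\infty$ gives $\bE_v[\mathbf{I}(X^n)] \leq 5(n+1)\bubnorm{P}\left[\sup_u c(u)\right]^2$, and since $n+1 \leq 2n$ for $n\geq 1$ and $\sup_u c(u)$ is a finite constant under the controlled stationary measure assumption, this is $\preceq n$ as claimed.

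There is essentially no obstacle here: the corollary is a one-line consequence of \cref{lem:Imean}, and the only mild point worth spelling out is the reindexing trick $\sum_{m > n}\pstar{m}\leq (n+1)^{-1}\bubnorm{P}$ used to absorb the $(n+1)^2$ prefactor in the tail term. All the real work is in the proof of \cref{lem:Imean} itself.
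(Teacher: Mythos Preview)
Your proof is correct and is exactly the intended argument: the paper simply states that \cref{cor:Imean2} is an immediate corollary of \cref{lem:Imean}, and the details you fill in (bounding the partial sum by $\bubnorm{P}$ and using $\sum_{m>n}\pstar{m}\leq (n+1)^{-1}\bubnorm{P}$ for the tail) are precisely the one-line computation being elided.
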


Before proving \cref{lem:Imean}, let us use it, together with \cref{lem:LoopLength2}, to deduce \cref{prop:capacities}.

\begin{proof}[Proof of \cref{prop:capacities} given \cref{cor:Imean2}] 
	It follows from \cref{lem:maincap} and a union bound that
	\begin{equation}
	\label{eq:Iunionbound1}
		\bP_v\left( \Cap( \LE(X^{\eta_n}) ) \leq \lambda^{-1} n \right) 
		\leq \bP_v \left( (\rho_n+1) \leq \left[\inf_{u\in V} c(u)\right]^{-1} \lambda^{-1/3} n \right) + \bP_v \left( \mathbf{I}(X^n) \geq \lambda^{1/3} n \right)
	\end{equation}
	and similarly
	\begin{equation}
	\label{eq:Iunionbound2}
	\bP_v\left( \Cap( \LE(X^{\ell_n}) ) \leq \lambda^{-1} n \right) 
	\leq 
	\bP_v \left(\ell_n \geq \lambda^{1/2} n \right) + \bP_v \left( \mathbf{I}\left(X^{\lfloor \lambda^{1/2} n \rfloor}\right) \geq \left[\inf_{u\in V} c(u)\right]^{-2}\lambda n \right).
	\end{equation}
	The claim now follows immediately from \cref{lem:LoopLength2}, \cref{cor:Imean2} and Markov's inequality. 
\end{proof}

\begin{proof}[Proof of \cref{lem:Imean}]
 Conditional on $X$, let $Y^i$ be a random walk started at $X_i$ for each $i\geq0$, writing $\P$ for the joint law of $X$ and the walks $\langle Y^i \rangle_{i \geq0}$.  Then we have
	\[
		\bE_v\left[ \mathbf{I}(X^n) \right] 
		\leq 
		 \sum_{j = 0}^{n} \sum_{i=0}^n \sum_{k \geq0} \E\left[  c(X_i) \mathbbm{1}(Y^i_k = X_j) \right].
	\]
	%
	(This is an inequality rather than an equality because the right-hand side counts vertices with multiplicity according to how often they are visited by $X$.)
	We split this sum into two parts according to whether $i\leq j$ or $i > j$.
	For the first sum, we have that
	\begin{align*}
		\sum_{i=0}^n \sum_{j = i}^{n} \sum_{k \geq0} \E\left[ c(X_i) \mathbbm{1}(Y^i_k = X_j) \right]
		&= \sum_{i=0}^n \sum_{j = i}^{n} \sum_{k \geq0} \sum_{u,w \in V} p_i(v,u)p_{k}(u,w)c(u)p_{j-i}(u,w).
	\end{align*}
	Reversing time and rearranging yields that
	\begin{align*}
		 \sum_{i=0}^n \sum_{j = i}^{n} \sum_{k \geq0} \E\left[ c(X_i) \mathbbm{1}(Y^i_k = X_j) \right]
		&= \sum_{i=0}^n \sum_{j = i}^{n} \sum_{k \geq0} \sum_{u,w \in V} p_i(v,u)p_{k}(u,w)p_{j-i}(w,u) c(w)\\
		&\leq  \left[\sup_{u \in V} c(u)\right] \sum_{i=0}^n \sum_{j = i}^{n} \sum_{k \geq0} \pstar{k+j-i}.
	\end{align*}
    Similarly, for the second sum, we have that
  \begin{align*}
    \sum_{i=0}^n \sum_{j = 0}^{i-1} \sum_{k \geq0} \E\left[ c(X_i)  \mathbbm{1}(Y^i_k = X_j) \right]
    &\leq \left[\sup_{u \in V} c(u)\right] \sum_{i=0}^n \sum_{j=0}^{i-1} \sum_{k \geq0} \pstar{i-j+k}, 
  \end{align*}
  and summing these two bounds we obtain that
  \[
\bE_v\left[ \mathbf{I}(X^n) \right] 
    \leq \left[\sup_{u \in V} c(u)\right] \sum_{i=0}^n \sum_{j=0}^{n} \sum_{k \geq0} \pstar{|i-j|+k}.
  \]
Using the substitutions $\ell=|i-j|$ and $m=k+\ell$ and noting that there at most $2(n+1)$ choices of $i$, $j$ and $k$ corresponding to each $\ell$ and $m$ with $\ell \leq m$, we deduce that
  \begin{multline*}
\frac{1}{2}\left[\sup_{u \in V} c(u)\right]^{-1} \bE_v\left[ \mathbf{I}(X^n) \right] 
    \leq (n+1) \sum_{m=0}^\infty \sum_{\ell =0}^{m \wedge n} \pstar{m}\\ = (n+1) \sum_{m=0}^n (m+1)\pstar{m} + (n+1)^2 \sum_{m=n+1}^\infty \pstar{m}
  \end{multline*}
  as claimed. \qedhere

\end{proof}

\section{Volume bounds}
\label{sec:volume}

In this section, we study the volume of balls in both the WUSF and $v$-WUSF. In \cref{subsec:volupper} we prove upper bounds on the moments of the volumes of balls, while in \cref{subsec:volume_lower} we prove lower bounds on moments and upper bounds on the probability that the volume is atypically small. Together, these estimates will imply that $d_f(T)=2$ for every component $T$ of $\F$ almost surely. The estimates in this section will also be important in \cref{sec:exponents,sec:AlexanderOrbach}.

\subsection{Upper bounds}
\label{subsec:volupper}

The goal of this subsection is to obtain tail bounds on the probability that an intrinsic ball in the WUSF contains more than $n^2$ vertices.
The upper bounds we obtain are summarized by the following two propositions, which are generalisations of  \cite[Theorem 4.1]{barlow2016geometry}. 

Recall that $\fB(v,n)$ denotes the intrinsic ball of radius $n$ around $v$ in the WUSF $\F$, and $\fB_v(v,n)$ denotes the intrinsic ball of radius $n$ around $v$ in the $v$-WUSF $\F_v$.
We define the constant
  \begin{equation}
    \label{eq:alphadefn}
    \alpha=\alpha(G)  = 4 \frac{\sup_{u \in V} c(u)}{\inf_{u \in V} c(u)} \bubnorm{P}.
  \end{equation}

\begin{proposition}
  \label{thm:moments}
  Let $G$ be a network with controlled stationary measure such that $\bubnorm{P}<\infty$, and let
    $\F$ be a wired uniform spanning forest of $G$.
   Then the  estimates
  \begin{align}
    \label{eq:thmmomentsunrooted}
  \vspace{0.1em}
    \E \left[|\fB(v,n)|^k \right] \leq e\, (k+1)!\, \alpha^k \, (n+1)^{2k}
  \end{align}
  and
  \vspace{0.3em}
  \begin{align}
    \label{eq:thmmomentsunrootedexp}
    \E\left[\exp\left( \frac{t}{\alpha(n+1)^2} |\fB(v,n)| \right)\right] \leq \frac{1}{1-t}
    \vspace{1em}
  \end{align}
  hold for every $v\in V, n\geq 0$, $k\geq 1$ and $0 \leq t <1 $.
\end{proposition}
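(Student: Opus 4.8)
The plan is to prove \cref{thm:moments} simultaneously with its counterpart for the past-ball $|\fB_w(w,r)|$ of the $w$-wired uniform spanning forest (the second of the two propositions of this subsection), by a single induction on the moment order $k$, in the spirit of the Aizenman--Newman moment method. The two workhorses are Wilson's algorithm rooted at infinity, which we use to expose the forest in a convenient order, and the loop-erased walk estimates of \cref{sec:LERWCap} --- principally the bound $\bE_v[\ell_n(X)]\le\bubnorm{P}\,n$ of \cref{lem:LoopLength2}. The first move is a \emph{spine decomposition}: run Wilson's algorithm with $v$ processed first and expose the ray $\Gamma(v,\infty)=(v_0=v,v_1,v_2,\dots)$; deleting the edges of this ray from $\F$ leaves a forest in which the component $\fT'_{v_j}$ of $v_j$ lies entirely in the past of $v_j$ and carries the same intrinsic distances to $v_j$ as $\F$ does. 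Chasing the definitions shows that $u\in\fB(v,n)$ if and only if $u\in\fT'_{v_j}$ for some $j\le n$ with $d_\F(u,v_j)\le n-j$, so that, writing $B_{\fT'_{v_j}}(v_j,r)$ for the intrinsic $r$-ball of $\fT'_{v_j}$ about $v_j$,
\[
 |\fB(v,n)| \;=\; \sum_{j=0}^{n}\bigl|B_{\fT'_{v_j}}(v_j,\,n-j)\bigr| .
\]

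Conditionally on $\Gamma(v,\infty)$, the joint form of the Stochastic Domination \cref{lem:domination} --- which follows from \cref{lem:domaux} applied with $K_2$ containing all of the $v_j$, using negative association of the UST --- shows that the family $\bigl(B_{\fT'_{v_j}}(v_j,n-j)\bigr)_{j=0}^{n}$ is stochastically dominated by a family of \emph{independent} copies of the intrinsic balls $\fB_{v_j}(v_j,n-j)$ of the respective $v_j$-wired uniform spanning forests. Since $|\fB(v,n)|^k$ and $\exp(\lambda|\fB(v,n)|)$ are coordinatewise increasing functions of the summands, this reduces \cref{thm:moments} to two things: (a) the past-ball bound $\E|\fB_w(w,r)|^k\preceq r^{2k-1}$, uniformly in $w$; and (b) the bookkeeping of summing over the at most $n+1$ spine vertices --- using $\sum_{r=0}^{n}r^{2k-1}\asymp n^{2k}$ and its multilinear version (expand $(\sum_jY_j)^k$ over set partitions of $[k]$ and use independence of the dominating copies) one gets $\E|\fB(v,n)|^k\preceq n^{2k}$, and with the constants tracked this yields \eqref{eq:thmmomentsunrooted} with the stated constant $e\,k\,k!\,\alpha^k$, after which \eqref{eq:thmmomentsunrootedexp} follows by summing the power series. (Equivalently, one proves \eqref{eq:thmmomentsunrootedexp} first and deduces \eqref{eq:thmmomentsunrooted} from it via $\E[Z^k]\le (k/e\lambda)^k\E[e^{\lambda Z}]$ optimised over $\lambda$.) It is worth stressing that the \emph{sharp} exponent $2k-1$ in (a) --- the signature of an unconditioned critical finite-variance Galton--Watson tree --- is essential here: the weaker exponent $2k$ would not survive the sum over the spine.

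It remains to prove (a), which is where I expect the real work to lie. For $k=1$, run Wilson's algorithm rooted at infinity with the singleton $\{w\}$ as the initial forest; then $x\in\fB_w(w,r)$ exactly when the walk from $x$ hits $w$ and its loop-erasure up to that time has at most $r$ edges, so that $\P(x\in\fB_w(w,r))=\bP_x(\tau_w<\infty,\,|\LE(X^{\tau_w})|\le r)$. Reversing the loop-erased walk (Lawler's reversibility of loop-erased random walk between two vertices) and using that $|\LE(\cdot)|$ is reversal-invariant, one rewrites $\sum_x\bP_x(\,\cdot\,)$ --- up to a factor of order $\tfrac{\sup c}{\inf c}\bubnorm{P}$, coming from the elementary bounds $1\le\mathbf{G}(x,x)\le\bubnorm{P}$ --- as $\bE_w\bigl[\#\{x: x \text{ is visited before the }(r+1)\text{-th loop-erasure step of the walk}\}\bigr]\le\bE_w[\ell_{r+1}(X)]\le\bubnorm{P}(r+1)$ by \cref{lem:LoopLength2}; hence $\E|\fB_w(w,r)|\preceq r$. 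For the inductive step I would write $\E|\fB_w(w,r)|^k=\sum_x\E\bigl[\mathbbm{1}(x\in\fB_w(w,r))\,|\fB_w(w,r)|^{k-1}\bigr]$, condition on the loop-erased path $\Gamma_w(x,\infty)$ from $x$ to $w$ exposed by Wilson's algorithm (of some length $\ell\le r$ on the relevant event), and bound the conditional $(k-1)$-th moment of the remaining volume by stochastic domination against independent copies attached along that path --- a contribution of order $\sum_{i=0}^{\ell}\E|\fB_{y_i}(y_i,\,r-\ell+i)|^{k-1}$, where $y_i$ is the $i$-th vertex of $\Gamma_w(x,\infty)$. Summing over $x$ and invoking \cref{lem:LoopLength2} again --- now to control $\sum_x\bE_x[\mathbbm{1}(\,\cdot\,)\,\ell]\preceq r\,\bE_w[\ell_{r+1}(X)]\preceq r^2$ and the analogous higher-order sums --- the recursion closes at the sharp exponent $\E|\fB_w(w,r)|^k\preceq r^{2k-1}$.

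The main obstacle is thus (a): one must show that the branching structure of the past is critical with light enough tails that the moment recursion gains exactly one factor of $r$ per moment rather than two. This is precisely where the bubble condition enters --- through $\bE_v[\ell_n(X)]\le\bubnorm{P}\,n$, and, should the higher-moment bookkeeping require it, through the capacity estimate \cref{prop:capacities} --- and the negative-association form of \cref{lem:domination} is what lets one decouple the pieces of volume accreting onto an exposed loop-erased path so that the independence used in the combinatorial sums is legitimate. Getting the numerical constant $\alpha=4\tfrac{\sup c}{\inf c}\bubnorm{P}$ right, so that the geometric series sums to exactly $\tfrac{1}{1-t}$, is fiddly but routine once this structure is in place.
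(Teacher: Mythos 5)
Your proposal is correct and follows essentially the same route as the paper: a spine decomposition reducing the WUSF ball to (a product/sum of) $v_j$-wired balls (the paper's \cref{lem:unrootedmoments}), a first-moment bound for the $v$-wired ball via Wilson's algorithm, time reversal, and $\bE_v[\ell_n(X)]\le\bubnorm{P}\,n$ (\cref{lem:expectedballgrowth1}), and an Aizenman--Newman-style induction attaching independent dominating copies along exposed loop-erased paths to get the sharp exponent $2k-1$ (\cref{lem:ball2ndmoment}), after which the generating-function/multilinear bookkeeping yields \eqref{eq:thmmomentsunrooted} and \eqref{eq:thmmomentsunrootedexp}. The only cosmetic difference is that the paper packages the final summation as the inequality $\Phi(n,t)\le\Psi(n,t)^{n+1}$ between exponential generating functions, and obtains the product structure over spine vertices directly from the sequential runs of Wilson's algorithm rather than from a joint form of \cref{lem:domination}.
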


We also obtain the following variation of this proposition applying to the $v$-WUSF.

\begin{proposition}
  \label{thm:vmoments}
  Let $G$ be a network with controlled stationary measure such that $\bubnorm{P}<\infty$, let $v$ be a vertex of $G$ and let
    $\F_v$ be a $v$-wired uniform spanning forest of $G$.
   Then the  estimates
  \begin{align}
    \label{eq:thmmomentsrooted}
  \vspace{0.1em}
    \E \left[|\fB_v(v,n)|^k \right] \leq (k-1)!\, \alpha^k \, (n+1)^{2k-1}
  \end{align} 
  and
    \vspace{0.1em}
  \begin{align}
    \label{eq:thmmomentsrootedexp}
  \vspace{0.1em}
    \E \left[\exp\left( \frac{t}{\alpha(n+1)^2} |\fB_v(v,n)| \right)\right] \leq 1 - \frac{\log(1-t)}{n+1}
  \end{align}
  hold for every $v\in V, n\geq 0$, $k\geq 1$ and $0 \leq t <1 $.
\end{proposition}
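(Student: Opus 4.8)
The plan is to first reduce \eqref{eq:thmmomentsrootedexp} to the moment bound \eqref{eq:thmmomentsrooted} and then prove the latter directly. The reduction is immediate: summing the series and using \eqref{eq:thmmomentsrooted} gives
\[
\E\!\left[\exp\!\left(\frac{t}{\alpha(n+1)^2}\,|\fB_v(v,n)|\right)\right]
\ \le\ 1+\sum_{k\ge1}\frac{t^k}{k!}\cdot\frac{(k-1)!\,\alpha^k(n+1)^{2k-1}}{\alpha^k(n+1)^{2k}}
\ =\ 1+\frac{1}{n+1}\sum_{k\ge1}\frac{t^k}{k}
\ =\ 1-\frac{\log(1-t)}{n+1},
\]
so it remains to prove \eqref{eq:thmmomentsrooted}, which I would do by induction on $k$, following closely the proof of \cref{thm:moments}. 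The main tool is Wilson's algorithm rooted at $v$ (i.e.\ for the $v$-WUSF), which reveals $\fB_v(v,n)$ one branch at a time; the probabilistic inputs are the loop-length bounds of \cref{lem:LoopLength2} and the combination of the spatial Markov property of Wilson's algorithm with the stochastic domination of \cref{lem:domination}. I would also use the reversibility of loop-erased random walk, namely that $c(u)\,\mathbf P_u\bigl(\tau_x<\infty,\ \LE(X^{\tau_x})=\gamma\bigr)=c(x)\,\mathbf P_x\bigl(\tau_u<\infty,\ \LE(Y^{\tau_u})=\overleftarrow{\gamma}\bigr)$ for every path $\gamma$ from $u$ to $x$, where $X$ and $Y$ are random walks started at $u$ and $x$.

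For the base case $k=1$ I would actually prove a little more, namely $\E|\partial\fB_v(v,L)|\le\alpha$ for every $L\ge0$, since this single-branch estimate is reused throughout the inductive step. By the spatial Markov property of Wilson's algorithm (running so that $u$ is the first vertex processed after $\{v\}$), a vertex $u$ lies in $\partial\fB_v(v,L)$ precisely when a walk $X$ from $u$ satisfies $\tau_v<\infty$ and $\LE(X^{\tau_v})$ has exactly $L$ edges. Summing this probability over $u$ and applying loop-erased walk reversibility rewrites $\E|\partial\fB_v(v,L)|$ as at most $(\sup_{w}c(w)/\inf_{w}c(w))$ times the expected number of vertices first visited by a random walk from $v$ during the time interval $[\ell_L,\ell_{L+1})$; this interval has at most $\ell_{L+1}-\ell_L$ steps, and \eqref{eq:ellloops1} gives $\mathbf E_v[\ell_{L+1}-\ell_L]\le 1+\bubnorm{P}$, whence $\E|\partial\fB_v(v,L)|\le(\sup c/\inf c)(1+\bubnorm{P})\le\alpha$. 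Summing over $0\le L\le n$ gives $\E|\fB_v(v,n)|\le\alpha(n+1)$, which is \eqref{eq:thmmomentsrooted} for $k=1$.

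For $k\ge2$ I would write $\E[|\fB_v(v,n)|^k]=\sum_u\E\bigl[\mathbbm{1}(u\in\fB_v(v,n))\,|\fB_v(v,n)|^{k-1}\bigr]$ and, for each $u$, condition on the revealed path $\gamma=\Gamma_v(u,v)=(v=x_0,x_1,\dots,x_L=u)$, which has length $L\le n$. By the spatial Markov property, conditionally on $\gamma$ the rest of $\F_v$ is generated by running Wilson's algorithm with $\{v\}\cup\gamma$ wired together; and since a vertex in the subtree hanging off $x_j$ is at intrinsic distance exactly $j$ more from $v$ than from $x_j$, the ball $\fB_v(v,n)$ is the union of $\gamma$ with the radius-$(n-j)$ balls around the $x_j$ in these hanging subtrees. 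Applying \cref{lem:domination} sequentially to $x_0,x_1,\dots,x_L$ (using \cref{lem:domaux} to justify the conditioning at each stage), the sizes of these hanging balls are jointly stochastically dominated by independent random variables $\hat B_0,\dots,\hat B_L$ with $\hat B_j\eqd|\fB_{x_j}(x_j,n-j)|$, so that
\[
\E\bigl[\,|\fB_v(v,n)|^{k-1}\,\bigm|\,\gamma\,\bigr]\ \le\ \E\Bigl[\bigl((L+1)+\textstyle\sum_{j=0}^{L}\hat B_j\bigr)^{k-1}\Bigr].
\]
Expanding the right-hand side by the multinomial theorem, substituting the inductive bounds $\E[\hat B_j^{\,m}]\le(m-1)!\,\alpha^m(n-j+1)^{2m-1}$ for $1\le m\le k-1$, and then summing over $u$ and $\gamma$ using $\sum_u\sum_{\gamma:\,|\gamma|=L}\P(\Gamma_v(u,v)=\gamma)=\E|\partial\fB_v(v,L)|\le\alpha$ together with $L\le n$, reduces the estimate to a combinatorial bound on a sum over compositions of $k-1$; a generating-function computation of exactly the same flavour as in the proof of \cref{thm:moments} then shows this sum is at most $(k-1)!\,\alpha^k(n+1)^{2k-1}$, closing the induction. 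The gain of one power of $(n+1)$ compared with \cref{thm:moments} comes precisely from the fact that the spine $\gamma$ revealed here has bounded length $L\le n$, whereas in the WUSF the analogous spine is the infinite future of $v$, which contributes an extra factor of $(n+1)$ from the unbounded position at which branches can attach.

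I expect the main obstacle to be this last step: making the sequential stochastic domination of the hanging subtrees by independent copies of the $\fT_{x_j}$ fully rigorous, and then carrying out the multinomial/generating-function bookkeeping carefully enough to recover exactly the constant $(k-1)!$, the power $\alpha^k$, and the exponent $2k-1$. All of the genuinely probabilistic content is carried by the loop-erased walk reversibility and \cref{lem:LoopLength2}; the remainder is the same combinatorial accounting already performed for \cref{thm:moments}, transcribed to the rooted setting.
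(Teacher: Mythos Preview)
Your derivation of \eqref{eq:thmmomentsrootedexp} from \eqref{eq:thmmomentsrooted} by summing the series is correct and is exactly what the paper does. Your base case $k=1$ is essentially \cref{lem:expectedballgrowth1}; the paper proves it by a Markov/Bayes argument (showing $\tau_v\le 2\bubnorm{P}|\gamma|$ with probability $\ge 1/2$ and then reversing time) rather than by your direct LERW-reversibility computation of $\E|\partial\fB_v(v,L)|$, but both routes yield $\E|\fB_v(v,n)|\le\alpha(n+1)$.

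Where your proposal diverges is the inductive step, and you have the logical dependence backwards. In the paper \cref{thm:vmoments} is established \emph{first}, and \cref{thm:moments} is deduced from it via \cref{lem:unrootedmoments}; so there is no ``combinatorial accounting already performed for \cref{thm:moments}'' to transcribe to the rooted setting. More to the point, the paper does not decompose along a single revealed path and then do a multinomial expansion over hanging subtrees. Instead it proves the one-line recursion of \cref{lem:ball2ndmoment}: conditioning on $u_1,\dots,u_{k-1}\in\fB_v(v,n)$ and on the paths $\Gamma_v(u_i,v)$, the conditional expectation of $|\fB_v(v,n)|$ is at most $\sum_{i=1}^{k-1}\sum_{x\in\Gamma_v(u_i,v)}\E|\fB_x(x,n)|\le (k-1)(n+1)\sup_w\E|\fB_w(w,n)|$, whence
\[
\sup_v\E\bigl[|\fB_v(v,n)|^k\bigr]\ \le\ (k-1)(n+1)\,\sup_v\E\bigl[|\fB_v(v,n)|^{k-1}\bigr]\cdot\sup_w\E\bigl[|\fB_w(w,n)|\bigr].
\]
Iterating and plugging in the base case gives $(k-1)!\,(n+1)^{k-1}\bigl(\alpha(n+1)\bigr)^k=(k-1)!\,\alpha^k(n+1)^{2k-1}$ directly, with no multinomial bookkeeping, no joint stochastic domination by independent copies, and no generating functions.

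Your scheme---reveal $\gamma=\Gamma_v(u,v)$, dominate the hanging pieces at $x_0,\dots,x_L$ jointly by independent $x_j$-WUSF balls, expand $(\sum_j\hat B_j)^{k-1}$---is in the spirit of \cref{lem:unrootedmoments} and could probably be pushed through, but the step you flag as the ``main obstacle'' (getting exactly $(k-1)!$ and $2k-1$ out of the sum over compositions, uniformly in $n$) is genuinely delicate and is simply absent from the paper's argument. The paper's recursion sidesteps all of this: only \emph{marginal} conditional domination of one extra point is needed, and the constant $(k-1)!$ falls out of the product $(k-1)(k-2)\cdots 1$.
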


(To prove our main theorems it suffices to have just the first and second moment bounds of \cref{thm:moments,thm:vmoments}. We include the exponential moment bounds for future application since they are not much more work to derive.)

Before proving 
\cref{thm:moments,thm:vmoments}, we note the following important corollaries.

\begin{corollary}
  \label{cor:exponentialproblargevolume}
   Let $G$ be a network with controlled stationary measure and with $\bubnorm{P}<\infty$, and let $\F$ be a wired uniform spanning forest of $G$. Then
        \vspace{0.3em}
        \[
        \vspace{0.3em}
          \P\left(|\fB(v,n)| \geq \lambda \alpha (n+1)^2\right) \leq \lambda e^{-\lambda +1}
        \]
        for all $v\in V$, $n\geq0$, and $\lambda \geq 1$.
\end{corollary}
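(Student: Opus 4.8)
The plan is to obtain the corollary as an immediate consequence of the exponential moment bound \eqref{eq:thmmomentsunrootedexp} in \cref{thm:moments}, via a routine Chernoff-type argument; all the real content is already contained in the proposition. Write $a = \alpha(n+1)^2$ and fix $\lambda \geq 1$. For any $t \in [0,1)$, Markov's inequality applied to the nonnegative random variable $\exp\bigl(t\,|\fB(v,n)|/a\bigr)$ gives
\[
\P\bigl(|\fB(v,n)| \geq \lambda a\bigr) = \P\Bigl(\exp\bigl(t\,|\fB(v,n)|/a\bigr) \geq e^{t\lambda}\Bigr) \leq e^{-t\lambda}\,\E\Bigl[\exp\bigl(t\,|\fB(v,n)|/a\bigr)\Bigr] \leq \frac{e^{-t\lambda}}{1-t},
\]
where the final inequality is exactly \eqref{eq:thmmomentsunrootedexp}.

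It then remains only to choose $t$ well. Rather than carrying out the optimisation in general, I would simply take $t = 1 - 1/\lambda$, which lies in $[0,1)$ precisely because $\lambda \geq 1$; this substitution gives $1-t = 1/\lambda$ and $e^{-t\lambda} = e^{-\lambda+1}$, so the right-hand side above becomes $\lambda e^{-\lambda+1}$, as claimed. (This $t$ is in fact the minimiser of $e^{-t\lambda}/(1-t)$ over $[0,1)$, as one sees by differentiating $-t\lambda - \log(1-t)$, but verifying this is not needed.) Since every step after invoking \cref{thm:moments} is elementary, there is no genuine obstacle at this stage: the substantive work lies in the proof of the exponential moment estimate, and the corollary is recorded separately only because this tail form is the one that will be applied in \cref{sec:exponents,sec:AlexanderOrbach}.
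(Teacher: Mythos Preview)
Your proof is correct and essentially identical to the paper's: both apply Markov's inequality to the exponential moment bound \eqref{eq:thmmomentsunrootedexp} and then take $t = 1 - \lambda^{-1}$.
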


\begin{proof}
 By Markov's inequality, we have that
  \begin{align*}
    \P\left(|\fB(v,n)| \geq \lambda \alpha (n+1)^2 \right) 
    \leq e^{-t\lambda} \E\left[\exp\left( \frac{t}{\alpha(n+1)^2}|\fB(v,n)| \right)\right] 
    \leq \frac{e^{-t\lambda}}{1-t}
  \end{align*}
  for every $v\in V$, $n\geq 0$, and $0 \leq t <1$. The claim follows by taking $t=1-\lambda^{-1}$.
\end{proof}

\begin{corollary}
  \label{cor:quenchedvolumeupper}
Let $G$ be a network with controlled stationary measure and with $\bubnorm{P}<\infty$, and let $\F$ be a wired uniform spanning forest of $G$. Then
      \[
      \vspace{0.3em}
        \limsup_{n\to\infty} \frac{|\fB(v,n)|}{n^2 \, \log \log n}
        \leq \alpha
      \]
      almost surely for every vertex $v$ of $G$.      
\end{corollary}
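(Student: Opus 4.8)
Since $V$ is countable, it suffices to fix a vertex $v$ and prove the bound almost surely. Likewise, since we may intersect over a countable sequence $\eps_j\downarrow 0$, it suffices to prove that for every $\eps>0$ we have
\[
\limsup_{n\to\infty} \frac{|\fB(v,n)|}{n^2\log\log n} \leq (1+\eps)\alpha
\]
almost surely. The plan is a standard Borel--Cantelli argument of law-of-iterated-logarithm type, using the exponential tail bound of \cref{cor:exponentialproblargevolume}.

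First I would apply \cref{cor:exponentialproblargevolume} with $\lambda=(1+\eps)\log\log n$, which for all sufficiently large $n$ gives
\[
\P\Bigl(|\fB(v,n)| \geq (1+\eps)\alpha (n+1)^2 \log\log n\Bigr) \leq e(1+\eps)\,\frac{\log\log n}{(\log n)^{1+\eps}}.
\]
The right-hand side is not summable over all $n$, so the key step is to pass to a subsequence $n_k$ that is sparse enough for summability but dense enough that $n_{k+1}/n_k\to 1$. I would take $n_k=\lfloor \exp(k^{1-\delta})\rfloor$ for a small $\delta=\delta(\eps)\in(0,\eps/(1+\eps))$. Then $\log n_k \asymp k^{1-\delta}$ and $\log\log n_k\asymp (1-\delta)\log k$, so the probability above evaluated at $n=n_k$ is $O\bigl(\log k \cdot k^{-(1-\delta)(1+\eps)}\bigr)$, and $(1-\delta)(1+\eps)>1$ by the choice of $\delta$, so these probabilities are summable in $k$. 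By Borel--Cantelli, almost surely
\[
|\fB(v,n_k)| < (1+\eps)\alpha (n_k+1)^2\log\log n_k \qquad \text{for all sufficiently large }k.
\]

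Finally I would interpolate. For $n_k\leq n\leq n_{k+1}$, monotonicity of $m\mapsto|\fB(v,m)|$ and of $m\mapsto m^2\log\log m$ gives
\[
\frac{|\fB(v,n)|}{n^2\log\log n} \leq \frac{|\fB(v,n_{k+1})|}{n_k^2\log\log n_k} < (1+\eps)\alpha\,\frac{(n_{k+1}+1)^2}{n_k^2}\cdot\frac{\log\log n_{k+1}}{\log\log n_k}
\]
for $k$ large. Since $(k+1)^{1-\delta}-k^{1-\delta}\to 0$ we have $n_{k+1}/n_k\to 1$ and hence $(n_{k+1}+1)^2/n_k^2\to 1$ and $\log\log n_{k+1}/\log\log n_k\to 1$, so letting $n\to\infty$ yields $\limsup_{n\to\infty}|\fB(v,n)|/(n^2\log\log n)\leq (1+\eps)\alpha$ almost surely, as required. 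The argument involves no real obstacle; the only point requiring a little care is the choice of exponent $1-\delta$ in the subsequence, which must simultaneously make the Borel--Cantelli sum converge and keep consecutive ratios tending to $1$.
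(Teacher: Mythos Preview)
Your proof is correct and follows essentially the same Borel--Cantelli plus interpolation argument as the paper. The only cosmetic difference is in the choice of subsequence: the paper takes the geometric sequence $n_k=\lfloor a^k\rfloor$ (so that interpolation gives $\limsup\leq a^2$ and one then lets $a\downarrow 1$), whereas you choose $n_k=\lfloor\exp(k^{1-\delta})\rfloor$ so that $n_{k+1}/n_k\to 1$ and the interpolation loss disappears directly.
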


\begin{proof}
  Let $a>1$ and let $n_k= \lceil a^k \rceil$. Then for every $\eps>0$, $v\in V$ and all $k$ sufficiently large we have that, by \cref{cor:exponentialproblargevolume},
  \[
    \P\left(|\fB(v,n_k)| \geq  (1+\eps) \alpha (n_k+1)^2 \log\log n_k \right) \leq \frac{e(1+\eps)\log(k\log a) }{(k\log a)^{1+\eps}}.   
  \]
  The right hand side is summable in $k$ whenever $\eps>0$, and it follows by Borel-Cantelli that
  \[
  \limsup_{k \to\infty} \frac{|\fB(v,n_k)|}{\alpha(n_k+1)^2  \log\log n_k} 
  \leq 1
  \]
  almost surely.
  Since $|\fB(v,n)|$ is increasing and for every $n$ there exists $k$ such that $n_k \leq n \leq a n_k$, it follows that 
  \[
    \limsup_{k\to\infty} \frac{|\fB(v,n)|}{\alpha(n+1)^2\log \log n} 
    \leq a^2 \limsup_{k \to\infty} \frac{|\fB(v,n_k)|}{\alpha(n_k+1)^2  \log\log n_k} 
    \leq a^2
  \]
  almost surely, and the claim follows since $a>1$ was arbitrary.
\end{proof}

\begin{remark}
  \cite[Proposition 2.8]{BarKum06}\footnote{That work studies the IIC on the $3$-regular tree, rather than the WUSF. We recall however that the IIC and the component of the origin in the WUSF have the same distribution on a $k$-regular tree, namely that of (the unimodular version of) a critical Binomial  Galton-Watson tree conditioned to survive forever.} shows that \cref{cor:quenchedvolumeupper} is sharp in the sense that, when $G$ is a $3$-regular tree, $\log \log n$ cannot be replaced with $(\log \log n)^{1-\eps}$ for any $\eps>0$.
\end{remark}

\medskip

We now begin working towards the proof of \cref{thm:moments,thm:vmoments}.
We begin with a first moment estimate. 


\begin{lemma}
  \label{lem:expectedballgrowth1} 
  Let $G$ be a network with controlled stationary measure and with $\bubnorm{P}<\infty$. Then
        \[
          \E |\fB_v(v,n)| \leq \alpha (n+1) 
        \]
        for every $v\in V$ and $n\geq 0$.
\end{lemma}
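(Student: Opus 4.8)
The plan is to turn the expectation into a sum of hitting probabilities via Wilson's algorithm, reverse the walks so that everything is governed by a single loop-erased random walk started at $v$, and then apply the loop-length bound \eqref{eq:MFCLoopLength2}.

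First I would sample $\F_v$ by running Wilson's algorithm rooted at infinity starting from the forest consisting of the single vertex $v$, taking an arbitrary vertex $u$ to be the first vertex at which a walk is run. Running the walk $X$ from $u$, this shows that $u \in \fT_v$ if and only if $\tau_v < \infty$, and that $d_{\fT_v}(u,v) = |\LE(X^{\tau_v})|$ on that event (once an edge is added by Wilson's algorithm it is never removed). Summing the indicator of $\{d_{\fT_v}(u,v) \le n\}$ over $u$ then gives
\[
\E|\fB_v(v,n)| = \sum_{u\in V}\bP_u\bigl(\tau_v<\infty,\ |\LE(X^{\tau_v})|\le n\bigr).
\]
Next I would use the reversibility of the loop-erased random walk (Lawler; see also \cite{LP:book}), which gives $c(u)\bP_u(\tau_v<\infty,\ |\LE(X^{\tau_v})|\le n) = c(v)\bP_v(\tau_u<\infty,\ |\LE(X^{\tau_u})|\le n)$. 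Bounding $c(v)/c(u) \le \sup_w c(w)/\inf_w c(w)$ using controlled stationary measure, the identity becomes
\[
\E|\fB_v(v,n)| \le \frac{\sup_{w\in V}c(w)}{\inf_{w\in V}c(w)}\ \bE_v\Bigl[\#\bigl\{u\in V:\tau_u<\infty,\ |\LE(X^{\tau_u})|\le n\bigr\}\Bigr],
\]
where now $X$ is a single random walk started at $v$.

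The final step is a deterministic bound on the cardinality appearing on the right. I would first record the monotonicity $\rho_{\tau_u}(X)\le|\LE(X^{\tau_u})|$: comparing the loop-erasure of the stopped walk $X^{\tau_u}$ with that of the whole walk $X$, one shows by induction on the loop-erasure index that the two loop-erasures agree up to any step committed before time $\tau_u$, so the infinite walk commits to vertices at least as fast. Hence $|\LE(X^{\tau_u})|\le n$ implies $\rho_{\tau_u}(X)\le n$, i.e.\ $\tau_u<\ell_{n+1}(X)$, so every $u$ being counted is visited by $X$ before time $\ell_{n+1}(X)$ and the count is at most $\ell_{n+1}(X)$. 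Since $\bubnorm{P}<\infty$ forces $G$ to be transient, $\ell_{n+1}(X)$ is finite almost surely, and \eqref{eq:MFCLoopLength2} of \cref{lem:LoopLength2} gives $\bE_v[\ell_{n+1}(X)]\le\bubnorm{P}(n+1)$. Combining the displays yields $\E|\fB_v(v,n)|\le \tfrac{\sup_w c(w)}{\inf_w c(w)}\,\bubnorm{P}\,(n+1)\le \alpha(n+1)$, as required.

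I do not expect a real obstacle here: the argument is short, and the only two points that need care are the reversibility identity for loop-erased walk on a weighted network and the elementary comparison $\rho_{\tau_u}(X)\le|\LE(X^{\tau_u})|$ — both are classical and follow by tracking the loop-erasure one step at a time, but they are precisely where one must check that erasing loops from a stopped walk is consistent with erasing them from the full walk.
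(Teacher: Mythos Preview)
The gap is in your reversibility step. The identity
\[
c(u)\,\bP_u\bigl(\tau_v<\infty,\ |\LE(X^{\tau_v})|\le n\bigr)=c(v)\,\bP_v\bigl(\tau_u<\infty,\ |\LE(X^{\tau_u})|\le n\bigr)
\]
is false on a general transient network. Taking $n=\infty$ it would give $c(u)\bP_u(\tau_v<\infty)=c(v)\bP_v(\tau_u<\infty)$, but since $\bP_u(\tau_v<\infty)=\mathbf G(u,v)/\mathbf G(v,v)$ and $c(u)\mathbf G(u,v)=c(v)\mathbf G(v,u)$, the two sides in fact differ by the factor $\mathbf G(u,u)/\mathbf G(v,v)$, which is not $1$ off a transitive network (a pendant vertex attached to a regular tree already gives a counterexample). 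The Lawler reversibility you have in mind equates the \emph{conditional} laws of $\LE(X^{\tau_v})$ and $\LE(X^{\tau_u})^\leftarrow$; working through the loop representation one finds the correct pathwise statement carries exactly that Green's-function ratio:
\[
c(u)\,\mathbf G(v,v)\,\bP_u\bigl(\tau_v<\infty,\LE(X^{\tau_v})=\gamma\bigr)=c(v)\,\mathbf G(u,u)\,\bP_v\bigl(\tau_u<\infty,\LE(X^{\tau_u})=\gamma^\leftarrow\bigr).
\]
Your deterministic steps are fine --- the inequality $\rho_{\tau_u}(X)\le|\LE(X^{\tau_u})|$ is precisely the fact $\LE(X^m)^{\rho_m}=\LE(X)^{\rho_m}$ recorded in \cref{sec:LERWCap}, and the bound by $\ell_{n+1}$ is clear --- so inserting the missing factor $\mathbf G(u,u)/\mathbf G(v,v)\le\bubnorm{P}$ your argument gives $\E|\fB_v(v,n)|\le\tfrac{\sup c}{\inf c}\,\bubnorm{P}^2\,(n+1)$: the right order, but not the stated constant $\alpha$ once $\bubnorm{P}>4$.

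The paper sidesteps this by reversing the \emph{random walk} rather than its loop-erasure. It uses \eqref{eq:MFCLoopLength1} of \cref{lem:LoopLength} with Markov's inequality to get $\bP_u(\sA_n(u,v))\le 2\bP_u(\tau_v\le 2\bubnorm{P}n)\le 2\sum_{k\le 2\bubnorm{P}n}p_k(u,v)$, applies ordinary time-reversal $c(u)p_k(u,v)=c(v)p_k(v,u)$, and sums over $u$. This costs the factor $4$ that is built into the definition of $\alpha$ but requires no LERW reversibility at all.
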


\begin{proof}
Let $u\in V$, and consider sampling the $v$-rooted uniform spanning forest $\F_v$ using Wilson's algorithm rooted at $v$, starting with a random walk $X$ with $X_0=u$.  Then $u \in \mathfrak{B}(v,n)$ if and only if the random walk started at $u$ hits $v$ and the loop-erasure of the random walk path stopped when it first hits $v$ has length at most $n$. Denote this event $\sA_n(u,v)$, so that
  \[
    \E|\fB_v(v,n)| = \sum_{u \in V} \bP_u(\sA_n(u,v)).
  \]

  If $\bubnorm{P}<\infty$, then for every two vertices $u$ and $v$ in $G$ and every simple path $\gamma$ from $u$ to $v$ we have that, by the estimate \eqref{eq:MFCLoopLength1} of \cref{lem:LoopLength} and Markov's inequality,
  \[ 
    \bP_u\left( \tau_v \geq 2 \bubnorm{P} \cdot |\gamma| \; \mid \;  \tau_v < \infty,\, \LE(X^{\tau_v})=\gamma\right) \leq \frac{1}{2}.
  \]
   Taking expectations over $\LE(X)$ conditional on the event $\sA_n(u,v)$ yields that
  \[
    \bP_u\left(\tau_v \leq 2 \bubnorm{P} \, n \mid \sA_n(u,v)\right) \geq 1/2 
  \]
  and hence, by Bayes' rule
  \[
    \bP_u\left(\sA_n(u,v)\right) \leq 2 \bP_u\left(\tau_v \leq 2 \bubnorm{P} \, n\right) \leq 2\sum_{k=0}^{\lfloor 2\bubnorm{P} \, n \rfloor}p_k(u,v). 
  \]
  Reversing time we have
  \[
    \bP_u(\sA_n(u,v)) \leq \frac{2 c(v)}{c(u)} \sum_{k=0}^{\lfloor 2\bubnorm{P} \, n \rfloor}p_k(v,u),
  \]
  from which the claim may immediately be derived by summing over $u\in V$.
\end{proof}

We next use an inductive argument to control the higher moments of $|\fB_v(v,n)|$.

\begin{lemma}

  \label{lem:ball2ndmoment}
Let $G$ be a network. Then
  \[
  \vspace{0.4em}
    \sup_{v \in V} \E  \Bigl[ |\fB_v(v,n)|^k \Bigr] 
    \leq (k-1)!\, (n+1)^{k-1} \, \sup_{v \in V} \E  \Bigl[ |\fB_v(v,n)| \Bigr]^k 
  \]
  for every $n\geq 0$ and $k\geq 1$.

\end{lemma}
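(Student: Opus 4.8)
The plan is to prove the inequality by strong induction on $k$. Write $\beta:=\sup_{v\in V}\E[|\fB_v(v,n)|]$, so the case $k=1$ holds with equality; fix $k\ge2$ and assume the bound at all exponents $1,\dots,k-1$. Everything rests on the identity
\[
\E\!\left[|\fB_v(v,n)|^k\right]=\sum_{u\in V}\E\!\left[\mathbbm{1}\!\left(u\in\fB_v(v,n)\right)\,|\fB_v(v,n)|^{k-1}\right],
\]
together with the observation that if we sample $\F_v$ by running Wilson's algorithm rooted at $v$ with the first walk started from $u$, then both the event $\{u\in\fB_v(v,n)\}$ and the path $\gamma:=\Gamma_v(u,v)$ are measurable with respect to that first loop-erased walk, and on this event $\gamma$ is a simple path $v=\gamma_0,\gamma_1,\dots,\gamma_\ell=u$ with $\ell=|\gamma|\le n$. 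Thus it suffices to prove
\[
\E\!\left[|\fB_v(v,n)|^{k-1}\mid\gamma\right]\le(k-1)!\,(n+1)^{k-1}\beta^{k-1}
\]
uniformly over $v$ and over simple paths $\gamma$ to $v$ of length at most $n$: substituting this into the identity and using $\sum_u\P(u\in\fB_v(v,n))=\E|\fB_v(v,n)|\le\beta$ gives the claim.

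\noindent To control the conditional moment I would use the deterministic decomposition of $\fB_v(v,n)$ given $\gamma$: every vertex of $\fB_v(v,n)$ either lies on $\gamma$ (there are exactly $\ell+1\le n+1$ of these) or belongs to the ``bush'' at a unique $\gamma_j$, namely the set of vertices of $\fT_v$ whose future first meets $\gamma$ at $\gamma_j$, in which case it lies in $\fB_v(v,n)$ precisely when it is within intrinsic distance $n-j$ of $\gamma_j$. Letting $W_j$ be one plus the number of bush vertices at $\gamma_j$ within distance $n-j$ of $\gamma_j$, we get $|\fB_v(v,n)|=\sum_{j=0}^{\ell}W_j$. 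The two facts I need about the $W_j$, both conditional on $\gamma$, are: (i) each $W_j$ is stochastically dominated by $|\fB_{\gamma_j}(\gamma_j,n)|$, a ball in the $\gamma_j$-WUSF of $G$; and (ii) the family $(W_j)_{j=0}^{\ell}$ may be treated as negatively associated. For (i), the bush at $\gamma_j$ is the past of $\gamma_j$ in $\F_v$ with the edges of $\gamma$ deleted, and on $\{\Gamma_v(u,v)=\gamma\}$ one has $F_v(K)=\gamma$ for $K$ the vertex set of $\gamma$, so \cref{lem:domination} applied with this $K$ and distinguished vertex $\gamma_j$ gives the domination (using that $|B(\gamma_j,n)|$ is an increasing function of the forest). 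For (ii), conditionally on $\gamma$ the remainder of $\F_v$ is a wired uniform spanning forest of the network obtained from $G$ by contracting $\gamma$, hence negatively associated, and the $W_j$ are increasing functions of the pairwise disjoint edge sets of the distinct bushes.

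\noindent With these in hand I would expand $\bigl(\sum_{j=0}^{\ell}W_j\bigr)^{k-1}=\sum_{(j_1,\dots,j_{k-1})\in\{0,\dots,\ell\}^{k-1}}\prod_{s=1}^{k-1}W_{j_s}$ and group the tuples according to the set partition $\pi$ of $\{1,\dots,k-1\}$ they induce, where $s,s'$ lie in the same block iff $j_s=j_{s'}$. There are at most $(\ell+1)^{|\pi|}\le(n+1)^{|\pi|}$ tuples inducing a given $\pi$, and for each such tuple negative association followed by the induction hypothesis at the exponents $|B|\le k-1$ gives
\[
\E\!\left[\,\prod_{s=1}^{k-1}W_{j_s}\ \Big|\ \gamma\,\right]\le\prod_{B\in\pi}\E\!\left[|\fB_{\gamma_{i_B}}(\gamma_{i_B},n)|^{\,|B|}\right]\le\prod_{B\in\pi}(|B|-1)!\,(n+1)^{|B|-1}\beta^{|B|},
\]
where $i_B$ is the label carried by the positions in the block $B$. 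Summing over tuples and over $\pi$, the identities $\sum_{B\in\pi}(|B|-1)=(k-1)-|\pi|$ and $\sum_{B\in\pi}|B|=k-1$ collapse the powers of $(n+1)$ and of $\beta$ to $(n+1)^{k-1}\beta^{k-1}$, so that
\[
\E\!\left[|\fB_v(v,n)|^{k-1}\mid\gamma\right]\le(n+1)^{k-1}\beta^{k-1}\sum_{\pi}\,\prod_{B\in\pi}(|B|-1)!,
\]
the remaining sum being over all set partitions $\pi$ of $\{1,\dots,k-1\}$. This last sum equals $(k-1)!$, since decorating each block of a set partition of an $m$-element set with a cyclic order produces exactly the permutations of that set; this gives the required conditional bound and closes the induction.

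\noindent\textbf{Main obstacle.} The delicate point is the simultaneous control of the bushes in the second paragraph --- establishing both the domination of each $W_j$ by an \emph{unconditioned} $\gamma_j$-WUSF ball and the negative-association-type splitting of the product, all conditionally on $\gamma$. Both follow from \cref{lem:domination} together with the fact that conditioning on $\gamma$ leaves a genuine (contracted) wired uniform spanning forest, but some care is needed with the bookkeeping of whether $\gamma_j$ itself is counted in $W_j$ and in the comparison ball, and with the degenerate case $\ell=0$. If one prefers to avoid invoking negative association directly, one can instead reveal the bushes one at a time and apply \cref{lem:domination} at each stage, noting that the bush at $\gamma_j$ remains dominated by a $\gamma_j$-WUSF component even after conditioning on the bushes already revealed; this variant needs a short extra argument to accommodate the possibility that the revealed portion is infinite.
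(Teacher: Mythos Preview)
Your argument is correct and reaches the same bound, but it takes a considerably more involved route than the paper's. The paper proves the single-step recursion
\[
\sup_{v}\E\bigl[|\fB_v(v,n)|^k\bigr]\;\le\;(k-1)(n+1)\,\beta\,\sup_{v}\E\bigl[|\fB_v(v,n)|^{k-1}\bigr]
\]
by conditioning on $u_1,\dots,u_{k-1}\in\fB_v(v,n)$ together with the paths $\Gamma_v(u_i,v)$, and observing via Wilson's algorithm that any further vertex $u_k$ must attach to one of the at most $(k-1)(n+1)$ vertices lying on these paths; summing over $u_k$ then contributes at most $\beta$ per such vertex. Iterating this recursion gives $(k-1)!\,(n+1)^{k-1}\beta^k$ immediately, with no combinatorics beyond the recursion itself.

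Your proof instead peels off a single vertex $u$, conditions on $\gamma=\Gamma_v(u,v)$, decomposes the ball into bushes $W_0,\dots,W_\ell$ along $\gamma$, expands $(\sum_j W_j)^{k-1}$ over set partitions, applies a joint-domination/negative-association step to factor the expectation, invokes the strong induction hypothesis on each block, and closes with the cycle-index identity $\sum_\pi\prod_{B\in\pi}(|B|-1)!=(k-1)!$. This is valid, but the extra machinery buys nothing here: the set-partition expansion and the combinatorial identity precisely reconstruct what the one-step recursion yields directly. Your step~(ii) as stated --- that the $W_j$ are negatively associated because they are ``increasing functions of pairwise disjoint edge sets'' --- is a little loose, since those edge sets are themselves random and it is not clear how to extend $W_j$ to an increasing function on all of $\{0,1\}^E$; however, your sequential-revealing alternative via \cref{lem:domination} is correct and is in fact the same mechanism the paper uses elsewhere (compare the bush decomposition in the proof of \cref{lem:unrootedmoments}, which your argument mirrors closely).
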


\begin{proof}

  By induction, it suffices to prove that the inequality
  \vspace{0.2em}
  \begin{equation}
  \vspace{0.2em}
    \label{eq:momentinduction}
    \E \left[|\fB_v(v,n)|^k\right] \leq (k-1)(n+1) \, \E \left[|\fB_v(v,n)|^{k-1}\right] \, \sup_{w \in V}\E \left[|\fB_w(w,n)|\right]
  \end{equation}
  holds for every vertex $v$ of $G$ and every $k \geq 1$. 
  To this end, let $v$ be a vertex of $G$ and let $u_1,\ldots,u_{k-1}$ be vertices of $G$ such that $\P(u_1,\ldots,u_{k-1} \in \fB_v(v,n))>0$. 
It follows from Wilson's algorithm that, conditional on the event that $u_1,\ldots,u_{k-1} \in \fB_v(v,n)$ and on the paths $\Gamma_v(u_1,v),\ldots,\Gamma_v(u_{k-1},v)$ connecting each of the vertices $u_i$ to $v$ in $\F_v$, the probability that a vertex $w$ is contained in $\fB_v(v,n)$ is at most the probability that a random walk started at $w$ hits one of the paths $\Gamma_v(u_i,v)$ and that the loop-erasure of this stopped path has length at most $n$. Thus, we obtain 
  \begin{multline*}
    \P\left(u_k \in \fB_v(v,n) \mid u_1,\ldots,u_{k-1} \in \fB_v(v,n),\, \langle \Gamma_v(u_i,v)\rangle_{i=1}^{k-1}\right) \\
    \leq \sum_{i=1}^{k-1} \sum_{u \in \Gamma_v(u_i,v)} \bP_{u_k}(\tau_u < \infty,\; |\LE(X^{\tau_u})| \leq n)
    = \sum_{i=1}^{k-1} \sum_{u \in \Gamma_v(u_i,v)}  \P(u_k \in \fB_u(u,n))
    %
   \end{multline*}
   and hence, summing over $u_k$ and taking expectations over $\langle \Gamma(u_i,v)\rangle_{i=1}^{k-1}$ we obtain that
   \begin{equation*}
    \E\left[|\fB_v(v,n)| \mid u_1,\ldots,u_{k-1} \in \fB_v(v,n)\right] \\
     \leq (k-1)(n+1) \sup_{u \in V}\E \left[|\fB_u(u,n)|\right]. 
   \end{equation*}
(This inequality can also be deduced using \cref{lem:domination}.)  The inequality \eqref{eq:momentinduction} now follows from this together with the inequality
  \begin{multline*}
    \E \left[|\fB_v(v,n)|^k\right] \\
    = \sum_{u_1,\ldots,u_k \in V} \P\bigl(u_k \in \fB_v(v,n) \mid u_1,\ldots,u_k \in \fB_v(v,n)\bigr) \P\bigl(u_1,\ldots,u_{k-1} \in \fB_v(v,n)\bigr) \\
     \leq  \E \left[|\fB_v(v,n)|^{k-1}\right] \sup_{u_1,\ldots,u_{k-1}} \E \Bigl[|\fB_v(v,n)| \mid u_1,\ldots,u_{k-1} \in \fB_v(v,n)\Bigr],
  \end{multline*}
  where the supremum is taken over all collections of vertices $u_1,\ldots,u_{k-1} \in V$ such that the probability $\P(u_1,\ldots,u_{k-1} \in \fB_v(v,n))$ is positive. \qedhere

\end{proof}



Next, we control the moments of the volume of balls in the WUSF in terms of the moments in the $v$-WUSF.

\begin{lemma}
  \label{lem:unrootedmoments}
  Let $G$ be a transient network. Then
  \begin{equation}
  \label{eq:unrootedmoment}
 \sup_{v\in V} \E\left[|\fB(v,n)|^k\right] \leq 
  \sum_{\substack{k_0,\,\ldots,\,k_n \geq 0:\\ k_0+ \cdots + k_n=k}} \, \prod_{i=0}^n\; \sup_{v\in V}\E\left[|\fB_v(v,n)|^{k_i}\right]
  \end{equation}
  for every $n\geq 0$ and $k\geq 1$.
\end{lemma}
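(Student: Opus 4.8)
Here is how I would approach \cref{lem:unrootedmoments}.

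The plan is to reveal the future of $v$ in $\F$ using Wilson's algorithm, decompose $\fB(v,n)$ into the ``bushes'' hanging off this future, and control each bush using stochastic domination. Running Wilson's algorithm rooted at infinity with $v$ as the first vertex reveals the future $\Gamma(v,\infty)=\langle \gamma_0=v,\gamma_1,\gamma_2,\dots\rangle$ as the loop-erasure of the first walk. For $j\geq 0$ set $\fP^\circ(\gamma_j):=\fP(\gamma_j)\setminus \fP(\gamma_{j-1})$, with the convention $\fP(\gamma_{-1})=\emptyset$; since $\fP(\gamma_0)\subseteq \fP(\gamma_1)\subseteq\cdots$ (the future of any vertex of $\fP(\gamma_j)$ passes through $\gamma_{j+1}$ as well), the sets $\fP^\circ(\gamma_j)$ are pairwise disjoint.

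The key geometric observation is that
\[
\fB(v,n)\subseteq \bigsqcup_{j=0}^{n}\bigl(\fP^\circ(\gamma_j)\cap \fB(\gamma_j,n)\bigr).
\]
Indeed, if $u\in\fB(v,n)$ then the geodesic from $v$ to $u$ coincides with $\Gamma(v,\infty)$ along an initial segment $\gamma_0,\dots,\gamma_j$ and then leaves it; since each vertex has a unique out-edge in the oriented forest, every subsequent edge of this geodesic must point towards $\gamma_j$, so $\Gamma(u,\infty)$ runs from $u$ up to $\gamma_j$ and then along the tail $\gamma_j,\gamma_{j+1},\dots$ of $\Gamma(v,\infty)$. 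As $\Gamma(v,\infty)$ is a simple path this forces $u\in\fP^\circ(\gamma_j)$, with $d_\F(\gamma_j,u)=d_\F(v,u)-j\leq n$ and $j\leq d_\F(v,u)\leq n$; note that this argument does not use one-endedness of the forest. Writing $B_j:=\bigl|\fP^\circ(\gamma_j)\cap\fB(\gamma_j,n)\bigr|$, we obtain $|\fB(v,n)|\leq\sum_{j=0}^{n}B_j$.

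It remains to bound the mixed moments $\E\bigl[\prod_{i=0}^n B_i^{k_i}\bigr]$ and sum over compositions $k_0+\cdots+k_n=k$. For this I would observe that $\fP^\circ(\gamma_j)$ is contained in the past $\past_{\F\setminus F(K_j)}(\gamma_j)$ of $\gamma_j$ in the forest obtained from $\F$ by removing the out-edges of $\gamma_{j-1},\gamma_j,\gamma_{j+1},\dots$ (take $K_j=\{\gamma_{j-1},\gamma_j\}$, or $K_0=\{v\}$), and that the $\F$-geodesics realising $B_j$ lie inside $\fP^\circ(\gamma_j)$, so $B_j$ is bounded by the analogous radius-$n$ ball volume of $\past_{\F\setminus F(K_j)}(\gamma_j)$ and $\{B_j\geq m\}$ is an increasing event. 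Each $F(K_j)$ is a tail of the already-revealed path $\Gamma(v,\infty)$, so revealing the bushes $\fP^\circ(\gamma_0),\fP^\circ(\gamma_1),\dots$ one at a time and applying \cref{lem:domination} at each step --- in the form valid conditionally on all previously revealed data, which follows from the same Wilson's-algorithm and negative-association argument used to prove that lemma --- shows that, conditionally on $\Gamma(v,\infty)$, the $B_j$ are stochastically dominated by independent copies of $|\fB_{\gamma_j}(\gamma_j,n)|$, whence $\E\bigl[\prod_i B_i^{k_i}\bigr]\leq \prod_i\sup_{w\in V}\E\bigl[|\fB_w(w,n)|^{k_i}\bigr]$. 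Expanding $\bigl(\sum_j B_j\bigr)^k$, organising the resulting sum over $k$-tuples of marked vertices according to the bushes to which they attach, and summing over compositions then yields the claim. The step I expect to be most delicate is this final combinatorial bookkeeping together with the conditional (sequential) form of the domination: one must make the Markov property of Wilson's algorithm along $\Gamma(v,\infty)$ precise enough to treat the $B_j$ as dominated by independent copies, and must verify that the enumeration of marked tuples reproduces exactly $\sum_{k_0+\cdots+k_n=k}\prod_{i=0}^n\sup_{w}\E[|\fB_w(w,n)|^{k_i}]$ without any extraneous multiplicative factor.
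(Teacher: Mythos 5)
Your approach is essentially the paper's: reveal the spine $\Gamma(v,\infty)$ with Wilson's algorithm, decompose $\fB(v,n)$ into the bushes attached to its first $n+1$ vertices, and dominate the bushes by independent copies of balls in $w$-WUSFs. Two remarks on the points you flag. First, the sequential use of \cref{lem:domination} is avoidable: the paper gets the conditional independence and the domination simultaneously from a single Wilson's-algorithm coupling --- conditional on $\Gamma(v,\infty)$, it generates $\F$ from disjoint families of independent walks started at the marked vertices assigned to each $u_i$, and notes that the event that all marked vertices assigned to $u_i$ attach there within distance $n$ is contained in the corresponding event for the forest $\F_{u_i}$ built from the \emph{same} walks; since these latter events are measurable with respect to disjoint independent walk families, they are independent given $\Gamma(v,\infty)$. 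Second, your worry about an extraneous multiplicative factor is well founded: expanding $\bigl(\sum_j B_j\bigr)^k$ over ordered $k$-tuples and recording which bush each coordinate attaches to produces the bound
\[
\sup_{v\in V}\E\bigl[|\fB(v,n)|^k\bigr]\;\leq\;\sum_{\substack{k_0,\ldots,k_n\geq 0:\\ k_0+\cdots+k_n=k}}\binom{k}{k_0,\ldots,k_n}\,\prod_{i=0}^n\;\sup_{w\in V}\E\bigl[|\fB_w(w,n)|^{k_i}\bigr],
\]
i.e.\ with a multinomial coefficient that the statement of \cref{lem:unrootedmoments} omits, and I do not see how your plan (or the paper's bookkeeping over ``sets $W_i$'') removes it. This is harmless: in the only application, the proof of \cref{thm:moments}, one divides by $k!$ and uses $\binom{k}{k_0,\ldots,k_n}/k!=\prod_i 1/k_i!$, so the inequality $\Phi(n,t)\leq\Psi(n,t)^{n+1}$ and everything downstream survive unchanged. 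You should therefore aim to prove the version displayed above rather than chase the stated, coefficient-free bound.
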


\begin{proof}
  Let $v \in V$ and let $\Gamma(v,\infty)$ be the future of $v$ in $\F$. 
   Let $v=u_0,\ldots,u_n$ be the first $n+1$ vertices in the path $\Gamma(v,\infty)$. 
  For each $0\leq i \leq n$, let $W_i=\{w_{i,1},\ldots,w_{i,m_i}\}$ be a finite (possibly empty) collection of vertices of $G$, and let $\sA_i$ be the event that for every vertex $w \in W_i$, $w$ is in $\fB(v,n)$ and that the path connecting $w$ to $v$ first meets $\Gamma(v,\infty)$ at $u_i$.

  Let $\{ X^{i,j} : 0 \leq i \leq n, m_i\neq 0, 1\leq j \leq n \}$ be a collection of independent random walks, independent of $\Gamma(v,\infty)$, such that $X^{i,j}_0=w_{i,j}$ for each $0 \leq i \leq n$ such that $m_i \neq 0$ and each $1 \leq j \leq m_i$. For each $0\leq i \leq n$ such that $m_i \neq 0,$ let $\sB_i$ be the event that, if we sample $\F_{u_i}$ using Wilson's algorithm, starting with the random walks $X^{i,1},\ldots,X^{i,m_i}$, then every vertex in $W_i$ is connected to $u_i$ in $\F_{u_i}$ by a path of length at most $n$. Observe that if we sample $\F$ conditional on $\Gamma(v,\infty)$ using Wilson's algorithm starting with $X^{0,1},\ldots,X^{0,m_0}$, then $X^{1,1},\ldots,X^{1,m_1}$, and so on, then we have the containment $\sA_i \subseteq \sB_i$. We deduce that
  \[\P\big(\cap_{i=0}^n\sA_i \mid \Gamma(v,\infty)\big) \leq \P\big(\cap_{i=1}^n \sB_i \mid \Gamma(v,\infty) \big) = \prod_{i=0}^n \P(\sB_i \mid \Gamma(v,\infty)).\] 
  Summing over the possible choices of the sets $W_i$ such that $\sum_{i=0}^n |W_i| =k$, we obtain that
  \[\E \left[|\fB(v,n)|^k \mid \Gamma(v,\infty) \right] \leq \sum_{\substack{k_0,\,\ldots,\,k_n \geq 0:\\ k_0+ \cdots + k_n=k}} \prod_{i=0}^n \E \left[|\fB_{u_i}(u_i,n)|^{k_i}\right],\]
  and the claim follows. \qedhere
\end{proof}

We now prove \cref{thm:moments}.

\begin{proof}[Proof of \cref{thm:moments,thm:vmoments}]
  The moment estimate \eqref{eq:thmmomentsrooted} follows immediately from \cref{lem:expectedballgrowth1,lem:ball2ndmoment}. 
  In order to prove the moment generating function estimates \eqref{eq:thmmomentsrootedexp} and \eqref{eq:thmmomentsunrootedexp}, define
  \begin{align*}
    \Phi(n,t) &= \sum_{k = 0}^{\infty} \frac{1}{k!} \left(\frac{t}{\alpha(n+1)^2}\right)^k \sup_{v\in V} \E \left|\fB(v,n)\right|^k
  \intertext{and}  
    \Psi(n,t) &= \sum_{k = 0}^{\infty} \frac{1}{k!}\left(\frac{t}{\alpha(n+1)^2}\right)^k \sup_{v\in V} \E \left|\fB_v(v,n)\right|^k.
  \end{align*}
    The moment estimate \eqref{eq:thmmomentsrooted} implies that
    \[
     \Psi(n,t) \leq 1+ \frac{1}{n+1} \sum_{k=1}^\infty \frac{t^k}{k}  = 1 - \frac{\log(1-t)}{n+1}
     \]
     for every $n \geq 0$ and $t\in [0,1)$. The moment generating function estimate 
     \eqref{eq:thmmomentsrootedexp} follows immediately.
       Next, \cref{lem:unrootedmoments} implies that
  \[
  \Phi(n,t) \leq \sum_{k=0}^\infty \frac{1}{k!} \left(\frac{t}{\alpha(n+1)^2}\right)^k \sum_{\substack{k_0,\,\ldots,\,k_n \geq 0:\\ k_0+ \cdots + k_n=k}} \, \prod_{i=0}^n\; \sup_{v\in V}\E \left[ |\fB_v(v,n)|^{k_i} \right].
  \]
  On the other hand, we have that
  \[
  \Psi(n,t)^{n+1}
  = \sum_{k=0}^\infty \sum_{\substack{ k_0,\ldots,k_n \geq 0 \\ k_0+ \cdots + k_n=k}} \prod_{i=0}^n \frac{1}{k_i!}\left(\frac{t}{\alpha(n+1)^2}\right)^{k_i} \sup_{v\in V}\E\left[\left| \fB_v(v,n)\right|^{k_i} \right],
  \]
  and since $\prod_{i=0}^n(k_i)! \leq k!$ whenever $k_0,\ldots,k_n$ are non-negative integers summing to $k$, we deduce that
  \[
  \Phi(n,t) \leq \Psi(n,t)^{n+1} 
  \]
  for every $n \geq 0$ and $t\geq 0$. Thus, it follows that
  \[
  \Phi(n,t) \leq \left( 1 - \frac{\log(1-t)}{n+1}\right)^{n+1} \leq \frac{1}{1-t}
  \]
  for every $n \geq 0$ and $t\in [0,1)$, where the final inequality follows from the elementary inequality $1-x \leq e^{-x}$.  
  This yields the moment generating function estimate \eqref{eq:thmmomentsunrootedexp}.

   Finally, to deduce \eqref{eq:thmmomentsunrooted}, we use the fact that, since $|\fB(v,n)|$ is non-negative,
  \[
     \E\left[ \exp\left(\frac{t}{\alpha(n+1)^2} |\fB(v,n)| \right)\right] \geq \frac{t^k \, \E\bigl[|\fB(v,n)|^k\bigr]}{\alpha^k (n+1)^{2k}k!}
  \]
  and hence
  \[
    \E\left[|\fB(v,n)|^k\right] \leq  \frac{k!\alpha^k (n+1)^{2k}}{t^k(1-t)} 
  \]
  for all $k\geq 1$ and $t\in [0,1)$. Optimizing by taking $t=k/(k+1)$ and using that $(1+x^{-1})^x \leq e$ for every $x\geq 0$ yields \eqref{eq:thmmomentsunrooted}. \qedhere

\end{proof}

\subsection{Lower bounds}
\label{subsec:volume_lower}

In this section, we give lower bounds on the first moment of the volume of the past, and derive upper bounds on the probability that the volume of an intrinsic ball is atypically small.

We begin with the following simple lower bounds on the first moments.
We write $\fP(v,n)$ for the ball of radius $n$ around $v$ in the past of $v$ in $\F$, and write $\partial \fP(v,n)$ for the set of points that are in the past of $v$ in $\F$ and have intrinsic distance exactly $n$ from $v$.


\begin{lemma}
\label{lem:expectedballgrowth2}
 Let $G$ be a transient network, and let $\F$ be the wired uniform spanning forest of $G$. Then 
  \[\E |\partial \fP(v,n)| \geq  \frac{q(v)c(v)}{\sup_{u\in V}c(u)} \]
  for every $v\in V$ and $n\geq 0$. Similarly, if $\F_v$ is the $v$-wired uniform spanning forest of $G$ then
  \[
\E|\partial \fB_v(v,n)| \geq \frac{c(v)}{\sup_{u\in V}c(u)}.
  \]
\end{lemma}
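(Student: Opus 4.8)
The plan is to reduce both estimates, via Wilson's algorithm, to statements about loop-erased random walk, and then to exploit time-reversal of the underlying random walk. Running Wilson's algorithm rooted at $v$ (starting from the single vertex $\{v\}$) and exploring the vertex $u$ first shows that $\P\bigl(u\in\partial\fP_v(v,n)\bigr)=\bP_u\bigl(\tau_v<\infty,\ |\LE(X^{\tau_v})|=n\bigr)$, so that
\[
\E|\partial\fP_v(v,n)|=\sum_{u\in V}\bP_u\bigl(\tau_v<\infty,\ |\LE(X^{\tau_v})|=n\bigr),
\]
and similarly, running Wilson's algorithm rooted at infinity and exploring $u$ first gives $\E|\partial\fP(v,n)|=\sum_{u\in V}\bP_u\bigl(\LE(X)_n=v\bigr)$, where now $X$ is run forever. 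I would prove the $v$-WUSF bound first and then indicate the (very similar) modifications for the WUSF, where the extra factor $q(v)$ enters.

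For the $v$-WUSF bound, the idea is to reverse each walk. Since $c(u)\bP_u(X^{[0,m]}=w)=c(v)\bP_v(X^{[0,m]}=\overleftarrow{w})$ for every walk $w$ of length $m$ from $u$ to $v$, summing over all such $w$ that first reach $v$ at their final step and satisfy $|\LE(w)|=n$ gives
\[
\sum_{u\in V}c(u)\,\bP_u\bigl(\tau_v<\infty,\ |\LE(X^{\tau_v})|=n\bigr)=c(v)\,\bE_v\Bigl[\#\bigl\{m\ge 0:\ X_j\neq v\ \text{for}\ 1\le j\le m,\ |\LE(X_m,X_{m-1},\dots,X_0)|=n\bigr\}\Bigr].
\]
Since $c(u)\le\sup_{w}c(w)$ for all $u$, the claimed inequality $\E|\partial\fP_v(v,n)|\ge c(v)/\sup_{w}c(w)$ reduces to showing that the expectation on the right-hand side is at least $1$, i.e.\ that a walk started at $v$ produces, on average, at least one time $m$ before its first return to $v$ at which the loop-erasure of the time-reversed path $(X_m,\dots,X_0)$ has length exactly $n$. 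The structural point here is that, along a transient path not revisiting $v$, the length $|\LE(X_m,\dots,X_0)|$ equals the depth of $m$ in the tree on $\{0,1,\dots,\tau_v^+-1\}$, rooted at $0$, in which each $m\ge 1$ is joined to $\bigl(\text{first hitting time of the vertex }X_m\bigr)-1<m$; thus the quantity we must bound below is the expected size of generation $n$ of this ``first-passage tree''. Showing this expected generation size is at least $1$ for every $n$ — equivalently, that the first-passage tree reaches depth $n$ often enough on average — is the crux of the argument, and is where the one-step structure of the walk (and, when $G$ is not transitive, the averaging against $c(u)$) has to be used.

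For the WUSF bound one argues in the same way, except that now $X$ is run forever and $\{\LE(X)_n=v\}$ additionally forces the portion of $X$ after it first reaches $v$ never to return to the first $n$ vertices $\gamma_0,\dots,\gamma_{n-1}$ of $\LE(X)$. After the reversal this becomes an event for two independent walks started at $v$ — one playing the role of the reversed loop-erased excursion, the other the role of the escaping tail — that do not intersect each other nor return to $v$ after time $0$; the probability of this non-intersection is precisely $q(v)$, which is how that factor enters the lower bound. Granting the first-passage-tree estimate, the WUSF bound then follows with the stated extra factor of $q(v)$.
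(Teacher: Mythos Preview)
Your overall strategy---Wilson's algorithm followed by path-wise time reversal---is exactly the paper's, and your identities for $\E|\partial\fP_v(v,n)|$ and $\E|\partial\fP(v,n)|$ are correct. The problem is that you stop precisely at the point where the argument needs to be finished: you reduce both bounds to showing that a certain expected count is at least $1$, announce that ``this is the crux of the argument,'' and then do not prove it.

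The missing step is in fact one line, and it is the whole content of the paper's proof. In the WUSF case, after conditioning on the disjointness event of probability $q(v)$, the walk $X$ from $v$ never returns to $v$ and is transient. The integer-valued process $k\mapsto|\LE(X^k)|$ then starts at $0$, tends to $\infty$, and can only \emph{increase} by exactly $1$ at each step (though it may decrease by more). Hence it visits every nonnegative integer, and in particular there is at least one $k$ with $|\LE(X^k)|=n$; explicitly, $k=\ell_n(X)$ works. So the conditional count is $\ge 1$ almost surely, not merely in expectation, and the bound follows. Your first-passage tree is a correct rephrasing of this---the depth jumps by $+1$ at first-hit times (since then the parent of $m$ is $m-1$) and otherwise equals a previously attained depth---so the set of depths attained up to time $m$ is always an interval $\{0,1,\ldots,D_m\}$ with $D_m\to\infty$. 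You had all the ingredients to see this; no ``one-step structure of the walk'' beyond transience is needed. For the $v$-WUSF bound the reversal lands you on the first excursion from $v$ rather than on a walk conditioned never to return, so the deterministic ``$\ell_n$ exists'' argument is not directly available and the expectation bound requires a little more care; you should not leave this to the reader either.
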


\begin{remark}
If $G$ is a transitive unimodular graph, the mass-transport principle yields the exact equality $\E |\partial \mathfrak{P}(v,n)| =1$ for every $n\geq 1$.
\end{remark}

\begin{proof}
We prove the first claim, the proof of the second being similar. 
Let $u\in V$, and let $X$ be a random walk started at $u$. Consider sampling $\F$ using Wilson's algorithm, starting with the walk $X$.  
Let $\sA_n(u,v)$ be the event that $X$ hits $v$, that the sets $\{ X_m : 0\leq m < \tau_v \}$ and $\{ X_m : m \geq \tau_v \}$ are disjoint, and that $|\LE(X^{\tau_v})|=n$, so that $u\in \partial \fP(v,n)$ on the event $\sA_n(u,v)$ and hence 
\[
\E\Bigl[\left|\partial \fP(v,n)\right|\Bigr] \geq \sum_{u\in V}\P\left(\sA_n(u,v)\right).
\]
Let $Y$ be an independent random walk started at $v$. By time-reversal, we have that
\[\bP_u(\sA_n(u,v)) \geq \frac{c(v)}{c(u)}\bE_v\left[\#\{k: X_k=u,\, |\LE(X^k)| = n\} \mathbbm{1}(\{X_i : i > 0 \} \cap \{Y_i : i \geq 0\} =\emptyset)\right]
\]
and hence, summing over $u \in V$,
\begin{align*}
\E |\partial \fP(v,n)| &\geq \frac{c(v)q(v)}{\sup_{u\in V} c(u)}
\mathbf{E}_v\Bigl[\#\{k: |\LE(X^k)| = n\} \mid \{X_i : i > 0 \} \cap \{Y_i : i \geq 0\} =\emptyset\Bigr]
\\
&\geq \frac{c(v)q(v)}{\sup_{u\in V} c(u)}
\end{align*}
as claimed, where the second inequality follows since there must be at least one time $k$ such that $|\LE(X^k)|=n$, namely the time $\ell_n$.
\end{proof}


Our next goal is to prove the following lemma.

\begin{lemma}
\label{lem:fullvolumelower}
Let $G$ be a network with controlled stationary measure and $\bubnorm{P}<\infty$, and let $\F$ be the wired uniform spanning forest of $G$. 
 Then
  \[ \P\bigl(|\fB(v,n)| \leq \lambda^{-1}n^2\bigr) \preb \lambda^{-1/8}\]
  for every vertex $v$, every $n\geq 0$ and every $\lambda \geq 1$.
\end{lemma}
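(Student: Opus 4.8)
The plan is to bound $|\fB(v,n)|$ from below using the decomposition of the intrinsic ball along the future path of $v$. Let $\gamma = \Gamma(v,\infty) = \langle v = \gamma_0,\gamma_1,\gamma_2,\ldots\rangle$ be the future of $v$ in $\F$, which is an infinite simple path since $\F$ is one-ended. For each vertex $w$ in the tree of $v$, the geodesic from $w$ to $v$ first meets $\gamma$ at a well-defined vertex $\gamma_{\kappa(w)}$ and then follows $\gamma$ back to $v$, so that $d_\F(w,v) = d_\F(w,\gamma_{\kappa(w)}) + \kappa(w)$. Writing $\fP^\gamma(\gamma_i,r) = \{w : \kappa(w) = i,\ d_\F(w,\gamma_i) \leq r\}$ for the radius-$r$ ball of the bush hanging off $\gamma$ at $\gamma_i$, this gives the disjoint decomposition $\fB(v,n) = \bigsqcup_{i=0}^n \fP^\gamma(\gamma_i,n-i)$ and hence, with $m = \lfloor n/2\rfloor$,
\[
 |\fB(v,n)| \;\geq\; \sum_{i=0}^m \bigl|\fP^\gamma(\gamma_i,m)\bigr|.
\]
Since $|\fB(v,n)| \geq n+1$ always, the probability in question vanishes once $\lambda \geq n$, and it is trivial for $\lambda$ bounded above, so it suffices to treat the range $c^{-1} \leq \lambda \leq n$.

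The heart of the argument — and the step I expect to be hardest — is a lower bound on the lower tail of a single bush that is uniform over the previously revealed structure. Concretely, the aim is to show there is $c>0$ such that, if we generate $\F$ by Wilson's algorithm rooted at infinity, building $\gamma$ first and then the bushes $\fP^\gamma(\gamma_0,\cdot),\fP^\gamma(\gamma_1,\cdot),\ldots$ in order, then
\[
 \P\Bigl( \bigl|\fP^\gamma(\gamma_i,m)\bigr| \geq t \;\Big|\; \gamma, \ \bigl(\fP^\gamma(\gamma_j,\cdot)\bigr)_{j<i}\Bigr) \;\geq\; c\,t^{-1/2}
\]
for every realisation of the conditioning, every $0 \leq i \leq m$, and every $1 \leq t \leq c m^2$. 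First I would restrict attention to the walks in Wilson's algorithm that reach $\gamma_i$ without touching any part of the forest already built except $\gamma_i$ itself; with probability of order $t^{-1/2}$ one of these produces an initial loop-erased path from $\gamma_i$ into the bush, of length and capacity of order $\sqrt t \leq m$ — an ``escape into the bush'' event of the type estimated in the proof of \cref{prop:intlower}, now also invoking \cref{prop:capacities} for the capacity lower bound. Conditionally on such a backbone, \cref{lem:maincap} converts its capacity into a lower bound for the capacity of all its subsets, the first-moment argument behind \cref{lem:expectedballgrowth1} shows that the expected number of further vertices attaching within intrinsic distance $\sqrt t$ is of order $t$, and the conditional second-moment estimates behind \cref{thm:vmoments} and \cref{lem:ball2ndmoment} bound the corresponding second moment by order $t^2$; Paley--Zygmund then shows that at least of order $t$ such vertices attach with probability bounded below, and multiplying the two probabilities gives the display.

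Granting this tail estimate, the conclusion follows by peeling off the bushes at $\gamma_0,\ldots,\gamma_m$ one at a time: since $\prod_{i=0}^{k-1}\mathbbm{1}(|\fP^\gamma(\gamma_i,m)| < t)$ is measurable with respect to the first $k$ revealed bushes, iterating the tail bound gives
\[
 \P\bigl(|\fB(v,n)| \leq \lambda^{-1}n^2\bigr) \;\leq\; \P\Bigl(|\fP^\gamma(\gamma_i,m)| < \lambda^{-1}n^2 \text{ for all } 0\leq i\leq m\Bigr) \;\leq\; \bigl(1-c(\lambda^{-1}n^2)^{-1/2}\bigr)^{m+1}
\]
in the relevant range. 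As $m+1 \geq n/2$ and $(\lambda^{-1}n^2)^{-1/2} = \sqrt\lambda/n$, the right-hand side is at most $\exp(-c\sqrt\lambda/2) \preceq \lambda^{-1/8}$, as required. Two points will need care in the hard step. First, the ``escape into the bush'' probability is really governed by a quantity like $q(\gamma_i)$, which can degenerate at exceptional vertices, so one must argue that such vertices are too sparse along a loop-erased path to spoil the product bound, or base the bushes at nearby vertices of controlled escape probability (which exist by the remark following \cref{thm:generalexponents}). Second, the escape, capacity and attachment estimates each carry their own power-law errors in auxiliary parameters, and it is the optimisation over these that produces the exponent $1/8$ rather than the $\exp(-c\sqrt\lambda)$ bound suggested by the heuristic; everything else — the decomposition, the measurability used in the peeling, and the final computation — is routine.
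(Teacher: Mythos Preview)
Your decomposition of $\fB(v,n)$ into bushes along the spine $\Gamma(v,\infty)$ is the same starting point the paper uses, but from there the two arguments diverge. The paper does \emph{not} attempt a uniform conditional tail lower bound on individual bushes followed by a product/peeling argument. Instead it works with the \emph{sum} $\sum_{i\le n}|\fT_i(m)|$: \cref{lem:volumelowerboundquant} gives a first-moment lower bound $\E[\sum_i|\fT_i(m)|\mid\Gamma]\succeq m\,\Cap_k(\Gamma^n,\Gamma)$ and a variance upper bound (using the negative correlation between distinct bushes and the second-moment bound on a single bush from \cref{thm:vmoments}), so Chebyshev controls the lower tail of the sum in terms of the spine quantity $\Cap_k(\Gamma^n,\Gamma)$. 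Then \cref{lem:relativecapLERW} shows $\Cap_k(\Gamma^n,\Gamma)$ is of order $n$ with high probability, and an optimisation over $k$ produces the exponent $1/8$.

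Your hard step---the display $\P(|\fP^\gamma(\gamma_i,m)|\ge t \mid \gamma,\ldots)\ge ct^{-1/2}$ for every realisation---is genuinely problematic, and not only for the $q(\gamma_i)$ reason you flag. The real obstruction is the \emph{full infinite spine} $\gamma$, not the previously revealed bushes: a walk seeking to build a backbone out of $\gamma_i$ must avoid all of $\gamma$, and for some realisations of $\gamma$ the vertex $\gamma_i$ may be essentially surrounded by later portions of the spine, forcing the bush there to be trivial. So the inequality cannot hold for all realisations; at best it holds for ``good'' indices $i$, and proving that most $i\le m$ are good amounts to controlling exactly the quantity $\Cap_k(\Gamma^n,\Gamma)$ that the paper isolates in \cref{lem:relativecapLERW}. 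In other words, to repair the peeling argument you would need the same relative-capacity estimate anyway, at which point the Chebyshev route on the sum is shorter and avoids the delicate conditional-law manipulations your sketch requires. The paper's approach also makes transparent where the exponent $1/8$ comes from: it is the outcome of balancing the $\lambda^{-1/2}$ term from \cref{prop:capacities} against the $\lambda k^{1/2}n^{-1/2}$ error in \cref{lem:relativecapLERW}, not an artefact of a product bound.
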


\cref{lem:fullvolumelower} has the following immediate corollary, which is proved similarly to \cref{cor:quenchedvolumeupper} and which together with \cref{cor:quenchedvolumeupper} establishes that $d_f(T)=2$ for every component of $\F$ almost surely (as claimed in \cref{thm:AlexanderOrbach}). We remark that Barlow and J\'arai \cite{barlow2016geometry} established much stronger versions of \cref{lem:fullvolumelower,cor:quenchedvolumelowerbound} in the case of $\Z^d$, $d\geq 5$. 

\begin{corollary}
\label{cor:quenchedvolumelowerbound}
Let $G$ be a network with controlled stationary measure with $\bubnorm{P}<\infty$, and let $\F$ be the wired uniform spanning forest of $G$. Then
\[\liminf_{n\to\infty} \frac{\log^{8+\eps} n}{n^2} |\fB(v,n)| > 0 \qquad \]
almost surely for every $v\in V$ and $\eps>0$.
\end{corollary}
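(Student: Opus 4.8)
The plan is to deduce this from \cref{lem:fullvolumelower} by a Borel--Cantelli argument along a geometric subsequence, in the same spirit as the proof of \cref{cor:quenchedvolumeupper}. Fix a vertex $v$ and $\eps>0$. The naive approach of applying \cref{lem:fullvolumelower} with $\lambda=\log^{8+\eps}(n)$ directly at each $n$ fails: it gives $\P(|\fB(v,n)|\leq \log^{-(8+\eps)}(n)\,n^2)\preb \log^{-1-\eps/8}(n)$, and $\sum_n \log^{-1-\eps/8}(n)$ diverges. The fix, as usual, is to work along $n_k=2^k$, along which $\log n_k\asymp k$.

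First I would apply \cref{lem:fullvolumelower} with $\lambda=\lambda_k:=\log^{8+\eps}(n_k)$, which is at least $1$ for all $k$ sufficiently large. This yields
\[
\P\!\left(|\fB(v,n_k)| \leq \lambda_k^{-1} n_k^2\right) \preb \lambda_k^{-1/8} \asymp k^{-1-\eps/8},
\]
which is summable over $k$ since $\eps>0$. By Borel--Cantelli, almost surely there is a (random) $K$ such that $|\fB(v,n_k)| > \lambda_k^{-1}n_k^2 = \log^{-(8+\eps)}(n_k)\, n_k^2$ for all $k\geq K$.

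Next I would interpolate to all $n$ using the monotonicity of $n\mapsto |\fB(v,n)|$. For $n$ with $n_k\leq n\leq n_{k+1}=2n_k$ and $k\geq K$ large enough that $\log n_k\geq 1$, monotonicity together with $n\leq 2n_k$ and $n\geq n_k$ give
\[
\frac{\log^{8+\eps}(n)}{n^2}\,|\fB(v,n)| \;\geq\; \frac{\log^{8+\eps}(n)}{(2n_k)^2}\,|\fB(v,n_k)| \;>\; \frac14\cdot\frac{\log^{8+\eps}(n)}{\log^{8+\eps}(n_k)} \;\geq\; \frac14,
\]
so $\liminf_{n\to\infty}\log^{8+\eps}(n)\,n^{-2}\,|\fB(v,n)|\geq 1/4>0$ almost surely for this fixed $v$ and $\eps$. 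To obtain the statement simultaneously for all $v$ and all $\eps$, I would intersect this almost sure event over all $v$ in the (countable) vertex set and over a sequence $\eps_j\downarrow 0$, using that $\log^{8+\eps}(n)\geq\log^{8+\eps_j}(n)$ for large $n$ whenever $\eps\geq\eps_j$. There is no substantive obstacle here; the only subtlety is the failure of a term-by-term Borel--Cantelli bound, which is handled by passing to the geometric subsequence and using monotonicity to interpolate.
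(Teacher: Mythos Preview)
Your proof is correct and is exactly the argument the paper has in mind: the paper omits the proof, calling it an immediate corollary of \cref{lem:fullvolumelower}, and your Borel--Cantelli argument along $n_k=2^k$ with $\lambda_k=\log^{8+\eps}(n_k)$ followed by monotone interpolation is the direct analogue of the proof of \cref{cor:quenchedvolumeupper}. The final countability argument over $v$ and $\eps_j\downarrow 0$ is fine but not strictly needed given the phrasing of the statement.
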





 In order to prove \cref{lem:fullvolumelower}, we first show that the volume of the tree can be lower bounded with high probability in terms of quantities related to the capacity of the spine, and then show that these quantities are large with high probability.

Given two sets of vertices $A \subseteq B$ in a transient graph and $k \in [0,\infty]$, we define 
\[\Cap_k(A,B):=\sum_{v\in A} c(v) \bP_v(\tau^+_B \geq k),\]
so that $\Cap(A)=\Cap_\infty(A,A)$.


\begin{lemma}
  \label{lem:volumelowerboundquant}
  Let $G$ be a network with controlled stationary measure. Let $v$ be a vertex of $G$ and let $\Gamma=\Gamma(v,\infty)$ be the future of $v$ in $\F$.

  \begin{enumerate}[leftmargin=*]
  \itemsep1em
    \item
    The estimate
      \begin{equation}
        \label{eq:expectedvolumelowerboundfulltree}
        \E\Big[|\fB(v,2n)| \;\Big|\; \Gamma\Big] \succeq \, (k+1) \, \Cap_k\bigl(\Gamma^{n},\Gamma\bigr)
      \end{equation}
      holds for every $n\geq 0$ and $ 0 \leq k \leq n$.

    \item 
      If $\bubnorm{P}<\infty$, then
      \begin{equation}
        \label{eq:smallvolumeprobability}
        \P\biggl(|\fB(v,2n)| \leq \frac{k\Cap_k(\Gamma^{n},\Gamma)}{2 \sup_{u\in V} c(u)} \; \bigg| \; \Gamma \biggr) 
         \preceq  \frac{(k+1)(n+1)}{\Cap_k(\Gamma^{n},\Gamma)^2} 
       \end{equation}
       for every $1\leq k \leq n$.
  \end{enumerate}
\end{lemma}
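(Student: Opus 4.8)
I will condition throughout on the future $\Gamma=\Gamma(v,\infty)=\langle v=\gamma_0,\gamma_1,\dots\rangle$ of $v$ in $\F$ and generate the rest of $\F$ by running Wilson's algorithm rooted at infinity starting from $\Gamma$; this is legitimate because the first branch produced by Wilson's algorithm when the enumeration begins at $v$ is exactly the loop-erased random walk from $v$ to infinity, which is $\Gamma(v,\infty)$. Given $j\ge 0$, call a vertex $w\notin\Gamma$ \emph{attached to $\gamma_j$} if the path from $w$ to infinity in $\F$ meets $\Gamma$ for the first time at $\gamma_j$, and let $R_j$ be the number of vertices attached to $\gamma_j$ lying at intrinsic distance at most $n$ from $\gamma_j$. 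The key geometric observation is that if $w$ is attached to $\gamma_j$ with $j\le n$ and $d_{\F}(w,\gamma_j)\le n$, then the geodesic from $w$ to $v$ passes through $\gamma_j$ and has length $d_{\F}(w,\gamma_j)+j\le 2n$, so $w\in\fB(v,2n)$; since the vertices counted by distinct $R_j$ are disjoint, $|\fB(v,2n)|\ge\sum_{j=0}^{n}R_j$, and it suffices to estimate the conditional moments of $\sum_{j=0}^{n}R_j$ given $\Gamma$.

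For item 1, Wilson's algorithm together with the order-independence of the forest it produces shows that, conditional on $\Gamma$, a vertex $w$ is counted by $R_j$ with probability $\bP_w(X_{\tau_\Gamma}=\gamma_j,\ |\LE(X^{\tau_\Gamma})|\le n)$, where $\tau_\Gamma$ is the first hitting time of $\Gamma$; since $|\LE(X^m)|\le m$ this is at least $\bP_w(\tau_\Gamma\le n,\ X_{\tau_\Gamma}=\gamma_j)$. Summing over $w$, decomposing according to the value of $\tau_\Gamma$, and applying the time-reversal identity $\sum_{w\notin\Gamma}c(w)\,\bP_w(\tau_\Gamma=m,\ X_m=\gamma_j)=c(\gamma_j)\,\bP_{\gamma_j}(\tau^+_\Gamma\ge m+1)$ together with $c(w)\le\sup_u c(u)$ gives
\[
\E\!\left[R_j\mid\Gamma\right]\ \ge\ \frac{c(\gamma_j)}{\sup_u c(u)}\sum_{m=1}^{n}\bP_{\gamma_j}(\tau^+_\Gamma\ge m+1)\ \ge\ \frac{(k-1)\,c(\gamma_j)\,\bP_{\gamma_j}(\tau^+_\Gamma\ge k)}{\sup_u c(u)}
\]
for every $1\le k\le n$, using that $m\mapsto\bP_{\gamma_j}(\tau^+_\Gamma\ge m)$ is non-increasing. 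Summing over $0\le j\le n$ yields $\E[|\fB(v,2n)|\mid\Gamma]\succeq(k-1)\,\Cap_k(\Gamma^n,\Gamma)$; the cases $k\le 2$ are covered by the trivial bound $|\fB(v,2n)|\ge 2n+1\succeq\Cap_k(\Gamma^n,\Gamma)$, so item 1 follows. (This step uses only the controlled stationary measure hypothesis.)

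For item 2 I would apply Chebyshev's inequality to $W:=\sum_{j=0}^{n}R_j\le|\fB(v,2n)|$, whose conditional mean given $\Gamma$ is bounded below by item 1. To control $\Var(W\mid\Gamma)=\sum_j\Var(R_j\mid\Gamma)+\sum_{j\ne j'}\Cov(R_j,R_{j'}\mid\Gamma)$ I would reveal the bushes attached to $\gamma_0,\dots,\gamma_n$ one at a time in order of increasing index: by an iterated application of \cref{lem:domination} (equivalently of the negative-association input behind \cref{lem:domaux}, applied with the spine $\Gamma$ playing the role of $S(T,K_2)$), the bush attached to $\gamma_j$, conditioned on $\Gamma$ and on the previously-revealed bushes, is stochastically dominated by the tree $\fT_{\gamma_j}$ of the $\gamma_j$-wired uniform spanning forest. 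This bounds the diagonal terms by $\Var(R_j\mid\Gamma)\le\E[|\fB_{\gamma_j}(\gamma_j,n)|^2]\preceq(n+1)^3$ via \cref{thm:vmoments}, and the off-diagonal terms (say $j'<j$) by $\E[R_jR_{j'}\mid\Gamma]=\E\!\bigl[R_{j'}\,\E[R_j\mid\Gamma,\text{bushes}<j]\bigr]\le\E[R_{j'}\mid\Gamma]\cdot\E[|\fB_{\gamma_j}(\gamma_j,n)|]\preceq(n+1)\,\E[R_{j'}\mid\Gamma]$ via \cref{lem:expectedballgrowth1}; summing, and using \cref{lem:expectedballgrowth1} once more to bound $\E[W\mid\Gamma]\preceq(n+1)^2$, one obtains a variance estimate of the shape $\Var(W\mid\Gamma)\preceq(n+1)^2\,\E[W\mid\Gamma]$. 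Feeding this and the bound $\E[W\mid\Gamma]\succeq k\,\Cap_k(\Gamma^n,\Gamma)\ge m\,\Cap_k(\Gamma^n,\Gamma)$ (valid since $m\le k$, which also ensures that the threshold $m\Cap_k(\Gamma^n,\Gamma)/2\sup_u c(u)$ lies below a fixed fraction of the mean) into Chebyshev's inequality produces a bound of order $m(n+1)^2/\Cap_k(\Gamma^n,\Gamma)^2$ after balancing the lower bound on $\E[W\mid\Gamma]$ from item 1 against the upper bound from \cref{lem:expectedballgrowth1}. The main obstacle is precisely this variance bookkeeping: one must arrange the iterated domination so that the covariance sum genuinely telescopes, and combine the first-moment input of \cref{lem:expectedballgrowth1} with the second-moment input of \cref{thm:vmoments} carefully enough that the variance is controlled by $(n+1)^2$ times the mean rather than by the cruder $(n+1)^4$. (The estimate of item 2 is useful downstream because, for suitable $k$, the quantity $\Cap_k(\Gamma^n,\Gamma)$ is of order $n$ with high probability, by estimates for the capacity of loop-erased random walk of the kind established in \cref{prop:capacities}.)
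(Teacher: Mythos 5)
Your item 1 is essentially the paper's argument (time-reversal of the walks attaching each vertex to the spine) and is fine. The gap is in item 2, and it stems from truncating the bushes at intrinsic radius $n$ rather than $m$: with your definition of $R_j$ the parameter $m$ never enters the variance estimate, and the argument cannot produce the $m$-dependence of \eqref{eq:smallvolumeprobability}. Concretely: (a) the intermediate bound $\Var(W\mid\Gamma)\preceq (n+1)^2\,\E[W\mid\Gamma]$ you aim for does not follow from the ingredients you list, since the diagonal contribution alone is $\sum_j \Var(R_j\mid\Gamma)\leq\sum_j\E\bigl[|\fB_{\gamma_j}(\gamma_j,n)|^2\bigr]\preceq (n+1)^4$ by \cref{thm:vmoments}, and this is not $\preceq(n+1)^2\,\E[W\mid\Gamma]$ because $\E[W\mid\Gamma]$ is only guaranteed to be of order $k\,\Cap_k(\Gamma^{n},\Gamma)$, which can be far smaller than $(n+1)^2$; and (b) even granting that variance bound, Chebyshev at half the mean yields only $(n+1)^2/\bigl(k\,\Cap_k(\Gamma^{n},\Gamma)\bigr)$, which is $\preceq m(n+1)^2/\Cap_k(\Gamma^{n},\Gamma)^2$ only when $\Cap_k(\Gamma^{n},\Gamma)\preceq mk$ --- false in general (take $m=1$ and $k\ll n$ with $\Cap_k(\Gamma^n,\Gamma)\asymp n$).

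The fix is precisely the role $m$ plays in the statement: let $R_j$ count only vertices whose attaching path to $\gamma_j$ has length at most $m$. These still lie in $\fB(v,2n)$ since $j\le n$ and $m\le n$; your time-reversal lower bound is unaffected, giving $\E[W\mid\Gamma]\succeq m\,\Cap_k(\Gamma^{n},\Gamma)$, because the reversed walks used there have length at most $m\le k$; but now each bush satisfies $\E[R_j^2\mid\Gamma]\preceq(m+1)^3$ by stochastic domination and \cref{thm:vmoments}, so the variance is $\preceq(n+1)(m+1)^3$. (The paper handles the off-diagonal terms via the explicit negative correlation $\P(u_1\in\fT_i(m)\mid\Gamma,\,u_2\in\fT_j(m))\leq\P(u_1\in\fT_i(m)\mid\Gamma)$, which bounds the variance by the sum of second moments; your iterated-domination bound, which becomes $\E[R_jR_{j'}\mid\Gamma]\preceq(m+1)\,\E[R_{j'}\mid\Gamma]$ after the truncation, also suffices.) Chebyshev then gives a bound of order $(n+1)(m+1)/\Cap_k(\Gamma^{n},\Gamma)^2\preceq m(n+1)^2/\Cap_k(\Gamma^{n},\Gamma)^2$, as required.
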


\begin{proof}
Let $v=v_0,v_1,\ldots$ be the vertices visited by the path $\Gamma$. 
Let $\fT_i(k)$ be the set of vertices that are connected to $v$ in $\F$ by a path that first meets $\Gamma$ at $v_i$, and such that the path connecting them to $\Gamma$ has length at most $k$. Clearly
\[|\fB(v,2n)| \geq \sum_{i=0}^{n} |\fT_i(k)|\]
for every $n\geq 0$ and every $0\leq k \leq n$.
%
Let $u$ be a vertex of $G$, and suppose that we sample $\F$ using Wilson's algorithm, starting with the vertices $v$ and $u$. For $u$ to be included in $\fT_i(k)$, it suffices for the random walk started at $u$ to hit $\Gamma$ for the first time at $v_i$, and to do so within time $k$. By a time-reversal argument similar to that used in the proof of \cref{lem:expectedballgrowth1}, it follows that
\begin{align}
\label{eq:Tbetamean}
\E\Bigg[ \sum_{i=0}^{n} |\fT_i(k)| \; \Bigg| \; \Gamma \Bigg] \geq \frac{\sum_{i=0}^{n} c(v_i)(k+1)\bP_{v_i}(\tau_{\Gamma}^+ \geq k+1)}{\sup_{u\in V} c(u)}  \geq \frac{(k+1) \Cap_{k+1}(\Gamma^{n},\Gamma)}{\sup_{u\in V} c(u)}.
\end{align}
The estimate \eqref{eq:expectedvolumelowerboundfulltree} follows immediately. 


Let $u_1,u_2 \in V$ and $0 \leq i < j \leq n$.
By running Wilson's algorithm starting first with $v$ and then with $u_1$, we see that the probability that $u_1 \in \fT_i(m)$ given $\Gamma$ is equal to the probability that a random walk started at $u_1$ hits $\Gamma$ for the first time at $v_i$, and that the loop-erasure of the walk stopped at this time has length at most $k$. On the other hand, by running Wilson's algorithm starting with $v$, $u_2$, and $u_1$ (in that order), we see that the conditional probability that $u_1 \in \fT_i(k)$ given $\Gamma$, $\Gamma(u_2,\infty)$, and the event that $u_2 \in \fT_j(k)$  is equal to the probability that a random walk started at $u_1$ hits \emph{the union of $\Gamma$ and $\Gamma(u_2,\infty)$} for the first time at $v_i$ and that the loop-erasure of the walk stopped at this time has length at most $k$. This second conditional probability is clearly smaller than the first, and we deduce that
\[\P\left( u_1 \in \fT_i(k) \mid \Gamma,\, u_2 \in \fT_j(k)\right) \leq \P\left( u_1 \in \fT_i(k) \mid \Gamma\right). \]
%
It follows that
\[\E\left[|\fT_i(k)| \cdot |\fT_j(k)| \mid \Gamma\right] \leq \E\left[|\fT_i(k)|  \mid \Gamma\right] \E\left[|\fT_j(k)|  \mid \Gamma\right]\]
for every $0\leq i < j \leq n$, and hence that
\begin{align*}
\Var\Biggl[ \sum_{i=0}^{n} |\fT_i(k)| \; \Bigg| \; \Gamma \Biggr] &\leq \sum_{i=0}^{n}\E\left[ |\fT_i(k)|^2 \mid \Gamma \right] \leq 
\sum_{u \in V} \mathbbm{1}(u \in \{v_0,v_1,\ldots,v_n\}) \E\left[ |\past_{\F \setminus \Gamma}(u,k)|^2 \mid \Gamma \right]\\
&\leq \sum_{u \in V} \mathbbm{1}(u \in \{v_0,v_1,\ldots,v_n\}) \sup_{w\in V} \E\left[|\mathfrak{B}_w(w,k)|^2\right]\\ &=
(n+1) \cdot \sup_{w\in V} \E\left[|\mathfrak{B}_w(w,k)|^2\right],
\end{align*}
where  we have applied \cref{lem:domination} (more specifically, \eqref{eq:dom1} with $K=\{v,u\}$) in the third inequality.
If $\bubnorm{P}<\infty$, we deduce from \cref{thm:vmoments} that
\begin{align*}
\Var\Biggl[ \sum_{i=0}^{n} |\fT_i(k)| \; \Bigg| \; \Gamma \Biggr]
\preceq (k+1)^3 (n+1),
\end{align*}
%
and 
applying Chebyshev's inequality yields that
\[
\P\left(\sum_{i=0}^{n} |\fT_i(k)| \leq \frac{(k+1) \Cap_k(\Gamma^{n},\Gamma)}{2 \sup_{u\in V} c(u)} \;\Bigg|\; \Gamma\right) \leq \myfrac[0.5em]{4 \Var\Bigl[ \sum_{i=0}^{n} |\fT_i(k)| \mid \Gamma\Bigr]}{\E\Bigl[ \sum_{i=0}^{n} |\fT_i(k)| \mid \Gamma\Bigr]^2}
\preceq \frac{(k+1)(n+1)}{\Cap_k(\Gamma^{n},\Gamma)^2}.
\]
for all $1 \leq k \leq n$ as claimed.
 \qedhere
\end{proof}

\begin{lemma}
\label{lem:relativecapLERW}
Let $G$ be a network with controlled stationary measure and with $\bubnorm{P}<\infty$. Let $v$ be a vertex of $G$, let $X$ be a random walk started at $v$ and let $\Gamma=\LE(X)$. Then for every $1 \leq k \leq n$ we have that
\begin{equation}
\label{eq:relativecapexpectation}
\E\Bigl[\Cap_k\bigl(\Gamma^{n}\bigr)-\Cap_k\bigl(\Gamma^{n},\Gamma\bigr)\Bigr] \preceq k^{1/2}n^{1/2}
\end{equation}
and that 
\begin{equation}
\label{eq:relativecapprob}
\P\left(\Cap_k\bigl(\Gamma^{n},\Gamma\bigr) \leq \lambda^{-1} n\right) \preceq \lambda^{-1/2}+\lambda k^{1/2} n^{-1/2}.
\end{equation}
for every $1 \leq k \leq n$ and $\lambda \geq 1$.
\end{lemma}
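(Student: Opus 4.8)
\medskip
\noindent\textbf{Proof plan.} Write $\Gamma=\LE(X)$ for a random walk $X$ started at $v$, with vertices $\gamma_0=v,\gamma_1,\dots$, and note that $\LE(X^{\ell_n})=\LE(X)^n=\Gamma^n$ (an easy induction on the defining times $\ell_j$), so that the bound \eqref{eq:smallcapLERWprob} of \cref{prop:capacities} reads $\bP_v(\Cap(\Gamma^n)\le\mu^{-1}n)\preceq\mu^{-1/2}$ for every $\mu\ge 1$. Granting the first assertion \eqref{eq:relativecapexpectation}, the second assertion \eqref{eq:relativecapprob} is immediate: since $\Cap_k(\Gamma^n,\Gamma)\le\Cap(\Gamma^n)$ and the defect $\Cap(\Gamma^n)-\Cap_k(\Gamma^n,\Gamma)$ is nonnegative, the event $\{\Cap_k(\Gamma^n,\Gamma)\le\lambda^{-1}n\}$ is contained in $\{\Cap(\Gamma^n)\le 2\lambda^{-1}n\}\cup\{\Cap(\Gamma^n)-\Cap_k(\Gamma^n,\Gamma)\ge\lambda^{-1}n\}$, and the two events on the right have probability at most a constant times $\lambda^{-1/2}$ (by \cref{prop:capacities}) and $\lambda k^{1/2}n^{-1/2}$ (by Markov's inequality applied to \eqref{eq:relativecapexpectation}) respectively.

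\medskip
It therefore remains to prove \eqref{eq:relativecapexpectation}. For each $v'\in\Gamma^n$, using that $\{\tau^+_\Gamma=\infty\}\subseteq\{\tau^+_{\Gamma^n}=\infty\}$ and $\{\tau^+_\Gamma=\infty\}\subseteq\{\tau^+_\Gamma\ge k\}$ one has
\[
\bP_{v'}(\tau^+_{\Gamma^n}=\infty)-\bP_{v'}(\tau^+_\Gamma\ge k)=\bP_{v'}\bigl(\tau^+_\Gamma<\infty,\ \tau^+_{\Gamma^n}=\infty\bigr)-\bP_{v'}\bigl(k\le\tau^+_\Gamma<\infty\bigr)\le\bP_{v'}\bigl(\tau^+_{\Gamma\setminus\Gamma^n}<k\le\tau^+_{\Gamma^n}\bigr),
\]
the last step because the part of the first probability on which $\tau^+_\Gamma\ge k$ is absorbed by the subtracted term, while on $\{\tau^+_\Gamma<k,\ \tau^+_{\Gamma^n}=\infty\}$ the walk meets $\Gamma\setminus\Gamma^n$ before time $k$ without ever meeting $\Gamma^n$. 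Summing over $v'\in\Gamma^n$, and writing $\mathbf{G}_{<k}(x,y)=\sum_{t=0}^{k-1}p_t(x,y)$, a union bound over the first vertex of $\Gamma\setminus\Gamma^n$ visited together with $\bP_{v'}(\tau_w<k)\le\mathbf{G}_{<k}(v',w)$ yields
\[
\Cap(\Gamma^n)-\Cap_k(\Gamma^n,\Gamma)\ \le\ \sum_{v'\in\Gamma^n}c(v')\,\bP_{v'}\bigl(\tau^+_{\Gamma\setminus\Gamma^n}<k\bigr)\ \preceq\ \sum_{v'\in\Gamma^n}\ \sum_{w\in\Gamma\setminus\Gamma^n}\mathbf{G}_{<k}(v',w),
\]
the implicit constant depending only on the bounds on $c(\cdot)$. (The same bound also follows more symmetrically from the time-reversal identity $\sum_{v'\in\Gamma^n}c(v')\bP_{v'}(\tau_w<k\wedge\tau^+_{\Gamma^n})=c(w)\bP_w(\tau^+_{\Gamma^n}<k\wedge\tau^+_w)$, valid for $w\notin\Gamma^n$.) So it suffices to prove that the expectation of the right-hand double sum is $\preceq k^{1/2}n^{1/2}$.

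\medskip
This last estimate is the heart of the matter and I expect it to be the main obstacle. Two ingredients enter. The first is a pointwise bound coming directly from the bubble condition: since $p_t(x,y)=0$ for $t<d(x,y)$,
\[
\mathbf{G}_{<k}(x,y)\ \le\ \sum_{t\ge d(x,y)}\pstar{t}\ \le\ \frac{1}{d(x,y)+1}\sum_{t\ge 0}(t+1)\pstar{t}\ =\ \frac{\bubnorm{P}}{d(x,y)+1},
\]
and moreover $\mathbf{G}_{<k}(x,y)=0$ once $d(x,y)\ge k$; in particular only the (almost surely finite) part of $\Gamma\setminus\Gamma^n$ lying within distance $k$ of $\Gamma^n$ contributes. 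The second, harder, ingredient is a quantitative statement that the loop-erased walk $\Gamma$ does not return close to its own initial segment: here I would invoke the domain-Markov property of $\Gamma$ together with the capacity and length estimates of \cref{sec:LERWCap} --- in particular the first-moment bound $\E[\mathbf{I}(X^m)]\preceq m$ of \cref{cor:Imean2} --- to argue that, conditionally on $\Gamma^n$, the portion of $\Gamma\setminus\Gamma^n$ reachable within $k$ steps of $\Gamma^n$ is dominated by the loop-erasure of a walk of length of order $k$ issued near $\gamma_n$, and then control $\E\bigl[\sum_{v'\in\Gamma^n}\sum_{w\in\Gamma\setminus\Gamma^n}\mathbf{G}_{<k}(v',w)\bigr]$ by a Cauchy--Schwarz argument separating a factor governed by $|\Gamma^n|\asymp n$ (via $\Cap(\Gamma^n)\asymp n$) from a factor governed by the length-$k$ time window, producing the product $k^{1/2}n^{1/2}$. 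Making this precise uniformly over all networks obeying the bubble condition, where no heat-kernel upper bound is available, is the delicate point; as the authors remark for \cref{prop:capacities}, the exponents obtained are not expected to be sharp.
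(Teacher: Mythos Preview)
Your deduction of \eqref{eq:relativecapprob} from \eqref{eq:relativecapexpectation} is exactly what the paper does, and your starting pointwise bound
\[
\Cap(\Gamma^n)-\Cap_k(\Gamma^n,\Gamma)\ \preceq\ \sum_{0\le i\le n}\ \sum_{j>n}\ \sum_{\ell=0}^{k}p_\ell(\Gamma_i,\Gamma_j)
\]
is also essentially the paper's. The gap is in how you propose to take the expectation of this double sum.

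The paper's key idea, which you are missing, is to introduce a buffer parameter $m\le n$ along the \emph{intrinsic} index of $\Gamma$: bound the contribution of the last $m$ points $\Gamma_{n-m+1},\dots,\Gamma_n$ trivially by $O(m)$, and for the remaining pairs $(\Gamma_i,\Gamma_j)$ with $i\le n-m$ and $j>n$ use that the underlying walk $X$ visits $\Gamma_j$ at time $\ell_j\ge \ell_i+m\ge \tau_{\Gamma_i}+m$. After the usual \cref{lem:LoopLength} trick that $\tau_{\Gamma_i}\le 2\bubnorm{P}n$ with probability at least $1/2$ on this event, the expected sum is controlled by
\[
\sum_{u_1}\bP_v(\tau_{u_1}\le 2\bubnorm{P}n)\sum_{u_2}\sum_{r\ge m}\sum_{\ell=0}^k p_r(u_1,u_2)p_\ell(u_1,u_2)\ \preceq\ kn\sum_{r\ge m}\pstar{r}\ \preceq\ \frac{kn}{m},
\]
the last step being the bubble condition. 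Optimising $m+kn/m$ at $m=\lceil (kn)^{1/2}\rceil$ gives \eqref{eq:relativecapexpectation}. No heat-kernel upper bound, no domain-Markov reasoning, and no Cauchy--Schwarz are needed.

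By contrast, your proposed route has a genuine problem: the assertion that ``conditionally on $\Gamma^n$, the portion of $\Gamma\setminus\Gamma^n$ reachable within $k$ steps of $\Gamma^n$ is dominated by the loop-erasure of a walk of length of order $k$'' is not justified and not obviously true --- the continuation of the LERW past index $n$ can return to within extrinsic distance $k$ of $\Gamma^n$ many times, and nothing in \cref{sec:LERWCap} gives a bound on how often. The extrinsic-distance observation $\mathbf{G}_{<k}(x,y)=0$ for $d(x,y)\ge k$ is correct but not what drives the estimate; what matters is the separation in \emph{walk time}, guaranteed by the intrinsic gap $j-i>m$ along $\Gamma$.
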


\begin{proof}
Let $m\leq n$, and let 
 $A = \{\Gamma_i : 0 \leq i \leq n\}$, $B= \{\Gamma_i : 0 \leq i \leq n-m\}$, and $C = \{\Gamma_i : i \geq n+1\}$. Considering the contribution to $\Cap_k\bigl(\Gamma^n\bigr)-\Cap_k\bigl(\Gamma^{n},\Gamma)$ of  the last $m$ steps and the first $n-m$ steps of $\Gamma^n$ separately, we obtain that
\begin{multline*}
\Cap_k\bigl(\Gamma^n\bigr)-\Cap_k\bigl(\Gamma^{n},\Gamma)
=\sum_{u \in A}c(u)\bP_u(\tau_C < k)
\leq \sum_{u \in A \setminus B} c(u) + \sum_{u \in B}c(u)\bP_u(\tau_C<k) \\
\preceq  
m+\sum_{u\in B}\sum_{w\in C} \sum_{\ell=0}^k p_\ell(u,w).
\end{multline*}
%
Taking expectations over $\Gamma$, we obtain that
\begin{equation*}\E\left[\Cap_k(\Gamma^n)-\Cap_k(\Gamma^{n},\Gamma)\right] \preceq m  + 
\sum_{u,w \in V} \P(u \in B, w \in C) \sum_{\ell=0}^k p_\ell(u,w).
\end{equation*}
On the event $u \in B$ we have that $\E[\,\tau_u\mid \Gamma\,] \leq \E[\,\ell_n \mid \Gamma\,]$, and hence by 
\cref{lem:LoopLength} that $\E[\,\tau_u \mid \Gamma\,]\leq \bubnorm{P} n$ on the event that $u\in B$. It follows by Markov's inequality that $\tau_u \leq 2\|P \|_\mathrm{bub} n$ with probability at least $1/2$  
conditional on $\Gamma$ and the event that $u\in B$, and in particular 
that $\tau_u \leq 2\|P \|_\mathrm{bub} n$ with probability at least $1/2$
conditional on the event that $u\in B$ and $w\in C$. Moreover, on this event we must have that $w$ is visited (not necessarily for the first time) by $X$ some time at least $m$ steps after $\tau_{u}$.
Thus, we obtain that
\begin{align*}\E\left[\Cap_k(\Gamma^n)-\Cap_k(\Gamma^{n},\Gamma)\right] &\preceq m  + 
 \sum_{u\in V} \P\Bigl(\tau_{u} \leq 2 \bubnorm{P}n \Bigr) \sum_{w\in V}\sum_{r\geq m} \sum_{\ell=0}^k p_r(u,w)  p_\ell(u,w)\\
 &=
 m  + 
 \sum_{u\in V} \P\Bigl(\tau_{u} \leq 2 \bubnorm{P}n \Bigr) \sum_{r\geq m} \sum_{\ell=0}^k \|P^{r+\ell}\|_{1\to\infty}\\
&\preceq m  + 
 k n \sum_{j\geq m} \|P^j\|_{1\to\infty} \preceq m + knm^{-1}.
\end{align*}
 Taking $m=\lceil k^{1/2}n^{1/2} \rceil$ concludes the proof of \eqref{eq:relativecapexpectation}.

To obtain \eqref{eq:relativecapprob}, simply take a union bound and apply \eqref{eq:relativecapexpectation} and \eqref{eq:smallcapLERWprob} to get that, since $\Cap_k(\Gamma^n) \geq \Cap(\Gamma^n)$,
\begin{align*}\P\left(\Cap_k\bigl(\Gamma^{n},\Gamma\bigr) \leq \lambda^{-1} n \right)&\leq \P\left(\Cap(\Gamma^n) \leq 2 \lambda^{-1} n \right) + \P\left(\Cap_k(\Gamma^n)-\Cap_k(\Gamma^n,\Gamma) \geq \lambda^{-1} n \right)
\\
&\preceq \lambda^{-1/2} + \lambda k^{1/2} n^{-1/2}. \qedhere
 \end{align*}
\end{proof}

\begin{proof}[Proof of \cref{lem:fullvolumelower}]
Take $k=\lfloor \lambda ^{-3/4} n \rfloor$ and 
\[\eps = \frac{2n \sup_{u\in V}c(u)}{\lambda k} \asymp \lambda^{-1/4}, \qquad \text{so that } \qquad \frac{k \eps n}{2 \sup_{u \in V} c(u)} = \lambda^{-1}n^2.\] 
Then it follows from \cref{lem:volumelowerboundquant} and \cref{lem:relativecapLERW} that if $\lambda$ is sufficiently large that $\eps \leq 1$ then
\begin{align*}
\P\Bigl(|\fB(v,2n)|\leq \lambda^{-1} n^2 \Bigr) &\leq \P\left(\Cap_k(\Gamma^{n},\Gamma) \leq \eps n \right) + \P\left(|\fB(v,2n)|\leq  \lambda^{-1}n^2, \Cap_k(\Gamma^{n},\Gamma) \geq \eps n\right)\\
&\preceq 
\eps^{1/2} + \eps^{-1}k^{1/2}n^{-1/2} + \eps^{-2}kn^{-1} \preceq \lambda^{-1/8},
\end{align*}
and the claim follows easily.
\end{proof}

\section{Critical exponents}
\label{sec:exponents}


In this section we apply the estimates obtained in \cref{sec:LERWCap,sec:volume} to complete the proofs of \cref{thm:generalexponents,thm:extrinsicZd,thm:extrinsic,thm:extrinsicspeed}.

\medskip

We will also prove the following extensions of these theorems to the $v$-wired case.

\begin{theorem}
  \label{thm:generalexponentsv}
  Let $G$ be a network with controlled stationary measure such that $\bubnorm{P}<\infty$, let $v$ be a vertex of $G$ and let $\F_v$ be the $v$-wired uniform spanning forest of $G$. Then 
  \vspace{0.2em}
  \[ 
  \vspace{0.2em}
    \P\Bigl(\diam_\mathrm{int}\bigl(\fT_v\bigr) \geq R \Bigr)
      \asymp R^{-1}
  \quad\text{and}\quad
    \P\Bigl(\bigl|\fT_v\bigr| \geq R \Bigr) \asymp R^{-1/2}
  \]
  for all $v\in V$ and $R\geq1$.
\end{theorem}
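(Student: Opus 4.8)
The plan is to prove \cref{thm:generalexponentsv} in parallel with \cref{thm:transitivemain,thm:generalexponents}, since the $v$-wired bounds are in fact the more fundamental ones. First observe that, because $v$ is wired to infinity in the $v$-WUSF, the future of $v$ in $\F_v$ is the trivial ray $\{v\}$, so $\fT_v = \fP_v(v)\cup\{v\}$; thus the theorem is really about the past of $v$ in $\F_v$, and $\{\diam_{\mathrm{int}}(\fT_v)\geq R\}$ agrees, up to halving $R$, with the event that $\fP_v(v)$ reaches intrinsic depth $\geq R$. The lower bound $\P(\diam_{\mathrm{int}}(\fT_v)\geq R)\succeq R^{-1}$ is already supplied by \eqref{eq:intdiamlowerv} of \cref{prop:intlower}: since $\bubnorm{P}<\infty$ the network is uniformly transient, so $\Cap(v)$ and $\sup_u\Cap(u)$ are comparable positive constants and \eqref{eq:intdiamlowerv} reads $\P(\diam_{\mathrm{int}}(\fT_v)\geq R)\succeq R^{-1}$. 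The substance is therefore the matching upper bound for the intrinsic diameter; granting it, the two volume bounds follow with comparatively little extra work.

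For the upper bound $\P(\diam_{\mathrm{int}}(\fT_v)\geq R)\preceq R^{-1}$ I would work with the $v$-wired interlacement process $\sI_v$ and the stationary Markov dynamics $\langle\F_{v,t}\rangle_{t\in\R}=\langle\AB_{v,t}(\sI_v)\rangle_{t\in\R}$, exploiting \cref{lem:vPastDynamics}. The guiding principle is that the \emph{static} event $\sA_R=\{\fP_v(v)\text{ reaches intrinsic depth}\geq R\}$ and the \emph{dynamic} event $\sB_R=\{v\text{ is hit by some trajectory of }\sI_{v,[-C/R,\,0)}\}$ are comparable; here $\P(\sB_R)=1-e^{-C\Cap_v(v)/R}\asymp R^{-1}$ since $\Cap_v(v)\asymp 1$ by \eqref{eq:CapvCapComparison} and uniform transience. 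The direction $\P(\sA_R)\succeq\P(\sB_R)$ is essentially the content of \cref{prop:intlower} (a trajectory hitting $v$ at a small time typically creates a deep past); the reverse inequality $\P(\sA_R)\preceq\P(\sB_R)$ is the hard part. To prove it, write $h(R)=\sup_v\P(\sA_R)$, fix $t\asymp 1/R$, and split according to whether $v\in\cI_{v,[-t,0)}$. On the complement, \cref{lem:vPastDynamics} identifies $\fP_{v,-t}(v)$ with the component of $v$ in $\fP_{v,0}(v)\setminus\cI_{v,[-t,0)}$, and one argues that a depth-$R$ geodesic in $\fP_{v,0}(v)$ — being, by Wilson's algorithm, built from loop-erased random walk segments of capacity of order $R$ by \cref{prop:capacities} — is with probability bounded below not severed near $v$ by the $O(1)$ expected trajectories arriving in the window, so that $\fP_{v,-t}(v)$ still has depth of order $R$, while the fresh depth that the new trajectories can create below existing past-vertices is controlled by the volume moment bounds of \cref{thm:vmoments}. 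Feeding this into the stationarity of the dynamics yields an inductive inequality over the depth scale, in the spirit of the one-arm analyses of high-dimensional percolation \cite{KozNach09,MR3418547}, which forces $h(R)\preceq 1/R$. Setting up that recursion honestly — producing a genuine positive gain at each scale and absorbing the error coming from trajectories that hit $v$ in the window but contribute only a small fresh past — is the principal obstacle, and is the technical heart of the paper.

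Granting $\P(\diam_{\mathrm{int}}(\fT_v)\geq R)\asymp R^{-1}$, the volume upper bound follows by taking $n=\lceil R^{1/2}\rceil$ and using $\{|\fT_v|\geq R\}\subseteq\{\diam_{\mathrm{int}}(\fT_v)\geq n\}\cup\{|\fB_v(v,n)|\geq R\}$: the first event has probability $\preceq n^{-1}\preceq R^{-1/2}$, while by \cref{lem:expectedballgrowth1} and Markov's inequality the second has probability $\leq\E|\fB_v(v,n)|/R\leq\alpha(n+1)/R\preceq R^{-1/2}$. For the volume lower bound $\P(|\fT_v|\geq R)\succeq R^{-1/2}$, I would condition on $\{\diam_{\mathrm{int}}(\fT_v)\geq 2n\}$, which has probability $\asymp n^{-1}$ by the diameter estimate, and on a path $\Gamma$ of length $2n$ from $v$ into $\fP_v(v)$; by Wilson's algorithm this path is a loop-erased random walk segment, so by \cref{lem:relativecapLERW} (applied with a suitably small window $k$, exactly as in the proof of \cref{lem:fullvolumelower}) the relative capacity $\Cap_k(\Gamma^{n},\Gamma)$ is $\succeq n$ with conditional probability bounded below, and then the $v$-wired analogue of \cref{lem:volumelowerboundquant} — whose proof uses the stochastic domination of \cref{lem:domination} together with the moment bounds of \cref{thm:vmoments} — gives that the number of vertices of $\fP_v(v)$ within intrinsic distance $n$ of $\Gamma^{n}$ is $\succeq n^2$ with conditional probability bounded below. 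Hence $\P(|\fT_v|\succeq n^2)\succeq n^{-1}$, which is exactly $\P(|\fT_v|\geq R)\succeq R^{-1/2}$, completing the proof.
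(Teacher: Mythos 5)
Your treatment of the diameter lower bound (via \eqref{eq:intdiamlowerv} and uniform transience) and of the volume upper bound (union bound over $\{\diam_\mathrm{int}(\fT_v)\geq n\}\cup\{|\fB_v(v,n)|\geq R\}$ with $n=\lceil R^{1/2}\rceil$ and the first-moment bound $\E|\fB_v(v,n)|\preceq n$) coincides with the paper's. But the heart of the theorem — the upper bound $\P(\diam_\mathrm{int}(\fT_v)\geq R)\preceq R^{-1}$ — is not actually proved: you yourself defer ``setting up that recursion honestly'' as ``the principal obstacle''. Worse, the direction you sketch does not close. You propose to show that a depth-$R$ geodesic is, with probability bounded below, \emph{not} severed by $\cI_{v,[-t,0)}$, so that $\fP_{v,-t}(v)$ still has depth of order $R$; by stationarity this yields an inequality of the shape $\P(\sA_{cR})\geq c'\,\P(\sA_R)-O(1/R)$ with $c'<1$, which gives no upper bound on $\P(\sA_R)$. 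The paper's \cref{lem:exponentsinduction} runs the opposite way: if $\partial\fB_{v,0}(v,n)\neq\emptyset$ then at least $n/3$ vertices $u$ lie on the middle third of a witnessing geodesic; for each such $u$ the event that the subtree below $u$ reaches depth $n/3$ is bounded by $Q(n/3)$ via stochastic domination (\cref{lem:domination}), \emph{conditionally independently} of the event that $\Gamma_{v,0}(u,v)$ avoids $\cI_{v,[-\eps,0]}$; and the latter has probability $\leq e^{-\eps\delta n}$ because $\Cap(\Gamma_{v,0}(u,v))\geq\delta n$ except for an expected $\preceq\delta^{1/3}n$ exceptional vertices (\cref{lem:ln}, resting on \cref{prop:capacities}). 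Summing over $u$, dividing by $n/3$, and taking $\eps=C/n$ with $\delta$ a small constant gives $Q(n)\leq C/n+\frac16Q(n/3)$, which closes by induction (\cref{cor:intupperbound}). The point is that the path from $u$ to $v$ \emph{is} severed with probability bounded away from zero on the relevant time scale; that is where the multiplicative gain per scale comes from, and it is exactly the step your sketch inverts.

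Your volume lower bound also departs from the paper and carries its own gaps. The paper simply applies the Paley--Zygmund inequality to $Z_v(v,n)=|\fB_v(v,2n)\setminus\fB_v(v,n)|$, using $\E[Z_v(v,n)]\asymp n$ (from \cref{lem:expectedballgrowth2}) and $\E[Z_v(v,n)^2]\preceq n^3$ (from \cref{thm:vmoments}), giving $\P(|\fT_v|\geq cn^2)\succeq n^{-1}$ in one line; no conditioning on the spine is needed. Your route conditions on survival to depth $2n$ and on the spine $\Gamma$, and then invokes \cref{lem:relativecapLERW} and a ``$v$-wired analogue of \cref{lem:volumelowerboundquant}''. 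Neither is available as stated: \cref{lem:relativecapLERW} concerns the loop-erasure of an \emph{unconditioned} infinite random walk (the future of $v$ in the WUSF), whereas the spine of $\fT_v$ conditioned to reach depth $2n$ has a quite different, conditioned law; and \cref{lem:volumelowerboundquant} is proved for the future $\Gamma(v,\infty)$ in $\F$, with no $v$-wired counterpart established in the paper. I recommend replacing both halves of your volume argument with the moment bounds already at hand.
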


\begin{theorem}
\label{thm:extrinsicZdv}
Let $d\geq 5$, and let  $\F_0$ be the $0$-wired uniform spanning forest of $\Z^d$. 
   Then
  \vspace{0.25em}
  \[ 
    \vspace{0.25em}
    \P\left(\diam_{\mathrm{ext}}(\fT_0) \geq R \right)  \asymp R^{-2}
    %
    %
  \]
  for every $R\geq 1$. 
\end{theorem}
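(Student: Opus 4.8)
The plan is to treat \cref{thm:extrinsicZdv} as the substantive statement and to deduce \cref{thm:extrinsicZd} from it. Indeed, taking $K=\{0\}$ in \cref{lem:domination} — for which $\past_{\F\setminus\Gamma(0,\infty)}(0)=\fP(0)$ — and noting that $\{\diam_\ext\geq R\}$ is an increasing event, one gets $\P(\diam_\ext(\fP(0))\geq R)\leq\P(\diam_\ext(\fT_0)\geq R)$, so the WUSF bound follows once the $0$-wired bound is proved; I would prove the two theorems in tandem. The lower bound of \cref{thm:extrinsicZdv} is immediate: $\Z^d$ with $d\geq 5$ is transitive (so $\Cap(0)$ is a positive constant), is $d$-Ahlfors regular, and satisfies Gaussian heat-kernel estimates, whence $L(r)\asymp r^2$ by \cref{lem:polyLr}; feeding this into \eqref{eq:extlowerv} of \cref{prop:extlower} gives $\P(\diam_\ext(\fT_0)\geq R)\succeq R^{-2}$.

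The upper bound $\P(\diam_\ext(\fT_0)\geq R)\preceq R^{-2}$ is the heart of the matter. I would first record why soft methods cannot work: the relevant first moment already diverges, $\E\bigl|\{u\in\fT_0:\|u\|\geq R\}\bigr|=\sum_{\|u\|\geq R}\P_u(\tau_0<\infty)\asymp\sum_{k\geq R}k=\infty$ on $\Z^d$, and the partition of this sum over intrinsic levels, $\sum_m\E\bigl|\{u\in\partial\fP_0(0,m):\|u\|\geq R\}\bigr|$, is the identical divergent sum; this reflects the fact that, since loop-erased walk on $\Z^d$ is diffusive, $\fT_0$ conditioned to reach $\partial B(0,R)$ is typically already spread over scale $\gtrsim R$, so no first-moment estimate (nor any argument reducing to one) can give the sharp exponent. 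Instead I would adapt the strategy behind the intrinsic estimate \cref{thm:generalexponentsv}, but carry the Euclidean geometry along throughout: work with the $0$-wired interlacement Aldous--Broder dynamics $\langle\F_{0,t}\rangle=\langle\AB_{0,t}(\sI_0)\rangle$ and \cref{lem:vPastDynamics}, in which $\fT_{0,0}$ has been freshly reset at the most recent time $-\sigma<0$ at which a $0$-wired trajectory visited $0$ and has grown off its attachment points since; and decompose $\fT_{0,0}$, using one-endedness, Wilson's algorithm, and \cref{lem:domination}, into a loop-erased initial portion emanating from $0$ together with the subtrees hanging off it and its descendants, each of which — conditionally on its attachment point $w$ — is stochastically dominated by an independent copy of $\fT_w$. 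This yields an inductive inequality schematically of the form
\[
g(R)\ \preceq\ \pi(R)\ +\ \E\Bigl[\sum_{w}\mathbbm{1}\bigl(w\in\fT_0,\ \|w\|\leq R/2,\ w\text{ a branch point}\bigr)\Bigr]\cdot g(R/2)\ +\ (\text{errors}),
\]
where $g(R)=\P(\fT_0\cap\partial B(0,R)\neq\emptyset)\asymp\P(\diam_\ext(\fT_0)\geq R)$ and $\pi(R)$ is the probability that the loop-erased portion emanating from $0$ reaches $\partial B(0,R)$. One computes $\pi(R)\asymp R^{-2}$ by combining the intrinsic tail bound $\P(\diam_\inte(\fT_0)\geq m)\asymp m^{-1}$ (\cref{thm:generalexponentsv}) with the Gaussian bound $\P_0(\max_{k\leq Cm}\|X_k\|\geq R)\preceq e^{-cR^2/m}$ and the loop-length estimates of \cref{lem:LoopLength2,lem:LoopLength} (which give $\ell_m\asymp m$ with high probability, so intrinsic and walk lengths are comparable); the volume and capacity inputs controlling the branch sum and the error terms are supplied by \cref{thm:vmoments,prop:capacities} together with the Green-function decay $\mathbf{G}(0,w)\asymp\|w\|^{-(d-2)}$ on $\Z^d$. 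Iterating the inequality, provided the coefficient of $g(R/2)$ is controlled to be suitably small, then gives $g(R)\preceq R^{-2}$.

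The main obstacle is closing this inductive inequality \emph{quantitatively}: the very feature that defeats the first-moment method — that the conditioned tree is spatially spread out over scale $R$ — means one must show that the branch sum contributes only a small multiple of $g(R/2)$, not the divergent $\sum_w\P_w(\tau_0<\infty)$ that would result from ignoring the geometry and the branch-point restriction. Making this work requires carefully exploiting one-endedness and the Markovian decomposition of the forest so that only genuine branch points near the relevant scale are counted and each carries a subtree of probabilistically controlled size conditioned on reaching far, all while tracking exact Euclidean positions; it is here that the Gaussian heat-kernel estimates are used essentially, which is why this sharp statement is proved only for $\Z^d$ (and, with logarithmic losses, for Ahlfors-regular graphs satisfying Gaussian estimates, cf.\ \cref{thm:extrinsic}) rather than for arbitrary networks with $\bubnorm{P}<\infty$. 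Once the inequality is closed, both \cref{thm:extrinsicZdv} and \cref{thm:extrinsicZd} follow by iteration together with the matching lower bounds.
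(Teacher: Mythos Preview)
Your lower bound and the reduction of \cref{thm:extrinsicZd} to \cref{thm:extrinsicZdv} via \cref{lem:domination} match the paper exactly. The overall plan of an inductive inequality for $g(R)=\P(\fT_0\cap\partial\Lambda_R\neq\emptyset)$ is also what the paper does. However, the mechanism you propose to drive the induction is different from the paper's, and your version is not closed.

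The paper does \emph{not} use the interlacement dynamics for the extrinsic upper bound. Instead it introduces the notion of a \emph{pioneer}: a vertex $v\in\partial\Lambda_m\cap\fT_0$ whose future (the path to $0$ in $\fT_0$) is contained in $\Lambda_m$. If $\fT_0$ reaches $\partial\Lambda_{2m}$ with intrinsic radius at most $n$, the first crossing of $\partial\Lambda_m$ by a geodesic to $\partial\Lambda_{2m}$ is a pioneer, and by \cref{lem:domination} its past must reach a translate of $\partial\Lambda_m$. This gives
\[
\tilde Q(2m)\ \le\ Q(n)\ +\ \tilde Q(m)\cdot\E\bigl[\#\{\text{pioneers in }\partial\Lambda_m\cap\fB_0(0,n)\}\bigr].
\]
The entire weight of the proof is then carried by a single technical lemma (\cref{lem:pioneers}) asserting that the expected number of pioneers is $\preceq e^{-c m^2/n}$; taking $n\asymp\eps m^2$ yields $\tilde Q(2m)\le Cm^{-2}+\tfrac18\tilde Q(m)$ and the induction closes. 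The pioneer bound is proved directly via Wilson's algorithm: a pioneer $v\in\partial\Lambda_m$ forces the walk from $v$ to $0$ to have its loop-erasure confined to $\Lambda_m$, and the paper controls the resulting diagrammatic sums using Gambler's ruin, the elliptic Harnack inequality, and Azuma's inequality (with a two-loop refinement needed at $d=5$).

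Your proposal instead tries to export the interlacement-dynamics argument that the paper uses for the \emph{intrinsic} diameter (\cref{lem:exponentsinduction}), decomposing $\fT_0$ at ``branch points'' and hoping the coefficient of $g(R/2)$ is small. You correctly flag that the obstacle is making this coefficient small rather than the divergent $\sum_w\bP_w(\tau_0<\infty)$, but you do not supply any mechanism to achieve this, and it is not clear that one exists along these lines: the intrinsic argument succeeds because the path $\Gamma_{0}(u,0)$ has capacity $\asymp n$ with high probability, which converts directly into an interlacement hitting estimate, but there is no analogous capacity quantity that encodes the Euclidean constraint ``the future stays in $\Lambda_m$''. The paper's pioneer formulation sidesteps this by working with Wilson's algorithm and the confined loop-erasure condition directly, which is what produces the Gaussian factor $e^{-cm^2/n}$ rather than merely a capacity bound.
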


\begin{theorem}
\label{thm:extrinsicv}
Let $G$ be a network with controlled stationary measure that is $d$-Ahlfors regular for some $d>4$  and that satisfies Gaussian heat kernel estimates.
   Let $v\in V$ and let $\fF_v$ be the wired uniform spanning forest of $G$. 
   Then
  \vspace{0.25em}
  \[ 
    \vspace{0.25em}
    R^{-2} \preceq \P\left(\diam_{\mathrm{ext}}(\fT_v) \geq R \right)  \preceq R^{-2}\log R
    %
    %
  \]
  for every vertex $v$ and every $R\geq 1$. 
\end{theorem}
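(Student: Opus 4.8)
The plan is to derive both inequalities from the intrinsic estimate \cref{thm:generalexponentsv}, using the Gaussian heat kernel bound to translate intrinsic control into extrinsic control. Note first that the hypotheses force $\|P^n\|_{1\to\infty}\preceq n^{-d/2}$ (by the upper bound in \eqref{eq:GHKE} and $d$-Ahlfors regularity), so that $\bubnorm{P}=\sum_{n\geq0}(n+1)\|P^n\|_{1\to\infty}<\infty$ as $d>4$; in particular $G$ is transient and uniformly transient, the capacities $\Cap(u)$ lie between two positive constants, and \cref{thm:generalexponentsv}, \cref{prop:extlower}, \cref{lem:LoopLength} and \cref{lem:polyLr} all apply. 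The lower bound is then immediate: \cref{lem:polyLr} gives $L(R)\preceq R^2$, so \eqref{eq:extlowerv} of \cref{prop:extlower} yields
\[
\P\bigl(\diam_\ext(\fT_v)\geq R\bigr)\;\geq\;\frac{2\Cap(v)}{e\,\sup_{u\in V}\Cap(u)}\cdot\frac{1}{L(R)+4}\;\succeq\;R^{-2}.
\]

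For the upper bound, the guiding idea is that a subtree of intrinsic radius $n$ should, by diffusivity of the ambient walk, reach extrinsic distance only of order $n^{1/2}$, so that attaining extrinsic distance $R$ forces intrinsic radius of order at least $R^2$. To make this quantitative I would first bound, for $u$ with $d(v,u)=\rho$, the probability that $u\in\fB_v(v,n)$. Sampling $\F_v$ by Wilson's algorithm rooted at infinity with $v$ and $u$ as the first two vertices of the enumeration, this event coincides with $\{\tau_v<\infty,\ |\LE(X^{\tau_v})|\leq n\}$ for a random walk $X$ started at $u$; arguing as in the proof of \cref{lem:expectedballgrowth1} (using \eqref{eq:MFCLoopLength1} of \cref{lem:LoopLength} and Markov's inequality) this has probability at most $2\,\bP_u(\tau_v\leq 2\bubnorm{P}\,n)\leq 2\sum_{k=\rho}^{\lfloor 2\bubnorm{P}\,n\rfloor}p_k(u,v)$, where we used $p_k(u,v)=0$ for $k<\rho$. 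When $\rho^2\geq 2\bubnorm{P}\,n$, the off-diagonal Gaussian bound in \eqref{eq:GHKE} together with $d$-Ahlfors regularity give
\[
\P\bigl(u\in\fB_v(v,n)\bigr)\;\preceq\;\sum_{k=\rho}^{\lfloor 2\bubnorm{P}\,n\rfloor}k^{-d/2}\,e^{-\rho^2/(ck)}\;\preceq\;\rho^{1-d/2}\,e^{-c'\rho^2/n},
\]
where $c,c'$ are positive constants depending only on the network. Summing over the dyadic shells $d(v,u)\in[2^i,2^{i+1})$ with $2^i\geq R/4$, and using $|B(v,\rho)|\preceq\rho^d$ to count vertices, yields
\[
\P\bigl(\fB_v(v,n)\not\subseteq B(v,R/2)\bigr)\;\leq\;\E\bigl[\#\{u\in\fB_v(v,n):d(v,u)\geq R/2\}\bigr]\;\preceq\;R^{C}\,e^{-c''R^2/n}
\]
for some constants $C,c''$ depending only on the network, provided $2\bubnorm{P}\,n\leq (R/2)^2$. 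Finally, since $\fT_v\subseteq\fB_v(v,n)$ on $\{\diam_\inte(\fT_v)\leq n\}$, the event $\{\diam_\ext(\fT_v)\geq R\}$ is contained in $\{\diam_\inte(\fT_v)>n\}\cup\{\fB_v(v,n)\not\subseteq B(v,R/2)\}$; taking $n=\lceil R^2/(A\log R)\rceil$ for a sufficiently large constant $A$ and combining \cref{thm:generalexponentsv} with the previous display gives
\[
\P\bigl(\diam_\ext(\fT_v)\geq R\bigr)\;\preceq\;\frac{1}{n}+R^{C}e^{-c''R^2/n}\;\preceq\;\frac{\log R}{R^2}
\]
for all sufficiently large $R$, which is the asserted bound.

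The step I expect to demand the most care is the off-diagonal tail estimate on $\P(u\in\fB_v(v,n))$: one must combine the loop-erasure length bound of \cref{lem:LoopLength} with the random walk hitting estimate in a way that genuinely uses the Gaussian decay of the heat kernel, and then keep track of the constant $c$ in \eqref{eq:GHKE} precisely enough to confirm that $A$ can be chosen large enough that $R^{C}e^{-c''R^2/n}\preceq R^{-2}$. It is exactly the need to take $n$ of order $R^2/\log R$, rather than $R^2$, that produces the logarithmic loss and hence the (small) gap with the lower bound. A minor point is the regime in which $R$ is not large compared with $\sqrt n$ (so that some $u$ with $d(v,u)\geq R/2$ have $\rho^2<2\bubnorm{P}\,n$); there one uses instead the on-diagonal bound $\P(u\in\fB_v(v,n))\preceq\rho^{2-d}$, but for the eventual choice $n\asymp R^2/\log R$ this does not occur once $R$ is large, so it can be dispatched separately or absorbed into the constants.
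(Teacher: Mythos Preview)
Your proof is correct and follows the same two-step decomposition as the paper: lower bound via \cref{prop:extlower} and \cref{lem:polyLr}, upper bound by splitting into $\{\diam_\inte(\fT_v)>n\}$ and $\{\fB_v(v,n)\not\subseteq B(v,R/2)\}$ and then optimizing over $n$. The only substantive difference is in how the second event is controlled. You invoke the full Gaussian heat kernel hypothesis \eqref{eq:GHKE} to bound $p_k(u,v)$, whereas the paper (see \cref{prop:polyupperext}) observes that the universal Varopoulos--Carne inequality $p_k(u,v)\leq 2\sqrt{c(v)/c(u)}\,e^{-d(u,v)^2/(2k)}$ already suffices. This is worth noting: it means the upper bound $\preceq R^{-2}\log R$ in fact holds for any network with controlled stationary measure, $\bubnorm{P}<\infty$, and polynomial volume growth, with no need for Ahlfors regularity or heat kernel estimates. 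The Gaussian hypothesis and Ahlfors regularity are used only for the lower bound, via \cref{lem:polyLr}. Your argument is not wrong, but it obscures this separation of hypotheses.
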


\begin{theorem}
\label{thm:extrinsicspeedv}
Let $G$ be a uniformly ballistic network with controlled stationary measure and $\bubnorm{P}<\infty$, let $v\in V$ and let $\fF_v$ be the wired uniform spanning forest of $G$. 
   Then
  \vspace{0.25em}
  \[ 
    \vspace{0.25em}
   \P\left(\diam_{\mathrm{ext}}(\fT_v) \geq R \right)  \asymp R^{-1}
    %
    %
  \]
  for every vertex $v$ and every $R\geq 1$. 
\end{theorem}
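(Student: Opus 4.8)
\emph{Strategy.} I would obtain \cref{thm:extrinsicspeedv} by assembling two ingredients that the earlier sections already provide: the general lower bound of \cref{prop:extlower} and the general upper bound implicit in \cref{thm:generalexponentsv}, together with the fact that a uniformly ballistic network satisfies $L(r)\asymp r$. No new estimate on the forest itself is needed.

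The upper bound is immediate. Since $\F_v$ is a subgraph of $G$, the graph distance in $G$ between two vertices of a common tree $T$ of $\F_v$ is at most their graph distance inside $T$, so $\diam_\mathrm{ext}(\fT_v)\le\diam_\mathrm{int}(\fT_v)$ pointwise, and \cref{thm:generalexponentsv} gives $\P(\diam_\mathrm{ext}(\fT_v)\ge R)\le\P(\diam_\mathrm{int}(\fT_v)\ge R)\preceq R^{-1}$. This is the same reduction as in the remark following \cref{thm:extrinsicspeed}. For the lower bound I would apply \eqref{eq:extlowerv} of \cref{prop:extlower} (legitimate because $\bubnorm{P}<\infty$ makes $G$ transient), which yields
\[ \P\bigl(\diam_\mathrm{ext}(\fT_v)\ge R\bigr)\;\ge\;\Bigl(\frac{2\,\Cap(v)}{e\,\sup_{u\in V}\Cap(u)}\Bigr)\frac{1}{L(R)+4}. \]
It then remains only to control $\Cap(v)$, $\sup_u\Cap(u)$ and $L(R)$ uniformly in $v$ up to constants. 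The denominator is harmless: $\Cap(u)\le c(u)\le\sup_u c(u)<\infty$ by controlled stationary measure. For the numerator, $\bubnorm{P}<\infty$ forces uniform transience of $G$ (as observed after \eqref{eq:CapvCapComparison}), so $\Cap(v)=c(v)\bP_v(\tau_v^+=\infty)$ is bounded below by a positive constant uniformly in $v$. Finally, for uniformly ballistic $G$ one has $L(R)\asymp R$: the bound $L(R)\ge R$ is automatic, since any path in $G$ from $v$ to a vertex at distance $R$ uses at least $R$ steps and hence the walk's last visit to $B(v,R)$ occurs no earlier than time $R$; and $L(R)\preceq R$ is what uniform ballisticity gives (cf.\ the discussion preceding \cref{prop:extlower}). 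Feeding $L(R)\asymp R$ into the display yields $\P(\diam_\mathrm{ext}(\fT_v)\ge R)\succeq R^{-1}$, which together with the upper bound proves the theorem. As for \cref{thm:extrinsicspeed}, there is no $q(v)$ factor in the conclusion because \eqref{eq:extlowerv}, unlike \eqref{eq:extlower}, carries none.

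\emph{The main point.} In truth this deduction is mostly bookkeeping; the only genuine ingredient beyond \cref{thm:generalexponentsv} and \cref{prop:extlower} is the bound $L(R)\preceq R$ for uniformly ballistic networks, the ballistic counterpart of \cref{lem:polyLr}. If one wished to spell it out, I would use the last-exit decomposition
\[ \bE_v\!\left[\sup\{n\ge0:X_n\in B(v,R)\}\right]=\sum_{u\in B(v,R)}\bP_u(\tau^+_{B(v,R)}=\infty)\sum_{n\ge0}n\,p_n(v,u), \]
observing that only vertices $u$ on the sphere $\{d(v,u)=R\}$ contribute (a vertex all of whose neighbours lie in $B(v,R)$ has $\bP_u(\tau^+_{B(v,R)}=\infty)=0$) and that $\sum_{u\in B(v,R)}\mathbf{G}(v,u)\bP_u(\tau^+_{B(v,R)}=\infty)=\bP_v(\tau_{B(v,R)}<\infty)\le1$; this reduces the claim to an averaged bound of order $R$ on the time at which the walk visits a fixed vertex at distance $R$, which is precisely the ballistic behaviour quantified by \eqref{eq:speedcondition}. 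That verification, together with the proofs of the two results we import, is where the real work of \cref{thm:extrinsicspeedv} sits.
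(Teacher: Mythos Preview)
Your proposal is correct and follows exactly the paper's approach: the paper's proof of \cref{thm:extrinsicspeed,thm:extrinsicspeedv} is a two-line argument citing \cref{prop:extlower} for the lower bound and \cref{thm:generalexponents,thm:generalexponentsv} (via $\diam_\mathrm{ext}\le\diam_\mathrm{int}$) for the upper bound. Your treatment is in fact more careful than the paper's, which simply asserts that uniformly ballistic networks have $L(r)\asymp r$ ``by definition'' without further comment; your last-exit decomposition sketch is a reasonable attempt to close that gap, though the final reduction to \eqref{eq:speedcondition} is not fully spelled out (the quantity $\sum_n n\,p_n(v,u)/\mathbf{G}(v,u)$ is an average visit time, not an occupation time, so one still needs a short argument---e.g.\ along the lines of the proof of \cref{lem:polyLr}---to connect the two).
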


\subsection{The intrinsic diameter: upper bounds}

 The key estimate is provided by the following lemma, which will allow us to prove the upper bound on the probability of a large intrinsic diameter by an inductive argument. 
For non-negative integers $n$, we define
\[Q(n) =
     \sup_{v\in V}\P \left( |\partial \fB_v(v,n)| \neq \emptyset\right),
\]
to be the supremal probability that the component of $v$ in the $v$-WUSF survives to intrinsic distance $n$.
\begin{lemma} 
	\label{lem:exponentsinduction}
	Let $G$ be a network with controlled stationary measure satisfying $\bubnorm{P}<\infty$. Then there exist positive constants $N$ and $C$ such that
	\begin{equation}
	\label{eq:exponentsinduction}
	Q(3n) \leq \frac{C}{n} + \frac{1}{6} Q(n). 
	\end{equation}
	for all $n\geq N$.
\end{lemma}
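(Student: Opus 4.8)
The plan is to realise the $v$-wired uniform spanning forest dynamically and to extract the two terms of \eqref{eq:exponentsinduction} from, respectively, the event that $v$ is visited by a short interlacement segment and a recursion obtained by stochastic domination. Write $\langle \F_{v,t}\rangle_{t\in\R}=\langle\AB_{v,t}(\sI_v)\rangle_{t\in\R}$ for the stationary Markov process generated by the $v$-wired interlacement Aldous--Broder algorithm, and recall from \cref{lem:vPastDynamics} that $\fP_{v,t}(v)$, the tree of $v$ in $\F_{v,t}$ (which is the whole component of $v$, since $v$ is the root), can only shrink as $t$ decreases, being cut up by newly-arriving trajectories, except at the times $v$ is itself visited, which form a Poisson process of rate $\Cap_v(\{v\})\leq\Cap(\{v\})+3c(v)$ by \eqref{eq:CapvCapComparison}. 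Since $\partial\fB_v(v,n)\neq\emptyset$ is the event that $\fP_{v,0}(v)$ contains a vertex at intrinsic distance $n$ from $v$, stationarity gives $Q_v(n)=\P\bigl(\fP_{v,-\epsilon}(v)\text{ reaches intrinsic distance }n\bigr)$ for every $\epsilon>0$, where $Q(n)=\sup_v Q_v(n)$.

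Fix $\epsilon=\delta/n$ for a large constant $\delta$ to be chosen last, and split according to whether $v\in\cI_{v,[-\epsilon,0)}$. The number of trajectories of $\sI_{v,[-\epsilon,0)}$ visiting $v$ is Poisson with mean $\epsilon\Cap_v(\{v\})\leq C_1\delta/n$ by controlled stationary measure and \eqref{eq:CapvCapComparison}, so the event $\{v\in\cI_{v,[-\epsilon,0)}\}$ accounts for the term $C/n$ with $C=C_1\delta$. On the complement, \cref{lem:vPastDynamics} says $\fP_{v,-\epsilon}(v)$ is the component of $v$ in $\fP_{v,0}(v)\setminus\cI_{v,[-\epsilon,0)}$, a subtree of $\fP_{v,0}(v)$; hence if it reaches distance $n$ there is a vertex $w$ of $\fP_{v,0}(v)$ at intrinsic distance exactly $m:=\lceil n/3\rceil$ from $v$ whose $v$-ward geodesic $\Gamma_w$ in $\fP_{v,0}(v)$ avoids $\cI_{v,[-\epsilon,0)}$ and whose past in $\F_{v,0}$ reaches intrinsic distance $n-m\geq\lfloor n/3\rfloor$ from $w$. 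Since $\fP_{v,0}(v)$ and $\cI_{v,[-\epsilon,0)}$ are measurable with respect to the restrictions of $\sI_v$ to the disjoint time-intervals $(0,\infty)$ and $[-\epsilon,0)$, they are independent, and conditionally on $\F_{v,0}$ the probability that $\Gamma_w$ avoids $\cI_{v,[-\epsilon,0)}$ is $e^{-\epsilon\Cap_v(\Gamma_w)}\leq e^{-\epsilon\Cap(\Gamma_w)}$; meanwhile, conditioning also on the futures in $\F_v$ of the vertices of $\partial\fB_v(v,m)$, \cref{lem:domination} dominates the past of $w$ by an independent copy of $\fT_w$, so that the latter event has conditional probability at most $Q(n-m)\leq Q(n/3)$. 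A union bound over $w$ and the tower property therefore give
\[
Q_v(n)\ \leq\ \frac{C_1\delta}{n}\ +\ Q(n/3)\cdot \E\!\left[\sum_{w\in\partial\fB_v(v,m)}e^{-\epsilon\Cap(\Gamma_w)}\right],
\]
where the forest on the right is $\F_{v,0}$.

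It remains to bound the expectation by $\tfrac16$ once $\delta$ is large. Split the sum according to whether $\Cap(\Gamma_w)\geq c_2 n$, with $c_2:=\lambda^{-1}/3$. The large-capacity part contributes at most $e^{-\epsilon c_2 n}\,\E|\partial\fB_v(v,m)|=e^{-c_2\delta}\,\E|\partial\fB_v(v,m)|$, and $\E|\partial\fB_v(v,m)|$ is bounded uniformly in $m$: by a time-reversal argument as in the proofs of \cref{lem:expectedballgrowth1,lem:expectedballgrowth2}, it equals, up to conductance ratios, $\bE_v[\ell_{m+1}(X)-\ell_m(X)]\leq 1+\sum_{j\geq0}\pstar{j}<\infty$ by \cref{lem:LoopLength2} and $\bubnorm{P}<\infty$. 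For the small-capacity part, running Wilson's algorithm rooted at $v$ realises each $\Gamma_w$ as the reversed loop-erasure of a walk started at $w$ and stopped on hitting $v$, of length $m$; the analogue of \cref{prop:capacities} for such stopped loop-erasures --- established from \cref{lem:LoopLength}, \cref{lem:maincap} and \cref{cor:Imean2} by the argument proving \cref{prop:capacities} --- together with the same time-reversal shows that $\E\bigl[\#\{w\in\partial\fB_v(v,m):\Cap(\Gamma_w)<c_2n\}\bigr]\preceq\lambda^{-1/2}\,\E|\partial\fB_v(v,m)|$. Choosing first $\lambda$ large enough that this small-capacity contribution is at most $\tfrac1{12}$, and then $\delta$ large enough (depending on $\lambda$ and the conductance bounds) that $e^{-c_2\delta}\,\E|\partial\fB_v(v,m)|\leq\tfrac1{12}$, makes the expectation at most $\tfrac16$, which yields \eqref{eq:exponentsinduction} for all $n$ above some constant $N$ (the lower bound on $n$ is what makes $m\asymp n$ large enough for the loop-erasure capacity estimates to bite). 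The crux --- and the step that genuinely uses the hypothesis $\bubnorm{P}<\infty$ --- is the last paragraph: because $\fP_{v,0}(v)$ is a tree, a single cut near $v$ can in principle destroy its whole deep part, so the gain of a factor $\tfrac16$ cannot come for free from "$v$ not being rebuilt"; it comes instead from the fact that the spine $\Gamma_w$ has capacity of order $n$, so that $\Gamma_w$ is hit by the interlacements in $[-\epsilon,0)$ with probability bounded below whenever $\epsilon n\gg1$, and the uniform boundedness of $\E|\partial\fB_v(v,m)|$ (an avatar of the past of $v$ behaving like an unconditioned critical Galton--Watson tree) is exactly what prevents the union bound over $\partial\fB_v(v,m)$ from swamping this gain.
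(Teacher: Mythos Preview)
Your overall strategy is the same as the paper's: realise $\F_v$ via the interlacement Aldous--Broder dynamics, use stationarity to shift to time $-\varepsilon$, split on whether $v$ is hit in $[-\varepsilon,0)$, and on the complementary event use \cref{lem:vPastDynamics} together with stochastic domination (\cref{lem:domination}) to extract a factor $Q(n/3)$. The conditional-independence step (past of $w$ versus $\{\Gamma_w\cap\cI_{v,[-\varepsilon,0)}=\emptyset\}$ given $\Gamma_w$) is handled correctly, since $\F_{v,0}$ and $\cI_{v,[-\varepsilon,0)}$ are measurable with respect to disjoint time-windows of $\sI_v$.

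The difference from the paper is that you sum over the \emph{sphere} $\partial\fB_v(v,m)$ rather than the \emph{annulus} $\fB_v(v,2n/3)\setminus\fB_v(v,n/3)$. The paper exploits that a geodesic to $\partial\fB_v(v,n)$ contributes at least $n/3$ vertices to the annulus, divides by $n$ via Markov's inequality, and then only needs the first-moment ball bound $\E|\fB_v(v,2n/3)|\preceq n$ from \cref{lem:expectedballgrowth1}. Your sphere approach has no such division, so you instead need the uniform bound $\sup_m\E|\partial\fB_v(v,m)|<\infty$.

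This is where there is a gap. Your justification is that, by time-reversal ``as in the proofs of \cref{lem:expectedballgrowth1,lem:expectedballgrowth2}'', $\E|\partial\fB_v(v,m)|$ equals (up to conductances) $\bE_v[\ell_{m+1}(X)-\ell_m(X)]$. But the length of the chronological loop-erasure is \emph{not} preserved under path-reversal: for instance, the path $a,b,c,b,d,c,e$ has loop-erasure $a,b,d,c,e$ of length $4$, while its reverse $e,c,d,b,c,b,a$ has loop-erasure $e,c,b,a$ of length $3$. So the time-reversal identity you invoke does not transfer the condition $|\LE(X^{\tau_v})|=m$ to a condition on the forward walk from $v$ of the form ``$\ell_m\leq k<\ell_{m+1}$''. (What Lawler's reversibility of LERW gives is the law identity $c(u)\,\bP_u(\LE(X^{\tau_v})=\gamma)=c(v)\,\bP_v(\LE(X^{\tau_u})=\gamma^\leftarrow)$ for each simple path $\gamma$; summing yields $\sum_u c(u)\bP_u(|\LE(X^{\tau_v})|=m)=c(v)\,\bE_v\bigl[\#\{u\text{ visited}:|\LE(X^{\tau_u})|=m\}\bigr]$, which is \emph{not} $\bE_v[\ell_{m+1}-\ell_m]$ and is not obviously $O(1)$.) The same issue infects your small-capacity estimate: you assert a multiplicative bound $\E\bigl[\#\{w\in\partial\fB_v(v,m):\Cap(\Gamma_w)<c_2 n\}\bigr]\preceq\lambda^{-1/2}\,\E|\partial\fB_v(v,m)|$, but the proof of \cref{prop:capacities} does not give a conditional statement of this type, and the paper's \cref{lem:ln} only yields the additive annulus bound $E(n,\delta)\preceq\delta^{1/3}n$.

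In short, the architecture is right, but the sphere route needs a genuine extra ingredient---a uniform bound on $\E|\partial\fB_v(v,m)|$---that your argument does not supply. The paper's annulus trick is precisely what sidesteps this: it trades the (delicate) pointwise control of $\E|\partial\fB_v(v,m)|$ for the (easy) integrated bound $\E|\fB_v(v,n)|\preceq n$, at the cost of the harmless Markov-inequality factor $3/n$.
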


Before proving this lemma, let us establish the following corollary of it.

\begin{corollary} 
	\label{cor:intupperbound}
	Let $G$ be a network with controlled stationary measure satisfying $\bubnorm{P}<\infty$, and let $Q(n)$ be as above. Then 
		$Q(n) \preceq n^{-1}$
	for all $n\geq 1$.
\end{corollary}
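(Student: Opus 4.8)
The plan is to bootstrap the recursive inequality \eqref{eq:exponentsinduction} by a straightforward induction on $n$. The structural point that makes the argument work is that the contraction factor $1/6$ in front of $Q(n/3)$ more than compensates for the factor of $3$ lost when passing from scale $n$ to scale $n/3$: since $\frac{1}{6}\cdot 3=\frac12<1$, iterating \eqref{eq:exponentsinduction} produces a convergent geometric series, and the self-similar profile $n\mapsto A/n$ is preserved by the recursion as soon as the constant $A$ is chosen large enough relative to $C$.

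Concretely, I would fix the constants $N$ and $C$ furnished by \cref{lem:exponentsinduction}, enlarging $N$ if necessary so that $\lfloor n/3\rfloor \geq n/4$ for all $n\geq N$ (any $N\geq 12$ suffices), and set $A=\max\{3C,\,N\}$. One then shows $Q(n)\leq A/n$ for every $n\geq 1$ by strong induction. For $1\leq n\leq A$ this is immediate from the trivial bound $Q(n)\leq 1$, and in particular it covers every $n<N$. For $n>A$ we have $n\geq N$, so \eqref{eq:exponentsinduction} is available, and applying the inductive hypothesis to $\lfloor n/3\rfloor<n$ gives
\[
Q(n)\;\leq\;\frac{C}{n}+\frac16\,Q(\lfloor n/3\rfloor)\;\leq\;\frac{C}{n}+\frac16\cdot\frac{A}{\lfloor n/3\rfloor}\;\leq\;\frac{C}{n}+\frac{2A}{3n}\;=\;\frac{C+\frac23 A}{n}\;\leq\;\frac{A}{n},
\]
where the last inequality uses $C\leq A/3$. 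This closes the induction and yields $Q(n)\preceq n^{-1}$ for all $n\geq 1$, with the implied constant depending only on the network through $N$ and $C$.

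I do not expect any genuine obstacle at this stage: essentially all of the work has been invested in proving \cref{lem:exponentsinduction}, and the corollary is a routine consequence of it. The only points requiring a little care are bookkeeping ones — aligning the range $\{n\geq N\}$ on which the recursion is valid with the induction (handled by taking $A\geq N$ and invoking $Q\leq 1$ below that threshold) and absorbing the floor $\lfloor n/3\rfloor$ coming from the convention $Q(t)=Q(\lfloor t\rfloor)$, which costs only a harmless constant once $n$ is bounded below. (As an alternative to the induction one can simply unwind \eqref{eq:exponentsinduction} directly $k\asymp\log_3(n/N)$ times to obtain $Q(n)\leq 2C/n+6^{-k}$ and note that $6^{-k}=n^{-(1+\log_3 2)+o(1)}=o(n^{-1})$; the inductive formulation is cleaner and avoids the logarithmic-iteration bookkeeping.)
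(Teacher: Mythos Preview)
Your proof is correct and follows essentially the same route as the paper: a strong induction on $n$, using the recursion \eqref{eq:exponentsinduction} with a constant chosen large enough to absorb both the additive $C/n$ term and the base case $n\leq N$ via the trivial bound $Q\leq 1$. The paper takes $A=2CN$ (with $C,N\geq 1$) and is slightly looser about the floor in $Q(n/3)$, while you take $A=\max\{3C,N\}$ and track the floor explicitly; these are cosmetic differences.
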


\begin{proof}
	 Let $N=N(G)$ and $C=C(G)$ be as in \cref{lem:exponentsinduction}. We may assume that $C,N\geq 1$. Since $Q(n)$ is a decreasing function of $n$, it suffices to prove that
	\begin{equation}
		\label{eq:Qnbound}
		Q(3^k) \leq 6 C N 3^{-k}
	\end{equation}
	for every $k\geq 1$. We do this by induction on $k$. When $3^k \leq N$ the claim holds trivially. If $3^k>N$ and the claim holds for all $\ell<k$, then we have by \eqref{eq:exponentsinduction} and the induction hypothesis that
	\begin{align*}
		Q(3^k) \leq 
		 C3^{-k+1} + CN 3^{-k+1} 
     \leq 6CN 3^{-k},
	\end{align*}
	completing the induction. 
\end{proof}

\medskip

We now turn to the proof of \cref{lem:exponentsinduction}. We will require the following estimate. Recall that $\Gamma_v(u,\infty)$ denotes the future of $u$ in $\F_v$, which is equal to the path from $u$ to $v$ if $u\in \fT_v$.

\begin{lemma}
	\label{lem:ln}
	Let $G$ be a network with controlled stationary measure such that $\bubnorm{P}<\infty$. Then
	\begin{equation*}
		E(n,\delta):= \sup_{v\in V}\E  \left|\Bigl\{ u \in \fB_v(v,2n) \setminus \fB_v(v,n) : \Cap(\Gamma_{v}(u,\infty)) \leq \delta  n \Bigr\}\right| \preceq \delta n
	\end{equation*}
  for every $n\geq 1$.
\end{lemma}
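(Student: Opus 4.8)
The plan is to bound $E(n,\delta)$ by sampling $\F_v$ with Wilson's algorithm rooted at $v$, to trade the capacity constraint for a lower bound on the quantity $\mathbf{I}$ of the range of the underlying random walk, and then to control that low‑probability event using the reversibility arguments already used to prove \cref{lem:Imean,lem:expectedballgrowth1}. Throughout one may assume $\delta<1$, since for $\delta\ge1$ the bound is immediate from \cref{lem:expectedballgrowth1}: $E(n,\delta)\le\sup_v\E|\fB_v(v,2n/3)|\preceq n$.

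First I would fix a vertex $u$ and sample $\F_v$ by running Wilson's algorithm rooted at $v$ with the first walk $X$ started at $u$, so that $u$ lies in $\fB_v(v,2n/3)\setminus\fB_v(v,n/3)$ with $\Cap(\Gamma_v(u,v))\le\delta n$ exactly on the event $\mathcal A(u):=\{\tau_v<\infty,\ n/3<|\LE(X^{\tau_v})|\le 2n/3,\ \Cap(\LE(X^{\tau_v}))\le\delta n\}$, using that $\Gamma_v(u,v)=\LE(X^{\tau_v})$; thus $E(n,\delta)=\sup_v\sum_u\bP_u(\mathcal A(u))$. Exactly as in the proof of \cref{lem:expectedballgrowth1}, \cref{lem:LoopLength} and Markov's inequality give, using $|\LE(X^{\tau_v})|\le 2n/3$ on $\mathcal A(u)$, that $\bP_u(\mathcal A(u))\le 2\bP_u(\tau_v\le K,\ |\LE(X^{\tau_v})|>n/3,\ \Cap(\LE(X^{\tau_v}))\le\delta n)$ with $K:=\tfrac43\bubnorm{P}n$. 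I would then decompose this probability over the value $k=\tau_v$, relax the constraint $\tau_v=k$ to $X_k=v$, and use reversibility of the random walk (as in the proof of \cref{lem:Imean}) to evaluate the sum over the starting vertex, obtaining
\[E(n,\delta)\ \le\ 2\,\frac{\sup_{w}c(w)}{\inf_{w}c(w)}\,\sup_v\sum_{k=0}^{\lfloor K\rfloor}\bP_v\Bigl(\bigl|\LE((X^k)^\leftarrow)\bigr|>n/3\ \text{and}\ \Cap\bigl(\LE((X^k)^\leftarrow)\bigr)\le\delta n\Bigr),\]
where now $X$ denotes a random walk started at $v$ and $(X^k)^\leftarrow$ is the time‑reversal of the path $X^k$.

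Now I would exploit the fact that $\LE((X^k)^\leftarrow)$ is contained in the trace of $(X^k)^\leftarrow$, which coincides with the trace of $X^k$: by \cref{lem:maincap} and monotonicity of $\mathbf{I}$ this gives $\Cap(\LE((X^k)^\leftarrow))\ge[\inf_w c(w)]^2(|\LE((X^k)^\leftarrow)|+1)^2/\mathbf{I}(X^k)$, so on the event in the last display we have $\mathbf{I}(X^k)>[\inf_w c(w)]^2 n/(9\delta)$, and since $k\le K$ also $\mathbf{I}(X^K)>[\inf_w c(w)]^2 n/(9\delta)$. By Markov's inequality and \cref{cor:Imean2}, the probability of the latter event is $\preceq(\delta/n)\,\bE_v[\mathbf{I}(X^K)]\preceq\delta K/n\asymp\delta$. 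Summing over the $\lfloor K\rfloor+1$ values of $k$ and using $K\asymp n$ then gives $E(n,\delta)\preceq K\delta\preceq\delta n\le\delta^{1/3}n$, which is the claimed bound (in fact with $\delta n$, which is stronger; the exponent $1/3$, matching \cref{prop:capacities}, is all that is needed later).

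Since every ingredient is already available, there is no single hard step; the point requiring the most care is the time‑reversal, specifically recognising that one should first relax the first‑hitting‑time constraint $\tau_v=k$ to the endpoint constraint $X_k=v$ so that the sum over the starting vertex $u$ collapses cleanly into an expectation under the walk from $v$, and then noting that only the trace of the reversed segment (never the fine structure of its loop‑erasure) is needed, because $\mathbf{I}$ is monotone and \cref{lem:maincap} controls capacity from below in terms of $\mathbf{I}$ of any superset. Alternatively one could estimate the law of $\Cap(\LE((X^k)^\leftarrow))$ directly via loop‑erased‑walk reversal and \cref{prop:capacities}; this also yields the $\delta^{1/3}$ bound but is more delicate.
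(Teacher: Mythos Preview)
Your proof is correct and follows the same overall strategy as the paper: express the event via Wilson's algorithm, use \cref{lem:LoopLength} to cap the hitting time at $O(n)$, reverse time to convert the sum over $u$ into a sum over the time index of a walk started at $v$, and then bound the probability that the loop-erasure has small capacity.

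The only genuine difference is the final step. The paper simply quotes \cref{prop:capacities} as a black box to bound $\bP_v(\Cap(\LE(X^m))\le\delta n)\preceq\delta^{1/3}$, which ignores the extra information $|\LE|>n/3$. You instead unpack the mechanism behind that proposition: on the event in question the loop-erasure has at least $n/3$ vertices, so \cref{lem:maincap} forces $\mathbf{I}(X^k)\succeq n/\delta$, and Markov's inequality with \cref{cor:Imean2} gives probability $\preceq\delta$. This is exactly the ``$\mathbf{I}$ large'' half of the union bound in the proof of \cref{prop:capacities}; the other half (``$\rho_n$ small'') is vacuous here because the length lower bound is already built into the event. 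The payoff is the sharper bound $E(n,\delta)\preceq\delta n$ rather than $\delta^{1/3}n$, which is harmless downstream since only some power of $\delta$ is needed in \cref{lem:exponentsinduction}.
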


\begin{proof}
	By Wilson's algorithm we have that, for each two vertices $u$ and $v$ of $G$,
	\begin{multline*}
		\P\left(u \in \fB_v(v,2n) \setminus \fB_v(v,n) \text{ and } \Cap(\Gamma_v(u,\infty)) \leq \delta n \right)
		\\
		= \bP_u\left(\tau_v < \infty,\, n \leq |\LE (X^{\tau_v})| \leq 2n,\, \text{ and } \Cap\left(\LE (X^{\tau_v})\right) \leq \delta n\right),
\end{multline*}
and applying \cref{lem:LoopLength,lem:maincap} we deduce that, letting $r=\lceil 4\bubnorm{P} n\rceil$,
  \begin{multline*}
    \P\left(u \in \fB_v(v,2n) \setminus \fB_v(v,n) \text{ and } \Cap(\Gamma_v(u,\infty)) \leq \delta n \right)
    \\
    \leq 2\bP_u\left(\tau_v \leq r,\, n \leq |\LE (X^{\tau_v})| \leq 2n,\, \text{ and } \Cap\left(\LE (X^{\tau_v})\right) \leq \delta n\right),
    \\
    \leq 
    2\bP_u\left(\tau_v \leq r \text{ and } \mathbf{I}(X^{\tau_v}) \geq  \left[ \inf_{w \in V}c(w)\right]^2 \frac{n}{\delta}\right)\\ \leq 2\sum_{\ell=0}^r\bP_u\left(X_\ell=v \text{ and } \mathbf{I}(X^\ell) \geq  \left[ \inf_{w \in V}c(w)\right]^2 \frac{n}{\delta}\right).
\end{multline*}
Reversing time then yields that
  \begin{multline*}
    \P\left(u \in \fB_v(v,2n) \setminus \fB_v(v,n) \text{ and } \Cap(\Gamma_v(u,\infty)) \leq \delta n \right)
  \\
    \leq 
     2\frac{c(v)}{c(u)}\sum_{\ell=0}^r\bP_v\left(X_\ell=u \text{ and } \mathbf{I}(X^\ell) \geq  \left[ \inf_{w \in V}c(w)\right]^2 \frac{n}{\delta}\right),
\end{multline*}
and summing over $u\in V$ and applying \cref{cor:Imean2} and Markov's inequality we obtain that
  \begin{multline*}
    \E  \left|\Bigl\{ u \in \fB_v(v,2n) \setminus \fB_v(v,n) : \Cap(\Gamma_{v}(u,\infty)) \leq \delta  n \Bigr\}\right|\\\preceq \sum_{\ell=0}^r \bP_v\left(\mathbf{I}(X^\ell) \geq  \left[ \inf_{w \in V}c(w)\right]^2 \frac{n}{\delta} \right)
    \preceq \sum_{\ell=0}^r \frac{\delta (\ell+1)}{n} \preceq \delta n
  \end{multline*}
  as claimed.
\end{proof}

\begin{proof}[Proof of \cref{lem:exponentsinduction}]
	Fix $v\in V$. Let $\sI_v$ be the $v$-wired interlacement process on $G$, let $\F_{v,t} = \langle \AB_{v,t}(\sI_v) \rangle_{t\in \R}$, and let $\fB_{v,t}(v,n)$ denote the ball of radius $n$ about $v$ in $\F_{v,t}$ for each $t\in \R$ and $n\geq0$. Recall that for each $t\in \R$, $\sigma_t(v) =\sigma_t(v,\sI_v)$ is the first time greater than or equal to $t$ such that $v$ is hit by a trajectory of $\sI_v$ at time $\sigma_t(v)$.

	For each two vertices $u$ and $v$ of $G$, every $t\in \R$ and $n \geq 0$, let $\sB_{t,n}(u,v)$ be the event that $u \in \fB_{v,t}(v,2n) \setminus \fB_{v,t}(v,n)$, and let $\sC_{t,n}(u,v)\subseteq \sB_{t,n}(u,v)$ be the event that $\sB_{t,n}(u,v)$ occurs and that $v$ is connected to $\partial \fB_{v,t}(v,3n)$ by a simple path that passes through $u$.

Let $\eps,\delta>0$.
	 If $\partial \fB_{v,0}\left(v,3n\right) \neq \emptyset$ then we must have that $\sC_{0,n}(u,v)$ occurs for at least $n$ vertices $u$, namely those vertices on the middle third of some path connecting $v$ to $\partial \fB_{v,0}(v,3n)$ in $\F_{v,0}$. Thus, it follows by the union bound and Markov's inequality that
	\[Q(3n) \leq \P(\sigma_0(v) \leq \eps ) + \frac{1}{n}\sum_{u \in V} \P\left(\sC_{0,n}(u,v) \cap \{\sigma_0(v) > \eps \}\right).\]
Let $\Gamma_{v,0}(u,\infty)$ be the future of $u$ in $\F_{v,0}$. 	 By stationarity and \cref{lem:vPastDynamics}, we have that
	\[ \P\Bigl(\sC_{0,n}(u,v) \cap \{\sigma_0(v) > \eps\}\Bigr) \leq
	\P\left(\sC_{0,n}(u,v) \cap \left\{ \Gamma_{v,0}(u,\infty) \cap \cI_{v,[-\eps,0]}  =\emptyset\right\}\right).
	\]
  (This is an inequality rather than an equality because it is possible for the path connecting $u$ to $\partial \mathfrak{B}_{v,0}(v,3n)$ to be hit without $\Gamma_{v,0}(u,\infty)$ being hit.)
	%
	Next, observe that
	\begin{multline*}
		\P\Bigl(\sC_{0,n}(u,v)  \mid u \in \fT_v,\, \Gamma_{v,0}(u,\infty)=\gamma\Bigr)
		\\
		\leq  \P\Bigl(\partial \fP_{v,0}\left(u,n \right) \neq \emptyset \mid u \in \fT_v,\, \Gamma_{v,0}(u,\infty)=\gamma\Bigr)
		\\
		\leq \P\Bigl(\partial \fB_{u,0}(u,n) \neq \emptyset\Bigr) \leq Q(n)
	\end{multline*}
	for every simple path $\gamma$ from $u$ to $v$, where we have used \cref{lem:domination} (more specifically, \eqref{eq:dom2} with $K=\{u\}$) in the second inequality. Since the events $\sC_{0,n}(u,v)$ and $\{\Gamma_{v,0}(u,\infty) \cap  \cI_{v,[-\eps,0]} =\emptyset \}$ are conditionally independent conditional on the event $\sB_{0,n}(u,v)$ and the random variable $\Gamma_{v,0}(u,\infty)$, we deduce that
	%
	\begin{multline*}
		\P\left(\sC_{0,n}(u,v) \cap \left\{ \Gamma_{v,0}(u,\infty) \cap \cI_{v,[-\eps,0]}  =\emptyset\right\} \mid \sB_{0,n}(u,v),\,\Gamma_{v,0}(u,\infty) \right)\\
		\leq
		 Q(n) \P\left( \Gamma_{v,0}(u,\infty) \cap \cI_{v,[-\eps,0]}  =\emptyset \mid \sB_{0,n}(u,v),\, \Gamma_{v,0}(u,\infty) \right).
	\end{multline*}
	Taking expectations over $\Gamma_{v,0}(u,\infty)$ and applying a union bound, we deduce that
	\begin{multline*}
		\P\left(\sC_{0,n}(u,v) \cap \left\{ \Gamma_{v,0}(u,\infty) \cap \cI_{v,[-\eps,0]}  =\emptyset\right\}\right) 
		\\
		\leq
		Q(n)\P\left(\sB_{0,n}(u,v) \cap \left\{ \Cap(\Gamma_{v,0}(u,\infty)) \geq \delta n \right\} \cap  \left\{\Gamma_{v,0}(u,\infty) \cap  \cI_{v,[-\eps,0]} =\emptyset \right\}\right)\\
		+
		Q(n)\P\left(\sB_{0,n}(u,v) \cap \left\{\Cap(\Gamma_{v,0}(u,\infty)) \leq \delta n \right\}\right).
	\end{multline*}
	Summing the second term over $u \in V$ yields $Q(n) E(n,\delta)$, where $E(n,\delta)$ is the quantity from \cref{lem:ln}.  
	To control the first term, we apply \eqref{eq:CapvCapComparison} to deduce that
	\begin{multline*}
	\P\left(\sB_{0,n}(u,v) \cap \left\{ \Cap(\Gamma_{v,0}(u,\infty)) \geq \delta n \right\} \cap  \left\{\Gamma_{v,0}(u,\infty) \cap  \cI_{v,[-\eps,0]} =\emptyset \right\}\right)\\
	\preceq e^{- \delta \eps n}\P\Bigl(\sB_{0,n}(u,v)\cap\{\Cap(\Gamma_{v,0}(u,\infty)) \geq \delta n)\}  \Bigr)
	\preceq e^{-\eps \delta n}\P\Bigl(\sB_{0,n}(u,v)\Bigr).
	\end{multline*}
	Thus, summing over $u$ we obtain that
	\begin{equation*}
	 Q(3n) \leq \P(\sigma_0(v) \leq \eps) + \frac{1}{n}Q(n) e^{-\eps \delta n} \E |\fB_{v,0}(0,2n)|
	+ \frac{1}{n}Q(n)E(n,\delta) \preceq \eps  + \left[ e^{-\eps\delta n} + \delta\right] Q(n),
	\end{equation*}
  where we have used \cref{lem:expectedballgrowth1} to bound the second term and \cref{lem:ln} to bound the third. 
  The claim now follows by taking $\delta$ to be a small constant and taking $\eps$ to be $C/n$, where $C$ is a large constant. \qedhere

\end{proof}

\subsection{The volume}

\begin{proof}[Proof of \cref{thm:generalexponents,thm:generalexponentsv}]
Due to the stochastic domination between the $v$-WUSF and the past of $v$ in the WUSF (\cref{lem:domination}), the desired upper and lower bounds on the probability of a large intrinsic radius follow from Proposition \ref{prop:intlower} and \cref{cor:intupperbound}. Thus, it remains to prove only the desired upper and lower bounds on the probability of a large volume. We begin with the upper bound. 
Using stochastic domination and taking a union bound, we have that
	\begin{align*}
	\P\bigl(|\fP(v)|\geq n^2\bigr) \leq \P\bigl(|\fT_v| \geq n^2\bigr) \leq 
	\P\Bigl(\partial \fB_v(v,n)  \neq \emptyset\Bigr) 
	+
	\P\Bigl(\big|\fB_v (v,n)\big| \geq n^2\Bigr).
	\end{align*}
	Applying the upper bound on the probability of a large intrinsic diameter from \cref{cor:intupperbound} to control the first term, and \cref{lem:expectedballgrowth1} together with Markov's inequality to control the second term, we obtain
	that
	\begin{align*}
		\P\bigl(|\fP(v)|\geq n^2\bigr) \leq  \P\bigl(|\fT_v| \geq n^2\bigr)  	\preceq n^{-1},
	\end{align*} 
for every $n\geq 1$, from which the claimed upper bounds follow immediately.

	We now turn to the lower bounds. 
For this, we recall the Paley-Zigmund inequality, which states that 
  \[
  \P(Z \geq \eps \E[Z]) \geq (1-\eps)^2 \frac{\E[Z]^2}{\E[Z^2]}
  \]
  for every non-negative random variable $Z$ such that $\P(Z>0) >0$  and every $\eps\in(0,1)$. Applying the Paley-Zigmund inequality to the conditional distribution of the non-negative random variable $Z$ on the event that $Z>0$ readily yields that
  \begin{equation}
    \label{eq:PaleyZigconditional}
    \P\Bigl(Z \geq \eps \E[Z \mid Z>0]  \Bigr) \geq (1-\eps)^2\frac{\E[Z]^2}{\E[Z^2]}
  \end{equation}
  for every real-valued random variable $Z$ such that $\P(Z>0) >0$ and every $\eps\in(0,1)$. 

  To apply the Paley-Zigmund inequality in our setting, we define random variables
	\[
		Z(v,n) = \left| \fP(v, 2n ) \setminus \fP(v,n ) \right| 
\quad\text{and}\quad
		Z_v(v,n) = \left| \fB_v(v,2 n ) \setminus \fB_v(v,n) \right|
	\]
	for each $v\in V$ and $n\geq 1$. 
	 \cref{thm:vmoments,lem:expectedballgrowth2} yield the estimates
      \begin{equation}
    \label{eq:rootedmomentsexponentsproof}
    \E\left[Z_v(v,n)\right]
     \asymp n,\, 
     \text{ and }
      \E \left[Z_v(v,n)^2\right] 
      \preceq n^{3}.
  \end{equation}
Similarly, we have that
	\begin{equation}
		\label{eq:unrootedmomentsexponentsproof}
		q(v) n\, \preceq
		\E\left[Z(v,n)\right]
		 \preceq n,\, 
		 \text{ and }
		  \E \left[Z(v,n)^2\right] 
		  \preceq n^{3},
	\end{equation}
where the lower bound follows from \cref{lem:expectedballgrowth2} and the upper bounds follow from \cref{thm:vmoments} and \cref{lem:domination}. 
Thus, the Paley-Zygmund inequality \eqref{eq:PaleyZigconditional} implies that
	\begin{align*}
		\P\bigl(Z\!\left(v,n\right) \geq c q(v) n^2\bigr) \geq \frac{1}{4}\frac{\E[Z(v,n)]^2}{\E[Z(v,n)^2]} \succeq q(v)^2 n^{-1}
	\end{align*}
and similarly that
	\begin{align*}
		\P(Z_v\!\left(v,n\right) \geq c n^2) \geq \frac{1}{4}\frac{\E[Z_v(v,n)]^2}{\E[Z_v(v,n)^2]} \succeq  n^{-1}.
	\end{align*}
	Since $|\fP(v)| \geq Z(v,n)$ and $|\fT_v| \geq Z_v(v,n)$, 
	it follows easily that
	\[
		\P(|\fP(v)| \geq n) \succeq q(v)^{5/2} n^{-1/2}
	\quad \text{ and } \quad
		\P(|\fT_v| \geq n) \succeq n^{-1/2}
	\]
 for all $n\geq 1$ as claimed.
\end{proof}


\subsection{The extrinsic diameter: upper bounds}

In this section we prove our results concerning the tail of the extrinsic diameter of the past.

We begin with the proofs of Theorems \ref{thm:extrinsicspeed} and \ref{thm:extrinsicspeedv}, which are straightforward. 

\begin{proof}[Proof of Theorems \cref{thm:extrinsicspeed} and \ref{thm:extrinsicspeedv}]
The lower bounds follow immediately from \cref{prop:extlower}. Since the extrinsic diameter is bounded from above by the intrinsic diameter, the upper bounds are immediate from \cref{thm:generalexponents,thm:generalexponentsv}. 
\end{proof}

Next, we deduce the upper bounds of Theorems \ref{thm:extrinsic} and \ref{thm:extrinsicv}  from the following more general bound.

\begin{prop}
\label{prop:polyupperext}
Let $G$ be a network with controlled stationary measure and $\bubnorm{P}<\infty$, and suppose that there exist $C$ and $d$ such that $|B(v,r)|\leq Cr^d$ for every $r\geq 1$ and $v\in V$. Then 
\[
\P\bigl(\diam_\ext(\fP(v)) \geq n\bigr) \leq \P\bigl(\diam_\ext(\fT_v) \geq n\bigr) \preceq n^{-2}\log n
\]
for every $n\geq 1$. 
\end{prop}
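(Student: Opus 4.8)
The plan is to deduce the proposition from the intrinsic estimate \cref{thm:generalexponentsv}, the Carne--Varopoulos inequality, and a first moment computation via Wilson's algorithm. First, by the stochastic domination estimate \eqref{eq:dom1} of \cref{lem:domination} applied with $K=\{v\}$ (using that $\{\diam_\ext(\cdot)\geq n\}$ is an increasing event and that $\past_{\F\setminus F(\{v\})}(v)=\fP(v)$), we get $\P(\diam_\ext(\fP(v))\geq n)\leq\P(\diam_\ext(\fT_v)\geq n)$, so it suffices to bound the latter; and since $v\in\fT_v$, the event $\{\diam_\ext(\fT_v)\geq n\}$ forces some $u\in\fT_v$ to have $d(v,u)\geq n/2$, so it is enough to prove $\P(\exists u\in\fT_v:d(v,u)\geq n)\preceq n^{-2}\log n$.

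I will fix a scale $m=m(n)$, to be taken as a small constant (depending on $d$ and $\bubnorm{P}$) times $n^2/\log n$, and decompose
\[
\P\bigl(\exists u\in\fT_v:d(v,u)\geq n\bigr)\leq\P\bigl(\diam_{\mathrm{int}}(\fT_v)\geq m\bigr)+\sum_{u:\,d(v,u)\geq n}\P\bigl(u\in\fT_v,\ d_{\fT_v}(v,u)<m\bigr),
\]
where $d_{\fT_v}(v,u)$ is the intrinsic distance in $\fT_v$. The first term is $\preceq m^{-1}$ by \cref{thm:generalexponentsv}. For the second, sampling $\F_v$ by Wilson's algorithm rooted at $\{v\}$ with the first walk $X$ started from $u$ identifies $\{u\in\fT_v,\ d_{\fT_v}(v,u)<m\}$ with $\{\tau_v<\infty,\ |\LE(X^{\tau_v})|<m\}$. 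By the moment bound \eqref{eq:MFCLoopLength1} of \cref{lem:LoopLength} and Markov's inequality, $\bP_u(\tau_v<\infty,\ |\LE(X^{\tau_v})|<m)\leq 2\bP_u(\tau_v\leq 2\bubnorm{P}m)\leq 2\sum_{k=0}^{\lfloor 2\bubnorm{P}m\rfloor}p_k(u,v)$; reversing time via $c(u)p_k(u,v)=c(v)p_k(v,u)$, using controlled stationary measure, and summing over $u$ with $d(v,u)\geq n$ yields
\[
\sum_{u:\,d(v,u)\geq n}\P\bigl(u\in\fT_v,\ d_{\fT_v}(v,u)<m\bigr)\preceq\sum_{k=n}^{\lfloor 2\bubnorm{P}m\rfloor}\bP_v\bigl(d(v,X_k)\geq n\bigr).
\]

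The heart of the matter is bounding this last sum. By Carne--Varopoulos, $p_k(v,u)\preceq e^{-d(v,u)^2/(2k)}$, and summing over the spheres $\{u:d(v,u)=\ell\}$ using the polynomial volume bound $|B(v,\ell)|\leq C\ell^d$ gives $\bP_v(d(v,X_k)\geq n)\preceq n^{O(d)}e^{-n^2/(2k)}$ in the range $n\leq k\leq n^2/d$. The key point is that summing the Gaussian factor over a sphere loses only a fixed power of $n$, so if the constant in $m\asymp n^2/\log n$ is chosen small enough (depending on $d$ and $\bubnorm{P}$) then $e^{-n^2/(2k)}\leq n^{-O(d)-4}$ for all $k\leq 2\bubnorm{P}m$; hence each term in the sum is $\preceq n^{-4}$, and since there are at most $2\bubnorm{P}m\leq n^2$ terms the whole sum is $\preceq n^{-2}$. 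With this choice of $m$ the first term is $\preceq m^{-1}\asymp n^{-2}\log n$, and combining the two bounds (absorbing the factor $2$ from the reduction to $d(v,u)\geq n/2$, and treating bounded $n$ trivially) completes the proof.

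I expect the Carne--Varopoulos step to be the only genuine obstacle: one has to verify that the polynomial-in-$n$ loss incurred in passing from the heat-kernel bound to the escape probability $\bP_v(d(v,X_k)\geq n)$ is swallowed by the Gaussian factor throughout the range $n\leq k\lesssim n^2/\log n$, and it is precisely this balance that pins down the cutoff scale $m\asymp n^2/\log n$ and produces the logarithmic factor in the final bound. Everything else — the reduction to the $v$-WUSF, the split into an intrinsic-diameter term and a first-moment term, and the Wilson's-algorithm computation — is routine given \cref{lem:domination,thm:generalexponentsv,lem:LoopLength}.
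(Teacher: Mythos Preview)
Your proof is correct and follows essentially the same route as the paper's: both reduce to $\fT_v$ by stochastic domination, split via a union bound into an intrinsic-diameter term $\P(\diam_{\mathrm{int}}(\fT_v)\geq m)\preceq m^{-1}$ and a first-moment term for vertices at extrinsic distance $\geq n$ inside $\fB_v(v,m)$, control the latter by Wilson's algorithm, \cref{lem:LoopLength}, time-reversal, and the Varopoulos--Carne bound combined with the polynomial volume growth, and then choose $m\asymp n^2/\log n$ with a small enough constant so that the Gaussian factor beats the polynomial loss. Your write-up is somewhat more detailed (e.g.\ the explicit reduction to $d(v,u)\geq n/2$ and the observation that the sum over $k$ effectively starts at $k=n$), but the argument is the same.
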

\begin{proof} The first inequality is immediate from \cref{lem:domination}, so it suffices to prove the second. 
By the union bound we have that
\begin{align*}
\P\bigl( \diam_\ext (\fT_v)\geq n\bigr) &\leq \P\bigl( \diam_\mathrm{int} (\fT_v)\geq m\bigr) +  \P\left(\max\left\{ d(v,u) : u \in \fB_v(v, m)\right\} \geq n \right)
\\ &\preceq m^{-1} + \P\left(\max\left\{ d(v,u) : u \in \fB_v(v, m)\right\} \geq n \right)
\\& \leq m^{-1} + \E |\{u \in \fB_v(v,m) : d(v,u) \geq n\} |.
\end{align*}
Recall that the \textbf{Varopoulos-Carne bound} \cite[Theorem 13.4]{LP:book} states that in any network,
\[
p_n(u,v) \leq 2\sqrt{\frac{c(v)}{c(u)}} e^{-d(u,v)^2/(2n)}.
\]
Write $A_n = \{u\in V : d(u,v) \geq n\} = V \setminus B(v,n-1)$ and $m'=\lceil 2\bubnorm{P} m \rceil$. Using \cref{lem:LoopLength} and the Varopoulos-Carne bound, one can easily derive that, using a time-reversal argument similar to that of \cref{lem:expectedballgrowth1},
\begin{align*}
\E |\{u \in \fB_v(v,m) : d(v,u) \geq n\} |
&= \sum_{u \in A_n} \bP_u(\tau_v< \infty, |\LE(X^{\tau_v})|\leq m) \leq \sum_{u \in A_n} 2\bP_u(\tau_v \leq m' )\\ &\leq \sum_{u \in A_n} \sum_{i=0}^{m'}2\bP_u(X_i =v ) \preceq \sum_{i=0}^{m'} \bP_v\left( d(v,X_i) \geq n \right) \\
&\leq \sum_{i=0}^{m'} \sum_{k \geq n} |\partial B(v,k)| \sup_{u \in \partial B(v,k)}p_i(v,u)
 \preceq m \sum_{k\geq n} k^d e^{-k^2/2m},
\end{align*}
so that taking $m= \eps n^2 \log^{-1} n$ for a suitably small constant $\eps>0$ yields that
\[
\P\bigl( \diam_\ext (\fT_v)\geq n\bigr) \preceq n^{-2} \log n. \qedhere
\]
\end{proof}

\begin{proof}[Proof of \cref{thm:extrinsic,thm:extrinsicv}]
The lower bounds both follow immediately from Proposition \ref{prop:extlower} and \cref{lem:polyLr}, while the upper bounds are immediate from \cref{prop:polyupperext}.
\end{proof}

We now wish to improve this argument and remove the logarithmic correction in the case of $\Z^d$, $d\geq 5$. To this end, let $\F_0$ be the $0$-wired uniform spanning forest of $\Z^d$, and let $\fT_0$ be the component of $0$ in $\fF_0$. (We do not consider any time parameterised forests in this section, so this notation should not cause confusion.) We write $\Lambda_m=[-m,m]^d$ and $\partial \Lambda_m = \Lambda_m \setminus \Lambda_{m-1}$. We say that a vertex $v\in \partial \Lambda_m$ is a \textbf{pioneer} if it is in $\fT_0$ and the future of $v$ (i.e. the unique path connecting $v$ to $0$ in $\fT_0$) is contained in $\Lambda_m$.

\begin{lemma}
\label{lem:pioneers}
Let $d\geq 5$, and let $\F_0$ be the $0$-wired uniform spanning forest of  $\Z^d$.  Then there exists positive constants $c_d$ and $C_d$ such that
\begin{equation}
\label{eq:pioneerbound}
\E\left[ \#\left\{ \text{pioneers in $\partial\Lambda_m\cap \fB_0(0,n)$} \right\}\right] \leq C_d \exp\left[-c_d\frac{m^2}{n}\right]
\end{equation}
for every $m,n \geq 1$.
\end{lemma}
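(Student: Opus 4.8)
The plan is to express the expectation using Wilson's algorithm rooted at $0$ and then to estimate the resulting quantity by comparing loop-erased random walk with simple random walk, combining the loop-length bounds of \cref{sec:LERWCap} with classical Green function estimates for $\Z^d$.

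Running Wilson's algorithm rooted at $0$ and starting it from a vertex $v$, one sees that $v$ is a pioneer lying in $\fB_0(0,n)$ exactly when the walk $X$ started at $v$ hits $0$, its loop erasure $\LE(X^{\tau_0})$ is contained in $\Lambda_m$, and $|\LE(X^{\tau_0})|\le n$; summing over $v\in\partial\Lambda_m$ gives
\[
  \E\left[\#\{\text{pioneers in }\partial\Lambda_m\cap\fB_0(0,n)\}\right]=\sum_{v\in\partial\Lambda_m}\bP_v\!\left(\tau_0<\infty,\ \LE(X^{\tau_0})\subseteq\Lambda_m,\ |\LE(X^{\tau_0})|\le n\right).
\]
The two constraints $\{\LE(X^{\tau_0})\subseteq\Lambda_m\}$ and $\{|\LE(X^{\tau_0})|\le n\}$ depend only on the loop erasure, and the reversal of the loop erasure of a walk from $v$ stopped on hitting $0$, on the event that it does hit $0$, has the law of the loop erasure of a walk from $0$ stopped on hitting $v$, the range and length of a path being unchanged by reversal; hence each summand equals $\bP_0(\tau_v<\infty,\ \LE(Y^{\tau_v})\subseteq\Lambda_m,\ |\LE(Y^{\tau_v})|\le n)$ for $Y$ a simple random walk started at $0$. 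Applying \eqref{eq:MFCLoopLength1} of \cref{lem:LoopLength} and Markov's inequality exactly as in the proof of \cref{lem:expectedballgrowth1} bounds this by $2\,\bP_0(\tau_v\le 2\bubnorm{P}n,\ \LE(Y^{\tau_v})\subseteq\Lambda_m)$, so it remains to prove
\[
  \sum_{v\in\partial\Lambda_m}\bP_0\!\left(\tau_v\le 2\bubnorm{P}n,\ \LE(Y^{\tau_v})\subseteq\Lambda_m\right)\preceq \exp\!\left(-c_d\,\frac{m^2}{n}\right).
\]

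Two features of $\Z^d$ for $d\ge5$ go into this. First, the event forces the loop-erased path from $0$ to reach the particular boundary point $v\in\partial\Lambda_m$ while staying inside $\Lambda_m$; since $v$ is at lattice distance $1$ from the complement $\Z^d\setminus\Lambda_m$, the probability of this is of order $m^{-(d-1)}$ --- an order $m^{-1}$ smaller than the free Green function $\mathbf{G}(0,v)\asymp m^{-(d-2)}$ --- by the standard boundary gradient estimate for the Green function of a box in $\Z^d$, $d\ge3$, so that summing over the $\asymp m^{d-1}$ points of $\partial\Lambda_m$ contributes only an $O_d(1)$ factor. Second, a loop-erased path with at most $2\bubnorm{P}n$ underlying walk steps joining $0$ to $\partial\Lambda_m$ forces $Y$ to travel Euclidean distance $m$ within time $\asymp n$, which by the Varopoulos--Carne bound, used as in the proof of \cref{prop:polyupperext}, happens with probability at most $\exp(-c_d m^2/n)$. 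Combining these gives the displayed bound.

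The crux is making the cancellation "$m^{-(d-1)}$ against $|\partial\Lambda_m|\asymp m^{d-1}$" rigorous, because it is the \emph{loop erasure} and not the walk that is confined to $\Lambda_m$: the walk $Y$ may make excursions out of $\Lambda_m$ that are later erased, and $v$ need not lie on the infinite loop erasure $\LE(Y)$. One natural route is to split according to whether $v\in\LE(Y)$: if so, the number of qualifying $v$ is at most the number of distinct points of $\partial\Lambda_m$ met by $\LE(Y)$ before it first leaves $\Lambda_m$, and on the event that $\LE(Y)$ reaches $\partial\Lambda_m$ within its first $\asymp n$ steps this has expectation $O_d(1)\exp(-c_d m^2/n)$ by the estimates above; if not, the excursion of $Y$ that produces $v$ is controlled separately using the strong Markov property and a last-exit decomposition. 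The estimates needed here are of the same flavour as Lawler's asymptotics for loop-erased walk on $\Z^d$, $d\ge5$, referenced in \cref{sec:LERWCap}.
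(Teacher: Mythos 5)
Your setup is sound and matches the paper's: Wilson's algorithm rooted at $0$ reduces the expectation to $\sum_{v\in\partial\Lambda_m}\bP_v(\tau_0<\infty,\,\LE(X^{\tau_0})\subseteq\Lambda_m,\,|\LE(X^{\tau_0})|\le n)$, and \cref{lem:LoopLength} plus Azuma/Varopoulos--Carne correctly produce the factor $\exp(-c_dm^2/n)$. (Your reversal of the whole loop-erasure via Lawler's reversibility theorem is a legitimate variant; the paper instead reverses individual walk segments inside the Green-function sums.) But the proof has a genuine gap exactly at the point you flag as ``the crux.'' The boundary gradient estimate of order $m^{-(d-1)}$ applies to the Green function of the walk \emph{killed on exiting} $\Lambda_m$, whereas here only the loop-erasure is confined to $\Lambda_m$. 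Your proposed resolution --- split on whether $v\in\LE(Y)$, count boundary points on $\LE(Y)$, and handle the remaining excursions ``separately using the strong Markov property and a last-exit decomposition'' --- is not an argument: $\LE(Y^{\tau_v})$ is not determined by (nor an initial segment of) $\LE(Y)$, the $O_d(1)$ bound on the number of boundary points hit and its interaction with the exponential event are asserted rather than proved, and no estimate is given for the excursion case. That excursion case is where all the work of the lemma lives.

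Concretely, the paper handles it by observing that any excursion of the walk beyond $\Lambda_m$ must later be erased, which forces the walk to return to an earlier point and hence creates a loop; decomposing at the outermost excursion and summing over the loop's anchor point and apex yields a diagrammatic sum of the form $\sum_{k,r}(k+r)^{-d+1}\cdot r\cdot(\cdots)$, bounded using Gambler's ruin and the elliptic Harnack inequality for box Green functions. This one-loop bound suffices only for $d>5$: in $d=5$ the resulting sum $\sum_k k^{-d+4}$ diverges logarithmically, and the paper must extract a \emph{second} loop and control two interleaving patterns of the associated stopping times to kill the logarithm. Your sketch gives no mechanism that would produce this extra decay at $d=5$; any rigorous completion along your lines would have to reproduce essentially the same two-loop diagrammatic analysis. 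So the proposal identifies the right difficulty but does not overcome it.
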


Note that the expectation of $|\partial \Lambda_m \cap \fB_0(0,n)|$ is of order $m e^{-\Omega(m^2/n)}$. The point of the lemma is that by considering only pioneers we can reduce the expectation by at least a factor of $m$.

Before proving \cref{lem:pioneers}, let us see how it can be applied to deduce \cref{thm:extrinsicZd,thm:extrinsicZdv}.

\begin{proof}[Proof of \cref{thm:extrinsicZd,thm:extrinsicZdv}]
The lower bounds follow from \cref{prop:extlower}, and so by stochastic domination (\cref{lem:domination}) it suffices to prove the upper bound on the probability that $\fT_v$ has a large extrinsic  diameter.
Let 
$Q(n) = \P(\partial \fB_0(0,n) \neq \emptyset)$ and let
$\tilde Q(m) = \P( \fT_0 \cap \partial \Lambda_m \neq \emptyset)$. By the union bound we have that
\begin{align*}
\tilde Q(2m) \leq Q(n) + \P( \fB_0(0,n) \cap \partial \Lambda_{2m} \neq \emptyset).
\end{align*}
Suppose that $\fB_0(0,n) \cap \partial \Lambda_{2m} \neq \emptyset$, and consider a geodesic $\gamma$ in $\fT_0$ from $0$ to $\partial \Lambda_{2m}$. If $\gamma$ visits $\partial \Lambda_m$ for the first time at $v$, then we necessarily have that $v$ is a pioneer in $\partial \Lambda_m \cap \fB_0(0,n)$ and that there is a path from $v$ to $v+\partial \Lambda_m$ that is disjoint from the future of $v$. Thus, counting the expected number of such $v$ and applying the stochastic domination property, we have that, by Markov's inequality,
\[
\P\left( \fB_0(0,n) \cap \partial \Lambda_{2m} \neq \emptyset\right) \leq \tilde Q(m) \E\left[ \#\left\{ \text{pioneers in $\partial\Lambda_m\cap \fB_0(0,n)$} \right\}\right].
\]
Thus, if we take $n= \lceil \eps m^2 \rceil$ for some sufficiently small constant $\eps$, it follows from \cref{thm:generalexponentsv} and \cref{lem:pioneers} that
\begin{align*}
\tilde Q(2m) \leq \frac{C}{m^2} + \frac{1}{8} \tilde Q(m).
\end{align*}
for all sufficiently large $m$.  The proof can now be concluded via an induction similar to that used in the proof of \cref{cor:intupperbound}. (The $1/8$ here could be replaced with any number strictly smaller than $1/4$.)
\end{proof}

The proof of \cref{lem:pioneers} will come down to a few somewhat involved estimates of diagrammatic sums involving random walks on boxes with Dirichlet boundary conditions.  
We write $\preceq$ for upper bounds depending only on the dimension $d$, and, to simplify notation, use the convention that $0^{\alpha}=1$ for every $\alpha \in \R$.
We also use $\Omega$ asymptotic notation ($f=\Omega(g)$ is equivalent to $f \succeq g$), where again the implicit constants depend only on $d$.

Let us first record some basic estimates concerning the random walk on $\Z^d$. 
Let $\mathbf{G}^m_i(v,u)$ be the expected number of times that  a walk started at $v$  visits $u$  when it is stopped either at time $i$ or when it first leaves $\Lambda_m$, whichever is sooner. It follows by Gambler's ruin applied to each coordinate that
\begin{equation}
\label{eq:GamblersRuin}
\sum_{u \in \partial \Lambda_m} \mathbf{G}^{m+r}_\infty(v,u) \leq 2d (r+1)
\end{equation}
for every vertex $v\in \Z^d$ and every $m,r\geq 0$. 
 Furthermore, the maximal version of Azuma's inequality \cite[Section 2]{mcdiarmid1998concentration} implies that there exists a constant $C$ such that
 \begin{equation}
\label{eq:Azuma}
%
\bP_0( \tau_{\partial \Lambda_m} \leq n) \leq C \exp\left[ - \frac{m^2}{Cn}\right]
\end{equation}
for every $m,n \geq 1$.

We will also use the following estimate.
\begin{lemma}
 $\max_{w \in \partial \Lambda_m} \mathbf{G}^m_\infty(v,w) \preceq \left(m+1-\|v\|_\infty\right)^{-d+1}
$ for every $v \in \Lambda_m$.
\end{lemma}
\begin{proof}
Let $\mathbf{H}(u,w)$ be the Green's function of the simple random walk on $\mathbb{Z}^d$ killed upon exiting the half-space $\mathbb{H}^-:=\{x \in \mathbb{Z}^d : x_d < 0\}$. The elliptic Harnack inequality (see e.g.\ \cite[Chapter 7]{barlow2017random}) implies that there exists a positive constant $C$ such that if $u$ and $v$ are such that $\|u-v\|_\infty \leq \min \{u_d,v_d\}/2$ then 
\begin{equation}
\label{eq:lemEHI} C^{-1}\mathbf{H}(v,w) \leq \mathbf{H}(u,w) \leq C\mathbf{H}(v,w)
\end{equation}
for every $w \in \partial \mathbb{H}^-:=\{x \in \mathbb{Z}^d : x_d = 0\}$. Let $k\geq 0$,  let $e_k = (0,0,\ldots,0,k)$, and let $w_k \in \partial \mathbb{H}^-$ be such that $\mathbf{H}(e_k,w_k)=\max_{w\in \partial \mathbb{H}^-} \mathbf{H}(e_k,w)$. Then it follows by \eqref{eq:lemEHI} and translation symmetry that 
\[\mathbf{H}(e_k,w) = \mathbf{H}(e_k+w_k-w,w_k) \geq C^{-1}\mathbf{H}(e_k,w_k) \]
for every $w \in \partial \mathbb{H}^-$ such that $\|w-w_k\|_\infty \leq k/2$. On the other hand, the expected total time spent in $\partial \mathbb{H}^-$ before entering $\mathbb{H}^-$ is clearly of constant order, and we deduce that
\[
\mathbf{H}(e_k,w_k) \leq C \bigl|\bigl\{w \in \partial \mathbb{H}^- : \|w-w_k\|_\infty \leq k/2\bigr\}\bigr|^{-1} \sum_{v \in \partial \mathbb{H}^-} \mathbf{H}(e_k,v) \preceq (k+1)^{-d+1}.
\]
This immediately implies the claim by translation symmetry.
\end{proof}

Now, since the expected number of times the walk spends in $\partial\Lambda_m$ before leaving $\Lambda_m$ is order 1, we deduce that
\begin{equation}
\label{eq:EHI}
\sum_{w\in \partial \Lambda_m}  \mathbf{G}^m_\infty(v,w)^2 \leq \max_{w \in \partial \Lambda_m} \mathbf{G}^m_\infty(v,w) \sum_{w\in \partial \Lambda_m}  \mathbf{G}^m_\infty(v,w) \preceq \left(m+1-\|v\|_\infty\right)^{-d+1}
\end{equation}
for every $m\geq 0$ and $v\in \Lambda_m$, and similarly that
\begin{equation}
\label{eq:EHI2}
\sum_{w\in \partial \Lambda_m}  \mathbf{G}^m_\infty(v,w)\mathbf{G}^m_{\infty}(u,w)  \preceq \left(m+1-\|v\|_\infty \wedge \|u\|_\infty\right)^{-d+1}
\end{equation}
for every $m\geq 0$ and $u,v \in \Lambda_m$.

We now turn to the proof of \cref{lem:pioneers}.

\begin{proof}[Proof of \cref{lem:pioneers}]


We begin with a proof that works only for $d>5$, and then show how it can be modified to obtain a proof for $d\geq 5$. (In fact, the modified proof works for all $d>9/2$.) The variable names we use will be somewhat idiosyncratic; this is to avoid renaming them in the proof for $d=5$.
Let $v \in \partial\Lambda_m$, let $X$ be a random walk started at $v$, and consider sampling $\F_0$ using Wilson's algorithm, starting with the walk $X$. Write $\tau_5$ for the first time that $X$ hits the origin. In order for $v$ to be a pioneer and be in $\fB_0(0,n)$, $X$ must hit $0$, and the loop-erasure $\LE(X^{\tau_5})$ must be contained in $\Lambda_m$. Applying \cref{lem:LoopLength}, we have furthermore that $\tau_5\leq 2\bubnorm{P} n$ with probability at least $1/2$ conditional on this event. Let $n'=\lceil2\bubnorm{P}n\rceil$, and let $\sA_{n,m}(v)$ be the event that $X$ hits $0$, that $\LE(X^{\tau_5})$ is contained in $\Lambda_m$ and that $\tau_5 \leq n'$. Thus, we have that
\[\sum_{v\in \partial \Lambda_m} \P\left(\text{$v$ a pioneer in $\partial\Lambda_m \cap \fB_0(0,n)$}\right) \leq 2\sum_{v\in \partial \Lambda_m} \bP_v\left(\sA_{n,m}(v)\right).
\]
Let $R_2\geq 0$ be maximal such that $X$ visits $\partial \Lambda_{m+R_2}$ before it first visits the origin, let $0 \leq \tau_4 \leq \tau_5$ be the first time that $X$ visits $\partial \Lambda_{m+R_2}$, and let $\tau_3$ be the first time $i$ such that $X_i \in \Lambda_m$ and $X_i = X_j$ for some $j\geq \tau_4$, and let $\tilde \tau_3$ be the first time after $\tau_4$ that $X$ visits $X_{\tau_3}$. Clearly the times $\tau_3$ and $\tilde \tau_3$ must exist and satisfy $0 \leq \tau_3 \leq \tau_4 \leq \tilde \tau_3 \leq \tau_5$ on the event $\sA_{n,m}(v)$. (Note that it is possible that $R_2=\tau_3=\tau_4=\tilde\tau_3=0$.)
Considering the possible choices for $r_2=R_2$ and for the vertices $u = X_{\tau_3}$ and $w= X_{\tau_4}$  yields that
\begin{align*}
 \P\left(\sA_{n,m}(v)\right) &\leq  
     \sum_{u \in \Lambda_m}  \sum_{r_2 \geq 0} \sum_{w\in \partial \Lambda_{m+r_2}} \\
     &\hspace{1.5cm}\bP_v\Bigl( R_2=r_2, X_{\tau_3}=w,X_{\tau_4}=v, \text{ and } |\tau_3|, |\tau_4-\tau_3|, |\tilde\tau_3-\tau_4|, |\tau_5-\tilde\tau_3| \leq n'\Bigr)
\end{align*}
\begin{align*}
\phantom{ \P\left(\sA_{n,m}(v)\right)}
&\leq      \sum_{k_2=0}^m \sum_{u \in \partial\Lambda_{m-k_2}}  \sum_{r_2 \geq 0} \sum_{w\in \partial \Lambda_{m+r_2}} \mathbf{G}^{m+r_2}_{n'}(v,u)\mathbf{G}^{m+r_2}_{n'}(u,w)^2\mathbf{G}^{m+r_2}_{n'}(u,0).
\end{align*}
Applying \eqref{eq:EHI} and reversing time yields that
\begin{equation*}
\sum_{v\in \partial \Lambda_m} \P\left(\sA_{n,m}(v)\right)
\preceq  \sum_{k_2=0}^m \sum_{r_2\geq 0} (r_2+k_2+1)^{-d+1} \sum_{u \in \partial \Lambda_{m-k_2}} \mathbf{G}^{m+r_2}_{n'}(0,u)   \sum_{v\in \partial \Lambda_m} \mathbf{G}^{m+r_2}_{n'}(u,v).
\end{equation*}
Applying \eqref{eq:GamblersRuin} and \eqref{eq:Azuma} we have that
\begin{align*}
\sum_{v\in \partial \Lambda_m}\mathbf{G}^{m+r_2}_{n'}(u,v) &\leq \bP_u\Bigl(\tau_{\partial\Lambda_m} \leq n'\Bigr) \max_{w \in \partial \Lambda_m} \sum_{v\in \partial \Lambda_m}\mathbf{G}^{m+r_2}_{\infty}(w,v) \preceq (r_2+1) \exp\left[ - \Omega(k_2^2/n)\right],
\end{align*}
for every $u\in \partial \Lambda_{m-k_2}$, and similarly that
\begin{multline*}
\sum_{u\in \partial \Lambda_{m-k_2}}
\mathbf{G}^{m+r_2}_{n'}(0,u) \leq \bP_0\Bigl(\tau_{\partial\Lambda_{m-k_2}} \leq n'\Bigr) \max_{w \in \partial \Lambda_{m-k_2}} \sum_{u\in \partial \Lambda_{m-k_2}}\mathbf{G}^{m+r_2}_{\infty}(w,u)\\ \preceq (r_2+k_2+1) \exp\left[ - \Omega((m-k_2)^2/n)\right].
\end{multline*}
Thus,  we deduce that
\begin{multline*}
\sum_{v\in \partial \Lambda_m} \P\left(\sA_{n,m}(v)\right)
\preceq \sum_{k_2=0}^m \sum_{r_2\geq 0} (r_2+1)(r_2+k_2+1)^{-d+2}   \exp\left[-\Omega\left(\frac{(m-k_2)^2+k_2^2}{n}\right)\right]\\
\preceq \exp\left[-\Omega(m^2/n)\right]\sum_{k_2=0}^m (k_2+1)^{-d+4} \preceq \exp\left[-\Omega(m^2/n)\right]
\end{multline*}
as claimed. (In five dimensions we would obtain an unwanted logarithmic correction to this bound since $\sum_{k_2\geq 0} (k_2+1)^{-d+4}$ diverges.)

\begin{figure}
\centering
\includegraphics[width=0.9\textwidth]{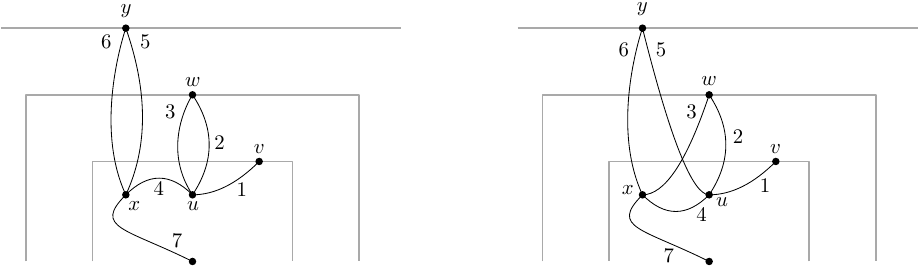}
\caption{Schematic illustration of the two types of paths contributing to the sums estimated in the five dimensional case. In each case, the numbers $1,\ldots,7$ indicate the order in which different segments are traversed by the path. The left figure corresponds to the case $\tilde \tau_1 \leq \tau_3$, while the right figure corresponds to the case $\tilde \tau_1 \geq \tau_3$.}
\label{fig:loops}
\end{figure}

To obtain a corresponding bound in $d=5$ dimensions, we sum over two loops instead of one.  We will be somewhat brief, as a fully detailed treatment of the calculations involved would be quite long.  Let $R_2, \tau_3, \tilde \tau_3, \tau_4,\tau_5$ be as above.
Let $R_1$ be maximal such that $X$ visits $\partial\Lambda_{m+R_1}$ before time $\tau_3$, let $\tau_2$ be the first time that $X$ visits $\partial\Lambda_{m+R_1}$, and let $\tau_1$ be the first time $i$ such that $X_i \in \Lambda_m$ and $X_i=X_j$ for some $j\geq \tau_2$. 
On the event $\sA_{n,m}$, we must have that $\tau_1$ exists and that $0\leq \tau_1\leq \tau_2$. 
Let $\tilde \tau_1$ be the first time $X$ visits $X_{\tau_1}$ after time $\tau_2$. It follows from the definitions that \[0\leq \tau_1 \leq \tau_2 \leq \tilde \tau_1,\tau_3 \leq \tau_4 \leq \tilde \tau_3\leq \tau_5\] on the event $\sA_{n,m}(v)$, but it is possible for $\tilde \tau_1 $ and $\tau_3$ to occur in either order. See \cref{fig:loops} for an illustration. We bound the contribution of the two possibilities separately. In the first case, we have that
\begin{multline*}
 \sum_{v\in \partial \Lambda_m}\P\left(\sA_{n,m}(v) , \tilde \tau_1 \leq \tau_3 \right)  \\
\preceq \sum_{0\leq k_1,k_2\leq m} \sum_{r_2 \geq r_1 \geq 0} \sum_{v\in \partial \Lambda_m}  \sum_{u \in \partial\Lambda_{m-k_1}}   \sum_{x \in \partial\Lambda_{m-k_2}}   \sum_{y\in \partial \Lambda_{m+r_2}} \sum_{w\in \partial \Lambda_{m+r_1}}\\  
\hspace{5.2cm}
\mathbf{G}^{m+r_1}_{n'}(v,u) \mathbf{G}^{m+r_1}_{n'}(u,w)^2 \mathbf{G}^{m+r_1}_{n'}(u,x)\mathbf{G}_{n'}^{m+r_2}(x,y)^2 \mathbf{G}_{n'}^{m+r_2}(x,0) ,
\end{multline*}
where we have written the terms in the same order as the corresponding path segments are traversed on the left of \cref{fig:loops}. Performing a similar calculation to that done in the case $d> 5$, above, we obtain that
\begin{multline*}
\sum_{v\in \partial \Lambda_m}\P\left(\sA_{n,m}(v) , \tilde \tau_1 \leq \tau_3 \right)\preceq\\ 
e^{-\Omega(m^2/n)}\sum_{k_1\geq 0}   \sum_{k_2\geq 0}  \sum_{r_2 \geq r_1 \geq 0}   
(k_2+r_2+1)(k_2+r_2+1)^{-d+1}(k_1+r_1+1)(k_1+r_1+1)^{-d+1}(r_1+1).
\end{multline*}
We may then show that this sum is finite by computing that
\begin{align*}
\sum_{k_1\geq 0}   \sum_{k_2\geq 0}  \sum_{r_2 \geq r_1 \geq 0}   
(k_2+r_2+1)^{-d+2}(k_1+r_1+1)^{-d+2}(r_1+1) &\preceq 
  \sum_{r_2 \geq r_1 \geq 0}   
(r_2+1)^{-d+3}(r_1+1)^{-d+4}\\ &\preceq \sum_{r_1\geq 0} (r_1+1)^{-2d+8} \preceq 1
\end{align*}
as desired.
Similarly, in the second case, we have that
\begin{multline*}
 \sum_{v\in \partial \Lambda_m }\P\left(\sA_{n,m}(v) , \tau_3 \leq \tilde \tau_1\right)  \preceq 
 \sum_{0\leq k_1,k_2\leq m} \sum_{r_2 \geq r_1 \geq 0} \sum_{v\in \partial \Lambda_m}  \sum_{u \in \partial\Lambda_{m-k_1}}   \sum_{x \in \partial\Lambda_{m-k_2}}   \sum_{y\in \partial \Lambda_{m+r_2}} \sum_{w\in \partial \Lambda_{m+r_1}}\\
 \hspace{1.6cm} \mathbf{G}^{m+r_1}_{n'}(v,u) \mathbf{G}^{m+r_1}_{n'}(u,w) \mathbf{G}^{m+r_1}_{n'}(w,x) \mathbf{G}^{m+r_2}_{n'}(x,u) \mathbf{G}^{m+r_2}_{n'}(u,y) \mathbf{G}^{m+r_2}_{n'}(y,x) \mathbf{G}^{m+r_2}_{n'}(x,0),
\end{multline*}
where we have written the terms in the same order as the corresponding path segments are traversed on the right of \cref{fig:loops}.
A similar calculation to above but using \eqref{eq:EHI2} instead of \eqref{eq:EHI} yields that
\begin{multline*}
 \sum_{v\in \partial \Lambda_m }\P\left(\sA_{n,m}(v) , \tau_3 \leq \tilde \tau_1\right) \preceq
\\
e^{-\Omega(m^2/n)}\sum_{k_1\geq 0}   \sum_{k_2\geq 0}  \sum_{r_2 \geq r_1 \geq 0}   
(k_2+r_2+1)(k_1\vee k_2 +r_2+1)^{-d+1}(k_1+r_2+1)(k_1\vee k_2 +r_1+1)^{-d+1}(r_1+1).
\end{multline*}
As before, we need to show that this sum is finite. To do this, we rewrite the sum in terms of $a=k_1 \wedge k_2$ and $b=k_1 \vee k_2$ to obtain that
\begin{multline*}
\sum_{k_1\geq 0}   \sum_{k_2\geq 0}  \sum_{r_2 \geq r_1 \geq 0}   
(k_2+r_2+1)(k_1\vee k_2 +r_2+1)^{-d+1}(k_1+r_2+1)(k_1\vee k_2 +r_1+1)^{-d+1}(r_1+1)
\\=  \sum_{b\geq a \geq 0} \sum_{r_2 \geq r_1 \geq 0} (a+r_2+1)(b+r_2+1)^{-d+2}(b+r_1+1)^{-d+1}(r_1+1)\\
 \preceq \sum_{b\geq 0} \sum_{r_2 \geq r_1 \geq 0} (b+1)(b+r_2+1)^{-d+3}(b+r_1+1)^{-d+1}(r_1+1).
\end{multline*}
Considering the contribution to this sum from the three cases $b\leq r_1$, $r_1<b<r_2$, and $b\geq r_2$ yields that
\begin{align*}
\sum_{b\geq 0} &\sum_{r_2 \geq r_1 \geq 0} (b+1)(b+r_2+1)^{-d+3}(b+r_1+1)^{-d+1}(r_1+1) \\
&\preceq \sum_{r_2\geq r_1\geq 0} \Biggl[\sum_{b=0}^{r_1} (b+1) (r_2+1)^{-d+3} (r_1+1)^{-d+2} + \sum_{b=r_1}^{r_2} (r_2+1)^{-d+3} (b+1)^{-d+2} (r_1+1)\\ & \hspace{10.7cm}+ \sum_{b \geq r_2} (b+1)^{-2d+5}(r_1+1) \Biggr]
\end{align*}
\begin{align*}
&\preceq  
\sum_{r_2\geq r_1 \geq 0} \left[ (r_2+1)^{-d+3} (r_1+1)^{-d+4} +  (r_2+1)^{-d+3} (r_1+1)^{-d+4} + (r_2+1)^{-2d+6}(r_1+1) \right]\\&\preceq \sum_{r_1\geq 0}  (r_1+1)^{-2d+8} \preceq 1
\end{align*}
as desired. This concludes the proof.
\end{proof}

\section{Spectral dimension, anomalous diffusion}
\label{sec:AlexanderOrbach}

In this section, we apply the estimates of \cref{sec:volume} together with the intrinsic diameter exponent \cref{thm:generalexponentsv} to deduce \cref{thm:AlexanderOrbach}. This will be done via an appeal to the following theorem of Barlow, J\'arai, Kumagai, and Slade \cite{BJKS08}, which gives a sufficient condition for Alexander-Orbach behaviour. See \cite{BJKS08} for quantitative versions of the theorem, and \cite{kumagai2008heat} for generalizations.

We recall that the \textbf{effective conductance} between two disjoint finite sets $A,B$ in a finite network $G$ is defined to be
\[
\Ceff(A\leftrightarrow B ;\, G) =  \sum_{v\in A}c(v)\mathbf{P}_v\bigl(\tau_B<\tau_A^+\bigr).
\]
The \textbf{effective resistance} $\Reff(A\leftrightarrow B ;\, G):= \Ceff(A\leftrightarrow B ;\, G)^{-1}$ is defined to be the reciprocal of the effective conductance. We also define the effective resistance and conductance by the same formulas when $A$ and $B$ are finite subsets of an infinite network $G$ that are such that every transient path from $A$ must pass through $B$. For further background on effective conductances and resistances see e.g.\ \cite[Chapters 2 and 9]{LP:book}.

\begin{theorem}[Barlow, J\'arai, Kumagai, and Slade 2008]
	\label{thm:BJKSquenched}
	Let $(G,\rho)$ be a random rooted graph, and suppose that there exist positive constants $C$, $\gamma$ and $N$ such that
	\begin{equation}
  \label{eq:BJKSgrowth}
		\P\left(\lambda^{-1} n^2 \leq |B(\rho,n)| \leq \lambda n^2 \right)
		\geq 1 - C \lambda^{-\gamma}
	\end{equation}
	and 
	\begin{equation}
  \label{eq:BJKSresistance}
		\P\left( \Reff\left(v \leftrightarrow \partial B(\rho,n);\, G\right) \geq \lambda^{-1}n \right)
		\geq 1 - C \lambda^{-\gamma}
	\end{equation}
	for all $n\geq N$.
	Then $d_s(G)=4/3$ and $d_w(G)=3$ almost surely. In particular, the limits defining both dimensions exist almost surely.
\end{theorem}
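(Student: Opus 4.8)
The plan is to follow the method of Barlow, Coulhon, and Kumagai and of Barlow, J\'arai, Kumagai, and Slade \cite{BJKS08}: extract from the two hypotheses almost-sure volume and resistance bounds around $\rho$, use them to control the time the walk needs to leave balls centred at $\rho$, and deduce matching estimates for the on-diagonal heat kernel and for the displacement, from which $d_s$ and $d_w$ can be read off. Since only exponents are needed, I would work throughout up to multiplicative errors of polylogarithmic order. First I would pass from the ``in probability'' bounds to quenched ones: applying \eqref{eq:BJKSgrowth} and \eqref{eq:BJKSresistance} along the dyadic scales $n=2^k$ with $\lambda=k^{2/\gamma}$, and using Borel--Cantelli together with the monotonicity of $n\mapsto|B(\rho,n)|$ and Rayleigh monotonicity of effective resistance to interpolate between dyadic scales, one gets that almost surely there are random $C,N<\infty$ with
\[
\frac{n^2}{(\log n)^C}\le|B(\rho,n)|\le n^2(\log n)^C
\quad\text{and}\quad
\frac{n}{(\log n)^C}\le\Reff\bigl(\rho\leftrightarrow\partial B(\rho,n);G\bigr)\le n
\]
for all $n\ge N$ (the resistance upper bound being automatic, since a geodesic is a path of at most $n$ unit resistors). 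Fix such a graph for the rest of the argument.

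Next I would estimate exit times. Writing $\tau_r=\tau_{\partial B(\rho,r)}$ and $g_r$ for the Green's function of the walk killed on leaving $B(\rho,r)$, the identities $g_r(\rho,\rho)=c(\rho)\Reff(\rho\leftrightarrow\partial B(\rho,r))$ and $g_r(\rho,y)=\mathbf{P}_\rho(\tau_y<\tau_r)\,g_r(y,y)$, the bounds $g_r(y,y)=c(y)\Reff(y\leftrightarrow\partial B(\rho,r))\le Cr$ and $\mathbf{P}_y(\tau_\rho<\tau_r)\ge 1-d(\rho,y)/\Reff(\rho\leftrightarrow\partial B(\rho,r))$, and the volume bound together give
\[
\frac{r^3}{(\log r)^C}\le\mathbf{E}_\rho[\tau_r]=\sum_{y\in B(\rho,r)}g_r(\rho,y)\le r^3(\log r)^C ,
\]
the lower bound coming from summing $g_r(\rho,y)\succeq\Reff(\rho\leftrightarrow\partial B(\rho,r))$ over the $\succeq r^2/(\log r)^C$ vertices $y$ with $d(\rho,y)\le\tfrac12 r(\log r)^{-C}$. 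A one-line restart bound ($\mathbf{E}_\rho[\tau_r]\le t+\mathbf{P}_\rho(\tau_r>t)\max_{x\in B(\rho,r)}\mathbf{E}_x[\tau_r]$, with $\max_x\mathbf{E}_x[\tau_r]\le|B(\rho,r)|\max_y g_r(y,y)\le r^3(\log r)^C$) then upgrades this to: $\mathbf{P}_\rho(\tau_r>t)\ge(\log r)^{-C}$ for $t\le r^3(\log r)^{-C}$, and $\mathbf{P}_\rho(\tau_r>t)\le\tfrac14$ for $t\ge r^3(\log r)^C$. From the first of these and Cauchy--Schwarz I would get the on-diagonal lower bound: writing $q^B$ for the transition kernel killed on leaving $B=B(\rho,m)$, $p_{2n}(\rho,\rho)\ge q^B_{2n}(\rho,\rho)=\sum_y q^B_n(\rho,y)^2 c(\rho)/c(y)\succeq|B(\rho,m)|^{-1}\mathbf{P}_\rho(\tau_m>n)^2$, so taking $m\asymp n^{1/3}(\log n)^C$ yields $p_{2n}(\rho,\rho)\ge n^{-2/3}(\log n)^{-C}$.

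The matching upper bound $p_{2n}(\rho,\rho)\le n^{-2/3}(\log n)^C$ is the first genuinely hard point: I would invoke the on-diagonal part of the Barlow--Coulhon--Kumagai estimate (see also \cite{KumagaiBook}), which says that the volume and resistance bounds at $\rho$ provide a Nash/Faber--Krahn-type inequality that, fed into the standard iteration, forces $p_{2n}(\rho,\rho)\le C/|B(\rho,r(n))|$ with $r(n)\asymp n^{1/3}$ the scale at which $r(n)^3\asymp n$. Given both bounds, $d_s(G)=\lim_n\frac{-2\log p_{2n}(\rho,\rho)}{\log n}=4/3$, and this limit exists; it is independent of the base point by the elementary fact recorded after the definition of $d_s$. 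For $d_w$, the displacement lower bound is immediate from the exit-time estimate: $\mathbf{P}_\rho(\tau_r\le 4r^3(\log r)^C)\ge\tfrac34$ gives $\mathbf{E}_\rho[\max_{m\le n}d(\rho,X_m)]\succeq n^{1/3}(\log n)^{-C}$, hence $\limsup_n\frac{\log n}{\log\mathbf{E}_\rho[\max_{m\le n}d(\rho,X_m)]}\le 3$. The matching upper bound reduces to showing
\[
\mathbf{P}_\rho(\tau_r\le n)\le C\exp\!\bigl(-c\,(r^3/n)^{1/2}(\log r)^{-C}\bigr)\qquad\text{for all }r,
\]
since then $\mathbf{E}_\rho[\max_{m\le n}d(\rho,X_m)]=\sum_{r\ge 1}\mathbf{P}_\rho(\tau_r\le n)\preceq n^{1/3}(\log n)^C$, forcing $d_w(G)=3$.

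I expect this last tail bound to be the main obstacle. The approach would be to study the walk's excursions from $\rho$: before $\tau_r$ the walk completes a number of excursions that is geometrically distributed with mean $c(\rho)\Reff(\rho\leftrightarrow\partial B(\rho,r))\asymp r/(\log r)^C$, and one must bound the total duration of these excursions from below. This is exactly the multi-scale analysis of Barlow--Coulhon--Kumagai (equivalently, the off-diagonal heat-kernel upper bound; see \cite{BJKS08,KumagaiBook} for quantitative versions), which uses the exit-time upper bound at every dyadic scale below $r$ together with the fact --- following from the resistance bound being assumed at \emph{every} scale around $\rho$, after passing to a polylogarithmically large scale ratio so that the telescoped bounds do not cancel --- that the effective resistance across a dyadic shell around $\rho$ is comparable to its radius. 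It is precisely here that the full strength of hypothesis \eqref{eq:BJKSresistance}, at all scales rather than a single one, is used; the remaining steps are routine once this estimate and the on-diagonal upper bound are in hand.
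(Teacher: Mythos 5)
The paper does not prove this theorem at all---it is quoted from \cite{BJKS08} and used as a black box (``See \cite{BJKS08} for quantitative versions\dots'')---so there is no in-paper argument to compare against. Your outline correctly reconstructs the standard Barlow--Coulhon--Kumagai/BJKS route (Borel--Cantelli along dyadic scales with $\lambda=k^{2/\gamma}$ to get quenched polylogarithmic bounds, Green-function identities for the exit-time estimates, Cauchy--Schwarz for the on-diagonal lower bound), and the two steps you explicitly defer to the literature---the Faber--Krahn/Nash on-diagonal upper bound and the multiscale exit-time tail estimate---are indeed exactly the technical content of the cited proof, so the deferral is consistent with how the paper itself treats the result.
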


The estimate \eqref{eq:BJKSgrowth} has already been established in \cref{cor:exponentialproblargevolume,lem:fullvolumelower}. Thus, to apply \cref{thm:BJKSquenched}, it remains only to prove an upper bound on the probability that the effective conductance is large. The following lemma will suffice.

\begin{lemma}
\label{lem:expectedconductance}
Let $G$ be a network with controlled stationary measure such that $\bubnorm{P}<\infty$. Then 
\[\E\Bigl[ \Ceff(v \leftrightarrow \partial \fB(v,n) ;\, \F)  \Bigr] \preceq n^{-1} \]
for all $n \geq 1$.
\end{lemma}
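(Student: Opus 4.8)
The plan is to bound $\Ceff(v\leftrightarrow\partial\fB(v,n);\F)$ from above by a Nash--Williams cutset argument with carefully chosen cutsets, and then control the expected sizes of these cutsets using the intrinsic diameter exponent (\cref{thm:generalexponentsv}), the volume bound (\cref{thm:moments}), and stochastic domination (\cref{lem:domination}).

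First I would fix $v$, let $T$ be the component of $v$ in $\F$ --- which is almost surely an infinite one-ended tree, so has a unique ray $\Gamma(v,\infty)=(v=v_0,v_1,v_2,\dots)$ --- and root $T$ at $v$. For $1\le k\le n$, call a vertex $u$ with $d_\F(v,u)=k$ \emph{productive} if its subtree in $T$ (rooted at $v$) contains a vertex at distance exactly $n$ from $v$; let $N_k$ be the number of productive vertices at distance $k$, and let $\Pi_k$ consist of the $N_k$ edges joining a productive distance-$k$ vertex to its parent. Then each $\Pi_k$ is an edge-cutset separating $v$ from $\partial\fB(v,n)$ in $T$, the cutsets $\Pi_1,\dots,\Pi_{\lfloor n/2\rfloor}$ are pairwise disjoint, and $N_k\ge 1$ since the spine vertex $v_k$ is always productive. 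By the Nash--Williams inequality and the Cauchy--Schwarz inequality,
\[
\Ceff\bigl(v\leftrightarrow\partial\fB(v,n);\F\bigr)\;\le\;\Biggl(\sum_{k=1}^{\lfloor n/2\rfloor}\frac1{N_k}\Biggr)^{-1}\;\le\;\frac1{\lfloor n/2\rfloor^2}\sum_{k=1}^{\lfloor n/2\rfloor}N_k ,
\]
so it suffices to prove that $\E\bigl[\sum_{k=1}^{\lfloor n/2\rfloor}N_k\bigr]\preceq n$. (Restricting to levels $k\le n/2$ is essential: summing $1/N_k$ over all $n$ levels would cost a spurious logarithmic factor.)

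The heart of the argument is the estimate $\E[N_k]\le 1+C\,\E|\partial\fB(v,k)|/(n-k+1)$. A distance-$k$ vertex $u\ne v_k$ does not lie on $\Gamma(v,\infty)$, so its subtree in $T$ is exactly $\{u\}\cup\fP(u)$, and $u$ is productive precisely when $\{\operatorname{height}_u(\fP(u))\ge n-k\}$ occurs (an increasing event, where $\operatorname{height}_u$ denotes the largest distance from $u$). The key point is that, because $T$ is one-ended, the event $\{d_\F(v,u)=k\}\cap\{u\notin\Gamma(v,\infty)\}$ is measurable with respect to the pair of futures $\Gamma(u,\infty)\cup\Gamma(v,\infty)$: these two rays eventually merge, and the $v$--$u$ geodesic is the union of their initial segments up to the merge vertex. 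Conditioning on $\Gamma(u,\infty)\cup\Gamma(v,\infty)$ and applying \cref{lem:domination} with $K=\{u,v\}$, the past $\fP(u)\cup\{u\}$ is stochastically dominated by the $u$-wired uniform spanning forest tree $\fT_u$, so by \cref{thm:generalexponentsv},
\[
\P\bigl(\operatorname{height}_u(\fP(u))\ge n-k \;\big|\; \Gamma(u,\infty)\cup\Gamma(v,\infty)\bigr)\;\le\;\P\bigl(\diam_{\mathrm{int}}(\fT_u)\ge n-k\bigr)\;\preceq\;\frac1{n-k+1} .
\]
Summing over $u\in V$ and using that $\{d_\F(v,u)=k\}$ is determined by the conditioning gives $\E[N_k]\le 1+C\,\E|\partial\fB(v,k)|/(n-k+1)$.

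Finally, combining the two displays above with the bound $\frac1{n-k+1}\le\frac2n$ valid for $k\le n/2$, the telescoping identity $\sum_{k=1}^{m}\E|\partial\fB(v,k)|=\E|\fB(v,m)|-1$, and the volume estimate $\E|\fB(v,m)|\le\alpha(m+1)^2$ from \cref{thm:moments}, I obtain $\E\bigl[\sum_{k=1}^{\lfloor n/2\rfloor}N_k\bigr]\le \tfrac n2+\tfrac{2C}{n}\E|\fB(v,\lfloor n/2\rfloor)|\preceq n$, and hence the claimed bound for all large $n$; bounded $n$ is immediate since $\E[\deg_\F(v)]$ is bounded under the controlled-stationary-measure hypothesis. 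The step I expect to be the main obstacle is the measurability/conditioning argument of the previous paragraph --- using one-endedness to express both the distance $d_\F(v,u)$ and the relevant stochastic domination simultaneously in terms of the futures $\Gamma(u,\infty)$ and $\Gamma(v,\infty)$ --- since this is what makes \cref{lem:domination} and \cref{thm:generalexponentsv} applicable at once.
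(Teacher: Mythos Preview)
Your proof is correct and follows essentially the same route as the paper's. Both arguments reduce the conductance bound to controlling $\E\bigl[\sum_k N_k\bigr]$, where $N_k$ counts vertices at intrinsic distance $k$ from $v$ that lie on a geodesic to $\partial\fB(v,n)$, and both estimate $\E[N_k]$ by combining the volume bound of \cref{thm:moments} with stochastic domination (\cref{lem:domination}) and the survival exponent of \cref{thm:generalexponentsv}. The only differences are cosmetic: the paper obtains the deterministic inequality $\Ceff\le k^{-1}N_k$ for each single level via the extremal-length characterisation (\cref{lem:treeresistance}) and then averages over the middle third of levels $n\le k\le 2n$ for the ball of radius $3n$, whereas you use Nash--Williams on the disjoint cutsets $\Pi_1,\dots,\Pi_{\lfloor n/2\rfloor}$ followed by Cauchy--Schwarz. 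Both routes land on the same bound $\Ceff\preceq(\text{number of levels})^{-2}\sum_k N_k$, and your choice of the first half of levels (rather than the middle third) works for the same reason, namely that $n-k\asymp n$ throughout the range.
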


We begin with the following deterministic lemma. Arguments of the form used to derive this lemma are well known, and a similar bound has appeared in \cite[Lemma 4.5]{BarKum06}.

\begin{lemma}
\label{lem:treeresistance}
Let $T$ be a tree, let $v$ be a vertex of $T$, and let $N_v(n,k)$ be the number of vertices $u \in \partial B(v,k)$ such that $u$ lies on a geodesic in $T$ from $v$ to $\partial B(v,n)$. Then
\[\Ceff\left( v \leftrightarrow \partial B(v,n) ; \, T \right) \leq \frac{1}{k}N_v(n,k)\]
for every $1\leq k \leq n$.
\end{lemma}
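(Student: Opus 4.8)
The plan is to bound the effective resistance $\Reff(v \leftrightarrow \partial B(v,n);\,T)$ from below using the Nash--Williams inequality with a sequence of cutsets read off from the level structure of $T$. Write $T'$ for the union of all geodesics in $T$ from $v$ to vertices of $\partial B(v,n)$; since $T$ is a tree this is a subtree of $T$, and for each $0\le j\le n$ the set of vertices of $T'$ at distance exactly $j$ from $v$ is exactly the set of vertices counted by $N_v(n,j)$. (A vertex $u$ with $d(v,u)=j$ lies on $T'$ if and only if it lies on a geodesic from $v$ to $\partial B(v,n)$.) Since each non-root vertex of a tree has a unique neighbour closer to $v$, this also identifies $N_v(n,j)$ with the number of edges of $T'$ joining distance $j-1$ to distance $j$.

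For $1\le j\le n$ let $\Pi_j$ be the set of edges of $T$ joining a vertex at distance $j-1$ from $v$ to a vertex at distance $j$ from $v$ that lie on some geodesic from $v$ to $\partial B(v,n)$; equivalently, $\Pi_j$ is the set of edges of $T'$ between level $j-1$ and level $j$, so that $\Pi_j$ has exactly $N_v(n,j)$ elements. The first point to check is that each $\Pi_j$ is a cutset separating $v$ from $\partial B(v,n)$ in $T$: any walk in $T$ from $v$ to a vertex $w\in\partial B(v,n)$ must traverse every edge of the unique simple path from $v$ to $w$, and that path contains precisely one edge of $\Pi_j$, namely the edge between its level $j-1$ and level $j$ vertices. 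This is the only place the tree structure is used, and it is also the only step requiring any care; the rest is bookkeeping. Since the cutsets $\Pi_1,\dots,\Pi_n$ are pairwise disjoint and all conductances of $T$ are equal to $1$, the Nash--Williams inequality \cite[Chapter 2]{LP:book} gives
\[
\Reff\bigl(v \leftrightarrow \partial B(v,n);\, T\bigr) \ \ge\ \sum_{j=1}^{k}\left(\sum_{e\in\Pi_j} c(e)\right)^{-1} \ =\ \sum_{j=1}^{k}\frac{1}{N_v(n,j)}
\]
for every $1\le k\le n$.

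To conclude I will use that the finite sequence $N_v(n,0),N_v(n,1),\dots,N_v(n,n)$ is non-decreasing: each vertex $u$ of $T'$ at a level $j<n$ has at least one child in $T'$ (extend a geodesic through $u$ by one further step), and children of distinct level-$j$ vertices are distinct, so $N_v(n,j+1)\ge N_v(n,j)$. Hence $N_v(n,j)\le N_v(n,k)$ for all $1\le j\le k$, the displayed sum is at least $k/N_v(n,k)$, and taking reciprocals gives $\Ceff(v\leftrightarrow\partial B(v,n);\,T)\le N_v(n,k)/k$, as required. (If $N_v(n,k)=\infty$ --- in particular if $\partial B(v,n)=\emptyset$ --- the asserted inequality holds trivially, so there is nothing to prove in that case and no local finiteness hypothesis on $T$ is needed.)
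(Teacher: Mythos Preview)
Your proof is correct and essentially equivalent to the paper's. The paper uses the extremal length characterisation of effective conductance, taking the test metric $m(e)=1/k$ on edges in the first $k$ levels of the subtree $T'$ and $m(e)=0$ otherwise, and then computes $\sum_e m(e)^2 = k^{-2}\sum_{r=1}^k N_v(n,r) \le k^{-1} N_v(n,k)$ using the monotonicity of $N_v(n,\cdot)$. Your Nash--Williams argument with the cutsets $\Pi_1,\dots,\Pi_k$ is the same computation in dual form: the Nash--Williams bound with these $k$ disjoint cutsets is exactly what one obtains from extremal length by choosing $m$ constant on each cutset, and the final monotonicity step is identical. (Incidentally, the paper writes ``decreasing'' where it means non-decreasing; you have the direction right.) One minor quibble: in your closing parenthetical, $\partial B(v,n)=\emptyset$ gives $N_v(n,k)=0$ rather than $\infty$, but the inequality is still trivially true in that case since the conductance is then zero.
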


\begin{proof}
We use the extremal length characterisation of the effective resistance \cite[Exercise 2.78]{LP:book}. Given a graph $G$ and a function $m:E\to[0,\infty)$, we define the $m$-length of a path in G by summing $m$ over the edges in the path, and define the $m$-distance $d_m(A,Z)$ between two sets of vertices $A$ and $Z$ to be the minimal $m$-length of a path connecting $A$ and $Z$. If $G$ is finite, then we have that
\[
\Ceff( A \leftrightarrow B) = \inf\left\{ \sum_{e} m(e)^2 :   d_m(A,Z) \geq 1 \right\}.
\]
We now apply this bound with $G$ equal to the (subgraph of $T$ induced by the) ball $B(v,n)$ in $T$.
If we set $m(e)=1/k$ if $e$ lies on the first $k$ steps of some geodesic from $v$ to $\partial B(v,n)$ and set $m(e)=0$ otherwise, then we clearly have that $d_m(v,\partial B(v,n))=1$ and, since $N_v(n,k)$ is increasing in $k$,
\[\sum_{e}m(e)^2 = \frac{1}{k^2} \sum_{r=1}^k N_v(n,r) \leq \frac{1}{k} N_v(n,k)\]
as claimed.
\end{proof}

\begin{proof}[Proof of \cref{lem:expectedconductance}]
For each $0 \leq m \leq n$, let $K(n,m)$ be the set of vertices $u\in \partial \fB(v,m)$ such that $u$ lies on a geodesic in $\F$ from $v$ to $\partial \fB(v,n)$, and let  $K'(n,m)$ be the set of vertices $u$ in $\partial \fB(v,m)$ such that 
$\fP(u,n-m) \neq \emptyset$.  Note that $K(n,m) \setminus K'(n,m)$ contains at most one vertex, namely the unique vertex in $\partial \fB(v,m)$ that lies in the future of $v$. 
Thus, by \cref{lem:treeresistance}, we have that
\[\Ceff\left(v\leftrightarrow \partial \fB(v,n) ;\F \right) \leq  \frac{1}{m} |K(n,m)| \leq \frac{1}{m} (|K'(n,m)|+1) \]
for every $1\leq m \leq n$, and hence that
\begin{equation}
\label{eq:conductancelanes}
\Ceff(v\leftrightarrow \partial \fB(v,3n) ;\F ) \preceq  \frac{1}{n^2} \sum_{m=n}^{2n} (|K'(3n,m)|+1) \preceq \frac{1}{n} + \frac{1}{n^2} \sum_{m=n}^{2n} |K'(3n,m)|
\end{equation}
for each $n\geq 1$.
Now, for each vertex $u$ of $G$ and $1\leq m \leq n$, let $\sA_{n,m}(v,u)$ be the event that $u\in K'(n,m)$. By   
  \cref{lem:domination} (more specifically, \eqref{eq:dom1} applied with $K=\{u,v\}$) and \cref{thm:generalexponentsv}, we have that
 \[
\P(\sA_{n,m}(v,u)) \preceq \frac{1}{n+1-m}\P\left(u \in \partial \fB(v,m)\right), 
 \]
 for $n \geq m+1$.
Summing over $u$, we obtain that
\begin{align*}
\label{eq:expectedlanes}
\E\Bigl[\Ceff(v\leftrightarrow \partial \fB(v,3n) ;\F )\Bigr] &\preceq
\frac{1}{n}+\frac{1}{n^2} \sum_{m=n}^{2n} \E |K'(3n,m)| 
\preceq \frac{1}{n}+\frac{1}{n^2}\sum_{m=n}^{2n}\sum_{u\in V} \P(\sA_{3n,m}(v,u))
\\ &\preceq \frac{1}{n}+\frac{1}{n^3}\sum_{m=n}^{2n}\sum_{u\in V} \P(u \in \partial \fB(v,m)) \leq \frac{1}{n}+ \frac{1}{n^3} \E|\fB(v,2n)| \preceq \frac{1}{n},
\end{align*}
where we have used \cref{thm:moments} in the final inequality. This establishes the claim.
\end{proof}

\begin{proof}[Proof of \cref{thm:AlexanderOrbach}]
The claim that $d_f(T)=2$ follows from \cref{cor:quenchedvolumeupper,cor:quenchedvolumelowerbound}. The remaining claims follow immediately by applying \cref{thm:BJKSquenched}, the hypotheses of which are met by \cref{cor:exponentialproblargevolume,lem:fullvolumelower,lem:expectedconductance}.
\end{proof}

\section{Applications to the Abelian sandpile model}
\label{sec:sandpile}

Let $G$ be a transient graph and let $\Eta$ be a uniform infinite recurrent sandpile on $G$, as defined in \cref{subsec:sandpileintro}. Let $\fF$ be the wired uniform spanning forest of $G$, let $\fF_v$ be the $v$-wired uniform spanning forest of $G$ for each vertex $v$ of $G$, let $\fT_v$ be the component of $v$ in $\F_v$, and let $\mathbf{G}$ be the Greens function on $G$. 

Given a recurrent sandpile configuration $\eta$ and a vertex $v$ of $G$, we write $\operatorname{Av}_v(\eta,u)$ for number of times $u$ topples if we add a grain of sand to $\eta$ at $v$ and then stabilize, so that $|\Av_v(\eta)|=\sum_{u\in V} \Av_v(\eta,u)$. 
We recall the following relationships between these objects:
\begin{enumerate}
  \item \textbf{Dhar's formula} \cite{Dhar90} states that the expected number of times $u$ topples when we add a grain of sand at $v$ is given by the Greens function. That is,
\begin{equation}
\label{eq:Dhar}
\E\left[ \operatorname{Av}_v(\Eta,u) \right] = \frac{\mathbf{G}(v,u)}{c(u)}.
\end{equation}
See also \cite[Section 2.3]{MR3857602}. (Note that the right hand side is also the Green's function for \emph{continuous time random walk}.)
\item The avalanche cluster at $v$  approximately stochastically dominates the past of $v$ in the WUSF. More precisely, for any increasing Borel set $\sA \subseteq \{0,1\}^V$, we have that
\begin{equation}
\label{eq:sandpilelowerbound}
\P\left( \AvC_v(\Eta) \in \sA \right)
\geq \frac{1}{\deg(v)} \P\left( \fP(v) \in \sA \right).
\end{equation}
This follows from the discussion in \cite[Section 2.5]{bhupatiraju2016inequalities}, see also equation $(3.2)$ of that paper.
\item The diameter of the avalanche cluster at $v$ is approximately stochastically dominated by the diameter of the component of $v$ in the $v$-WUSF. More precisely, we have that
\begin{equation}
\label{eq:sandpileupperbound}
\P\left(\diam_\ext\left[ \AvC_v(\Eta) \right] \geq r\right) \leq \frac{\mathbf{G}(v,v)}{c(v)} \P\left(\diam_\ext\left[ \fT_v \right] \geq r\right).
\end{equation}
This follows from \cite[Lemma 2.6]{bhupatiraju2016inequalities}.
\end{enumerate}
See \cite{bhupatiraju2016inequalities,MR3857602} for detailed discussions of these properties.

We now apply these relations to deduce \cref{thm:sandpile,thm:sandpilepolynomial,thm:sandpilenonamenable} from the analogous results concerning the WUSF and $v$-WUSF.

\begin{proof}
The lower bounds all follow immediately from \eqref{eq:sandpilelowerbound} together with the corresponding statements for the WUSF, which are given in \cref{thm:generalexponents,thm:extrinsicZd,thm:extrinsic,thm:extrinsicspeed}. Similarly, the upper bounds on the extrinsic radius of the avalanche follow from \cref{eq:sandpileupperbound} and the corresponding statements for the $v$-WUSF. Thus, it remains only to prove the upper bound on the probability of a large number of topplings. For this, we apply a union bound and Dhar's formula to obtain that
\begin{align*}
\P\Bigl(|\Av_v(\Eta)| \geq n\Bigr) &\leq \P\left( \diam_\ext\left[ \AvC_v(\Eta) \right]\geq m \right) + \frac{1}{n}\sum_{u\in B(v,m)} \frac{\mathbf{G}(v,u)}{c(u)}.
\end{align*}
Under the hypotheses of \cref{thm:sandpile,thm:sandpilepolynomial}, the second term on the right is $O(m^2)$ by \eqref{eq:Occupation}, while under the hypotheses of \cref{thm:sandpilenonamenable} it is $O(m)$ by definition of uniform ballisticity. Thus, the claimed upper bounds follow by applying \cref{thm:extrinsicZdv,thm:extrinsicv,thm:extrinsicspeedv} as appropriate to bound the first term on the right, taking $m=\lceil n^{1/4} \rceil$ in the case of $\Z^d$ (where $d\geq 5$),  taking $m =\lceil n^{1/4} \log^{1/4} n \rceil$ in the case of \cref{thm:sandpilepolynomial}, and taking $m=\lceil n^{1/2} \rceil$ in the uniformly ballistic case.
\end{proof}

\subsection*{Acknowledgments}
We thank Martin Barlow, Antal J\'arai, and Perla Sousi for helpful discussions, and thank Russ Lyons for catching some typos. 
I also thank the two anonymous referees for their close and careful reading of the paper; their comments and suggestions have greatly improved the paper.
Much of this work took place while the author was a PhD student at the University of British Columbia, during which time he was supported by a Microsoft Research PhD Fellowship.

\phantomsection

\addcontentsline{toc}{section}{References}

\footnotesize{
	\bibliographystyle{abbrv}
	\bibliography{unimodular}
 }
\end{document}